\let\over\@@over\makeatother
\numberwithin{equation}{section}
\theoremstyle{plain} 
\newtheorem{theorem}{Theorem}[section] 
\newtheorem{proposition}[theorem]{Proposition} 
\newtheorem{lemma}[theorem]{Lemma}
\theoremstyle{remark}
\newtheorem{remark}[theorem]{Remark}
\definecolor{gamebookers}{RGB}{50,153,187}
\definecolor{vitaminc}{RGB}{255,153,0}
\newcommand{\bc}{\begin{center}}
\newcommand{\ec}{\end{center}}
\newcommand{\ba}{\begin{align*}}
\newcommand{\ea}{\end{align*}}
\newcommand{\bmc}{\begin{multicols}}
\newcommand{\emc}{\end{multicols}}
\newcommand{\al}{\alpha}
\newcommand{\gam}{\gamma}
\newcommand{\ep}{\epsilon}
\newcommand{\lam}{\lambda}
\newcommand{\tit}[1]{\textit{#1}}
\newcommand{\tbf}[1]{\textbf{#1}}
\newcommand{\trm}[1]{\textrm{#1}}
\newcommand{\ds}{\displaystyle}
\newcommand{\sgn}{\textrm{sgn}}
\newcommand{\tx}{\tilde{x}}
\newcommand{\ty}{\tilde{y}}
\newcommand{\tn}{\tilde{\eta}}
\newcommand{\tpsi}{\tilde{\psi}}
\newcommand{\tPsi}{\tilde{\Psi}}
\newcommand{\tgam}{\tilde{\gamma}}
\newcommand{\tGamma}{\tilde{\Gamma}}
\newcommand{\tom}{\tilde{\omega}}
\newcommand{\ts}{\tilde{s}}
\newcommand{\tr}{\tilde{r}}
\newcommand{\tilh}{\tilde{h}}
\newcommand{\by}{\times}
\DeclareMathOperator{\sech}{sech}
\newcommand{\calX}{\mathcal{X}}
\newcommand{\calY}{\mathcal{Y}}
\newcommand{\Ham}{\mathcal{H}}
\newcommand{\eHam}{\mathcal{H}^{\ep}}
\newcommand{\eK}{\mathcal{K}^{\ep}}
\newcommand{\symplecticform}{\Upsilon}
\newcommand{\calM}{\mathcal{M}}
\newcommand{\dw}{\dot{w}}
\newcommand{\calA}{\mathcal{A}}
\newcommand{\calB}{\mathcal{B}}
\newcommand{\calZ}{\mathcal{Z}}
\newcommand{\calF}{\mathscr{F}}
\newcommand{\genG}{\mathcal F}
\newcommand{\genI}{\mathcal I}
\newcommand{\genU}{\mathcal U}
\newcommand{\genX}{\mathcal X}
\newcommand{\genY}{\mathcal Y}
\newcommand{\Dn}{\Omega_{\eta}}
\newcommand{\tDn}{\tilde{\Omega}_{\tilde{\eta}}}
\newcommand{\R}{\mathbb{R}}
\newcommand{\calR}{R}
\newcommand{\calRbar}{\overline{R}}
\newcommand{\p}{\partial}
\newcommand{\bigO}{\mathcal{O}}
\newcommand{\Froude}{\dfrac{1}{F^2}}
\newcommand{\Fcr}{\dfrac{1}{F^2_{\mathrm{cr}}}}
\newcommand{\Fep}{\dfrac{1}{(F^{\ep})^2}}
\newcommand{\mucr}{\mu_{\mathrm{cr}}}
\newcommand{\hlam}{h^{\lambda}}
\newcommand{\hlamstar}{h^{\lambda_{\star}}}
\newcommand{\vlam}{v^{\lambda}}
\newcommand{\vlamk}{v^{\lambda_k}}
\newcommand{\vlamstar}{v^{\lambda_{\star}}}
\newcommand{\ulam}{u^{\lambda}}
\newcommand{\lamo}{\lambda_0}
\newcommand{\lamstar}{\lambda_{\star}}
\newcommand{\pf}{\begin{proof}}
\newcommand{\QED}{\end{proof}}
\newcommand{\be}{\begin{equation} }
\newcommand{\ee}{\end{equation}}
\newcommand{\bse}{\begin{subequations}}
\newcommand{\ese}{\end{subequations}}
\newcommand{\dist}{\operatorname{dist}}
\newcommand{\Dom}[1]{\operatorname{Dom}{#1}}
\newcommand{\supp}[1]{\operatorname{supp}{#1}}
\newcommand{\Xb}{{X_{\mathrm{b}} }}
\newcommand{\Yb}{{Y_{\mathrm{b}} }}
\newcommand{\odd}{\mathrm{o}}      
\newcommand{\even}{\mathrm{e}}      
\newcommand{\bdd}{\mathrm{b}}       
\newcommand{\F}{{\mathscr F}}
\newcommand{\Pc}{P^{\mathrm{c}}}
\newcommand{\Ph}{P^*}
\newcommand{\Xc}{\mathcal{X}^{\mathrm{c}}}
\newcommand{\Xh}{\mathcal{X}^*}
\newcommand{\reverser}{\mathcal S}  
\newcommand{\linear}{L}             
\newcommand{\X}{{\mathcal X}}
\newcommand{\cm}{{\mathscr C}}         
\newcommand{\loc}{{\mathrm{loc}} }          
\newcommand{\Phia}{\tilde\Phi}         
\newcommand{\flowforce}{\mathscr{S}}
\newcommand{\Udomain}{\mathscr{U}}  
\newcommand{\placeholder}{\,\cdot\,}
\newcommand{\maps}{\colon}
\newcommand{\n}[2][]{#1\lVert #2 #1\rVert}
\newcommand{\abs}[2][]{#1\lvert #2 #1\rvert}
\newcommand{\dell}{\partial}
\begin{document}

\title[Solitary waves with discontinuous vorticity]{Solitary water waves with discontinuous vorticity}

\author[A. Akers]{Adelaide Akers}
\address{Department of Mathematics and Economics, Emporia State University, Emporia, KS 66801} 
\email{aakers1@emporia.edu}  

\author[S. Walsh]{Samuel Walsh}
\address{Department of Mathematics, University of Missouri, Columbia, MO 65211} 
\email{walshsa@missouri.edu} 

\subjclass[2010]{35R35, 76B15, 76B25, 37K50}
\keywords{discontinuous vorticity, solitary wave, spatial dynamics, global bifurcation theory}

\begin{abstract}
We investigate the existence of solitary gravity waves traversing a two-dimensional body of water that is bounded below by a flat impenetrable ocean bed and above by a free surface of constant pressure.  Our main interest is constructing waves of this form that exhibit a discontinuous distribution of vorticity.  More precisely, this means that the velocity limits both upstream and downstream to a laminar flow that is merely Lipschitz continuous.  We prove that, for any choice of background velocity with this regularity, there exists a global curve of solutions bifurcating from a critical laminar flow and including waves arbitrarily close to having stagnation points.  Each of these waves has an axis of even symmetry, and the height of their streamlines above the bed decreases monotonically as one moves to the right of the crest.
\end{abstract}

\maketitle

\setcounter{tocdepth}{1}
\tableofcontents

\section{Introduction} \label{introduction section}

Consider a two-dimensional body of water that lies above a perfectly flat ocean bed and below a region of air.  We  take the motion of the water to be governed by the incompressible Euler equations with an external gravitational force.  The air--water interface is as a free boundary along which the pressure is constant.  \emph{Traveling waves} are special solutions of this system that are independent of time when viewed in a certain moving reference frame.   
Of special importance are \emph{solitary waves}, which are traveling waves that are localized in the sense that their free surfaces are asymptotically flat and the velocity fields approach a laminar background current upstream and downstream.  

Countless works have been devoted to proving the existence of solitary waves and studying their qualitative properties.  However, nearly all of these efforts have focused on the case where the velocity field is irrotational and the background flow is trivial.  On the other hand, numerical simulations reveal that vorticity in the bulk, or the presence of a nontrivial underlying current, may strongly affect the structure of a wave.  In recent years, better understanding the influence of vorticity and wave-current interactions has been a major objective in both mathematics and the applied sciences.  Most relevant to this work, we note that small-amplitude solitary waves with vorticity were constructed independently by Hur \cite{hur2008solitary} and Groves and Wahl\'en \cite{groves2008vorticity}.  Wheeler \cite{wheeler2013solitary,wheeler2015pressure} proved the existence of large-amplitude solitary waves with an arbitrary smooth background current, and Chen, Walsh, and Wheeler \cite{chen2017existence} studied the analogous problem in the heterogeneous density regime.  

In the present paper, we are interested in solitary waves with a background flow that has rapidly varying vorticity.  This occurs, for example, when a wave passes over a strong countercurrent or the vorticity is organized in layers with sharp transition regions.  As we explain in more detail below, we study weak solutions where the background velocity is Lipschitz continuous.  This implies that the vorticity at infinity is merely bounded and measurable; in particular, it may be \emph{discontinuous}.  Constantin and Strauss \cite{constantin2011discontinuous} constructed a global curve of large-amplitude periodic traveling waves with similar regularity, but the solitary wave regime has remained open until now.   

\subsection{Main result} \label{main result section}
Now we formulate the problem more precisely and explain the main theorem.  Switching at the outset to the coordinate frame moving with the wave, we assume that the  water lies in the region 
	\begin{align*}
	 \Dn = \left\{ (x,y) \in \R^2 : -d<y<\eta(x)\right\},
	\end{align*}
where $\eta = \eta(x)$ is the free surface profile that determines the air--water interface, and $d > 0$ is the asymptotic depth.  Note that $\eta$ is a priori unknown; we must determine it as part of the solution.  The velocity field of the wave we denote by $\mathbf{u} = (u,v) : \overline{\Omega_\eta} \to \mathbb{R}^2$, and let $P : \overline{\Omega_\eta} \to \mathbb{R}$ be the pressure.  Recall that in two dimensions, the \emph{vorticity} is identified with the scalar quantity
\[ \omega := v_x - u_y.\]

The motion in the interior of the fluid is governed by the (steady) incompressible Euler equations.  In conservative form, they are:
\begin{subequations} \label{intro weak euler}
\be \label{euler tensor form}
	-c \mathbf{u}_x + \nabla \cdot \left( \mathbf{u} \otimes \mathbf{u} \right) = -\nabla P + \mathbf{g} = 0, \quad \nabla \cdot \mathbf{u} = 0 \qquad \textrm{in } \Omega_\eta,
	\ee
where $c > 0$ is the wave speed, and $\mathbf{g} := (0,-g)$, with $g > 0$ being the constant of gravitational acceleration.  The bed $\{ y = -d \}$ is assumed to be impermeable, while on the free surface $\{ y = \eta(x)\}$ we impose the kinematic and dynamic boundary conditions
\begin{align} \label{intro Ebc}
	\left \{
	\begin{array}{l l}
	v=0 & \text{ on } y=-d\\
	v=(u-c)\eta_x & \text{ on } y=\eta(x)\\
	P=P_{\trm{atm}} & \text{ on } y=\eta(x).
	\end{array}
	\right.
	\end{align}
Here $P_{\mathrm{atm}}$ is the atmospheric pressure.

Additionally, we shall always consider the case where there are no points of horizontal stagnation:
\be \label{no stagnation} 
\sup_{\Omega_\eta} \left( u - c \right) < 0.
\ee
This means that none of the particles have a horizontal velocity exceeding the speed of the wave itself.  One important consequence of \eqref{no stagnation} is that the integral curves of the relative velocity field $(u-c, v)$, which are called the \emph{streamlines},  can each be written globally as a graph of a single-valued function of $x$. 

A solitary wave is a solution of the system \eqref{euler tensor form}--\eqref{no stagnation} that exhibits the asymptotic behavior
\be \label{intro asymptotic conditions}
\eta \to 0, \qquad \mathbf{u} \to (U,0) \qquad \text{ as } x \to \pm \infty, ~\textrm{uniformly in $y$.}
\ee
\end{subequations}
Here, $U = U(y) \in C^{0,1}([-d,0])$ is a given arbitrary function describing the background current.  Observe that the vorticity at infinity is then $U_y \in L^\infty([-d,0])$.  Rather than prescribe $U$ directly, it is in fact more convenient to work with the family of shear flows
	\begin{align*}
	U(y)=c-FU^*(y)
	\end{align*}
where $F$ a dimensionless parameter called the \tit{Froude number} and $U^* \in C^{0,1}([-d,0])$ is a fixed positive function normalized so that
	\begin{align*}
	\ds gd^3 = \left(\int^{0}_{-d} U^*(y) \, dy \right)^2 \qquad \text{ or equivalently } \qquad \Froude=gd^3\left(\int^{0}_{-d} (c-U(y)) \, dy \right)^{-2}.
	\end{align*}
Physically, $U^*$ is simply a rescaling of the relative shear flow at $x=\pm \infty$.  It must be strictly positive in accordance with \eqref{no stagnation}.  The Froude number can be thought of as a non-dimensionalized wave speed.  It is in some sense the ratio between inertial and gravitational forces; later, we will uncover the existence of a critical Froude number $F_{\textrm{cr}}$ that plays an important role in both the existence and qualitative theory.  We refer to a solution with $F > F_{\textrm{cr}}$ as \tit{supercritical}, and one with $F<F_{\textrm{cr}}$ as \tit{subcritical}.


That said, our main contribution is the following.  

\begin{theorem}[Existence of large-amplitude solitary waves] \label{main theorem}
  Fix $\alpha \in (0,1/2]$, wave speed $c > 0$, gravitational constant $g >0$, asymptotic depth $d>0$, and positive asymptotic relative velocity 
  \be U^* \in C^{0,1}([-d,0]) \cap C^{2,\alpha}([-d, -d+\delta)) \cap C^{2,\alpha}((-\delta, 0]),\label{regularity U*} \ee
  for some $\delta \in (0,d)$.   
  There exists a continuous curve
  \begin{align*}
    \cm = \left\{ (u(t), v(t), \eta(t), F(s)) : t \in (0,\infty) \right\} 
  \end{align*}
  of solitary waves solving \eqref{intro weak euler} (in the distributional sense) and having the regularity
  \begin{align}
    \label{(u,v,eta) regularity} 
    (u(t), v(t), \eta(t)) \in 
    C_\bdd^{0,\alpha}(\overline{\Omega(s)}) \times C_\bdd^{0,\alpha}(\overline{\Omega(s)}) \times C_\bdd^{1,\alpha}(\R),
  \end{align}
  where $\Omega(t) := \Omega_{\eta(t)}$.  
  The solution curve $\cm$ has the following properties:
  \begin{enumerate}[label=\rm(\alph*)] 
  \item \label{extreme wave limit} $\cm$ contains waves that are arbitrarily close to having points of (horizontal) stagnation,
    \begin{equation}
      \label{c-u to 0} 
      \lim_{t \to \infty} \inf_{\Omega(t)}  |c-u(t)| = 0.
    \end{equation} 
 \item The left endpoint of $\cm$ is a critical laminar flow, 
    \begin{align*}
      \lim_{t \to 0} (u(t),v(t), \eta(t), F(t)) = (c-F_{\mathrm{cr}} U^*,0,0,F_{\mathrm{cr}}).
    \end{align*}
  \item Every solution in $\cm$ is \emph{symmetric} in the sense that $u(t)$ and $\eta(t)$ are even in $x$ and $v(t)$ is odd in $x$.  Moreover, the elements of $\cm$ are \emph{waves of elevation} in that every streamline (except the bed) lies strictly above its asymptotic height.  Finally, they  are \emph{monotonic}:   the height of each streamline above the bed is strictly decreasing in $x$ for $x > 0$.
  \end{enumerate}
\end{theorem}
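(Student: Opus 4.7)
The plan is to reformulate the free-boundary problem \eqref{intro weak euler} as a nonlinear operator equation on a fixed strip via the Dubreil--Jacotin semi-hodograph change of variables $(x,y) \mapsto (q,p) = (x, -\psi(x,y))$, where $\psi$ is the relative stream function (well defined thanks to $\nabla\cdot\mathbf{u}=0$ and the no-stagnation condition \eqref{no stagnation}). The unknown becomes the height-above-bed function $h = h(q,p)$, which satisfies a quasilinear second-order PDE in the strip $\{p_0 < p < 0\}$ with a Neumann condition on $\{p = p_0\}$ and a nonlinear Bernoulli-type condition on $\{p = 0\}$. The vorticity function $\gamma = \gamma(p)$ arising in the transformed equation inherits the regularity of $U^*$, so it is merely $L^\infty$, and solutions must be understood in the distributional sense. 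The hypothesis \eqref{regularity U*} that $U^*$ is $C^{2,\alpha}$ near the top and bottom of the fluid ensures that $h$ enjoys $C^{2,\alpha}$ regularity near the boundaries of the strip, which is what the H\"older regularity \eqref{(u,v,eta) regularity} of the velocity field and free surface ultimately reflects.

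The next step is to linearize at the laminar flow $h = H(p;F)$ corresponding to $U = c - FU^*$ and identify the critical Froude number $F_{\mathrm{cr}}$ as the value at which the associated Sturm--Liouville problem first admits a nontrivial bounded solution on $\R$, as is standard in the solitary-wave literature (see \cite{hur2008solitary,groves2008vorticity,wheeler2013solitary}). At $F = F_{\mathrm{cr}}$ a one-dimensional kernel appears, and an application of a low-regularity variant of the Crandall--Rabinowitz theorem, in the spirit of \cite{constantin2011discontinuous}, yields a local curve of symmetric solitary-wave solutions. Because the coefficients are only Lipschitz in the interior, care is needed in verifying the Fredholm and transversality hypotheses; weighted H\"older spaces that encode the expected exponential decay at $x = \pm\infty$ are the natural setting.

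To extend the local branch to a global continuum $\cm$, the plan is to apply a Healey--Simpson / Buffoni--Toland global continuation theorem, which gives the alternative that $\cm$ either (i) blows up in norm, (ii) returns to the trivial branch, or (iii) loses compactness. Alternatives (ii) and (iii) are then excluded via a priori bounds: a flow-force identity, uniform bounds on the free surface, and quantitative control of decay at $x = \pm\infty$ using the structure of the asymptotic laminar flow and the assumption of no stagnation. Along the way, symmetry (moving-plane / Alexandrov reflection), the wave-of-elevation property (comparison with the laminar flow via a strong maximum principle applied to a weak solution), and monotonicity to the right of the crest are established and propagated along $\cm$. Finally, one shows that in the surviving alternative, $\inf_{\Omega(t)}|c-u(t)|$ must tend to zero, giving \eqref{c-u to 0}.

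The main obstacle is the combination of low regularity and the unbounded domain. The Constantin--Strauss framework \cite{constantin2011discontinuous} relies heavily on compactness of a bounded period interval, which in the solitary regime must be replaced by careful decay and compactness estimates for a quasilinear equation with merely $L^\infty$ coefficients. In particular, proving the strong maximum principle and Hopf boundary point lemma for weak solutions of the transformed equation, ruling out escape-to-infinity loss of compactness along the continuum (a translation-invariance phenomenon), and pushing the nodal and symmetry arguments through in $C^{0,\alpha}$ regularity rather than $C^{2,\alpha}$ are where the bulk of the technical work will lie.
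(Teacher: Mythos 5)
There is a genuine gap at the heart of your local construction. You propose to produce the small-amplitude branch by ``a low-regularity variant of the Crandall--Rabinowitz theorem,'' bifurcating from the laminar flow at $F=F_{\mathrm{cr}}$. For solitary waves this cannot work as stated: on the unbounded strip the linearized operator $\mathscr{F}_\phi(0,F_{\mathrm{cr}})$ is \emph{not} Fredholm --- the zero eigenvalue of the transverse Sturm--Liouville problem sits at the edge of the essential spectrum of the full linearization, so there is no finite-dimensional kernel/cokernel splitting to feed into a Lyapunov--Schmidt or Crandall--Rabinowitz argument. Your suggested remedy of exponentially weighted H\"older spaces does not rescue the scheme, because the would-be local family is a KdV-type $0^2$ resonance: the waves exist only for $F>F_{\mathrm{cr}}$, have amplitude of order $\epsilon$ and decay rate of order $\epsilon^{1/2}$, so no fixed weight captures the whole branch, and there is no transcritical/pitchfork curve through $(0,F_{\mathrm{cr}})$ parametrized by a kernel amplitude. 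The paper instead rewrites the height equation as an ill-posed spatial Hamiltonian system in $x$, verifies the spectral hypotheses for a Mielke--Kirchg\"assner center manifold reduction (the delicate point at this regularity being the resolvent estimates with only $H_s\in C^{0,1}$), and obtains the small waves as reversible homoclinic orbits of the reduced planar (KdV-scaled) Hamiltonian system; the invertibility and uniqueness statements needed to launch the global continuation are then proved separately, using that $\mathscr{F}_\phi(\phi,F)$ \emph{is} Fredholm index $0$ for strictly supercritical $F$ and that supercritical waves are waves of elevation.

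Two secondary points. First, you assert that the vorticity function ``inherits the regularity of $U^*$'' and pass directly to the height equation; at this regularity the functional dependence $\omega=\gamma(\psi)$ with $\gamma\in L^\infty$, and more generally the equivalence of the weak velocity, stream-function, and height formulations, is itself a nontrivial step (the paper's Theorem~\ref{equivalence theorem}, which is also why the $W^{2,p}_{\mathrm{loc}}$ framework appears in \eqref{(u,v,eta) regularity} and in the pressure bounds later). Second, a small slip: in the Dubreil--Jacotin variables the bed condition is the Dirichlet condition $h=0$ on $\{p=p_0\}$, not a Neumann condition. Your global-continuation outline (flow-force identity to exclude loss of compactness by translation, moving planes for symmetry and monotonicity, maximum-principle comparison for elevation, and a priori bounds forcing the stagnation limit) is in the same spirit as the paper's argument via the Chen--Walsh--Wheeler global bifurcation theorem, but it cannot start until the local curve is built by a method that confronts the essential-spectrum issue.
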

\begin{remark} \label{main theorem remark} (i) Note that \eqref{regularity U*} asks that the background current be slightly smoother in a strip near the free surface and bed.  This turns out to be quite important when we apply some maximum principle arguments in Section~\ref{symm}. See Lemma~\ref{addt'l regularity at boundary} and Remark~\ref{improved regularity U* remark}.

(ii) For technical reasons, we work in function spaces that also ensure that the waves in $\mathscr{C}$ have the local Sobolev regularity 
\[ u,v \in W_{\mathrm{loc}}^{1,\frac{2}{1-\alpha}}(\Omega_\eta), \qquad \eta \in W_{\mathrm{loc}}^{2, \frac{2}{1-\alpha}}(\mathbb{R}), \qquad P \in W_{\mathrm{loc}}^{2, \frac{1}{1-\alpha}}(\Omega_\eta).\] 
This is discussed in the remarks following Theorem~\ref{equivalence theorem}.  

(iii) We actually prove in Theorem~\ref{symmetrythm} the much stronger statement that \emph{every} supercritical solitary wave with this regularity has the qualitative properties enumerated in part (c).  
\end{remark}


Before proceeding, let us make some remarks about how Theorem \ref{main theorem} relates to previous works in the literature.  The study of steady water waves stretches back centuries, but it was only in the early 1920s that rigorous existence theories were developed by Nekrasov \cite{nekrasov1921steady} and Levi-Civita \cite{levi1924determinazione}.  Both of these authors considered small-amplitude irrotational periodic waves in infinite depth.  Solitary waves are more challenging to treat analytically due to compactness issues stemming from the unboundedness of the domain.  The first constructions of small-amplitude irrotational solitary waves came in the form of long wavelength limits of periodic solutions (see \cite{Lavrentiev1954}, \cite{friedrichs1954existence}, and \cite{terkrikorov1960existence}).  Beale \cite{beale1977existence} used a generalized implicit function theorem of Nash--Moser type, and later Mielke \cite{mielke1988reduction} used spatial dynamics techniques.  Large-amplitude irrotational solitary waves were obtained by  Amick and Toland \cite{amick1981solitary}, who used global bifurcation methods and a sequence of approximate problems.  A similar result, using different ideas, was proved by Benjamin, Bona, and Bose \cite{benjamin1990solitary}.  

All of these works were carried out in the irrotational regime where $\mathbf{u}$ is divergence free and curl free.  This implies that each component of the velocity field is a harmonic function, and hence the rather unwieldy system \eqref{euler tensor form} can be replaced by Laplace's equation.  The problem can then be pushed to the boundary in several ways.   However, with vorticity, one is forced to contend with more complicated behavior in the bulk.  In particular, incompressibility permits us to introduce a \emph{(relative) stream function} defined up to a constant by
\be \psi_y = u -c, \qquad \psi_x = -v. \label{stream function definition} \ee
The level sets of $\psi$ are the streamlines of the flow.  In fact, the boundary conditions in \eqref{intro Ebc} imply that $\psi$ must be constant on the free surface and bed.   From the definition, we have $-\Delta \psi = \omega$.  One can show that, in the absence of stagnation \eqref{no stagnation}, the vorticity is  functionally dependent on $\psi$.  The velocity in the bulk is therefore captured by the semilinear elliptic problem 
\[ -\Delta \psi = \gamma(\psi) \qquad \textrm{in } {\Omega_\eta},\]
for some $\gamma$ usually called the \emph{vorticity function}.  Beginning with Dubreil-Jacotin \cite{dubreil1934determination}, a standard strategy for constructing rotational steady waves has been to fix a choice of $\gamma$, and then consider solutions of the corresponding free boundary elliptic equation.  Note that for solitary waves, the background current $U$ determines $\gamma$.  In our view, it is slightly more natural in the solitary wave context to prescribe $U$, and so that is how we have phrased Theorem \ref{main theorem}.   It is important to mention that, if $U \in C^{0,1}$, then the vorticity function will generally be $L^\infty$.  At this level of regularity, the functional dependence of $\omega$ on $\psi$ is not obvious, but we confirm it Theorem \ref{equivalence theorem}.  

The first rigorous existence theory for solitary waves with general vorticity was obtained concurrently by  Hur \cite{hur2008solitary} and Groves and Wahl\'en \cite{groves2008vorticity}.  Hur constructed a family of small-amplitude solitary water waves with arbitrary vorticity function $\gamma \in C^0$.  Her method employed a Nash--Moser iteration scheme that generalized the work of Beale on irrotational solitary waves \cite{beale1977existence}.  On the other hand, Groves and Wahl\'en used a spatial dynamics approach together with a center manifold reduction in the spirit of Mielke \cite{mielke1988reduction}.  They allowed for a general vorticity function $\gamma \in H^1$.  Wheeler \cite{wheeler2013solitary,wheeler2015pressure} proved the existence of large-amplitude solitary waves with an arbitrary vorticity function in the H\"older space $C^{1,\alpha}$, for $\al \in (0,1/2]$.  His global theory included an additional alternative that the waves remain bounded away from stagnation while the Froude number diverges to $+\infty$ along the solution curve.  Wheeler later showed that this possibility could be excluded for certain choices $\gamma$ using estimates on $F$ in \cite{wheeler2015froude}.  Finally, Chen, Walsh, and Wheeler \cite{chen2017existence} proved conclusively that the stagnation limit \eqref{c-u to 0} must occur for all (smooth) background flows.  

These results assume some degree of continuity on the vorticity.  However, recent numerical computations in \cite{KS2008} and \cite{KS2008_2} indicate that the discontinuity in the vorticity may bring about flow patterns that vary drastically from those in the continuous case.  This was one of the main motivations for Constantin and Strauss \cite{constantin2011discontinuous} to investigate the existence of periodic solutions with an arbitrary $\gamma$ that is bounded and measurable.  While we are interested here in solitary waves, many of our arguments draw inspiration from their ideas.


\subsection{Plan of the article} \label{plan subsection}

The argument leading to Theorem \ref{main theorem} is quite long, so we now discuss briefly the main difficulties to be overcome, the machinery we will use, and the overarching structure of the paper.  
%

As this is a free boundary problem, we begin by changing coordinates in order to fix the domain.  For rotational steady waves without stagnation, the traditional method for doing this is to use the Dubreil-Jacotin transformation (also called semi-Lagrangian variables).  However, it is not at all obvious at this level of regularity that this is a valid change of variables.  Indeed, confirming the equivalence of the three main formulations of the problem is our first major contribution; see Section \ref{equivalence}.

With this result in hand, we are permitted to apply the Dubreil-Jacotin transformation, which recasts the steady incompressible Euler system as a scalar quasilinear elliptic PDE with fully nonlinear boundary conditions posed on a fixed domain.  More precisely, the fluid domain $\Omega_{\eta}$ is mapped to the infinite strip $R = \mathbb{R} \times (-1,0)$.  For the purposes of this discussion, we can represent the problem as an abstract operator equation of the form $\mathscr{F}(\phi, F) = 0$, where $\phi$ is a new unknown that describes the deviation of the streamlines from their far-field heights.   

At this point, we encounter a second obstacle:  the unboundedness of $R$ has serious implications for the compactness properties of the linearized operator $\mathscr{F}_\phi(\phi, F)$.  In particular, it is well-known in the literature of solitary waves that $\mathscr{F}_\phi(0,F_{\mathrm{cr}})$ fails to be Fredholm.  We are therefore barred from using a Lyapunov--Schmidt reduction approach to construct small-amplitude waves as was done in the periodic case (see, for example, \cite{constantin2004exact,constantin2011discontinuous}).  A similar issue was faced by Wheeler \cite{wheeler2013solitary,wheeler2015pressure} and Chen, Walsh, and Wheeler \cite{chen2017existence}, who were able to prove that the linearized operator at a \emph{supercritical} wave is in fact Fredholm index $0$.   However, these authors studied classical solutions, and some delicate adaptations are necessary in our setting.

In place of Lyanpunov--Schmidt, we devote Section \ref{small amp} to constructing a family $\mathscr{C}_{\mathrm{loc}}$ of small-amplitude solitary waves using a spatial dynamics approach similar to that of Groves and Wahl\'en \cite{groves2008vorticity}, as well as Wheeler \cite{wheeler2013solitary}.   First, we rewrite the problem once more as an infinite-dimensional Hamiltonian system where the horizontal variable $x$ acts as time.  At the critical value of the Froude number, $0$ is an eigenvalue of algebraic multiplicity $2$ for the linearized problem and the rest of the spectrum is bounded away from the imaginary axis.  We are therefore able to invoke a variant of the center manifold  theorem for quasilinear elliptic equations on infinite cylinders pioneered by Mielke \cite{mielke1988reduction} and Kirchg\"assner \cite{kirchgassner1982wavesolutions,kirchgassner1988resonant}.   This reduces the infinite-dimensional problem to a planar Hamiltonian system that, in fact, is equivalent to the Korteweg--de Vries equation modulo a rescaling.  We prove that for every slightly supercritical Froude number, the reduced equation has a homoclinic orbit, and these lift up to give solitary wave solutions of the original Euler problem.

The weak regularity of our solutions also presents difficulties when carrying out the maximum principle arguments that are crucial to proving the existence of large-amplitude waves.  For this reason, we follow \cite{constantin2011discontinuous} and assume some additional smoothness for $U^*$ near the bed and free surface.  Using an elliptic regularity argument that exploits the translation invariance of the domain, we can then infer that the solutions likewise enjoy enough regularity near the boundary so that the Hopf boundary point lemma and Serrin corner-point lemma can be applied.   By a moving planes method, we then prove that every supercritical solitary wave solutions exhibits even symmetry, is monotone, and (necessarily) a wave of elevation.  

In section \ref{proof main result section}, we complete the proof of Theorem \ref{main theorem} by extending $\mathscr{C}_{\textrm{loc}}$ to a global curve $\mathscr{C}$ using a variation of the Dancer \cite{dancer1973bifurcation}, and Buffoni and Toland \cite{buffoni2003analytic} abstract global bifurcation theory introduced recently by Chen, Walsh, Wheeler \cite{chen2017existence}.  

Finally, for the convenience of the reader, in Appendix~\ref{quoted results section} we provide the statement of a number of results that we draw on in the paper.

\section{Preliminaries}
\label{form}

\subsection{Notation} \label{notation section}
Before we begin, we must fix some notation.  Let $D\subset \R^n$ be a possibly unbounded domain in $R^n$ for $n \geq 1$. We denote the space of test functions
\begin{align*}
  C^\infty_{\mathrm c}(D) &:= \left\{ \phi\in C^\infty(D):\ \supp{\phi} \subset\subset D  \right\}.
\end{align*}
 For $k \in \mathbb{N}$, and $\alpha\in (0,1]$, we write 
 \begin{align*}
  C^{k,\alpha}(D) &:= \left\{ u\in C^k(D):\ \|\phi u\|_{C^{k,\alpha}}<\infty \ \text{ for all } \phi\in C^\infty_{\mathrm c}(D) \right\}.
\end{align*}
Thus $C^{k,\alpha}$ refers to \emph{locally} $k$-times H\"older continuously differentiable functions.  Note that the special case $\alpha = 1$ corresponds to locally Lipschitz functions.  We also define in the obvious way the spaces $C^{\infty}_{\mathrm c}(\overline{D})$ and $C^{k+\alpha}(\overline D)$.    On the other hand, we denote by
\[
  C^{k,\alpha}_\bdd(\overline D) := \left\{ u\in C^k(\overline{D}):\ \|u\|_{C^{k,\alpha}}<\infty \right\},
\]
the space of \emph{uniformly} $k$-times H\"older continuously differentiable functions, which is a Banach space under the $C^{k,\alpha}$ norm.    

As we are often interested in asymptotic behavior, we will frequently work with spaces of the form
\[
  C^{k,\alpha}_0(\overline D) := \left\{ u\in C^{k+\alpha}_\bdd(\overline D):\ \lim_{r\to\infty}\sup_{|x| = r} |\partial^\beta u(x)| = 0 \text{ for } 0\leq |\beta| \leq k \right\}.
\]
It is easily seen that $C_0^{k,\alpha}(\overline{D})$ is a closed subspace of $C^{k,\alpha}_\bdd(\overline D)$ under the $C^{k+\alpha}(\overline D)$ norm. 

To keep clear the distinction between the local and uniform topologies, we adopt the convention that to say $u_j \to u$ in $C^{k,\alpha}(\overline{D})$ means precisely that $\| u_j - u \|_{C^{k,\alpha}(D)} \to 0$.  On the other hand, if we write $u_j \to u$ in $C_{\mathrm{loc}}^{k,\alpha}(\overline{D})$, it means only that $\phi u_j \to \phi u$ in $C^{k,\alpha}(\overline{D})$ for all $\phi \in C_c^\infty(\overline{D})$.  

We will also make extensive use of Sobolev spaces.  For $k \in \mathbb{N}$ and $p \in [1,\infty]$ we define
\[ W^{k,p}(D) := \left\{ u \in L^p(\Omega) : \partial^\beta u \in L^p(\Omega) \textrm{ for } 0 \leq |\beta| \leq k \right\}.\]
Likewise, we let $W_{\mathrm{loc}}^{k,p}(D)$  be the set of measurable functions on $\Omega$ such that $\phi u \in W^{k,p}(D)$ for all $\phi \in C_c^\infty(\overline{D})$.  It is well-known that this has Banach space structure when prescribed the norm
\[ \| u \|_{W_{\mathrm{loc}}^{k,p}(D)} :=  \sum_{m = 1}^\infty 2^{-m} \frac{\| \phi_m u \|_{W^{k,p}(D)}}{1+\| \phi_m u \|_{W^{k,p}(D)}},\]
where $\{ \phi_m \} \subset C_c^\infty(\overline{D})$ is a family of cut-off functions uniformly bounded in $C_\bdd^\infty$ and such that $B_m(0) \cap \overline{D} \subset \supp{\phi_m} \subset B_{2m}(0) \cap \overline{D}$.  Thus $u_j \to u$ in $W_{\mathrm{loc}}^{k,p}(D)$ if and only if $u_j \to u$ in $W^{k,p}(D^\prime)$, for all $D^\prime \subset \subset \overline{D}$.   As usual, if $p = 2$, we will write $H^k$ and $H_{\mathrm{loc}}^k$ in place of $W^{k,2}$ and $W_{\mathrm{loc}}^{k,2}$, respectively. 

Finally, for any of the above spaces, we may add a subscript of $\even$ or $\odd$ to indicate evenness or oddness with respect to the first variable.  

\subsection{Three formulations} \label{three formulations section}
The steady water wave problem has many alternative formulations, each one offering certain advantages.  In this work, we will in fact move between four equivalent versions of the governing equations.  The most fundamental is the (weak) \emph{velocity formulation} presented in \eqref{intro weak euler}.  We also make use of the \emph{stream function formulation}, which rewrites the system in terms of the stream function \eqref{stream function definition} and free surface profile.  Most of the qualitative theory and large-amplitude existence theory will be done in the \emph{height equation formulation}, where the unknown is the so-called height function of Dubreil-Jacotin.  Finally, the small-amplitude theory is conducted in the \emph{spatial dynamics formulation}.  The equivalence of these systems is quite standard when the regularity is classical.  However, with discontinuous vorticity and on an unbounded domain the issue is surprisingly subtle.  In this section, we discuss in more detail the first three of the above formulations.

\subsubsection*{Weak velocity formulation}

We begin by recalling the weak formulation of the governing equations.  Written in component form, \eqref{euler tensor form} becomes
\begin{subequations} \label{weak form}
	\begin{align} \label{velocity form interior}
	\left \{
	\begin{array}{l l}
	-cu_x +(u^2)_x+(uv)_y= -P_x & \\
	-cv_x+(uv)_x+(v^2)_y= -P_y-g & \text{ in } \Dn,\\
	u_x+v_y=0 & 
	\end{array}
	\right.
	\end{align}
and the boundary conditions are 
	\begin{align} \label{Ebc}
	\left \{
	\begin{array}{l l}
	v=0 & \text{ on } y=-d\\
	v=(u-c)\eta_x & \text{ on } y=\eta(x)\\
	P=P_{\trm{atm}} & \text{ on } y=\eta(x).
	\end{array}
	\right.
	\end{align}
Here again, $P_{\trm{atm}}$ is the (constant) atmospheric pressure and $g$ is the gravitational constant of acceleration.  For solitary waves, we also impose the asymptotic  conditions
	\begin{align} \label{Eac}
	\eta \to 0, \qquad v \to 0, \qquad u \to U(y)=c-FU^*(y), \qquad \text{ as } x \to \pm \infty,\textrm{ uniformly in $y$.}
	\end{align}
	Recall that we think of $U^*$ as fixed, and $F$ is a parameter.   As always, we require there to be no horizontal stagnation points:
\be \sup_{\Omega_\eta} \left( u - c \right) < 0.
  \label{no stag} \ee
  \end{subequations}
\subsubsection*{Stream function formulation}

We will eliminate the pressure by introducing the relative stream function $\psi$ defined by \eqref{stream function definition}.  The no stagnation condition \eqref{no stag} dictates that
\begin{subequations} \label{stream function form}
	\begin{align} \label{no stagnation condition}
	u-c=\psi_y<0,
	\end{align}
throughout the fluid.  In particular, this guarantees the existence of a \tit{vorticity function} $\gam$ that satisfies
	\begin{align*}
	-\Delta \psi = \omega = \gam(\psi).
	\end{align*}
This fact is easily shown for classical solutions, but not at all obvious in the weak setting that we now consider.  We will verify that it holds when we prove the equivalence theorem at the end of this section.

Taking that for granted momentarily, we may transform the weak velocity formulation \eqref{velocity form interior}--\eqref{Ebc} into the following free boundary elliptic problem

	\begin{align} \label{fbp}
	\left \{
	\begin{array}{l l}
	\Delta \psi = -\gam(\psi) & \text{ in } \Dn\\
	\psi=0 & \text{ on } y=\eta(x)\\
	\psi=m & \text{ on } y=-d\\
	\left|\nabla \psi \right|^2 + 2g(y+d) = Q & \text{ on } y=\eta(x)
	\end{array}
	\right.
	\end{align}
together with the asymptotic conditions
	\begin{align} \label{fbpa}
	\eta \to 0, \qquad \psi_x \to 0, \qquad \psi_y \to -FU^*(y), \qquad \text{ as } x\to \pm \infty, \textrm{ uniformly in $y$.}
	\end{align}
\end{subequations}
Here, $m>0$ is the volumetric mass flux
	\begin{align} \label{mdefn}
	\ds m:=F \int^{0}_{-d} U^*(y) \, dy,
	\end{align}
$Q$ is a constant to be determined later, and the vorticity function $\gam$ is given implicitly in terms of $U^*$ and $F$ by
	\begin{align*}
	\ds \gam(-s)=FU^*_y(y), \qquad \text{ where } s=F\int^{y}_{-d} U^* dy'.
	\end{align*}
Note that this is valid because $s$ is strictly increasing.

We continue by writing (\ref{fbp}) together with (\ref{fbpa}) in terms of the dimensionless variables
\[
(\tx,\ty) : =\left(\frac{x}{d},\frac{y}{d}\right), \quad \tn(\tx) := \dfrac{1}{d} \eta(x), \quad \tpsi(\tx,\ty) := \dfrac{1}{m} \psi(x,y), \quad \tgam(\tpsi) := \dfrac{d^2}{m} \gam(\psi),
\]
which result from rescaling lengths by $d$ and velocities by $m/d$.  The stream function formulation \eqref{stream function form} then becomes
\begin{subequations} \label{streamline form}
	\begin{align}
	\label{dfbp}
	\left \{
	\begin{array}{l l}
	\Delta \tpsi = -\tgam(\tpsi) & \text{ in } \tDn\\
	\tpsi =1 & \text{ on } \ty = -1\\
	\tpsi =0 & \text{ on } \ty = \tn(\tx)\\
	|\nabla \tpsi|^2 + \dfrac{2}{F^2}(\tn+1) = \tilde Q & \text{ on } \ty=\tn(\tx)
	\end{array}
	\right.
	\end{align}
with asymptotic conditions
	\begin{align}
	\label{dfbpa}
	\tn \to 0, \qquad \tpsi_x \to 0, \qquad \tpsi_y \to \dfrac{U^*(\ty d)d}{\int^0_{-d} U^*(y) \, dy} \qquad \text{ as } x \to \pm \infty, \textrm{ uniformly in $\tilde y$.}
	\end{align}
\end{subequations}
Note that this implies that
	\begin{align*}
	\ds \tpsi(\tx, \ty) \to \tPsi(\ty) :=\dfrac{\int^{\ty d}_{-1} U^*(y) \, dy}{\int^{0}_{-d} U^*(y) \, dy} \qquad \textrm{as $\tilde x \to \pm \infty$, uniformly in $\tilde y$.}
	\end{align*}
Lastly, the dimensionless vorticity function $\tgam$ is given in terms of $U^*$ at infinity according to
	\begin{align*}
	\ds \tgam(-\ts) = \dfrac{d^2}{m} \gam(-s) = \dfrac{d^2U^*_{\ty}(\ty d)}{\int^{0}_{-d} U^* \, dy} \qquad
	\text{ where } \ts = \dfrac{\int^{y}_{-d} U^* \, dy}{\int^{0}_{-d} U^* \, dy}.
	\end{align*}

\subsubsection*{Height function formulation}

Applying the Dubreil-Jacotin change of variables 
\begin{figure} [tb!]
\centering
\includegraphics[page=3,width=0.7\textwidth]{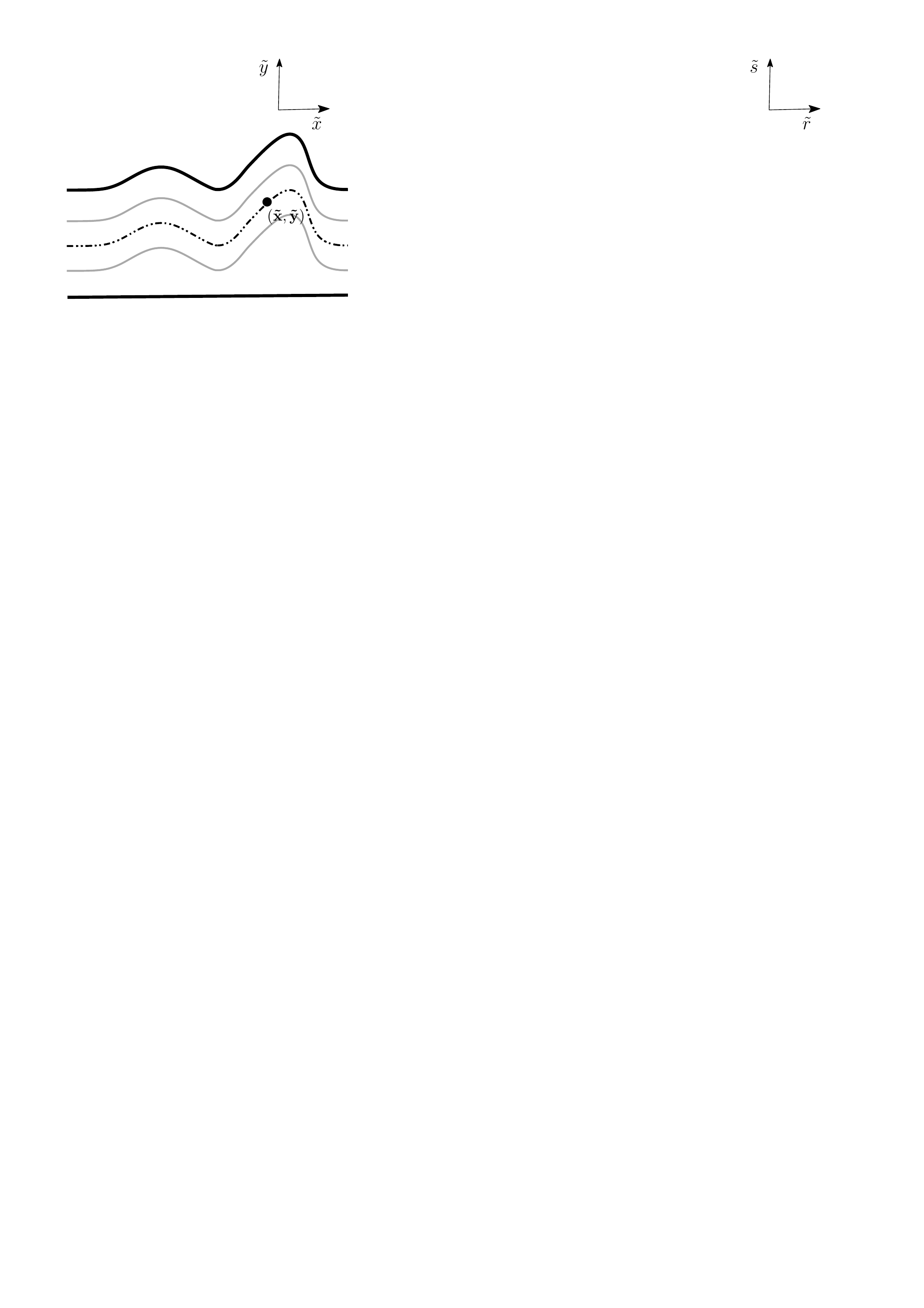}
\caption{Dubreil-Jacotin transformation}
\label{DBtransformation}
\end{figure}

	\begin{align*}
	(\tilde x, \tilde y) \mapsto (\tilde r, \tilde s) := \left( \tilde x, -\tpsi \right),
	\end{align*}
 sends the non-dimensional fluid domain $\tilde \Omega_{\tilde \eta}$ to the infinite strip $R := \mathbb{R} \times (-1, 0)$.   We also consider the new unknown 
 \[ h = h(\tilde r, \tilde s) := \tilde y + 1,\]
 which is called the \emph{height function}.  One can think of it as essentially the second coordinate of the inverse of the Dubreil-Jacotin transformation.  Physically, $h(\tilde r, \tilde s)$ is the vertical distance between the bed and the point sitting on the streamline $\{\tilde \psi = -\tilde s\}$ with $\tilde x = \tilde r$ ; see Figure \ref{DBtransformation}.  Written in these coordinates, \eqref{streamline form} becomes the following quasilinear elliptic problem in divergence form
\begin{subequations} \label{height function form}
	\begin{align}
	\label{hp}
	\left \{
		\begin{array}{l l}
		\ds \left( \dfrac{\tilde{h}_{\tr}}{\tilde{h}_{\ts}} \right)_{\tr} - \left(\dfrac{1+\tilde{h}^2_{\tr}}{2\tilde{h}^2_{\ts}} +  \tilde\Gamma \right)_{\ts} =0 & \text{ in } R \\
		\tilde{h}=0 & \text{ on } \ts=-1\\
		\ds \dfrac{1+\tilde{h}^2_{\tr}}{2\tilde{h}^2_{\ts}} + \Froude \tilde{h} = \dfrac{Q}{2} & \text{ on } \ts=0
		\end{array}
	\right.
	\end{align}
with asymptotic conditions
	\begin{align} \label{hpa}
	\tilde{h} \to \tilde{H}(\ts), \qquad \tilde{h}_{\tr} \to 0, \qquad \tilde{h}_{\ts} \to \tilde{H}_{\ts} \qquad \text{ as } \tr \to \pm \infty, \textrm{ uniformly in $\ts$.}
	\end{align}
Here we have eliminated $\tilde \gamma$ in favor of its primitive $\tilde \Gamma \in C^{0,1}([-1,0])$ defined by
	\begin{align} \label{Gamma}
	 \tilde \Gamma(s)= \int^{s}_{0} \tilde\gam(-z) \, dz.
	\end{align}
The asymptotic height function $\tilde{H}$ is related to the limiting stream function $\tilde \Psi$ and relative velocity $U^*$ via the differential equation
	\begin{align*}
	\left \{
	\begin{array}{l l}
	\ds \tilde{H}_{\ts}(\ts) = -\dfrac{1}{\tPsi_{\ty}\left(\tilde{H}(\ts)-1\right)} = -\dfrac{\int^{0}_{-d} U^*(y) \, dy}{U^{*}\left((\tilde{H}(\ts)-1) d \right) d} & \\
	\\
	\tilde{H}(-1)=0, \quad \tilde{H}(0)=1.
	\end{array}
	\right.
	\end{align*}
Finally, the no stagnation condition \eqref{no stag} has the following expression in the Dubreil-Jacotin variables:
\be \inf_{R} \tilde h_{\tilde s} > 0. \label{DJ no stag} \ee	\end{subequations}

\subsubsection*{Equivalence}
\label{equivalence}

The main result of this section is the following theorem stating the equivalence of the three formulations discussed above.  

\begin{theorem}[Formulation equivalence] \label{equivalence theorem} Let $\alpha \in (0,1)$ be given and set $p := 2/(1-\alpha)$.  Then the following statements are equivalent:
	\begin{enumerate}[font=\upshape, label=(\roman*)]
	\item \label{i} there exists a solution of the velocity formulation \eqref{weak form} with the regularity 
	\[ u, v \in C_{\bdd}^{0,\alpha}(\overline{\Omega_\eta}) \cap  W_{\mathrm{loc}}^{1,p}(\Omega_\eta), \quad \eta \in C_{\bdd}^{1,\alpha}(\overline{\Omega_\eta}) \cap  W_{\mathrm{loc}}^{2,p}(\mathbb{R}), \quad P \in C_{\bdd}^{1,\alpha}(\overline{\Omega_\eta}) \cap W_{\mathrm{loc}}^{2,\frac{p}{2}}(\Omega_\eta)\]
	 for a given $U^* \in C^{0,1}([-d,0])$; 
	\item \label{ii} there exists a solution of the stream function formulation \eqref{streamline form} with the regularity 
	\[ \tilde \psi \in C_\bdd^{1,\alpha}(\overline{\tilde \Omega_{\tilde \eta}}) \cap W_{\mathrm{loc}}^{2,p}(\tilde \Omega_{\tilde \eta}), \quad \tilde \eta \in C_{\bdd}^{1,\alpha}(\mathbb{R}) \cap  W_{\mathrm{loc}}^{2,p}(\mathbb{R}) \] 
	for a given $\tilde \gamma \in L^\infty([-1,0])$ or $\tilde \Psi \in C^{1,1}([-1,0])$;
	\item \label{iii} there exists a solution of the height formulation \eqref{height function form} with the regularity 
	\[ \tilde h \in C_\bdd^{1,\alpha}(\overline{R}) \cap W_{\mathrm{loc}}^{2,p}(R),\]
	for a given $\tilde \Gamma \in C^{0,1}([-1,0])$ or $\tilde H \in C^{1,1}([-1,0])$.
	\end{enumerate}
\end{theorem}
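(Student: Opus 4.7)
The plan is to establish the chain of implications $(i) \Rightarrow (ii) \Rightarrow (iii) \Rightarrow (i)$. Each is standard in the classical setting, but here the $L^\infty$ nature of the vorticity forces one to work carefully at the weak level. The exponent $p = 2/(1-\alpha)$ is chosen precisely so that the two-dimensional Sobolev embedding yields $W^{1,p}_\loc \hookrightarrow C^{0,\alpha}_\loc$, letting regularity hypotheses pass cleanly between the H\"older and Sobolev sides of each formulation.

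For $(i) \Rightarrow (ii)$, I would use the distributional identity $u_x + v_y = 0$ to define a stream function $\psi$ on the simply connected domain $\Omega_\eta$ via $(\psi_x, \psi_y) = (-v, u - c)$, normalized so that $\psi = 0$ on the free surface. The bed condition $v = 0$ forces $\psi_x = 0$ on $\{y = -d\}$, so $\psi$ is constant there with value $m > 0$ (the mass flux); a parallel argument using the kinematic free-surface condition confirms $\psi$ vanishes on $\{y = \eta(x)\}$. The central nontrivial assertion is then that $\omega = v_x - u_y$ can be written as $\tilde\gamma(\psi)$ for some $\tilde\gamma \in L^\infty$; this is the main obstacle, discussed below. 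Once it is in hand, the dynamic free-surface condition in \eqref{fbp} follows by evaluating the Bernoulli quantity $\tfrac12 |\mathbf{u}-c\hat x|^2 + P + g(y+d)$ on $\{y = \eta\}$ using $P = P_{\mathrm{atm}}$.

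For $(ii) \Rightarrow (iii)$, I would apply the Dubreil-Jacotin map $(\tilde x, \tilde y) \mapsto (\tilde r, \tilde s) := (\tilde x, -\tilde \psi)$, which is a $C^{1,\alpha}$-diffeomorphism of $\tilde \Omega_{\tilde \eta}$ onto $R$ with uniformly positive Jacobian $-\tilde \psi_{\tilde y}$ by no-stagnation. Setting $\tilde h(\tilde r, \tilde s) := \tilde y + 1$, one has $\tilde h_{\tilde s} = -1/\tilde \psi_{\tilde y}$ and $\tilde h_{\tilde r} = -\tilde \psi_{\tilde x}/\tilde \psi_{\tilde y}$, and a chain-rule computation transforms the semilinear equation $\Delta \tilde \psi = -\tilde\gamma(\tilde \psi)$ into the divergence-form equation \eqref{hp}. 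Introducing the primitive $\tilde \Gamma$ of \eqref{Gamma} is precisely what allows the $L^\infty$ vorticity function to enter the divergence structure only through a Lipschitz quantity, so the equation in $\tilde h$ makes sense in $W^{2,p}_\loc$. The boundary and asymptotic conditions transfer directly, and the no-stagnation condition becomes $\inf_R \tilde h_{\tilde s} > 0$. For the reverse implication $(iii) \Rightarrow (i)$, I would invert the map: define $\tilde \eta(\tilde r) := \tilde h(\tilde r, 0) - 1$, solve $\tilde h(\tilde r, -\tilde\psi) = \tilde y + 1$ for $\tilde \psi$ (possible at the required regularity thanks to $\inf \tilde h_{\tilde s} > 0$), then set $\tilde u = c + \tilde \psi_{\tilde y}$, $\tilde v = -\tilde \psi_{\tilde x}$. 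Bernoulli recovers $\tilde P$, a direct calculation verifies the weak formulation \eqref{velocity form interior}, and rescaling yields $(i)$.

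The main obstacle is verifying the functional dependence $\omega = \tilde\gamma(\psi)$ in $(i) \Rightarrow (ii)$, since the classical pointwise streamline-conservation argument is unavailable when $\omega$ is only $L^\infty$. My approach is to first derive the vorticity transport equation in conservation form $\nabla \cdot ((\mathbf{u} - c \hat x)\omega) = 0$ distributionally from the weak Euler equations; this is legal because $(\mathbf{u} - c\hat x)\omega \in L^\infty$. Since $(\mathbf{u} - c\hat x)$ is tangent to the level sets of $\psi$, which by \eqref{no stag} foliate $\Omega_\eta$ by global graphs over the $x$-axis, testing this weak divergence identity against pullbacks $\tilde\phi(x, -\psi(x,y))$ of test functions on $R$ reduces it to $\partial_{\tilde r}\omega = 0$ on the strip $R$ in the distributional sense, forcing $\omega$ to depend only on $\tilde s = -\tilde \psi$. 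Everything else is routine chain-rule bookkeeping, carried out using the uniform bounds on the Jacobian of the Dubreil-Jacotin map.
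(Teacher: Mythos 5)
Your overall route is the same as the paper's (the chain of implications, the Dubreil-Jacotin map with $\tilde h_{\tilde s}=-1/\tilde\psi_{\tilde y}$, and the distributional curl-of-Euler argument in the strip to get $\partial_{\tilde r}\tilde\omega=0$), but there is a genuine gap in the step you yourself single out as the main obstacle. With the stated regularity, $u,v\in C^{0,\alpha}_{\bdd}\cap W^{1,p}_{\mathrm{loc}}$ only gives $\omega=v_x-u_y\in L^p_{\mathrm{loc}}$; your parenthetical claim that $(\mathbf{u}-c\hat x)\omega\in L^\infty$ is unjustified (it is only $L^p_{\mathrm{loc}}$, which is still enough to make sense of the weak transport equation, so that part survives). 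Consequently, your argument establishes only that $\omega$ is a function of $\tilde s$, i.e.\ $\omega=\tilde\gamma(\tilde\psi)$ with $\tilde\gamma\in L^p([-1,0])$. Statement \ref{ii} requires $\tilde\gamma\in L^\infty([-1,0])$, and moreover the vorticity function must be the one determined by the prescribed current $U^*$. You cannot get this by letting $x\to\pm\infty$ pointwise, because the asymptotic conditions \eqref{Eac} give uniform convergence of $(u,v,\eta)$ but say nothing about convergence of their derivatives, so $\omega\to U_y$ is not available directly.

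The paper closes this gap with an extra argument you would need to supply: test the identity $\omega=\tilde\gamma(-\tilde s)$ against products $\xi^\epsilon(r-n)\zeta(s)$ of a horizontal cut-off pushed out to $r=n\to\infty$ and a vertical test function, integrate by parts in the Dubreil-Jacotin variables so that only $\tilde u$, $\tilde v$, $\tilde h_{\tilde r}$, $\tilde h_{\tilde s}$ (and no derivatives of $\omega$) appear, and then pass to the limit using \eqref{hpa}/\eqref{Eac}. This identifies $\tilde\gamma=-\tilde U_{\tilde y}(\tilde H(\cdot)-1)$, which is $L^\infty$ precisely because $U^*\in C^{0,1}$, and simultaneously ties the stream-function data to the given background flow. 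A secondary, more minor omission: in \ref{ii}$\Rightarrow$\ref{iii} the divergence-form equation for $\tilde h$ involves $\tilde\Gamma(-\tilde\psi)$, and with $\tilde\gamma$ merely bounded one should verify the distributional chain rule $\partial\,\tilde\Gamma(-\tilde\psi)=-\tilde\gamma(\tilde\psi)\nabla\tilde\psi$ rather than dismiss it as bookkeeping; the paper does this with a short change-of-variables computation.
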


We pause to make some comments about the selection of spaces here, as they are admittedly somewhat unusual.  A version of Theorem \ref{equivalence theorem} was obtained by Constantin and Strauss \cite{constantin2011discontinuous} in their study of the periodic regime.  They introduced the idea of using Sobolev spaces rather than working entirely with H\"older continuous functions.   Note that by Morrey's inequality, $W^{1,p}_{\mathrm{loc}} \subset C^{0,\alpha}$, so this choice assumes more regularity than what at first glance appears necessary.   However, they observed that if one knows only that $\tilde h \in C^{1,\alpha}$, then the corresponding stream function $\tilde \psi$ does not necessarily satisfy \eqref{dfbp}, but rather a weaker (fully nonlinear) elliptic equation:
\be \partial_{\tilde x} \left( -\tilde \psi_{\tilde x} \tilde \psi_{\tilde y} \right) + \partial_{\tilde y} \left( \frac{1}{2} \tilde \psi_{\tilde x}^2 - \tilde \psi_{\tilde y}^2 + \Gamma(-\psi) \right) = 0.\label{very weak stream function form} \ee
  Nonetheless, Varvaruca and Zarnescu \cite{varvaruca2012equivalence} showed that the weak Euler formulation and this weaker stream function formulation \eqref{very weak stream function form} are equivalent when the H\"older exponent $\alpha \in (1/3, 1]$.  Most recently Sastre-Gomez \cite{SSG2017} established that the weak velocity and weak stream function forms are equivalent to the modified-height formulation (due to Henry \cite{Henry2013}), again for periodic solutions in H\"older spaces.  
  
The present paper follows the Constantin--Strauss approach of assuming that $\tilde h$ has second-order weak derivatives in $L^p$ largely because we are unable to prove some necessary bounds on the pressure working only with \eqref{very weak stream function form}; see Lemma \ref{pressure bound lemma} and Remark~\ref{pressure bound remark}. On the other hand, because our domain is unbounded, we do not wish to impose integrability requirements at infinity.  It is for these reasons that we ask for uniform boundedness in H\"older norm and only locally integrable weak differentiability.   It is also worth noting that we assume full Lipshitz continuity of the asymptotic data.  This is to simplify a number of results later.  In particular, it allows us to construct certain auxiliary functions that are important to the spectral theory in Section \ref{SLP} and later the qualitative theory in Section \ref{symm}.  One can also see that, for example, determining $\tilde H$ from $\tilde \Psi$ requires solving the ODE $\tilde H_{\tilde s} = -1/(\tilde \Psi_{\tilde y}(\tilde H - 1))$, which makes taking $\tilde \Psi$ in  $C^{1,1}$ quite natural.  

\begin{proof}[Proof of Theorem \ref{equivalence theorem}]   We closely follow the argument in \cite[Theorem 2]{constantin2011discontinuous}.  First, let us show that \ref{i} implies \ref{ii}.  Suppose that $(u,v)$, $P$, and $\eta$ have the stated regularity.  We define $\psi$ according to \eqref{stream function definition} and non-dimensionalize to obtain $\tilde \psi$ and $\tilde \eta$. Clearly they both have the required regularity.  Moreover, the asymptotic conditions \eqref{fbpa} follow from the definition of $\tilde \psi$ and $\tilde \eta$, along with the corresponding limits for $(u,v)$ and $\eta$ in \eqref{Eac}.  

The kinematic boundary condition and no penetration condition in the weak Eulerian formulation lead directly to 
	\begin{align*}
	\left \{
	\begin{array}{l l}
	-\tilde \psi_{\tilde x}=\tilde\psi_{\tilde y} \tilde \eta_{\tilde x} & \text{ on } \tilde{y}=\tilde\eta(\tilde x)\\
	\tilde\psi_{\tilde x}=0 & \text{ on } \tilde y=-1.
	\end{array}
	\right.
	\end{align*}
Note that as $\tilde \psi \in C^{1,\alpha}(\tilde\Omega_{\tilde\eta})$ and $\tilde \eta \in C^{1,\alpha}(\mathbb{R})$, these are equivalent to $\tilde \psi$ being constant on the free surface and bed.  Without loss of generality we may set $\tilde \psi=0$ on $\{\tilde y= \tilde \eta (\tilde x)\}$, which forces $\tilde \psi=1$ on $\{\tilde y = 0\}$.

We claim, and prove below, that $\tilde \omega = \tilde \gam(\tilde \psi)$ for some $\tilde\gam \in L^{\infty}([0,1])$.  Assuming the claim momentarily, it follows that (\ref{velocity form interior})--(\ref{Eac}) becomes
	\begin{align*}
	\left \{
	\begin{array}{l l}
	\Delta\tilde \psi=-\tilde\gam(\tilde\psi) & \text{ in } \tilde \Omega_{\tilde \eta}\\
	\tilde \psi=1 & \text{ on } \tilde y=-1\\
	\tilde\psi=0 & \text{ on } \tilde y= \tilde \eta(\tilde x).
	\end{array}
	\right.
	\end{align*}
with asymptotic conditions (\ref{fbpa}).  To check the remaining nonlinear boundary condition, define
	\begin{align*}
	\ds \tilde E: =\dfrac{(\tilde c-\tilde u)^2+\tilde v^2}{2} + \frac{1}{F^2}(\tilde y+1) +\tilde P- \tilde\Gamma(-\tilde\psi) \in C_\bdd^{0,\alpha}(\overline{\tilde \Omega_{\tilde \eta}}) \cap W^{1,\frac{p}{2}}_{\mathrm{loc}}(\tilde \Omega_{\tilde \eta}),
	\end{align*}
where
\[ \left(\tilde u(\tilde x, \tilde y) -\tilde c, \tilde v(\tilde x, \tilde y)\right) :=  \frac{m}{d} \left(  u(x,y) -c,  v(x,y)\right), \qquad \tilde P(\tilde x, \tilde y) =  \frac{m^2}{d^2}P(x,y) \]
are the non-dimensionaized relative velocity field and pressure.  A straightforward calculation shows that the gradient of $\tilde E$ vanishes throughout $\tilde \Omega_{\tilde \eta}$, which is the weak form of Bernoulli's law.  Indeed, this is equivalent to the first two equations in \eqref{velocity form interior}.  Furthermore, evaluating $E$ on the free surface and noticing that 
	\begin{align*}
	\tilde Q := 2\left( \tilde\Gamma(-\tilde \psi) - \tilde P \right) \Big|_{\tilde y=\tilde \eta(\tilde x)} = 2(\tilde\Gamma(0) - \tilde P_{\textrm{atm}}),
	\end{align*}
we obtain the missing part of (\ref{fbp}).

Now we wish to show that \ref{ii} implies \ref{i}.  Suppose we have a solution of the stream function problem \eqref{streamline form} with the regularity $\tpsi \in C_\bdd^{1,\alpha} \cap W_{\mathrm{loc}}^{2,p}$ and $\tilde{\eta} \in C^{1,\alpha}$.  Then  we recover the velocity field $(u, v)$ via \eqref{stream function definition}.  Letting the (non-dimensional) pressure be given by
	\be \label{pressure in terms of E} 
	\tilde P:= -\dfrac{1}{2} \left( \tilde\psi_{\tilde x}^2 + \tilde \psi_{\tilde y}^2\right)-\frac{1}{F^2}(\tilde y+1)+\tilde\Gamma(-\tilde\psi) + \dfrac{\tilde Q}{2} + \tilde P_{\trm{atm}} \in C_{\bdd}^{0,\alpha}(\overline{\tilde \Omega_{\tilde \eta}}) \cap W_{\mathrm{loc}}^{1,\frac{p}{2}}(\tilde \Omega_{\tilde \eta}),
	\ee
 and taking the gradient yields the Euler equations \eqref{velocity form interior} and the Bernoulli boundary condition.  This completes the proof that \ref{i} and \ref{ii} are equivalent.

In the process of showing \ref{ii} implies \ref{iii}, we will also prove the previous claim that $\tom = \tgam(\tpsi)$ for some $\tgam \in L^{\infty}([0,1])$.  Suppose again that we have a solution of the stream function problem \eqref{streamline form} with the stated regularity and consider the change of independent and dependent variables
	\begin{align} \label{ytos}
	\ts := -\tpsi(\tx,\ty), \qquad \tr :=\tx, \qquad \tilde{h}(\tr,\ts):=\ty+1, \qquad \text{ for } (\tr, \ts) \in R,
	\end{align}
so that
	\begin{align} \label{cov1}
	\tilde{h}_{\tr}=\dfrac{\tilde v}{\tilde u- \tilde c}, \qquad \tilh_{\ts}=\dfrac{1}{\tilde c- \tilde u}, \qquad v=\dfrac{-m\tilh_{\tr}}{\tilh_{\ts}d}, \qquad  u=c-\dfrac{m}{d\tilh_{\ts}},
	\end{align}
and
	\begin{align} \label{cov2}
	\p_{\tx} =\p_{\tr} - \dfrac{\tilh_{\tr}}{\tilh_{\ts}}\p_{\ts}, \qquad \p_{\ty}=\dfrac{1}{\tilh_{\ts}}\p_{\ts},
	\end{align}
hence
	\begin{align} \label{cov3}
	\p_x=\dfrac{1}{d}\left(\p_{\tr} - \dfrac{\tilh_{\tr}}{\tilh_{\ts}}\p_{\ts}\right) \qquad \text{and} \qquad \p_y=\dfrac{1}{d}\left(\dfrac{1}{\tilh_{\ts}}\p_{\ts} \right).
	\end{align}
In addition, for $\tom \in L_{\mathrm{loc}}^p(\tilde \Omega_{\tilde \eta})$, we see that the following identity holds in the sense of distributions:
	\begin{align*}
	\p_{\tr}\tom=\left(\p_x-\dfrac{v}{c-u}\p_y\right)\tom.
	\end{align*}
Taking the curl of the Euler equations \eqref{velocity form interior} yields
	\begin{align*}
	0 &= (u-c)\tom_x+v\tom_y = (c-u)\p_{\tr}\tom
	\end{align*}
It follows that $\p_{\tr}\tom=0$, so $\tom$ is a function only of $\ts$ throughout $R$.  We are therefore justified in letting $\tom = \tgam(\tilde \psi )$ for some vorticity function $\tgam \in L^p([0,1])$.  

Next, we confirm that $\gamma \in L^\infty$.  Let $\zeta \in C_c^\infty((-1,0))$ be a test function and let $\xi \in C_c^\infty(\mathbb{R})$ be a cut-off function with 
\[ 0 \leq \xi \leq 1, \qquad \xi = 1 \textrm{ on $(-1,1)$}, \qquad \supp{\xi} \subset (-2,2), \qquad \int_{\mathbb{R}} \xi \, dr = 1.\]
For each $\epsilon > 0$, we set $\xi^\epsilon := \xi(\epsilon \cdot)/(2\epsilon)$.  Also, for $n \geq 0$, we put $\varphi_n^\epsilon := \xi^\epsilon(r-n) \zeta(s) \in C_c^\infty(R)$, and compute  
\begin{align*}
\int_R \tilde \gamma(-\tilde s) \varphi_n^\epsilon \, d\tilde r \, d \tilde s & = \int_{-1}^0 \tilde \gamma(-\tilde s) \zeta(\tilde s) \, d\tilde s, \qquad \textrm{for all } \epsilon > 0, ~n \geq 1.
\end{align*}
On the other hand, from \eqref{ytos}--\eqref{cov3}, we find that
\begin{align*}
\int_{\tilde \Omega_{\tilde \eta}} \tilde \omega \varphi_n^\epsilon(\tilde x, -\tilde \psi) \, d \tilde x \, d \tilde y & = \int_R \left( -\tilde v \left( \tilde h_{\tilde s} \partial_{\tilde r} \varphi_n^\epsilon - \tilde h_{\tilde r} \partial_{\tilde s} \varphi_n^\epsilon  \right) + \tilde u \partial_{\tilde s} \varphi_n^\epsilon \right) \, d \tilde r \, d \tilde s \\
& = \frac{1}{2\epsilon} \int_{n-\epsilon}^{n+\epsilon} \xi(\epsilon \tilde r) \int_{-1}^0 \left( \tilde u - \tilde h_{\tilde r} \tilde v \right) \partial_{\tilde s} \zeta \, d\tilde s \, d\tilde r - \frac{1}{2} \int_{n-\epsilon}^{n+\epsilon} \xi^\prime(\epsilon \tilde r) \int_{-1}^0 v \tilde h_{\tilde s} \, d\tilde s \, d\tilde r.
\end{align*}
Taking $n \to \infty$ and $\epsilon \to 0$, keeping in mind the asymptotic conditions \eqref{hpa}, this at last gives 
\[ \int_{-1}^0 \tilde \gamma(-\tilde s) \zeta(\tilde s) \, d\tilde s = -\int_{-1}^0 \tilde U_{\tilde y}(\tilde y) \zeta(-\tilde \Psi(\tilde y)) \, d \tilde y.\]
By assumption, $\tilde U \in C^{0,1}$, and hence we have $\tilde \gamma  = -\tilde U_y(\tilde H(\cdot) - 1) \in L^\infty$.

Now, the identities \eqref{cov1} and no stagnation condition \eqref{no stag} immediately imply that $\tilde h \in C_\bdd^{1,\alpha}(\overline{R}) \cap W_{\mathrm{loc}}^{2,p}(R)$.  Applying the change of variables \eqref{cov2} to \eqref{dfbp} in the interior of $\tDn$, we see that
	\begin{align*}
	0 &=\Delta\tpsi + \tgam(\tpsi) =\left(\p_{\tr} - \dfrac{\tilh_{\tr}}{\tilh_{\ts}}\p_{\ts}\right)\left(\dfrac{\tilh_{\tr}}{\tilh_{\ts}}\right)+\dfrac{1+\tgam(-s)}{\tilh_{\ts}}\p_{\ts}\left(\dfrac{1}{\tilh_{\ts}}\right)+\tgam(-s)\\
	&=\left(\dfrac{\tilh_{\tr}}{\tilh_{\ts}}\right)_{\tr} + \left(\dfrac{1+\tilh_{\tr}^2}{2\tilh_{\ts}^2}\right)_{\ts}+\tgam(-s).
	\end{align*}
The boundary conditions in \eqref{hp} and asymptotic condition \eqref{hpa} follow similarly.  

Let us now verify that $\tGamma(-\tpsi) \in C^{0,1}(\tDn)$ and the chain rule holds for $\tGamma \in C^{0,1}([-1,0])$.  First, notice that $\tGamma(-\tpsi) \in C_\bdd^{0,\alpha}(\overline{\tDn})$ since $\Gamma \in C^{0,1}([-1,0])$ and $\tilde \psi \in C_\bdd^{1,\alpha}(\overline{\tDn})$.  We aim to justify the following:
	\begin{align*}
	\p_{\tx}\tGamma(-\tpsi) = -\tgam(\tpsi)\tpsi_{\tx} \in L_{\mathrm{loc}}^p(\tDn)
	\end{align*}
Let $\varphi \in C_c^\infty(\tDn)$ be a test function and observe the action of the distribution $\p_{\tx}\tGamma(-\tpsi)$ on $\varphi$ is given by
	\begin{align*}
	\ds  \int_{\tilde \Omega_{\tilde \eta}} \p_{\tx}\tGamma(-\tpsi) \varphi \, d\ty \, d\tx &=- \int_{R} \tilh_{\ts} \tGamma(\ts) \left(\p_{\tr} - \dfrac{\tilh_{\tr}}{\tilh_{\ts}} \p_{\ts} \right) \varphi \, d\tr \, d\ts\\
	&= -\int_{R} \dfrac{\tilh_{\tr}}{\tilh_{\ts}} \tgam(-\ts) \tilh_{\ts} \varphi \, d\tr \, d\ts = \int_{\tilde \Omega_{\tilde \eta}} \tpsi_{\tx} \tgam(\tpsi) \varphi \, d\ty \, d\tx
	\end{align*}
by first using the definition of the distributional derivative, applying the change of variables (\ref{ytos}) and subsequently (\ref{cov3}), performing the differentiation, utilizing the compact support of the distribution and integrating by parts, and finally changing back to the original variables.  Therefore, we have shown that $\p_{\tx}\tGamma(-\tpsi) = -\tgam(\tpsi)\tpsi_{\tx}$ in the distributional sense.  In a similar fashion, it is easy to prove the fact that
	\begin{align*}
	\p_{\ty}\tGamma(-\tpsi) = -\tgam(\tpsi) \tpsi_{\ty} \in L^{\infty}(\tDn).
	\end{align*}

It remains to verify that \ref{iii} implies \ref{ii}.  Given a solution $\tilh$ of \eqref{height function form} with $\tilh \in C_\bdd^{1,\alpha}(\overline{R}) \cap W_{\mathrm{loc}}^{2,p}(R)$, the free surface profile can be recovered by taking $\ds \tn := \tilh(\cdot,0)-1$.  Thus the fluid domain $\tilde \Omega_{\tilde \eta}$ is also known.  

Next consider the mapping $G:(\tr, \ts) \in R \mapsto (\tr, \tilh(\tr, \ts)-1) \in \mathbb{R}^2$.  By the no stagnation condition \eqref{DJ no stag} and inverse function theorem, it is easy to see that $G$ is a $C^{1,\alpha}$-diffeomorphism onto its range.  As for its inverse, the first component must be given by $(\tx, \ty) \mapsto \tx$; define $-\tpsi(\tx,\ty)$ to be its second component.  We then know that that $\tpsi=0$ on $\tilde y = \tilde \eta(\tilde x)$, $\ty =0$ and $\tpsi=-1$ on $\ty = -1$, by construction.  Moreover,
	\begin{align} \label{tpsi}
	\tpsi_{\tx}(\tx, \ty) = -\dfrac{\tilh_{\tr}(\tx, -\tpsi(\tx,\ty))}{\tilh_{\ts}(\tx,-\tpsi(\tx, \ty))} \quad \text{ and } \quad \tpsi_{\ty} = -\dfrac{1}{\tilh_{\ts}(\tx,-\tpsi(\tx, \ty))}.
	\end{align}
In particular this implies that $\tilde \psi \in W_{\mathrm{loc}}^{2,p}(\tilde \Omega_{\tilde \eta})$.  

Additionally, letting $\tilde Y := \tilde H - 1 \in C^{1,1}([-1,0])$, we see that $\tilde Y$ is monotone and thus has an inverse $-\tPsi \in C^{1,1}([-1,0])$.  The asymptotic condition \eqref{hpa} for $\tilde h$ then implies that $\tilde \psi$ has the desired limiting behavior in \eqref{dfbpa}.  The nonlinear boundary condition in \eqref{fbp} follows immediately by applying \eqref{tpsi} to the free surface boundary condition in \eqref{hp}.

Notice that by differentiating $\tpsi_{\ty}$ in (\ref{tpsi}) with respect to $\ty$, we obtain
	\begin{align*}
	\ds \tpsi_{\ty\ty} = \left(\dfrac{\tilh_{\ts\ts}}{\tilh_{\ts}^3}(\tx,-\tpsi(\tx,\ty))\right)
	\end{align*}
Similarly, if we reformulate $\tpsi_{\tx}$ in (\ref{tpsi}) to be
	\begin{align*}
	\tpsi_{\tx}(\tx,\ty)\tilh_{\ts}(\tx,-\tpsi(\tx,\ty)) = \tilh_{\tr}(\tx,-\tpsi(\tx,\ty))
	\end{align*}
and differentiate with respect to $\tx$, we obtain
	\begin{align*}
	\tpsi_{\tx\tx} = \dfrac{\tilh_{\tr\tr}}{\tilh_{\ts}} -2\dfrac{\tilh_{\tr\ts}\tilh_{\tr}}{\tilh_{\ts}^2} + \dfrac{\tilh_{\ts\ts}\tilh_{\tr}^2}{\tilh_{\ts}^3}.
	\end{align*}
Finally, combining these identities with \eqref{hp} gives 
	\[
	\tpsi_{\tx\tx} + \tpsi_{\ty\ty} = -\gam(\psi) \qquad \text{ in } \tilde{\Omega}_{\tilde \eta}. \qedhere
	\]
\QED

\subsection{Function spaces and the operator equation} 

Here and in the sequel, we drop the $\thicksim$ notation in both the stream function and the height function formulations.  Furthermore, we notice that the upstream and downstream conditions on $h$ allow us to write the function $\Gamma$ in terms of the asymptotic height function $H$ in the following way:
\be \Gamma(s) =  \frac{1}{2H_s(s)^2} - \frac{1}{2H_s(0)^2}.\label{Gamma and H relation} \ee
We can then eliminate $\Gamma$ in the height equation \eqref{hp}, giving the system 
\begin{equation} \label{heightFormulation}
  \left\{ 
  \begin{alignedat}{2}
    \left(-\frac{1+h_r^2}{2h_s^2} + \frac{1}{2H_s^2}\right)_s + \left( \frac{h_r}{h_s} \right)_r  &= 0 
    &\qquad& \textrm{in } R, \\
    \frac{1+h_r^2}{2h_s^2} - \frac{1}{2H_s^2} + \frac{1}{F^2} (h-1) &= 0 
    && \textrm{on } T, \\
    h &= 0 && \textrm{on } B,
  \end{alignedat} 
  \right. 
\end{equation}
where $B$ and $T$ are bottom and top boundaries of $R$, respectively.  It is often most convenient to work with the difference 
\[ \phi := h - H,\]
which measures the deflection of the streamlines from their asymptotic heights.  The height equation \eqref{heightFormulation} then becomes
 \begin{equation} 
    \left\{ 
    \begin{alignedat}{2}
      \left(
      -\frac{1+\phi_r^2}{2(H_s+\phi_s)^2} 
      + \frac{1}{2H_s^2}
      \right)_s
      + \left( \frac{\phi_r}{H_s+\phi_s} \right)_r 
      &= 0 &\qquad& \textrm{in } R, \\
      \frac{1+\phi_r^2}{2(H_s+\phi_s)^2} - \frac{1}{2H_s^2} + \frac{1}{F^2}  \phi &= 0 && \textrm{on } T, \\
      \phi &= 0 && \textrm{on } B.
    \end{alignedat} 
    \right. \label{phiHeightEqn}
  \end{equation}
The asymptotic conditions \eqref{hpa} likewise have the simple expression
	\begin{align} \label{phi asymptotic conditions}
	\phi, \,\phi_r, \, \phi_s \to 0 \qquad \text{as } r \to \pm \infty, \textrm{ uniformly in $s$.}
	\end{align}

Now let us fix the function space setting and introduce the operator equation.  For the domain we take
\[ {X} := \left\{ \phi \in C_{\bdd,\even}^{1,\alpha}(\overline{R}) : \phi|_B = 0 \right\} \cap C_0^1(\overline{R}) \cap W_{\mathrm{loc}}^{2,p}(R),\]
where as above $p := 2/(1-\alpha)$.  Note that both the bottom boundary condition and asymptotic condition \eqref{phi asymptotic conditions} are enforced in the definition of $X$.   As the codomain, we use the space ${Y} := {Y}_1 \times {Y}_2$ where
\[ {Y}_1 := \left\{ \partial_q F_1 + \partial_p F_2 \in \mathcal{D}_\even^\prime(R) : F_i \in C_{\bdd}^{0,\alpha}(\overline{R}) \cap C_0^0(\overline{R}) \cap W_{\mathrm{loc}}^{1,p}(R) \right\},\]
and 
\[ {Y}_2 := C_{\bdd,\even}^{0,\alpha}(T) \cap C_0^0(T) \cap W_{\mathrm{loc}}^{1-\frac{1}{p},p}(T).\]
Here $\mathcal{D}_\even^\prime(R)$ refers to the set of (even) distributions on $\Omega$.  We endow ${Y}_1$ with Banach space structure by prescribing the norm 
\begin{align*} \| f \|_{{Y}_1} := \inf\Big\{ \| F_1 \|_{C^{0,\alpha}(R)} + \| F_2 \|_{C^{0,\alpha}(R)} & + \| F_1 \|_{W_{\mathrm{loc}}^{1,p}(R)} +\| F_2 \|_{W_{\mathrm{loc}}^{1,p}(R)} : \\
&  f = \partial_q F_1 + \partial_p F_2, \, F_1, F_2 \in C_{\bdd}^{0,\alpha}(\overline{R}) \Big\}.\end{align*} 
Note that for $\phi \in C_\bdd^1(\overline{R}) \cap W_{\mathrm{loc}}^{2,p}(R)$, we indeed have $|\nabla \phi|^2 \in W_{\mathrm{loc}}^{1,p}(R)$, so that these spaces are consistent with the equations.  

Finally, define 
\[ \Udomain := \left\{ (\phi,F) \in {X} \times \mathbb{R} : \inf_R \left( \phi_s + H_s \right) >0, ~ F- F_{\mathrm{cr}} > 0 \right\}\]
which corresponds to solitary waves that are supercritical and satisfy the no stagnation condition \eqref{no stagnation condition}.    

The system \eqref{phiHeightEqn} can now be expressed as 
\be \mathscr{F}(\phi,F) = 0,\label{operatorEqn} \ee
where $\mathscr{F} = (\mathscr{F}_1, \mathscr{F}_2) : \Udomain \subset X \times \mathbb{R} \to {Y}$ is the real-analytic mapping
\be \begin{split} \label{F1&F2}
\mathscr{F}_1(\phi,F) & := \left(
      -\frac{1+\phi_r^2}{2(H_s+\phi_s)^2} 
      + \frac{1}{2H_s^2}
      \right)_s 
      + \left( \frac{\phi_r}{H_s+w_s} \right)_r \\
\mathscr{F}_2(\phi,F) & := \left( \frac{1+\phi_r^2}{2(H_s+\phi_s)^2} - \frac{1}{2H_s^2} + \frac{1}{F^2}  \phi \right)\bigg|_T.
\end{split} \ee

%
%

\section{Linear theory}
\label{linearized}

\subsection{Sturm--Liouville problem}
\label{SLP}

Let us begin by examining the eigenvalue problem for the linearized operator $\calF_{\phi}(0,F)$ restricted to variations $\dot{\phi}$ that are laminar in the sense that $\dot{\phi} = \dot{\phi}(s)$:
	\begin{align} \label{S-LProblem}
	\left \{ 
	\begin{array}{l l}
	-\left(\dfrac{\dot{\phi}_s}{H_s^3} \right)_s = \nu \dfrac{\dot{\phi}}{H_s} & -1 < s < 0\\
	\dot{\phi} = 0 & s=-1\\
	-\dfrac{\dot{\phi}_s}{H_s^3} + \mu\dot{\phi} = 0 & s=0
	\end{array}
	\right.
	\end{align}
where $\nu$ is the eigenvalue, and $\mu := 1/F^2$ is introduced for notational convenience throughout this section.  We begin by analyzing the case when $\nu =0$, and we wish to find the smallest value $\mu$ for which
		\begin{align} \label{eigenvalueProblem}
	\left \{ 
	\begin{array}{l l}
	-\left(\dfrac{\dot{\phi}_s}{H_s^3} \right)_s = 0 & -1 < s < 0\\
	\dot{\phi} = 0 & s=-1\\
	-\dfrac{\dot{\phi}_s}{H_s^3} + \mu\dot{\phi} = 0 & s=0
	\end{array}
	\right.
	\end{align}
 has nontrivial (weak) solution.  With that in mind, we consider the unique solution $\Phi$ to the initial value problem
	\begin{align} \label{IVP}
	\left \{ 
	\begin{array}{l l}
	-\left(\dfrac{\Phi_s}{H_s^3} \right)_s = 0 & -1 < s < 0\\
	\Phi = 0 & s=-1\\
	\Phi_s = 1 & s=-1
	\end{array}
	\right.
	\end{align}
along with the affine function
	\begin{align*}
	A(\mu) := -\dfrac{\Phi_s(0)}{H_s^3(0)} + \mu\Phi(0).
	\end{align*}
Notice that for fixed $\mu$, $\Phi$ solves \eqref{eigenvalueProblem} if and only if $A(\mu)=0$.  We arrive at the following lemma:

\begin{lemma}[Existence of the critical Froude number]  \label{critical Froude number lemma} There exists a unique critical Froude number $F_{\mathrm{cr}}$ given by
\be \label{def mucr} \Fcr =  \mucr :=  \left( \ds \int_{-1}^{0} H_s^3(t) \, dt \right)^{-1},\ee
such that
	\begin{enumerate}[font=\upshape,label=(\roman*)]
	\item \label{Fcr_i} the Sturm--Liouville problem \eqref{eigenvalueProblem} has a nontrivial solution $\dot{\phi} = \Phi$ for $\mu = \mucr$; and
	\item \label{Fcr_ii} $A(\mu) < 0$ for $ \mu < \mucr$ and $A(\mu) > 0$ for $\mu > \mucr$.
	\end{enumerate}
\end{lemma}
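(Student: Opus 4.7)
The plan is to solve the initial value problem \eqref{IVP} in closed form, substitute into $A(\mu)$, and read off both assertions directly. The ODE $-(\Phi_s/H_s^3)_s = 0$ is immediately integrable: one integration produces a conserved quantity $\Phi_s/H_s^3 \equiv c_0$, and the initial condition $\Phi_s(-1) = 1$ fixes $c_0 = 1/H_s^3(-1)$. Integrating once more against the other initial condition $\Phi(-1) = 0$ yields the explicit formula
\[
\Phi(s) = \frac{1}{H_s^3(-1)} \int_{-1}^{s} H_s^3(t)\, dt, \qquad -1 \le s \le 0.
\]
Before using this, I would first confirm that the construction is legitimate at our level of regularity: because $U^* \in C^{0,1}([-d,0])$ and $U^* > 0$, the defining ODE for $H$ shows $H_s \in C^{0,1}([-1,0])$ with $H_s > 0$, so the coefficient $1/H_s^3$ is Lipschitz and the IVP \eqref{IVP} admits a unique solution $\Phi \in C^{1,1}([-1,0])$. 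In particular $\Phi(0) > 0$.

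Next I would plug this explicit $\Phi$ into the definition of $A$. Using $\Phi_s(0)/H_s^3(0) = c_0 = 1/H_s^3(-1)$ and the formula above at $s = 0$, I get
\[
A(\mu) = -\frac{1}{H_s^3(-1)} + \frac{\mu}{H_s^3(-1)} \int_{-1}^0 H_s^3(t)\, dt,
\]
which is affine and strictly increasing in $\mu$ since the coefficient of $\mu$ is positive. Setting $A(\mu) = 0$ gives the unique root
\[
\mu = \left( \int_{-1}^0 H_s^3(t)\, dt \right)^{-1},
\]
identifying $\mucr$ as in \eqref{def mucr}. The sign statements in \ref{Fcr_ii} then follow immediately from the monotonicity of $A$.

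For \ref{Fcr_i} I only need to verify that nontrivial solutions of the boundary value problem \eqref{eigenvalueProblem} correspond exactly to zeros of $A$. Any weak solution $\dot\phi$ satisfies $\dot\phi(-1) = 0$, and the interior equation $-(\dot\phi_s/H_s^3)_s = 0$ forces $\dot\phi_s/H_s^3$ to be constant, so $\dot\phi$ is a scalar multiple of $\Phi$. The remaining top boundary condition is precisely $A(\mu)\dot\phi = 0$ after the same substitution, so a nontrivial solution exists iff $A(\mu) = 0$, i.e. iff $\mu = \mucr$.

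The computation itself is completely elementary once the IVP is integrated; the only step that requires any care is the regularity/positivity check for $H_s$ that legitimizes the closed-form integration and guarantees $\Phi(0) > 0$ (so that $A$ really is strictly increasing). That positivity comes straight from the Lipschitz asymptotic data $U^* > 0$ and the no-stagnation constraint built into the height formulation, so there is no genuine obstacle, only bookkeeping.
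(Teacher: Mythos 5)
Your proposal is correct and follows essentially the same route as the paper: integrate the IVP \eqref{IVP} explicitly to get $\Phi(s) = H_s^{-3}(-1)\int_{-1}^s H_s^3(t)\,dt$, observe $A(\mu)=0$ exactly at $\mu=\mucr$, and deduce the sign statement from the positivity of $\Phi(0)$. The extra remarks you include (well-posedness of the IVP for Lipschitz $H_s$ and the observation that every solution of \eqref{eigenvalueProblem} is a scalar multiple of $\Phi$) are consistent with, and implicitly contained in, the paper's setup preceding the lemma.
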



\pf  To prove \ref{Fcr_i}, first notice that we can solve \eqref{IVP} and write $\Phi$ explicitly:
	\begin{align*}
	\ds \Phi(s) = \dfrac{1}{H_s^3(-1)} \int_{-1}^s H_s^3(t) \, dt.
	\end{align*}
Using our explicit solution $\Phi$ to solve $A(\mu) = 0$ for $\mu$, we arrive at
	\begin{align*}
	\dfrac{\Phi_s(0)}{H_s^3(0)} &= \mu \Phi(0),
	\end{align*}
if and only if $\mu = \mucr$ defined in \eqref{def mucr}.    This proves \ref{Fcr_i}.  Part \ref{Fcr_ii} follows immediately from the definitions of $A(\mu)$, $\mucr$, and the fact that $\Phi(0) > 0$. 
\QED

With this in hand, we now analyze the full eigenvalue problem \eqref{S-LProblem} fixing $\mu = \mucr$.

\begin{lemma}[Spectrum of the Sturm--Liouville operator]  \label{S-L spectrum} Let $\Sigma$ denote the set of eigenvalues $\nu$ for the Sturm--Liouville problem \eqref{S-LProblem} with $\mu = \mucr$.  Then the following holds:
	\begin{enumerate}[font=\upshape,label=(\roman*)]
	\item \label{Spec_i} $\Sigma = \{ \nu_j \}_{j=0}^{\infty}$, where $\nu_j \to \infty$ as $j \to \infty$, and $\nu_j < \nu_{j+1}$ for all $j \geq 0$;
	\item \label{Spec_ii} $\nu_0 = 0$; and
	\item \label{Spec_iii} $\nu_j$ is both geometrically and algebraically simple for all $j \geq 0$.
	\end{enumerate}
\end{lemma}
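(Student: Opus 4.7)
The plan is to recognize \eqref{S-LProblem} as a regular Sturm--Liouville problem and invoke classical spectral theory. I would frame it as an eigenvalue problem for the operator $L\dot\phi := -H_s(\dot\phi_s/H_s^3)_s$ acting on the weighted Hilbert space $\mathcal{H} := L^2((-1,0);\, H_s^{-1}\,ds)$, with domain
\[
\mathcal{D}(L) := \left\{\dot\phi \in H^2(-1,0) : \dot\phi(-1) = 0, \; -\frac{\dot\phi_s(0)}{H_s^3(0)} + \mucr \dot\phi(0) = 0\right\}.
\]
Since $H \in C^{1,1}([-1,0])$ with $H_s$ bounded above and below by positive constants, these weighted spaces are equivalent to their unweighted analogues. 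An integration by parts incorporating the Robin boundary condition at $s=0$ shows $L$ is symmetric, with associated closed quadratic form
\[
q(\dot\phi) := \int_{-1}^0 \frac{\dot\phi_s^2}{H_s^3}\,ds - \mucr \dot\phi(0)^2
\]
on $V := \{\dot\phi \in H^1(-1,0) : \dot\phi(-1) = 0\}$.

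Parts \ref{Spec_i} and \ref{Spec_iii} then follow from standard Sturm--Liouville machinery. Rellich's theorem gives compactness of $V \hookrightarrow \mathcal{H}$, so $L$ has compact resolvent and hence a purely discrete spectrum $\{\nu_j\}_{j=0}^\infty$ with $\nu_j \to +\infty$. Geometric simplicity follows because the condition $\dot\phi(-1)=0$ cuts the two-dimensional solution space of the second-order ODE down to a one-dimensional subspace; and self-adjointness of $L$ upgrades geometric to algebraic simplicity, since any $v$ with $(L-\nu)^2 v = 0$ satisfies $\|(L-\nu)v\|^2 = \langle (L-\nu)^2 v, v\rangle = 0$. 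Strict monotonicity $\nu_j < \nu_{j+1}$ is immediate from simplicity of each eigenvalue.

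For part \ref{Spec_ii}, I would use a direct Cauchy--Schwarz computation. Since $\dot\phi(-1) = 0$, we have $\dot\phi(0) = \int_{-1}^0 \dot\phi_s\,ds$, so
\[
\dot\phi(0)^2 \leq \int_{-1}^0 H_s^3\,ds \cdot \int_{-1}^0 \frac{\dot\phi_s^2}{H_s^3}\,ds = \frac{1}{\mucr}\int_{-1}^0 \frac{\dot\phi_s^2}{H_s^3}\,ds,
\]
by definition of $\mucr$ in Lemma~\ref{critical Froude number lemma}. Consequently $q(\dot\phi) \geq 0$ for all $\dot\phi \in V$, so pairing the eigenvalue equation with an eigenfunction in $\mathcal{H}$ gives $\nu = q(\dot\phi)/\|\dot\phi\|_{\mathcal{H}}^2 \geq 0$. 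Lemma~\ref{critical Froude number lemma} furnishes $\Phi$ as a nontrivial zero-eigenvalue eigenfunction, so $\nu_0 = 0$. The only technical point I anticipate is checking that the classical theory tolerates the modest regularity $H \in C^{1,1}$, but this is routine: the coefficients $H_s^{-3}$ and $H_s^{-1}$ are Lipschitz with uniform positive lower bounds, and $L^\infty$ coefficients with uniform ellipticity suffice for both the compactness and the self-adjointness arguments above.
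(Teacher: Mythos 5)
Your proof is correct, but it takes a genuinely different route from the paper. The paper argues by shooting: it solves the initial value problem \eqref{eigenvalueIVP}, rewrites it as a planar system to handle the $C^{0,1}$ coefficient, and studies the function $B(\nu) = b(t_0;\nu)/a(t_0;\nu)$, which is shown via a Green's/Wronskian identity to be strictly decreasing in $\nu$ with simple poles at the Dirichlet eigenvalues; this locates exactly one eigenvalue of \eqref{S-LProblem} in each interval between consecutive Dirichlet eigenvalues and at most one below the first, with Lemma~\ref{critical Froude number lemma} identifying that lowest one as $\nu_0 = 0$, while simplicity in part (iii) is quoted from classical self-adjoint Sturm--Liouville theory. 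You instead work variationally: compact resolvent of the self-adjoint realization gives discreteness and accumulation only at $+\infty$, ODE uniqueness plus self-adjointness give geometric and algebraic simplicity, and the Cauchy--Schwarz trace inequality with the sharp constant $\mucr = (\int_{-1}^0 H_s^3\,ds)^{-1}$ from \eqref{def mucr} shows the quadratic form is nonnegative, so $\nu_0 = 0$ once Lemma~\ref{critical Froude number lemma} supplies the zero-mode $\Phi$. Your approach is shorter and makes transparent why the spectrum is nonnegative exactly at the critical Froude number; the paper's approach yields additional structural information (interlacing with the Dirichlet spectrum and monotonicity of the shooting function) and sets up machinery reused for the auxiliary function in Lemma~\ref{Phitheorem}. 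Two small points you should make explicit: the form $q$ with its negative boundary term is closed and semibounded (a standard trace interpolation inequality), and weak eigenfunctions are genuine Carath\'eodory solutions of the ODE (one-dimensional regularity), so the dimension count for geometric simplicity applies; both are routine at the regularity $H_s \in C^{0,1}$ with $H_s$ bounded above and below.
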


\pf  Let us begin by introducing the solution $\Phi := \Phi(s;\nu)$ to the initial value problem
	\begin{align}\label{eigenvalueIVP}
	\left \{
		\begin{array}{l l}
		-\left( \dfrac{\Phi_s}{H_s^3} \right)_s = \nu \dfrac{\Phi}{H_s} & -1<s<0\\
		\Phi = 0 & \text{ for } s=-1\\
		\Phi_s=1 & \text{ for } s=-1.
		\end{array}
	\right.
	\end{align}
To see that this is valid, we make the $C^{1,1}$ change of variables
\begin{align*}
	 s  \in (-1,0) \mapsto t := \int_{-1}^{s} \dfrac{1}{H_s} \, ds \in \left(0, t_0 := \int_{-1}^0 H_s^{-1} \, ds\right), \qquad a(t) := \Phi(s),
	\end{align*}
which allows us to rewrite \eqref{eigenvalueIVP} as
	\begin{align*}\label{y-eigenvalueIVP}
	\left \{
		\begin{array}{l l}
		-\left( \dfrac{a_t}{H_s^4} \right)_t = \nu a, & 0 < t < t_0 \\
		a = 0 & \text{ for } t=0\\
		a_t=H_s(-1) & \text{ for } t=0.
		\end{array}
	\right.
	\end{align*}
Now, defining $b := a_t/H_s^4$, we see that the above problem can be formulated as the planar system
\be \label{a b planar} 
		\partial_t \begin{pmatrix} a \\ b \end{pmatrix} = \begin{pmatrix} 0 & H_s^4 \\ -\nu & 0 \end{pmatrix} \begin{pmatrix} a \\ b \end{pmatrix}, \qquad \begin{pmatrix} a(0) \\ b(0) \end{pmatrix} = \begin{pmatrix} 0 \\ 1 \end{pmatrix}.
\ee
As $H_s \in C^{0,1}$, it is now obvious that the corresponding initial value problem is well-posed and the solution depends smoothly on $\nu$.  

We next consider the associated function
	\begin{align*}
	B(\nu) := \dfrac{b(t_0;\nu)}{a(t_0;\nu)}.
	\end{align*}
The utility of $B$ lies in the observation that $\dot{\phi} = \Phi(\cdot; \nu)$ solves \eqref{S-LProblem} for $\nu$ provided
	\begin{align*}
	B(\nu) = \mucr H_s(0)^3.
	\end{align*}
Also, $B$ will have a simple pole at each eigenvalue $\nu_{\mathrm{D}}$ of the Dirichlet problem
	\begin{align} \label{DirichletIVP}
	-\left( \dfrac{\dot{\phi}_s}{H_s^3} \right)_s = \nu_{\mathrm{D}} \dfrac{\dot{\phi}}{H_s} \text{ for } -1<s<0, \quad \dot{\phi}(-1)=0, \quad \dot{\phi}(0)=0.
	\end{align}
It is immediately clear that the set of Dirichlet eigenvalues $\Sigma_{\mathrm{D}}  \subset (0, \infty)$.  Moreover, as \eqref{DirichletIVP} is a Sturm--Liouville problem, the eigenvalues are each simple and accumulate only at $+\infty$.  We are therefore justified in writing $\Sigma_{\mathrm{D}} := \{\nu_{\mathrm{D}}^{(j)} \}_{j =1}^{\infty} \subset (0, \infty)$.

Now, we have already argued that $a$ and $b$ depend smoothly on $\nu$.  Indeed,  
\[ 
		\partial_t \begin{pmatrix} a_{\nu} \\ b_{\nu} \end{pmatrix} = \begin{pmatrix} 0 & H_s^4 \\ -\nu & 0 \end{pmatrix} \begin{pmatrix} a_\nu \\ b_\nu \end{pmatrix} - \begin{pmatrix} 0 & 0 \\ 1 & 0 \end{pmatrix} \begin{pmatrix} a \\ b \end{pmatrix}, \qquad \begin{pmatrix} a_\nu(0) \\ b_\nu(0) \end{pmatrix} = \begin{pmatrix} 0 \\ 0 \end{pmatrix}.
\]
Using the above equation together with \eqref{a b planar}, we can compute $B^\prime$ via the Green's identity 
	\begin{align*}
	\ds B'(\nu) & = \dfrac{a(t_0) b_\nu(t_0) - a_\nu(t_0) b(t_0)}{a(t_0)^2} = \frac{1}{a(t_0)^2} \int_0^{t_0} \partial_t \left[ a(t) b_\nu(t) - a_\nu(t) b(t) \right] \, dt \\
	& = -\dfrac{H_s^3(0)}{\Phi(0; \nu)^2}\int_{-1}^0 \dfrac{\Phi(s;\nu)^2}{H_s(s)} \, ds < 0.
	\end{align*}
Hence $B$ is a strictly decreasing function of $\nu$.  Now, since $B$ has poles at each Dirichlet eigenvalue $\nu_{\mathrm{D}}^{(j)}$ for $j \geq 0$, it follows that $\nu \mapsto B(\nu)$ decreases from $+\infty$ to $-\infty$ on the intervals $(-\infty,\nu_{\mathrm{D}}^{(0)})$ and $(\nu_{\mathrm{D}}^{(j)}, \nu_{\mathrm{D}}^{(j+1)})$ for all $j \geq 1$.  Thus, for each such $j$, there exists a unique $\nu_j \in (\nu_{\mathrm{D}}^{(j)},\nu_{\mathrm{D}}^{(j+1)})$ for which
	\begin{align*}
	B(\nu_j) =  \mucr H_s(0)^3 = \Fcr H_s(0)^3.
	\end{align*}
Additionally, there exists at most one $\nu_0 \in (-\infty, \nu_{\mathrm{D}}^{(0)})$ for which the same holds; in light of  Lemma \ref{critical Froude number lemma}, it must in fact be true that $\nu_0 = 0$.  Recalling the definition of $B$, we see that $\{\nu_j\}_{j=0}^{\infty}$ are exactly the eigenvalues of \eqref{S-LProblem}, completing the proofs for \ref{Spec_i} and \ref{Spec_ii}.
 
To prove \ref{Spec_iii}, we notice once again that \eqref{S-LProblem} is a self-adjoint Sturm--Liouville problem, and hence has a countable set of simple eigenvalues $\{ \nu_j \}_{j=0}^{\infty}$ accumulating only at $+\infty$.
\QED

\subsection{Local properness and Fredholm index}
\label{properness}

In this section, we analyze the linearized operator $\calF_{\phi}(\phi, F)$ at an arbitrary $(\phi,F) \in \mathscr{U}$.  Specifically, our main objective is to prove that this operator has certain important compactness properties:  it is locally proper and Fredholm index $0$.  This is crucial to both the small- and large-amplitude existence theory.  Note that the assumption of supercriticality $F>F_{\textrm{cr}}$ is necessary throughout this section.  Our argument follows closely --- albeit with several modifications due to the lower regularity --- that given by Wheeler \cite{wheeler2013solitary} and Chen, Walsh, and Wheeler \cite{chen2017existence}.  

For future reference, we record the Fr\'echet derivative
\be \label{Fphi} 	\begin{split}
		\mathscr{F}_{1\phi}(\phi,F) \dot\phi &= \left( \dfrac{1}{\phi_s + H_s}\dot{\phi}_r - \dfrac{\phi_r}{(\phi_s + H_s)^2}\dot{\phi}_s \right)_r - \left( \dfrac{\phi_r}{(\phi_s+H_s)^2}\dot{\phi}_r - \dfrac{1+\phi_r^2}{(\phi_s+H_s)^3}\dot{\phi}_s \right)_s\\
		\mathscr{F}_{2\phi}(\phi,F) \dot\phi &= \left(\dfrac{\phi_r}{(\phi_s+H_s)^2}\dot{\phi}_r - \dfrac{1+\phi_r^2}{(\phi_s+H_s)^3}\dot{\phi}_s + \Froude\dot{\phi}\right) \bigg|_{T}.
	\end{split} \ee
When studying the compactness properties of elliptic problems posed on strips, it is important to consider the corresponding ``limiting operator" found by taking the limit of the coefficients as $r \to \pm \infty$.  For this problem, the limiting operator is nothing but $\mathscr{F}_\phi(0,F)$.

We begin with a preparatory result.  At  various stages in the analysis, we will need to make use of maximum principle arguments or exploit coercivity of certain bilinear forms associated to the linearized problem.  This is not possible in general, due to the zeroth order term in the boundary operator $\mathscr{F}_{2\phi}$ having the ``bad sign.'' However, using crucially the assumption of supercriticality, we can overcome this difficulty via an auxiliary function $\tilde \Phi$ --- a slight modification of the function $\Phi$ introduced in \eqref{IVP}.

\begin{lemma}\label{Phitheorem} Fix $0 < \ep \ll 1$.  There exists a unique solution $\tilde{\Phi} := \tilde{\Phi}(s; F, \ep) \in C^{1,1}([-1,0])$ to the initial value problem
	\begin{align}\label{Phi1}
	\left( \dfrac{\tilde{\Phi}_s}{H_s^3} \right)_s + \Froude\ep\tilde{\Phi} = 0 \, \text{ in } (-1,0), \quad \tilde{\Phi}(-1)=\ep, \quad \tilde{\Phi}_s(-1)=1.
	\end{align}
For supercritical waves, and for $\ep > 0$ sufficiently small,
	\begin{align}\label{Phi2}
	\tilde{\Phi} >0 \text{ for } -1 \leq s \leq 0, \quad \tilde{\Phi}_s >0 \text{ for } -1 \leq s \leq 0,
	\end{align}
and
	\begin{align}\label{Phi3}
	-\dfrac{\tilde{\Phi}_s}{H_s^3} + \dfrac{1}{F^2}\tilde{\Phi} <0 \, \text{ on } s=0.
	\end{align}
\end{lemma}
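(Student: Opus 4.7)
The plan is to view $\tilde{\Phi}$ as a small perturbation of the function $\Phi$ from \eqref{IVP}. Indeed, when $\epsilon = 0$ the equation in \eqref{Phi1} reduces to $(\tilde{\Phi}_s/H_s^3)_s = 0$ with initial data $\tilde{\Phi}(-1) = 0$, $\tilde{\Phi}_s(-1) = 1$, which is precisely the IVP defining $\Phi$. The strategy is therefore to establish existence by ODE theory and then transfer the relevant sign information from $\Phi$ to $\tilde{\Phi}$ by continuous dependence on $\epsilon$.

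For existence and uniqueness, I would recast \eqref{Phi1} as a first-order planar system in the spirit of the proof of Lemma \ref{S-L spectrum}. Setting $a := \tilde{\Phi}$ and $b := \tilde{\Phi}_s/H_s^3$, the ODE becomes
\[ a_s = H_s^3 \, b, \qquad b_s = -\tfrac{\epsilon}{F^2}\, a, \qquad a(-1) = \epsilon, \qquad b(-1) = 1/H_s^3(-1). \]
Since $H_s \in C^{0,1}([-1,0])$ and is bounded below by a positive constant (by \eqref{DJ no stag}), the coefficient matrix is Lipschitz in $s$, yielding a unique solution with $a,b \in C^{1,1}([-1,0])$, hence $\tilde{\Phi} \in C^{1,1}([-1,0])$. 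Standard continuous dependence for linear ODEs also gives $(a_\epsilon, b_\epsilon) \to (\Phi, 1/H_s^3(-1))$ uniformly on $[-1,0]$ as $\epsilon \to 0^+$.

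Next, I would use the explicit identity $\Phi_s = H_s^3/H_s^3(-1) > 0$ on $[-1,0]$ recorded in the proof of Lemma \ref{critical Froude number lemma}. Since $\tilde{\Phi}_s = H_s^3 b_\epsilon$ and $b_\epsilon \to 1/H_s^3(-1) > 0$ uniformly, for all sufficiently small $\epsilon$ we have $\tilde{\Phi}_s > 0$ throughout $[-1,0]$. Combined with $\tilde{\Phi}(-1) = \epsilon > 0$, monotonicity then forces $\tilde{\Phi}(s) \geq \epsilon > 0$ on $[-1,0]$, which establishes \eqref{Phi2}.

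The key non-trivial input is the boundary inequality \eqref{Phi3}: this is the one place where supercriticality enters. At $\epsilon = 0$, the quantity $-\tilde{\Phi}_s(0)/H_s^3(0) + \tilde{\Phi}(0)/F^2$ reduces to $A(1/F^2)$, where $A$ is the affine function introduced just before Lemma \ref{critical Froude number lemma}. Since $F > F_{\mathrm{cr}}$, we have $\mu := 1/F^2 < \mu_{\mathrm{cr}}$, and so Lemma \ref{critical Froude number lemma}\ref{Fcr_ii} delivers $A(1/F^2) < 0$. Continuous dependence on $\epsilon$ in $C^1([-1,0])$ then preserves this strict inequality for all sufficiently small $\epsilon > 0$, yielding \eqref{Phi3}. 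No step is a serious obstacle; the entire argument is a continuity argument off the base case $\epsilon = 0$, which was handled in Lemma \ref{critical Froude number lemma}, and supercriticality is essential only to make the limiting expression strictly negative.
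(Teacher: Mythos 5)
Your proposal is correct and follows essentially the same route as the paper: existence, uniqueness, and $C^{1,1}$ regularity via the planar-system reformulation from the proof of Lemma~\ref{S-L spectrum}, then a continuity-in-$\epsilon$ argument off the base case $\tilde{\Phi}|_{\epsilon=0}=\Phi$, with supercriticality entering exactly through $A(1/F^2)<0$ from Lemma~\ref{critical Froude number lemma}, and \eqref{Phi2} completed by integrating $\tilde{\Phi}_s>0$ from the initial value at $s=-1$. Your write-up simply spells out the details that the paper leaves implicit.
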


\begin{proof}  The existence and regularity of $\tilde \Phi$ follows as in the proof of Lemma \ref{S-L spectrum} It is easy to see that $\tilde{\Phi} = \Phi$ whenever $\ep = 0$.  It follows that the second inequality in \eqref{Phi2} and the inequality in \eqref{Phi3} hold for sufficiently small $\ep$.  Indeed, $\tilde{\Phi}_s$ is uniformly bounded away from zero for $\ep$ sufficiently small, and since $\tilde{\Phi} = 0$, integrating the second inequality in \eqref{Phi2} yields the first inequality.
\QED

Let $\mathbb{H}$ be the Hilbert space 
\[ \mathbb{H} := \left\{ \dot{\phi} \in H^1(R) : \dot{\phi}|_B = 0 \textrm{ in the trace sense} \right\},\]
and consider the (weak) solvability of the problem 
\[ \mathscr{F}_\phi(0,F) \dot{\phi} = \left( \partial_s f_1 + \partial_r f_2, \, g \right)\]
for $\dot{\phi} \in \mathbb{H}$, $F > F_{\mathrm{cr}}$, given $f_1, f_2 \in L^2(R)$, $g \in L^2(T)$.  Using the auxiliary function $\tilde \Phi$ and making the change of variables 
\[ \dot{\phi} =: \tilde{\Phi} v,\]
we see that this is equivalent to 
  \begin{equation}  \label{modifiedLinf}
    \left\{ 
    \begin{alignedat}{2}
 \left( \dfrac{v_r}{H_s} \right)_r + \left(\dfrac{v_s}{H_s^3} \right)_s + \dfrac{2\tilde{\Phi}_s}{\tilde{\Phi} H_s^3}v_s - \Froude\ep v  & =  \dfrac{1}{\tilde{\Phi}}\partial_s f_1  + \dfrac{1}{\tilde{\Phi}}\partial_r f_2&\qquad& \textrm{in } R, \\
      -\dfrac{v_s}{H_s^3} + \dfrac{1}{\tilde{\Phi}}\left( - \dfrac{\phi_s}{H_s^3} + \Froude\tilde{\Phi} \right)v & = \dfrac{1}{\tilde{\Phi}} g && \textrm{on } T, \\
    v & = 0 && \textrm{on } B,
    \end{alignedat} 
    \right. 
  \end{equation}
  where these equalities hold in the distributional sense.

\begin{lemma}[$H^1$ solvability] \label{weak invertibility lemma} 
  If $F > F_{\mathrm{cr}}$, then, for each $f_1, f_2 \in L^2(R)$ and $g \in L^2(T)$, there exists a unique $\dot \phi \in \mathbb H$ solving 
  \[ \F_\phi(0,F)\dot \phi = (\partial_s f_1 + \partial_r f_2 ,\, g)\]
  in the sense of distributions. Equivalently, $\dot v := \dot{\phi}/\Phia$ is a weak solution to \eqref{modifiedLinf}.
  \end{lemma}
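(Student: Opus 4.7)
The approach is to apply the Lax--Milgram lemma to a variational reformulation of the problem, using the function $\tilde\Phi$ of Lemma~\ref{Phitheorem} to repair a boundary term with the ``wrong'' sign that otherwise prevents coercivity on $\mathbb{H}$.

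First I would write the natural weak formulation of $\F_\phi(0,F)\dot\phi = (\partial_s f_1 + \partial_r f_2, g)$: find $\dot\phi\in\mathbb{H}$ with
\begin{align*}
b(\dot\phi,\dot w) := \int_R\!\left(\dfrac{\dot\phi_r\dot w_r}{H_s}+\dfrac{\dot\phi_s\dot w_s}{H_s^3}\right)dr\,ds - \int_T\dfrac{\dot\phi\,\dot w}{F^2}\,dr = L_0(\dot w) \qquad \text{for all } \dot w\in\mathbb{H},
\end{align*}
where $L_0(\dot w)$ collects the contributions of $f_1,f_2$, and $g$ and is manifestly a bounded linear functional on $\mathbb{H}$. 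Integration by parts shows that this identity is equivalent to the distributional equation in the lemma statement, together with the Robin-type boundary condition on $T$. The obstacle is that $b$ is \emph{not} coercive on $\mathbb{H}$: the boundary integral has the wrong sign and cannot, in general, be absorbed into the gradient part.

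The remedy is to substitute $\dot\phi = \tilde\Phi v$ and $\dot w = \tilde\Phi w$. Because $\tilde\Phi$ is continuous and bounded above and below by positive constants on $[-1,0]$ by \eqref{Phi2}, multiplication by $\tilde\Phi$ is a bi-Lipschitz isomorphism of $\mathbb{H}$ with itself, so existence and uniqueness for $\dot\phi$ is equivalent to the corresponding problem for $v$. After integrating by parts in $s$ (using $v|_B = w|_B = 0$) and invoking the identity
\begin{align*}
\left(\dfrac{\tilde\Phi\,\tilde\Phi_s}{H_s^3}\right)_{\!s} = \dfrac{\tilde\Phi_s^2}{H_s^3} - \dfrac{\ep}{F^2}\,\tilde\Phi^2,
\end{align*}
which is immediate from the ODE \eqref{Phi1}, the cross terms $\tilde\Phi\,\tilde\Phi_s(vw)_s/H_s^3$ telescope and one arrives at the symmetric form
\begin{align*}
a(v,w) := \int_R \tilde\Phi^2\!\left(\dfrac{v_rw_r}{H_s}+\dfrac{v_sw_s}{H_s^3}+\dfrac{\ep}{F^2}\,vw\right)dr\,ds + \int_T \tilde\Phi\!\left(\dfrac{\tilde\Phi_s}{H_s^3}-\dfrac{\tilde\Phi}{F^2}\right)vw\,dr,
\end{align*}
which is exactly the weak formulation of the modified system \eqref{modifiedLinf} paired against $w\in\mathbb{H}$. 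Coercivity is then immediate: the boundary coefficient is strictly positive by \eqref{Phi3}, while the uniform positivity of $\tilde\Phi$ and $H_s$ ensures that the interior quadratic form dominates $\|v\|_{H^1(R)}^2$. Continuity of $a$ is obvious, and continuity of the transformed right-hand side follows from $f_1,f_2\in L^2(R)$, $g\in L^2(T)$, and the trace embedding $\mathbb{H}\hookrightarrow L^2(T)$. Lax--Milgram then produces a unique $v\in\mathbb{H}$, and $\dot\phi := \tilde\Phi v\in\mathbb{H}$ is the required unique weak solution.

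The main obstacle is structural rather than computational: the original bilinear form $b$ is indefinite and no Hilbert-space existence theorem applies to it directly. The substitution by $\tilde\Phi$ leverages the supercriticality assumption $F>F_{\mathrm{cr}}$ --- which is the precise hypothesis of Lemma~\ref{Phitheorem} --- to reverse the sign of the offending boundary term, at which point Lax--Milgram applies routinely.
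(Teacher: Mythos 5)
Your proposal is correct and follows essentially the same route as the paper: substitute via the auxiliary function $\tilde\Phi$ of Lemma~\ref{Phitheorem}, use the supercritical sign condition \eqref{Phi3} to make the boundary term favorable, and conclude by Lax--Milgram. The only (harmless) difference is that you substitute $\tilde\Phi$ into both trial and test functions, producing a symmetric $\tilde\Phi^2$-weighted coercive form via the identity $\bigl(\tilde\Phi\tilde\Phi_s/H_s^3\bigr)_s = \tilde\Phi_s^2/H_s^3 - \ep\tilde\Phi^2/F^2$, whereas the paper tests the transformed equation \eqref{modifiedLinf} against unweighted test functions and obtains a non-symmetric form whose extra boundary term vanishes on the diagonal.
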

  \begin{proof}
    By definition, a weak solution of \eqref{modifiedLinf} satisfies 
    \begin{equation} 
      \mathscr{B}[\dot v, \dot u] = \int_{R} \left( \frac{1}{\Phia} f_1 \dot u_r +  \left( \frac{\dot u}{\Phia} \right)_s f_2  \right) \, dr \, ds - \int_T \frac{1}{\Phia} (g - f_2) \dot u \, dr\label{weak v problem} 
    \end{equation}
    for each $\dot u \in \mathbb{H}$, where the bilinear form $\mathscr{B}\colon \mathbb{H} \times \mathbb{H} \to \R$ is given by
    \begin{align*}
      \mathscr{B}[\dot v, \dot u] := \int_{R} \left( \frac{1}{H_s^3} \dot v_s \dot u_s  + \frac{1}{H_s} \dot v_r \dot u_r \right) \, dr \, ds +
      \int_{T} \left( \frac{\Phia_s}{H_s^3} - \frac{1}{F^2} \Phia \right) \dot v \dot u \, dr - \int_{T} \frac{1}{H_s} \dot v_r \dot u\, dr.
    \end{align*}
    It is clear that $\mathscr{B}$ is bounded, and that the right-hand side is an element of the dual space of $\mathbb{H}$, since $\Phia \in C^{1,1}$.  On the other hand, we can estimate 
    \begin{align*} \mathscr{B}[\dot v, \dot v] &= \int_{R} \left( \frac{1}{H_s^3} \dot v_s^2  + \frac{1}{H_s} \dot v_r^2 \right) \, dr \, ds +
      \int_{T} \left( \frac{\Phia_s}{H_s^3} - \frac{1}{F^2} \Phia \right) \dot v^2 \, dr - \int_{T} \frac{1}{H_s} \partial_r ( \frac{1}{2} \dot v^2)\, dr \geq C \| \dot v \|_{\dot{H}^1(R)}^2.
    \end{align*}
    Here we have used the sign condition for $\Phia$ in \eqref{Phi3} to eliminate the second term on the right-hand side in the first line.  The bottom boundary condition $\dot{\phi} = 0$ on $B$ enables us to apply Poincar\'e's inequality to control the full $H^1(R)$ norm in terms of the homogeneous one.  This prove that $\mathscr{B}$, and hence the lemma follows from a standard application of Lax--Milgram.  \end{proof}

We next prove that $\mathscr{F}_\phi(0,F)$ is injective as a mapping between the smoother spaces $X_\bdd$ and $Y_\bdd = Y_{1,\bdd} \times Y_{2,\bdd}$ defined by 
\[ X_\bdd := \left \{ \phi \in C_\bdd^{1,\alpha}(\overline{R}) : \phi|_B = 0 \right\} \cap W_{\mathrm{loc}}^{2,p}(R),\]
and
\[ Y_{1,\bdd} = \left\{ \partial_r F_1 + \partial_s F_2  \in \mathcal{D}^\prime(R) : F_1, F_2 \in C_\bdd^{0,\alpha}(\overline{R}) \cap W_{\mathrm{loc}}^{1,p}(R) \right\}, \quad Y_{2,\bdd} := C_\bdd^{0,\alpha}(T) \cap W_{\mathrm{loc}}^{1-\frac{1}{p},p}(T). \]
The point here is that $X_\bdd$ and $Y_\bdd$ mandate the same amount of smoothness as $X$ and $Y$, respectively, but they do not require decay as $r \to \pm \infty$.  They are Banach spaces when endowed with the obvious norms.  This is the appropriate function space setting for the problem at infinity.  

\begin{lemma}[Strong injectivity] \label{strong injectivity lemma} 
  For all $F > F_{\mathrm{cr}}$, $\F_\phi(0,F)$ is injective as a mapping from $X_\bdd$ to $Y_\bdd$.
  \begin{proof}  
    Let $\dot \phi \in \Xb$ be a solution to $\F_w(0,F)\dot \phi = 0$, and put $\dot v := \dot \phi/\Phia$.  Then $\dot v$ solves \eqref{modifiedLinf} in the distributional sense for the data $f_0$, $f_1$, $f_2$, $g = 0$.   Moreover, for any $\delta > 0$, $\dot u := \sech(\delta r) \dot v$ is an element of $\mathbb H$. By an argument very similar to the one given in the proof of Lemma~\ref{weak invertibility lemma}, we can then show that conclude $\dot u \equiv 0$, provided that $\delta$ is sufficiently small. But then $\dot v \equiv 0$, and hence $\mathscr{F}(0,F)$ is injective, as claimed.
    \end{proof} 
\end{lemma}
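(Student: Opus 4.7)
The argument is already sketched by the author; here is a more detailed plan. Given $\dot\phi\in X_\bdd$ with $\mathscr{F}_\phi(0,F)\dot\phi=0$, the function $\tilde{\Phi}$ of Lemma~\ref{Phitheorem} is bounded below away from zero on $[-1,0]$, so $\dot v:=\dot\phi/\tilde{\Phi}$ lies in $C_\bdd^{1,\alpha}(\overline{R})\cap W_{\mathrm{loc}}^{2,p}(R)$ and satisfies the homogeneous version of \eqref{modifiedLinf} in the distributional sense. The obstruction to running the Lax--Milgram argument of Lemma~\ref{weak invertibility lemma} directly is that $\dot v$ need not decay as $|r|\to\infty$, so $\dot v\notin\mathbb{H}$ a priori.

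To remedy this, for small $\delta>0$ set $\chi(r):=\sech(\delta r)$, which decays exponentially and satisfies $|\chi'|\le\delta\chi$. Because $\dot v$ and $\nabla\dot v$ are uniformly bounded and $\dot v|_B=0$, the product $\chi^2\dot v$ lies in $\mathbb{H}$ and therefore is an admissible test function. I would then use a standard cutoff-and-density argument to extend the weak identity $\mathscr{B}[\dot v,\varphi]=0$, which holds for $\varphi\in C_c^\infty(\overline{R}\setminus B)$, to the choice $\varphi=\chi^2\dot v$.

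Expanding $\mathscr{B}[\dot v,\chi^2\dot v]=0$ and using $(\chi^2)_s=0$, $(\chi^2)_r=2\chi\chi'$, the dominant contribution is the manifestly coercive
\[\int_R\left(\frac{\chi^2\dot v_s^2}{H_s^3}+\frac{\chi^2\dot v_r^2}{H_s}\right)dr\,ds,\]
together with a nonnegative boundary term $\int_T(\tilde{\Phi}_s/H_s^3-\tilde{\Phi}/F^2)\chi^2\dot v^2\,dr$ whose sign is exactly what \eqref{Phi3} provides in the supercritical regime. The remaining pieces---namely $\int_R (2\chi\chi'/H_s)\dot v \dot v_r\,dr\,ds$ in the interior, and $-\int_T (\chi\chi'/H_s(0))\dot v^2\,dr$ obtained by integrating by parts the antisymmetric last summand of $\mathscr{B}$ along $T$---are each controlled by $C\delta\|\chi\dot v\|_{H^1(R)}^2+C\delta\|\chi\dot v\|_{L^2(T)}^2$. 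Combining with the Poincar\'e inequality (available thanks to the Dirichlet condition on $B$) and the trace inequality yields
\[\|\chi\dot v\|_{H^1(R)}^2\le C\delta\|\chi\dot v\|_{H^1(R)}^2.\]
Choosing $\delta$ small enough forces $\chi\dot v\equiv 0$; since $\chi>0$ everywhere, $\dot v\equiv 0$ and hence $\dot\phi\equiv 0$.

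The main obstacle I foresee is justifying the density step that inserts $\varphi=\chi^2\dot v$ into the weak formulation, because $\dot v$ itself does not belong to $\mathbb{H}$. I would handle this by approximating with $\xi_n\chi^2\dot v$ for a sequence of smooth compactly supported cutoffs $\xi_n\to 1$, exploiting the exponential decay of $\chi$ against the uniform boundedness of $\dot v$ to pass to the limit. A secondary check is confirming that the coefficients multiplying the $O(\delta)$ error terms remain uniformly bounded up to $s=0$ and $s=-1$; this is immediate from $H_s$ being bounded below away from zero and $\tilde{\Phi}\in C^{1,1}$.
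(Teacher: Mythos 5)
Your proposal is correct and takes essentially the same route as the paper: the paper likewise exploits that $\sech(\delta r)\dot v$ lies in $\mathbb H$ and reruns the coercivity argument of Lemma~\ref{weak invertibility lemma}, absorbing the $O(\delta)$ commutator and boundary errors for $\delta$ small. Your write-up simply fills in the details the paper leaves implicit (testing with $\sech^2(\delta r)\dot v$, the cutoff/density justification, and Poincar\'e plus the trace inequality), all of which check out.
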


%
%

\begin{lemma}[Local properness] \label{local properness lemma} 
  For all $F > F_{\mathrm{cr}}$, $\F_\phi(0,F) : \Xb \to \Yb$ is locally proper.  That is, for any compact set $K \subset \Yb$ and any closed and bounded set $D \subset \Xb$, $\F_\phi(0,F)^{-1}(K) \cap D$ is compact in $\Xb$.
\end{lemma}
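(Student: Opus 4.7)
My plan is to prove local properness via the standard contradiction-translation strategy for elliptic problems on unbounded cylinders, in the spirit of Wheeler \cite{wheeler2013solitary} and Chen, Walsh, Wheeler \cite{chen2017existence}, adapted to the lower-regularity setting. Given a sequence $\{\dot\phi_n\} \subset D$ with $g_n := \F_\phi(0,F)\dot\phi_n \to g$ in $Y_\bdd$, the goal is to extract a subsequence converging in $X_\bdd$.

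First, I would produce a candidate limit. Boundedness of $D$ in $X_\bdd$ supplies a uniform $C^{1,\alpha}_\bdd$ bound, and interior Calder\'on--Zygmund estimates together with boundary estimates for the oblique-derivative condition on $T$ (exploiting supercriticality through the auxiliary function $\Phia$ of Lemma \ref{Phitheorem}) give a uniform $W^{2,p}_{\mathrm{loc}}(R)$ bound. Combining Arzel\`a--Ascoli with Rellich--Kondrachov, I extract a subsequence converging in $C^{1,\alpha'}_{\mathrm{loc}} \cap W^{2,p'}_{\mathrm{loc}}$ for some $\alpha'<\alpha$ and $p'<p$, to a limit $\dot\phi^\star$; lower semi-continuity of the $X_\bdd$ norm under local convergence places $\dot\phi^\star \in X_\bdd$, and passage to the distributional limit yields $\F_\phi(0,F)\dot\phi^\star = g$.

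The heart of the proof is upgrading this local convergence to convergence in the full $X_\bdd$ norm. Set $\dot\psi_n := \dot\phi_n - \dot\phi^\star$, which is bounded in $X_\bdd$, tends to $0$ locally, and satisfies $\F_\phi(0,F)\dot\psi_n \to 0$ in $Y_\bdd$. Suppose for contradiction that $\|\dot\psi_n\|_{X_\bdd} \not\to 0$. Along a subsequence there exist $\delta_0>0$ and horizontal shifts $r_n \to \pm\infty$ such that the $C^{1,\alpha}_\bdd$ norm of $\dot\psi_n$ restricted to a unit neighborhood of $\{r=r_n\}$ is at least $\delta_0$. Define $\tilde\psi_n(r,s) := \dot\psi_n(r+r_n,s)$. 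Since the coefficients of $\F_\phi(0,F)$ depend only on $s$, the operator is translation-invariant in $r$, so $\F_\phi(0,F)\tilde\psi_n$ equals the $r_n$-translate of $g_n-g$. The $C^{0,\alpha}_\bdd$ component of $g_n-g$ tends to $0$ uniformly, which is preserved by translation, while the $W^{1,p}_{\mathrm{loc}}$ component tends to $0$ distributionally on every bounded region---both of these are enough to pass to the distributional limit of the translated equation.

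Applying the first step to $\{\tilde\psi_n\}$ yields, after a further subsequence, a limit $\tilde\psi^\star \in X_\bdd$ solving $\F_\phi(0,F)\tilde\psi^\star = 0$. By Lemma \ref{strong injectivity lemma}, $\tilde\psi^\star \equiv 0$. On the other hand, the choice of $r_n$ ensures that $\tilde\psi^\star$ inherits the $\delta_0$-sized local mass: whichever ingredient of the $C^{1,\alpha}_\bdd$ norm (pointwise value, gradient, or Holder increment) realized the lower bound, the corresponding quantity survives in the $C^{1,\alpha'}_{\mathrm{loc}}$ limit, forcing $\tilde\psi^\star \not\equiv 0$---a contradiction. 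The main obstacle is the bookkeeping when the $C^{1,\alpha}_\bdd$ mass is realized by a Holder seminorm between two distant points; this is handled by splitting $\|\cdot\|_{C^{1,\alpha}_\bdd}$ into its pointwise, gradient, and Holder-increment parts and translating appropriately in each case, with the observation that a non-trivial limit in $C^{1,\beta}_{\mathrm{loc}}$ for any $\beta<\alpha$ already contradicts injectivity.
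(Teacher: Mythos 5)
Your overall strategy (translation invariance of $\F_\phi(0,F)$ plus the injectivity of Lemma~\ref{strong injectivity lemma}) is the right one --- it is essentially the content of the result of Wheeler that the paper quotes --- but the final contradiction step has a genuine gap. You claim that if $\|\dot\psi_n\|_{X_\bdd}\not\to 0$ then the mass must sit near shifts $r_n\to\pm\infty$ and that ``whichever ingredient of the $C^{1,\alpha}_\bdd$ norm\dots realized the lower bound, the corresponding quantity survives in the $C^{1,\alpha'}_{\mathrm{loc}}$ limit.'' That is false for the top-order pieces of the norm. The $\alpha$-H\"older seminorm of $\nabla\dot\psi_n$ can be bounded below by $\delta_0$ while being carried by pairs of points whose distance tends to zero (think of $n^{-\alpha}\zeta(n r)$-type oscillations): such a sequence tends to $0$ in $C^{1,\beta}_{\mathrm{loc}}$ for every $\beta<\alpha$ and even in $C^1_\bdd$, so the translated limit is identically zero and injectivity gives no contradiction. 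Likewise, failure of convergence in the $W^{2,p}_{\mathrm{loc}}$ component of the $X_\bdd$ norm can occur on a \emph{fixed} bounded set at the critical exponent $p$ (you only extracted convergence in $W^{2,p'}_{\mathrm{loc}}$, $p'<p$), in which case there are no shifts $r_n\to\pm\infty$ at all. Compactness and soft limiting arguments alone cannot recover the top-order seminorms; the equation must be used quantitatively.

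The standard repair --- and the route the paper takes, partly by citing Wheeler's Lemma A.7 and partly explicitly --- is to apply uniform a priori estimates to the \emph{differences}. The Schauder-type estimate (Theorem~\ref{schauder theorem}) for the translation-invariant operator gives $\|\dot\phi_n-\dot\phi\|_{C^{1,\alpha}(R)}\le C(\|\dot\phi_n-\dot\phi\|_{C^0(R)}+\|(f_n-f,g_n-g)\|_{\Yb})$, and the interior/boundary $L^p$ estimates give $\|\dot\phi_n-\dot\phi\|_{W^{2,p}(R_M)}\le C_M(\|\dot\phi_n-\dot\phi\|_{L^p(R_{2M})}+\|(f_n-f,g_n-g)\|_{\Yb})$. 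These reduce convergence in $X_\bdd$ to convergence of the data together with $C^0_\bdd$ and $L^p_{\mathrm{loc}}$ convergence; the latter follows from local compactness, and only the $C^0$ tail requires your translation-plus-injectivity argument, where pointwise values genuinely do pass to the translated limit. With that reduction inserted, your proof closes and coincides with the paper's.
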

\begin{proof}
Let $\{ (f_n,g_n) \} \subset \Yb$ be given with $(f_n,g_n) \to (f,g)$ in $\Yb$, and suppose that $\{ \dot\phi_n \} \subset \Xb$ is a uniformly bounded sequence such that $\mathscr{F}_\phi(0,F) \dot\phi_n = (f_n, g_n)$.  We must show that $\{ \dot\phi_n\}$ has a subsequence converging in $\Xb$.  

Since the coefficients of $\F_\phi(0,F)$ are independent of the horizontal variable $r$, we can use the results of Wheeler  \cite[Lemma~A.7]{wheeler2013solitary} to extract a subsequence converging in $C_{\bdd}^{1,\alpha}(\overline{R})$ to some $\dot \phi$ solving $\mathscr{F}_\phi(0,F) \dot \phi = (f,g)$.  To complete the argument, we must have also that (up to a subsequence) $\dot \phi_n \to \dot \phi$ in $W_{\mathrm{loc}}^{2,p}(R)$.  But this is a consequence of a priori estimates in Sobolev norms:  for any $M > 0$, let $R_M := R \cap \{ |r| < M \}$.  Then by virtue of the equations satisfies by $\dot \phi_n$ and $\dot \phi$, we have
\[ \| \dot \phi_n - \dot \phi \|_{W^{2,p}(R_M)} \leq C_M \left( \| \dot \phi_n - \dot \phi\|_{L^p(R_{2M})} + \| f_n - f \|_{Y_{1,\bdd}} + \| g_n - g \|_{Y_{2,\bdd}} \right),\]
for some constant $C_M > 0$.  As the right-hand side converges to $0$, we see that indeed $\dot \phi_n \to \dot \phi$ in $\Xb$.  
\end{proof}

Using this fact, we will be able to infer the surjectivity of $\F_w(0,F)$ from its weak invertibility and a limiting argument.

\begin{lemma}[Strong invertibility] \label{strong invertibility lemma} For all $F > F_{\mathrm{cr}}$, $\F_\phi(0,F) : \Xb \to \Yb$ is an isomorphism.
\end{lemma}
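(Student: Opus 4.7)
The plan is to prove surjectivity of $\F_\phi(0,F)\colon \Xb \to \Yb$; combined with the injectivity from Lemma~\ref{strong injectivity lemma}, the open mapping theorem then yields the isomorphism property. The crucial intermediate ingredient is the uniform a priori estimate
\[ \| \dot\phi \|_{\Xb} \leq C \| \F_\phi(0,F) \dot\phi \|_{\Yb}, \qquad \dot\phi \in \Xb, \]
which will also certify boundedness of the inverse.

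I would establish the a priori estimate by a contradiction-and-translation argument. Supposing it fails, there is a sequence $\{\dot\phi_n\} \subset \Xb$ with $\|\dot\phi_n\|_{\Xb} = 1$ and $\F_\phi(0,F)\dot\phi_n \to 0$ in $\Yb$. Choose points $(r_n, s_n) \in \overline R$ at which the $C^{1,\alpha}$ and $W^{2,p}$ contributions to the $\Xb$ norm are nearly attained, and consider the translates $\tilde\phi_n(r,s) := \dot\phi_n(r+r_n, s)$. Because the coefficients of $\F_\phi(0,F)$ are independent of $r$, the $\tilde\phi_n$ satisfy the same equation with data still tending to zero and retain the uniform $\Xb$ bound. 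Interior and boundary Schauder together with local $W^{2,p}$ estimates (as already exploited in the proof of Lemma~\ref{local properness lemma}) then furnish, via Arzel\`a--Ascoli, a subsequence converging in $C^1_\loc(\overline R) \cap W^{1,p}_\loc(R)$ to some $\tilde\phi_\infty \in \Xb$ solving $\F_\phi(0,F)\tilde\phi_\infty = 0$. By the choice of $(r_n, s_n)$ the limit is nontrivial, contradicting Lemma~\ref{strong injectivity lemma}.

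For surjectivity, given $(f,g) \in \Yb$ with $f = \partial_r F_1 + \partial_s F_2$, I would approximate by compactly supported data. Let $\chi \in C_c^\infty(\R)$ satisfy $\chi \equiv 1$ on $[-1,1]$ and $\supp \chi \subset [-2,2]$; set $\chi_n(r) := \chi(r/n)$ and
\[ f_n := \partial_r(\chi_n F_1) + \partial_s(\chi_n F_2), \qquad g_n := \chi_n g. \]
Then $(f_n, g_n) \in Y$, the $\Yb$ norms remain bounded by a constant multiple of $\|(f,g)\|_{\Yb}$, and $(f_n, g_n) \to (f,g)$ in $\mathcal D'_\loc$. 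Since $\chi_n F_i \in L^2(R)$ and $g_n \in L^2(T)$, Lemma~\ref{weak invertibility lemma} produces a weak solution $\dot\phi_n \in \mathbb H$, and a standard bootstrap using interior and boundary $W^{2,p}$ theory followed by Morrey embedding upgrades it to $\dot\phi_n \in \Xb$. The a priori estimate then gives $\|\dot\phi_n\|_{\Xb} \leq C\|(f,g)\|_{\Yb}$, and another local compactness argument yields a subsequence converging to some $\dot\phi \in \Xb$ that solves $\F_\phi(0,F)\dot\phi = (f,g)$.

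The main obstacle is the a priori estimate. The delicate point is that the translated limit $\tilde\phi_\infty$ must be genuinely nontrivial and must lie in $\Xb$ (not merely in $C^{1,\alpha}_\loc$) so that Lemma~\ref{strong injectivity lemma} applies; this forces a careful accounting of both the H\"older and the local $W^{2,p}$ seminorms when selecting the concentration points $(r_n, s_n)$. Supercriticality $F > F_{\mathrm{cr}}$ enters essentially throughout, both via the auxiliary function $\tilde\Phi$ of Lemma~\ref{Phitheorem} that underpins coercivity in Lemma~\ref{weak invertibility lemma} and via the strong injectivity it yields through Lemma~\ref{strong injectivity lemma}.
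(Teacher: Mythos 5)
Your proposal is correct and follows essentially the same route as the paper: the paper likewise reduces to surjectivity, first obtains the lower-order-free bound $\n{\dot\phi}_\Xb \le C \n{\F_\phi(0,F)\dot\phi}_\Yb$ (there deduced from injectivity together with Lemma~\ref{local properness lemma}, whose proof is precisely the translation--compactness argument you sketch), and then solves for approximated data --- multiplying $F_1$, $F_2$, $g$ by $\sech(\delta r)$ instead of truncating by cutoffs --- via Lemma~\ref{weak invertibility lemma}, elliptic regularity, the uniform estimate, and a $C^1_\loc$ limit. The one step to tighten is the nontriviality of your translated limit: first apply the Schauder estimate with the $C^0$ lower-order term (Theorem~\ref{schauder theorem}) to conclude that $\n{\dot\phi_n}_{C^0}$ stays bounded below when the data tend to zero, and then center the translations at points where $|\dot\phi_n|$ is bounded away from zero, so that the limit violates Lemma~\ref{strong injectivity lemma}.
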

\begin{proof}
In view of Lemma~\ref{strong injectivity lemma}, it remains only to prove that $\F_w(0,F) : \Xb \to \Yb$ is surjective.  With that in mind, let $(f,g) \in \Yb$ be given.  By definition, this means that we can find $F_1, F_2 \in C_\bdd^{0,\alpha}(\overline{R}) \cap W_{\mathrm{loc}}^{1,p}(R)$ such that $f = \partial_s F_1 + \partial_r F_2$.  

  First, since $\F_\phi(0,F)$ has a trivial kernel by Lemma~\ref{strong injectivity lemma} and is locally proper by Lemma~\ref{local properness lemma}, a standard argument shows that it enjoys an improved Schauder estimate
  \begin{align}
    \label{schauderlater}
    \n{\dot \phi}_\Xb \le C\n{\F_\phi(0,F)\dot \phi}_\Yb.
  \end{align}
  In particular, the right-hand side contains no lower-order terms of the form $\| \dot\phi \|_{C^0(R)}$ or $\|\dot\phi\|_{L_{\mathrm{loc}}^p(R)}$.  
  
  Now, for $\delta > 0$, define
  \begin{align*}
    f^\delta := \partial_s \left(  \sech(\delta r) F_1 \right) + \partial_r \left( \sech(\delta r) F_2\right),
   \quad 
    g^{\delta} := \sech(\delta r) g \in L^2(T).
  \end{align*}
  Then we can use Lemma~\ref{weak invertibility lemma} to infer that, for each $\delta > 0$, there exists a unique solution $\dot \phi^\delta \in \mathbb{H}$ to $\F_\phi(0,F) \dot \phi^\delta = (f^\delta, g^\delta)$.  As $(f^\delta, g^\delta)$ also lies in $\Yb$ --- with a norm bounded uniformly in $\delta$ --- standard elliptic theory guarantees that $\dot \phi^\delta \in \Xb$ as well. Using the estimate \eqref{schauderlater}, we then find
  \begin{align*}
    \n{\dot \phi^\delta}_\Xb
    \le C\n{(f^\delta,g^\delta)}_\Yb
    \le C\n{(f,g)}_\Yb.
  \end{align*}
  This gives uniform in $\delta$ control of $\dot \phi^\delta$ in $C_\bdd^{1,\alpha}(\overline{R}) \cap W_{\mathrm{loc}}^{2,p}(R)$.   We can then extract a subsequence converging in $C^1_\loc(\overline R)$ to a function $\dot \phi \in C_\bdd^{1,\alpha}(\overline{R})$.  It is easy to see that $\F_\phi(0,F) \dot \phi = (f,g)$ in the sense of distributions.  By elliptic theory, $\dot{\phi}$ then has the improved regularity $\dot{\phi} \in \Xb$, which completes the proof.  
\end{proof}


\begin{lemma}[Fredholm]  \label{Fredholm} For all $(\phi, F) \in \Udomain$, the operator $\mathscr{F}_\phi(\phi,F)$ is Fredholm index $0$ both as a mapping from $X$ to $Y$ and as a mapping from $\Xb$ to $\Yb$.
\end{lemma}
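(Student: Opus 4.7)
The plan is to treat $\mathscr F_\phi(\phi,F)$ as a perturbation of the asymptotic operator $\mathscr F_\phi(0,F)$, which Lemma~\ref{strong invertibility lemma} shows is an isomorphism from $\Xb$ to $\Yb$ (and, by restriction, from $X$ to $Y$). The crucial observation is that $(\phi,F)\in\Udomain$ forces $\phi\in X\subset C_0^1(\overline R)$, so the difference $K := \mathscr F_\phi(\phi,F)-\mathscr F_\phi(0,F)$ --- whose coefficients are rational functions of $\phi_r$ and $H_s+\phi_s$ vanishing at $\phi=0$ --- has coefficients decaying uniformly to zero as $|r|\to\infty$, together with their weak derivatives.

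The main step is to derive the semi-Fredholm estimate
\begin{align*}
\|\dot\phi\|_{\Xb} \;\le\; C\big(\|\mathscr F_\phi(\phi,F)\dot\phi\|_{\Yb} + \|\dot\phi\|_{L^p(R_M)}\big),
\qquad R_M := R\cap\{|r|<M\},
\end{align*}
for all $\dot\phi\in\Xb$ and some $M>0$ large (and the analogous estimate on $X$). I would do this by a cut-off localization: pick $\chi\in C_c^\infty(\mathbb R)$ equal to $1$ on $\{|r|<M\}$ and supported in $\{|r|<2M\}$. Then $(1-\chi)\dot\phi$ satisfies an equation of the form $\mathscr F_\phi(0,F)[(1-\chi)\dot\phi] = (1-\chi)\mathscr F_\phi(\phi,F)\dot\phi - K[(1-\chi)\dot\phi] + [\mathscr F_\phi(0,F),1-\chi]\dot\phi$, with the commutator supported in $\{M<|r|<2M\}$. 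By choosing $M$ so large that the coefficients of $K$ on $\{|r|>M\}$ are sufficiently small in $C^{0,\alpha}\cap W^{1,p}_{\loc}$, the term $K[(1-\chi)\dot\phi]$ is absorbed into the left-hand side using the uniform bound from Lemma~\ref{strong invertibility lemma}. The compactly supported piece $\chi\dot\phi$ is then controlled by interior and boundary Schauder and $W^{2,p}$ estimates for divergence-form elliptic operators, yielding the stated inequality.

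Since $X\hookrightarrow L^p(R_M)$ is compact by Rellich--Kondrachov and the embedding $\Xb\hookrightarrow L^p(R_M)$ is likewise compact (the restriction kills the issue of non-decay), the semi-Fredholm estimate together with a standard abstract argument (e.g.\ \cite[Lemma~A.8]{wheeler2013solitary}) implies that $\mathscr F_\phi(\phi,F)$ has finite-dimensional kernel and closed range in both settings. To upgrade this to Fredholm of index $0$, I would use a homotopy argument: set $L_\tau := \mathscr F_\phi(\tau\phi,F)$ for $\tau\in[0,1]$. Because $\tau\phi_s+H_s=(1-\tau)H_s+\tau(\phi_s+H_s)$ is bounded below uniformly in $\tau$ by $\min(\inf_R H_s,\inf_R(\phi_s+H_s))>0$, each $(\tau\phi,F)\in\Udomain$, so the previous steps apply uniformly and each $L_\tau$ is semi-Fredholm. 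The real-analyticity of $\mathscr F$ makes $\tau\mapsto L_\tau$ continuous in the operator-norm topology on $\mathcal L(X,Y)$ and on $\mathcal L(\Xb,\Yb)$, and invariance of the index along a continuous path of semi-Fredholm operators, combined with $L_0=\mathscr F_\phi(0,F)$ being an isomorphism, gives $\operatorname{ind}L_1=0$ in both function-space settings.

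The main obstacle I anticipate is the derivation of the semi-Fredholm estimate at the reduced regularity of our setting, where the coefficients of $K$ only lie in $C^{0,\alpha}\cap W^{1,p}_\loc$ rather than being smooth. One has to be cautious about how the commutator $[\mathscr F_\phi(0,F),1-\chi]$ is handled in the $\Yb$-norm, which is defined in terms of distributional derivatives of $C^{0,\alpha}\cap W^{1,p}_\loc$ functions, and about tracking the boundary terms generated by the nonlinear condition on $T$. The rather elaborate function-space architecture set up in Section~\ref{three formulations section} was chosen precisely so that these estimates go through.
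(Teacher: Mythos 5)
Your overall strategy is in fact the one the paper uses: the published proof merely notes that the coefficients of $\mathscr{F}_\phi(\phi,F)$ converge to those of the limiting operator $\mathscr{F}_\phi(0,F)$, which is invertible by Lemma~\ref{strong invertibility lemma}, and then cites \cite[Lemmas~A.12 and A.13]{wheeler2015pressure}; the content of those lemmas is precisely the semi-Fredholm estimate plus homotopy-invariance-of-the-index scheme you outline. Your homotopy $L_\tau=\mathscr{F}_\phi(\tau\phi,F)$, the convexity observation guaranteeing $(\tau\phi,F)\in\Udomain$ for all $\tau\in[0,1]$, and the index-constancy conclusion are all correct.

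The genuine gap is in the absorption step of your semi-Fredholm estimate. You assert that the coefficients of $K:=\mathscr{F}_\phi(\phi,F)-\mathscr{F}_\phi(0,F)$ decay at infinity ``together with their weak derivatives'' and can be made ``sufficiently small in $C^{0,\alpha}\cap W^{1,p}_{\loc}$'' on $\{|r|>M\}$. Membership of $\phi$ in $X$ only gives $\phi\in C_0^1(\overline{R})$ with a \emph{bounded} (not decaying) $C^{1,\alpha}$ seminorm and merely locally bounded second derivatives; hence the coefficient differences are small at infinity only in $C^0$, while their H\"older seminorms and their $W^{1,p}_{\loc}$ norms (which involve $\partial^2\phi$ and $H_{ss}$) need not become small for $|r|>M$. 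Since the bound you want to borrow from Lemma~\ref{strong invertibility lemma} is in the $\Xb\to\Yb$ topologies, built exactly on $C^{0,\alpha}$ and $W^{1,p}_{\loc}$, the term $K[(1-\chi)\dot\phi]$ cannot be absorbed as written. This is repairable: interpolate the $C^0$-smallness against the uniform $C^{0,\alpha}$ bound to obtain smallness in $C^{0,\beta}$ for $\beta<\alpha$, run the localization in the $\beta$-scale, and recover the $\alpha$-H\"older and local $W^{2,p}$ control afterwards by Schauder and $L^p$ estimates. Alternatively --- and this is what the cited lemmas do --- avoid quantitative smallness altogether and prove the estimate $\n{\dot\phi}_{\Xb}\le C\bigl(\n{\mathscr{F}_\phi(\phi,F)\dot\phi}_{\Yb}+\n{\dot\phi}_{C^0(R\cap\{|r|<M\})}\bigr)$ by a contradiction/translation argument: a sequence violating it concentrates along translates $|r_n|\to\infty$, and any limit solves the limiting problem, which is injective by Lemma~\ref{strong injectivity lemma}. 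With the estimate established in either way, the rest of your argument (compactness of $\Xb\hookrightarrow L^p$ on bounded sub-strips, semi-Fredholmness of each $L_\tau$, and the homotopy to the isomorphism $L_0$) goes through and reproduces the paper's conclusion.
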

\begin{proof} Let $(\phi,F) \in \Udomain$ be given.  By definition of the space $X$, we see that the coefficients of $\mathscr{F}_\phi(\phi,F)$ approach those of the limiting operator $\F_\phi(0,F)$ as $r \to \pm\infty$. We have already seen that $\F_\phi(0,F) \maps \Xb \to \Yb$ is invertible.  The lemma now follows from the soft analysis argument given in \cite[Lemmas~A.12 and A.13]{wheeler2015pressure}.
\QED

\section{Qualitative properties}
\label{quals}	

The purpose of this section is to investigate some qualitative features of solitary water waves.   In particular, we establish the equivalence between being supercritical and a wave of elevation, and then go on to prove that every supercritical wave is also symmetric and monotone.  It is important to note that these properties are not limited to small-amplitude waves, nor the specific family that we construct later in this work, but are universal among all solitary waves with this regularity.  

\subsection{Elevation}
\label{elevation}
We first endeavor to prove that a solitary wave $(\phi, F)$ is a wave of elevation if and only if $F > F_{\mathrm{cr}}$.  It turns out that this requires gaining a better understanding of the family of laminar waves that share the same vorticity function as the fixed background profile.  Here we follow the ideas of Wheeler in \cite[Section 2.1 and Section 2.2]{wheeler2013solitary}.

Note that a function $K = K(s)$ is a $r$-independent solution of the height equation \eqref{heightFormulation} provided that it satisfies the ODE
\begin{equation}  \label{laminar IVP}
    \left\{ 
    \begin{alignedat}{2}
      \left(
      -\frac{1}{2 K_s^2} 
      + \frac{1}{2H_s^2}
      \right)_s
      &= 0 &\qquad& \textrm{in } -1 < s < 0, \\
      \frac{1}{2K_s^2} -\frac{1}{2H_s^2} + \frac{1}{F^2}  (K-1) &= 0 && \textrm{on } s = 0, \\
      K &= 0 && \textrm{on } s = -1.
    \end{alignedat} 
    \right. 
  \end{equation}
We can solve \eqref{laminar IVP} explicitly to find a one-parameter family $\mathscr{T}$ of laminar flows
\be \label{definition of H(s,kappa)}
	\ds \mathscr{T} = \left\{  H(\placeholder; \kappa) : \kappa \in (-2\Gamma_{\mathrm{min}}, \infty) \right\}, \qquad H(s; \kappa) := \int_{-1}^{s} \dfrac{1}{\sqrt{\kappa + 2\Gamma(t)}} \, dt,
\ee
where we recall that $\Gamma$ is given by \eqref{Gamma and H relation} and has minimum value $\Gamma_{\mathrm{min}}$.  The family $\mathscr{T}$ includes the fixed background flow $H$:  there exists a unique value $\lambda$ such that
	\begin{align*}
	\ds 1 = H(0;\lambda) := \int_{-1}^0 \dfrac{1}{\sqrt{\lambda + 2\Gamma(t)}} \, dt,
	\end{align*}
and hence $H(s;\lambda) = H(s)$.  The asymptotic depth varies along $\mathscr{T}$; let $d^* \in [d, \infty]$ be its supremum over all these laminar flows:
	\begin{align*}
	\ds d^* := d\cdot \sup_{\kappa} H(0;\kappa).
	\end{align*}
	
Looking at the Bernoulli boundary condition in \eqref{laminar IVP}, we see that it is possible to solve for the Froude number in terms of the parameter $\kappa$.  In the next lemma, we investigate this dependence.  

\begin{lemma}  \label{A kappa} Define $\mu : (-2\Gamma_{\textrm{min}}, \infty) \to \R$ by
	\begin{align*}
	\ds \mu(\kappa) := \left \{
		\begin{array}{l l}
		\dfrac{1}{2} \left ( \dfrac{\kappa-\lambda}{1-H(0;\kappa)}\right) & \kappa \neq \lambda,\\
		\mu_{\mathrm{cr}} & \kappa = \lambda.
		\end{array}
	\right.
	\end{align*}
Then $\mu(\cdot)$ is $C^1$ and strictly increasing.  Moreover, if $d^* = \infty$, then $\lim_{\kappa \downarrow -2\Gamma_{\mathrm{min}}} \mu(\kappa) = 0$, and if $d^* < \infty$, then $\lim_{\kappa \downarrow -2\Gamma_{\mathrm{min}}} \mu(\kappa) > 0$.
\end{lemma}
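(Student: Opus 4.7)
The plan is to remove the apparent singularity of $\mu$ at $\kappa = \lambda$ by factoring $\kappa - \lambda$ out of the difference $H(0;\lambda) - H(0;\kappa) = 1 - H(0;\kappa)$. Using the elementary identity $a^{-1/2} - b^{-1/2} = (b - a)/[\sqrt{ab}\,(\sqrt{a} + \sqrt{b})]$ pointwise in $t$ with $a := \lambda + 2\Gamma(t)$ and $b := \kappa + 2\Gamma(t)$, one obtains
\[
1 - H(0;\kappa) = (\kappa - \lambda)\, J(\kappa),
\qquad
J(\kappa) := \int_{-1}^{0} \frac{dt}{\sqrt{(\lambda + 2\Gamma(t))(\kappa + 2\Gamma(t))}\,\bigl(\sqrt{\kappa + 2\Gamma(t)} + \sqrt{\lambda + 2\Gamma(t)}\bigr)},
\]
so that $\mu(\kappa) = 1/(2J(\kappa))$ for $\kappa \neq \lambda$. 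A direct check at $\kappa = \lambda$ gives
\[
2 J(\lambda) = \int_{-1}^{0} (\lambda + 2\Gamma(t))^{-3/2}\, dt = \int_{-1}^{0} H_s(t)^3\, dt = \frac{1}{\mucr},
\]
so this unified representation $\mu = 1/(2J)$ extends continuously through $\kappa = \lambda$ and agrees with the prescribed value there. The remainder of the lemma will be read off directly from this formula.

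Next I would establish $C^1$ regularity and strict monotonicity. Since $\Gamma$ is Lipschitz and $\kappa + 2\Gamma(t) \geq \kappa + 2\Gamma_{\mathrm{min}} > 0$ uniformly on compact $\kappa$-subintervals of $(-2\Gamma_{\mathrm{min}}, \infty)$, the integrand of $J$ and its $\kappa$-derivative are uniformly bounded for $t \in [-1,0]$, so differentiation under the integral sign is justified and yields $J \in C^1$ (in fact $C^\infty$). Strict monotonicity is immediate: the integrand defining $J$ is a strictly decreasing function of $\kappa$ at every fixed $t$, so $J$ is strictly decreasing, and hence $\mu = 1/(2J)$ is strictly increasing.

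Finally, for the limits as $\kappa \downarrow -2\Gamma_{\mathrm{min}}$, I would return to the original form $\mu(\kappa) = (\kappa - \lambda)/[2(1 - H(0;\kappa))]$. Since $\partial_\kappa H(0;\kappa) = -\tfrac{1}{2} \int_{-1}^{0}(\kappa + 2\Gamma(t))^{-3/2}\, dt < 0$, the map $\kappa \mapsto H(0;\kappa)$ is strictly decreasing, and monotone convergence shows $H(0;\kappa) \uparrow d^*/d$ as $\kappa \downarrow -2\Gamma_{\mathrm{min}}$. If $d^* = \infty$, then $1 - H(0;\kappa) \to -\infty$ while $\kappa - \lambda$ remains bounded, so $\mu \to 0$. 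If $d^* < \infty$, then
\[
\lim_{\kappa \downarrow -2\Gamma_{\mathrm{min}}} \mu(\kappa) = \frac{-2\Gamma_{\mathrm{min}} - \lambda}{2(1 - d^*/d)},
\]
a quotient of two strictly negative numbers (using $\lambda > -2\Gamma_{\mathrm{min}}$, because $\lambda$ lies in the admissible parameter range, and $d^*/d > 1$ because $H(0;\kappa) > H(0;\lambda) = 1$ for $\kappa < \lambda$), hence strictly positive.

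The only conceptually nontrivial step, and the one I expect to be the main obstacle, is spotting the factorization $1 - H(0;\kappa) = (\kappa - \lambda) J(\kappa)$; once the clean representation $\mu = 1/(2J)$ is in hand, every other assertion reduces to routine differentiation-under-the-integral and sign-tracking.
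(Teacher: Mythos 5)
Your argument is correct, and it reaches the conclusion by a genuinely different mechanism than the paper at the one delicate point, namely the behavior at $\kappa=\lambda$ and the strict monotonicity. The paper works directly with $g(\kappa):=H(0;\kappa)$: it computes $\partial_\kappa H(0;\kappa)=-\tfrac12\int_{-1}^0(\kappa+2\Gamma)^{-3/2}\,ds<0$, observes that $g$ is also strictly convex, identifies $\mu(\lambda)=\mu_{\mathrm{cr}}$ as the limit of the difference quotient $\tfrac12(\kappa-\lambda)/(g(\lambda)-g(\kappa))$, and then gets $\mu'>0$ from a computation that in effect uses the convexity of $g$ (the secant slopes of a strictly convex function are strictly increasing). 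You instead remove the apparent singularity algebraically: the pointwise identity $a^{-1/2}-b^{-1/2}=(b-a)/[\sqrt{ab}(\sqrt a+\sqrt b)]$ gives $1-H(0;\kappa)=(\kappa-\lambda)J(\kappa)$, hence the unified representation $\mu=1/(2J)$ with $2J(\lambda)=\int_{-1}^0 H_s^3\,dt=1/\mu_{\mathrm{cr}}$, after which $C^1$ regularity follows from differentiation under the integral and strict monotonicity from the fact that the integrand of $J$ is strictly decreasing in $\kappa$ for each $t$ — no convexity needed, and no separate removable-singularity analysis at $\kappa=\lambda$. The endpoint limits as $\kappa\downarrow-2\Gamma_{\mathrm{min}}$ you treat exactly as the paper does (monotone convergence of $H(0;\kappa)$ to $d^*/d$, plus $\lambda>-2\Gamma_{\mathrm{min}}$ and $d^*>d$), and your justification that $d^*>d$ via strict monotonicity of $H(0;\cdot)$ is sound. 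The trade-off: the paper's route records the convexity of $\kappa\mapsto H(0;\kappa)$, which is a reusable structural fact, while your factorization yields a cleaner, more self-contained proof in which every claim of the lemma is read off a single integral formula.
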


\begin{proof} Clearly its the definition in \eqref{definition of H(s,kappa)},   $H(\placeholder; \kappa)$ is smooth in its dependence on $\kappa$.  Differentiating, we see that
	\begin{align*}
	\p_\kappa H(0;\kappa) = -\frac{1}{2} \int_{-1}^0 \left( \kappa + 2\Gamma(s) \right)^{-3/2} \, ds < 0.
	\end{align*}
Thus the (nondimensionalized) depth $H(0;\kappa)$ is a strictly decreasing function of $\kappa$ for  $\kappa \in (-2\Gamma_{\textrm{min}}, \infty)$.  A similar computation shows that it is also strictly convex.  Now, due to the fact that $H(0;\lambda)=1$, we have 
	\begin{align*}
	\ds \lim_{\kappa \to \lambda} \mu(\kappa) = \frac{1}{\partial_\kappa H(0; \lambda)} =  \mu_{\textrm{cr}},
	\end{align*}
where we have used the definition of $\mu_{\mathrm{cr}}$ in \eqref{def mucr}.  We conclude that $\mu$ is $C^1$.  A simple computation confirms that $\mu^\prime > 0$, and hence it is strictly increasing.  Finally, $\lambda > -2\Gamma_{\textrm{min}}$ and $d^* > d$ imply
	\begin{align*}
	\ds \lim_{\kappa \downarrow -2\Gamma_{\textrm{min}}} \mu(\kappa) = \dfrac{1}{2} \left( \dfrac{2\Gamma_{\textrm{min}} + \lambda}{d^*/d-1}\right),
	\end{align*}
which is clearly positive if $d^* < +\infty$ and $0$ if $d^* = +\infty$.
\QED

We arrive at the main theorem for this subsection.

\begin{theorem}[Elevation]  \label{supercritical elevation}    Every nontrivial solitary wave solution $(\phi, F) \in (C^{1,\alpha}(\overline{R}) \cap C_0^1(\overline{R})) \times \R$ of \eqref{phiHeightEqn} is supercritical if and only if it is a wave of elevation in the sense that
	\begin{align*}
	\phi > 0 \text{ in } \calR \cup T.
	\end{align*}
\end{theorem}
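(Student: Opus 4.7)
The plan is to prove the equivalence via two maximum-principle comparison arguments, following the strategy of Wheeler~\cite{wheeler2013solitary} but adapted to the weak regularity here. The two main tools are the auxiliary function $\tilde\Phi$ from Lemma~\ref{Phitheorem} (for the forward direction) and the one-parameter family of laminar flows $\{H(\cdot;\kappa)\}$ from \eqref{definition of H(s,kappa)} together with Lemma~\ref{A kappa} (for the reverse direction).

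For the forward direction (supercritical $\Rightarrow$ elevation), I would first rewrite the quasilinear height equation \eqref{phiHeightEqn} for $\phi$ as a linear divergence-form elliptic equation $\mathcal{L}\phi = 0$ in $R$ with a linear Robin boundary condition on $T$ and Dirichlet condition on $B$, by factoring one derivative out of each nonlinear term. The coefficients of $\mathcal{L}$ are bounded, measurable, and uniformly elliptic thanks to the no-stagnation constraint $\phi_s + H_s > 0$, but the zeroth-order coefficient of the boundary operator on $T$ has the ``wrong'' sign for a direct maximum-principle argument. I would therefore substitute $v := \phi/\tilde\Phi$: by \eqref{Phi2} this is well-defined and bounded, and by \eqref{Phi3} the new top boundary operator acquires a strictly dissipative zeroth-order coefficient. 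Since $v\to 0$ at infinity, if $\phi$ were not strictly positive on $R\cup T$ then $v$ would attain a non-positive minimum at some point of $\overline R$; I would rule out each possible location with (i) the strong maximum principle in the interior, (ii) the Dirichlet condition $v|_B = 0$ together with Hopf's lemma, and (iii) Hopf's lemma on $T$ combined with the good sign of the new boundary coefficient, yielding $\phi > 0$.

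For the reverse direction I would argue by contrapositive: assuming $\phi > 0$ and $F\leq F_{\mathrm{cr}}$, use Lemma~\ref{A kappa} to choose $\kappa^*\geq\lambda$ with $\mu(\kappa^*) = 1/F^2$, so that $H^* := H(\cdot;\kappa^*)\leq H$ is a laminar solution sharing the same top boundary condition as $h$. Then $w := h - H^*$ solves a homogeneous linear divergence-form problem with $w|_B = 0$ and homogeneous Robin condition on $T$, and $w\to H - H^* \geq 0$ at infinity. In the strictly subcritical case $F < F_{\mathrm{cr}}$ the asymptotic value $H - H^*$ is strictly positive; running the forward-direction auxiliary-function argument on $w$ forces $w \geq c > 0$ uniformly down to the axis $\{r=0\}$, which combined with $w = \phi + (H - H^*)$ contradicts the asymptotic condition $\phi\to 0$. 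In the critical case $F = F_{\mathrm{cr}}$, we have $\kappa^* = \lambda$ and $w = \phi$; the linearization at zero has $\Phi$ (from Lemma~\ref{critical Froude number lemma}) as a nontrivial kernel element, and testing the equation for $\phi$ against $\Phi$ over a large rectangle $R_M := R\cap\{|r|<M\}$ yields, after integration by parts and sending $M\to\infty$, a solvability obstruction on the quadratic nonlinearity that is incompatible with both $\phi > 0$ and $\phi\to 0$.

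The main obstacle is the reverse direction at criticality: because the linearization has a non-trivial kernel there, the comparison with a laminar flow degenerates and one must extract information from the quadratic corrections---effectively matching with the KdV-type reduced dynamics from the center manifold analysis of Section~\ref{small amp}. A secondary technical difficulty is the weak regularity $\phi\in C_\bdd^{1,\alpha}\cap W_{\mathrm{loc}}^{2,p}$, which rules out classical $C^2$ arguments; however, since $\mathcal{L}$ can be placed in divergence form with bounded measurable coefficients (by the equivalence set up in Theorem~\ref{equivalence theorem}), the strong maximum principle and Hopf boundary point lemma remain applicable in their Sobolev versions.
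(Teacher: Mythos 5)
Your proposal diverges from the paper in both directions, and each direction as you sketch it has a genuine gap. For supercritical $\Rightarrow$ elevation, the substitution $v=\phi/\tilde\Phi$ does not deliver the signs you need \emph{globally}. Writing the difference equation for $\phi=h-H$ in divergence form and dividing by $\tilde\Phi$ produces an interior zeroth-order coefficient of the type $\mathcal{A}\tilde\Phi/\tilde\Phi$ and a top-boundary zeroth-order coefficient of the type $\mathcal{B}'\tilde\Phi+\tfrac{1}{F^2}\tilde\Phi$, but the coefficients of $\mathcal{A}$ and $\mathcal{B}'$ are built from $h$, not $H$; the favorable inequalities \eqref{Phi1}--\eqref{Phi3} only control $\bigl(\tilde\Phi_s/H_s^3\bigr)_s$ and $-\tilde\Phi_s/H_s^3+\tfrac{1}{F^2}\tilde\Phi$, so the needed signs hold only where $h$ is uniformly close to $H$ (e.g.\ if $h_s(r_0,0)$ is much larger than $H_s(0)$, the transformed boundary coefficient can have the wrong sign). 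This is exactly why the paper deploys the $\tilde\Phi$ trick only in the far field, in Lemma~\ref{monotonicity} for $\lambda\le -K$, where $|h-H|$ and $|\nabla(h-H)|$ are $O(\delta)$. The paper's elevation proof works differently: it compares $h$ with the laminar flow $H(\cdot;\kappa_*)$ from \eqref{definition of H(s,kappa)} chosen so that $H(0;\kappa_*)$ equals the minimal surface height, applies the maximum principle and the Hopf lemma to $\varphi=h-H(\cdot;\kappa_*)$ (whose interior equation has \emph{no} zeroth-order term), and then feeds the resulting inequality $h_s(r_0,0)<\kappa_*^{-1/2}$ into the \emph{nonlinear} Bernoulli condition to get $1/F^2>\mu(\kappa_*)\ge\mucr$, contradicting supercriticality via the monotonicity of $\mu$ in Lemma~\ref{A kappa}; interior positivity then comes from the quasilinear comparison principle \cite[Theorem 10.7]{gilbarg2001elliptic} plus the strong maximum principle, and strict positivity on $T$ from one more Hopf argument. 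Supercriticality enters through $\mu(\kappa)$, not through a sign in a transformed boundary operator, and your scheme is missing that mechanism.

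For elevation $\Rightarrow$ supercritical, the paper does not reprove anything: it cites \cite[Theorem 1.1]{wheeler2015froude} directly, and that result is proved by an integral/flow-force identity rather than a comparison argument. Your contrapositive sketch does not close this direction. In the strictly subcritical case, establishing $w=h-H(\cdot;\kappa^*)\ge c>0$ yields no contradiction: since $H-H(\cdot;\kappa^*)$ is itself bounded below by a positive constant away from the bed, such a bound is perfectly compatible with $\phi\to 0$, so the claimed incompatibility with the asymptotic condition evaporates. In the critical case $F=F_{\mathrm{cr}}$, the ``solvability obstruction obtained by testing against $\Phi$'' is precisely the hard analytic content (it is where the kernel of the linearization degenerates), and matching with the KdV reduction of Section~\ref{small amp} only describes small-amplitude waves, whereas Theorem~\ref{supercritical elevation} must cover arbitrary waves of elevation with the stated regularity. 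As written, this direction is an unproved assertion; you would either need to reproduce the flow-force argument of \cite{wheeler2015froude} (checking it survives the $C^{1,\alpha}\cap W^{2,p}_{\mathrm{loc}}$ regularity) or cite it as the paper does.
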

\begin{proof} That elevation implies supercriticality is due to a direct application of Theorem 1.1 in \cite{wheeler2015froude}, so we consider only the converse direction.  
Suppose that $(\phi, F)$ is a nontrivial solitary wave with $F > F_{\mathrm{cr}}$.  

We will first show that $\phi \geq 0$ on $T$, which is equivalent to having $h(r,0) \geq 1 \text{ for all } r \in \R$.  Physically, this means that the height of the free surface is everywhere above its limiting value.  By way of contradiction, assume $\inf_{r \in \mathbb{R}} h(r,0)< 1$.  Since $h(r,0) \to 1$ as $r \to \pm\infty$, we know that $h(r,0)$ must achieve its minimum at some $r_0 \in \R$.  Recalling that $\kappa \mapsto H(0;\kappa)$ is decreasing with $H(0;\lambda) =1$ and $H(0,\kappa) \to 0$ as $\kappa \to \infty$, we know that there exists a unique $\kappa_{*} > \lambda$ such that 
	\begin{align*}
	h(r_0,0) = H(0; \kappa_*) <1.
	\end{align*}
Define
	\begin{align*}
	\varphi(r,s) := h(r,s) - H(s;\kappa_*).
	\end{align*}
Taking the difference of the equations satisfied by $h$ and $H^{\kappa_*} :=H(s;\kappa_*)$, we see that $\varphi$ solves the elliptic problem
	\begin{align} \label{varphi elliptic equation}
	\left \{
		\begin{array}{l l}
		\mathscr{L}\varphi =0, & \text{ in } R\\
		\mathscr{B}\varphi = 0 & \text{ on } T\\
		\varphi = 0 & \text{ on } B
		\end{array}
	\right.
	\end{align}
where
\[
	\mathscr{L}\varphi := \left( \dfrac{\varphi_r}{h_s} \right)_r - \left( \dfrac{h_r}{2h_s^2}\varphi_r - \dfrac{h_s + H_s^{\kappa_*}}{2h_s^2(H^{\kappa_*}_s)^2}\varphi_s \right)_s,\qquad
	\mathscr{B}\varphi := \dfrac{h_r}{2h_s}\varphi_r + \dfrac{h_s + H^{\kappa_*}_s}{2h_s^2(H^{\kappa_*}_s)^2}\varphi_s + \Froude\varphi.
\]
Note that the coefficients above are $C_\bdd^{0,\alpha}(\overline{R})$, and so by Theorem~\ref{max principle}\ref{strong max principle}-\ref{hopf lemma}, the strong maximum principle and Hopf boundary point lemma can be applied. 

By design, $\varphi \geq 0$ on $T$ and $\varphi =0$ on $B$.  Since $\kappa_* > \lambda$, the asymptotic height condition \eqref{hpa} indicates that
	\begin{align*}
	\ds \lim_{r \pm \infty} \varphi(r,s) = H(s;\lambda) - H(s;\kappa) \geq 0,
	\end{align*}
uniformly in $s$.  An application of the maximum principle allows us to conclude that $\varphi \geq 0$ in $\overline{R}$.  As $\varphi(r_0,0) = 0$ and $\varphi \not\equiv 0$, we conclude from the Hopf boundary point lemma that
	\begin{align} \label{nonpositivity}
	0 > \varphi_s(r_0,0) = h_s(r_0,0) - H_s(0; \kappa_*).
	\end{align}
From \eqref{definition of H(s,kappa)} it follows that $H_s(0;\kappa_*) = \kappa_*^{-1/2}$, and 
hence by \eqref{nonpositivity}  we have $h_s(r_0,0) < \kappa_*^{-1/2}$.  However, upon comparing this to the nonlinear boundary condition in  \eqref{heightFormulation} evaluated $(r_0,0)$ we obtain
	\begin{align*}
	\frac{1}{F^2}  = \frac{1}{1-h(r_0,0)} \left( \frac{1}{2h_s(r_0,0)^2} - \frac{1}{2H_s(0)^2} \right) > \dfrac{1}{2} \dfrac{\kappa_*-\lambda}{1-H(0;\kappa_*)} = \mu(\kappa_*).
	\end{align*}
We already concluded $\kappa_* > \lambda$, and so Lemma \ref{A kappa} indicates that
	\begin{align*}
	\frac{1}{F^2} > \mu(\kappa_*) \geq \mucr,
	\end{align*}
contradicting our assumption that the Froude number is supercritical.  Therefore, $\phi \geq 0$ on $T$.

Now, we know that $\phi = 0$ on $B$ and $\phi \in C_0^1(\overline{R})$. As $h$ and $H$ are solutions of the height equation, and $h \geq H$ on $\partial R$, we can use the comparison principle for divergence form quasilinear elliptic equations \cite[Theorem 10.7]{gilbarg2001elliptic} to conclude that $\phi \geq 0$ in $R$.  Hence by the strong maximum principle, $\phi > 0$; see, for example, \cite[Theorem 2.5.2]{pucci2007book}.  Finally, we wish to show that $\phi > 0$ on $T$.  Seeking a contradiction, assume $\phi(r_0,0) = 0$ for some $r_0 \in \R$.  Since $\phi \geq 0$ in $\overline{R}$ and $\phi \not\equiv 0$, we apply the Hopf lemma to obtain
	\begin{align} \label{lambda inequality}
	\phi_s(r_0,0) = h_s(r_0,0) - \lambda^{-1/2} < 0.
	\end{align}
On the other hand, the nonlinear boundary condition of \eqref{phiHeightEqn} at $(r_0,0)$ yields
	\begin{align*}
	\lambda = h_s(r_0,0)^2,
	\end{align*}
contradicting the strict inequality \eqref{lambda inequality}.  Therefore, $\phi > 0$ in $\calR \cup T$.
\QED

\subsection{Improved regularity near the surface and bed} \label{improved regularity section}

Before going further, we must address a somewhat technical issue.   There are a number of places in the next subsection where we will need to use the Serrin corner-point lemma (see Theorem~\ref{max principle}\ref{edge point}).  Unfortunately, that is not possible in general for function that are merely $C^{1,\alpha} \cap W^{2,p}$ near the boundary.  To overcome this obstacle, we adopt the idea of Constantin--Strauss and assume that the background profile has an extra derivative of smoothness near the top and bottom.  In the next lemma, we show that via elliptic theory, the solutions of the height equation likewise inherit additional regularity.  Here we use a slightly modified version of \cite[Theorem 7]{constantin2011discontinuous}.

\begin{lemma}[Improved regularity]  \label{addt'l regularity at boundary} Any solution $h \in C_\bdd^{1,\alpha}(\overline{R})$ of the height equation \eqref{heightFormulation} exhibits the additional regularity $\partial_r^k h \in C_\bdd^{0,\alpha}(\overline{R})$, for all $k \geq 0$.  If we assume further that 
\be \frac{1}{H_s^{2}} \in C^{2,\al}([-1,-1+\delta]) \cap C^{2,\al}([-\delta,0]) \qquad \textrm{for some $\delta \in (0,1)$,} \label{extra regularity H} \ee
then $h$ is $C^{2,\al}_\bdd$ in a strip of width $\delta/2$ near both the top $T$ and the bottom $B$. 
\end{lemma}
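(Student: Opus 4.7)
My plan is to prove the two claims separately, both by leveraging the translation invariance of the height equation \eqref{heightFormulation} in the horizontal variable $r$. For the tangential regularity assertion ($\partial_r^k h \in C^{0,\alpha}_\bdd$), I will use a difference-quotient argument combined with Schauder estimates for a linear oblique-derivative problem, iterated in $k$. For the improved regularity near the top and bottom, I will expand the divergence form of the equation into a non-divergence identity and solve algebraically for $h_{ss}$, using the tangential regularity from the first part together with the hypothesis on $H_s$.

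In detail, for $\tau \ne 0$ let $h^\tau(r,s) := h(r+\tau, s)$, which is still a solution, and consider the difference quotient $w^\tau := (h^\tau - h)/\tau$. Subtracting the systems satisfied by $h^\tau$ and $h$ and dividing by $\tau$, one finds that $w^\tau$ satisfies a linear, uniformly elliptic problem in divergence form whose coefficients are smooth expressions in $(h_r, h_s, h_r^\tau, h_s^\tau)$, hence bounded in $C^{0,\alpha}_\bdd(\overline R)$ uniformly in $\tau$ (uniform ellipticity is supplied by the no-stagnation condition \eqref{DJ no stag}). The nonlinear Bernoulli condition likewise linearizes to a uniformly oblique boundary condition on $T$ with $C^{0,\alpha}_\bdd$ coefficients, while $w^\tau = 0$ is preserved on $B$. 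Applying boundary Schauder estimates for this oblique-derivative problem on balls of fixed radius and exploiting the translation invariance of the resulting bounds yields
\[
  \n{w^\tau}_{C^{1,\alpha}_\bdd(\overline R)} \le C \n{h}_{C^{1,\alpha}_\bdd(\overline R)},
\]
with $C$ independent of $\tau$. Letting $\tau \to 0$ gives $\partial_r h \in C^{1,\alpha}_\bdd(\overline R)$. Applying the same argument to $\partial_r^k h$, whose governing linear equation has coefficients in $C^{0,\alpha}_\bdd$ by the induction hypothesis, produces $\partial_r^k h \in C^{1,\alpha}_\bdd(\overline R)$ for every $k \ge 0$, which in particular establishes the first conclusion.

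Turning to the improved regularity, I expand the divergence structure of \eqref{heightFormulation} into the pointwise non-divergence identity
\[
  \frac{1+h_r^2}{h_s^3}\, h_{ss} - \frac{2 h_r}{h_s^2}\, h_{rs} + \frac{1}{h_s}\, h_{rr} = -\partial_s\!\left(\frac{1}{2 H_s^2}\right),
\]
which is justified since $h \in W^{2,p}_{\loc}(R)$. In the strips $\{-1 \le s \le -1+\delta\}$ and $\{-\delta \le s \le 0\}$, the hypothesis \eqref{extra regularity H} places the right-hand side in $C^{1,\alpha}$; the first part furnishes $h_{rr}, h_{rs} \in C^{0,\alpha}_\bdd$; and \eqref{DJ no stag} keeps $(1+h_r^2)/h_s^3$ bounded away from zero. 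Solving algebraically for $h_{ss}$ thus gives $h_{ss} \in C^{0,\alpha}_\bdd$ on a strip of width $\delta/2$ (or any width strictly less than $\delta$) near each of $T$ and $B$, which together with the tangential bounds yields $h \in C^{2,\alpha}_\bdd$ there. The principal technical delicacy throughout is obtaining the Schauder estimate in the first step \emph{uniformly in} $r$; this hinges on the translation invariance of the coefficients, the uniform obliqueness of the linearized Bernoulli condition (guaranteed by $\inf h_s > 0$), and the uniform $C^{0,\alpha}_\bdd$ control inherited from $h \in C_\bdd^{1,\alpha}$, which together reduce the matter to standard oblique-derivative Schauder theory applied on a ball of fixed radius centered at an arbitrary point along the infinite strip.
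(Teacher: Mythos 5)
Your argument is correct, and its two halves relate to the paper's proof differently. The first half (horizontal difference quotients, uniform-in-$\tau$ Schauder estimates for the linearized oblique-derivative problem via Theorem~\ref{schauder theorem}, then passing to the limit and bootstrapping on the equation satisfied by $h_r$, iterated in $k$) is essentially the argument the paper gives. The second half takes a genuinely different route: the paper treats the normal direction by another difference-quotient argument, this time in $s$, where the lack of vertical translation invariance produces an inhomogeneous term $-\tfrac{1}{2\ep}\bigl(\tfrac{1}{(H_s^\ep)^2}-\tfrac{1}{H_s^2}\bigr)_s$ whose $C^{0,\alpha}$ control near $T$ and $B$ is exactly what \eqref{extra regularity H} provides, and then runs Schauder estimates up to the boundary a second time; you instead expand the divergence-form equation into its a.e.\ non-divergence form (legitimate since solutions here lie in $W^{2,p}_{\loc}(R)$ with $p>2$) and solve algebraically for $h_{ss}$, using the tangential regularity $h_{rr},h_{rs}\in C^{0,\alpha}_\bdd$ from the first half, the no-stagnation bound \eqref{DJ no stag} to keep the coefficient $(1+h_r^2)/h_s^3$ away from zero, and \eqref{extra regularity H} to control the right-hand side in the boundary strips. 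Your route is more elementary in that it avoids a second compactness/Schauder step and in fact yields the $C^{2,\alpha}_\bdd$ regularity on strips of width up to $\delta$ rather than $\delta/2$; its only extra ingredient is the a priori $W^{2,p}_{\loc}$ regularity needed to justify the pointwise identity, which the paper's finite-difference argument does not use. One small point worth spelling out: after identifying the weak derivative $h_{ss}$ a.e.\ with a $C^{0,\alpha}_\bdd$ function on the strip, you should note explicitly that a $W^{1,p}_{\loc}$ function ($h_s$) whose weak gradient has a continuous representative up to the closed strip is classically $C^1$ there, so that the Hölder bounds indeed give $h\in C^{2,\alpha}_\bdd$ up to $T$ and $B$; this is standard and does not affect the validity of the proof.
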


\pf Let $h \in C_\bdd^{1,\alpha}(\overline{R})$ be given as above.  For any $\delta$, let $h^\delta := h(\cdot - \delta, \cdot)$ be a horizontal translate of $h$, and consider the finite difference $\xi^\delta := (h - h^\delta)/\delta$.  Note that the height equation is invariant under translations in $r$, and so we find that $\xi^\delta$ satisfies 
	\begin{align} \label{heightdelta}
	\left \{
		\begin{array}{l l}
		\calA\xi^{\delta} = 0 & \text{in } R\\
		\calB \xi^{\delta} = 0 & \text{on } T\\
		\xi^\delta = 0 & \textrm{on } B,
		\end{array}
	\right.
	\end{align}
where $\mathcal{A}$ is a uniformly elliptic operator in divergence and $\mathcal{B}$ is a uniformly oblique boundary operator:
\be \label{def A B operators} 
\begin{split}
	\calA \xi^\delta &:= \left \{ \dfrac{1}{h^\delta_s}\xi^\delta_r - \dfrac{h_r}{h^\delta_s h_s}\xi^\delta_s \right \}_r - \left \{ \dfrac{h^\delta_r + h_r}{2(h^\delta_s)^2}\xi^\delta_r - \dfrac{(1+h_r^2)(h^\delta_s + h_s)}{2(h^\delta_s)^2h_s^2}\xi^\delta_s \right \}_s,\\
	\\
	\calB\xi^\delta &:= \dfrac{h^\delta_r + h_r}{2(h^\delta_s)^2}\xi^\delta_r + \dfrac{(1+h_r^2)(h^\delta_s + h_s)}{2(h^\delta_s)^2h_s^2}\xi^\delta_s + \Froude\xi^\delta.
\end{split}
\ee
In particular, we note that the coefficients occurring in $\mathcal{A}$ are uniformly bounded in the $C^{0,\alpha}(R)$ norm, while those in $\mathcal{B}$ are bounded uniformly in $C^{0,\alpha}(T)$.  Applying Schauder estimates up to the boundary allows us to control 
\[ \| \xi^\delta \|_{C^{1,\alpha}(R)} < C \| \xi^\delta \|_{C^0(R)} < C\| h \|_{C^0(R)},\]
for some constant $C > 0$ independent of $\delta$; see Theorem \ref{schauder theorem}.  It follows that we can extract a subsequence converging in the limit $\delta \to 0$ locally in $C^1(\overline{R})$.  We therefore conclude that $h_r \in C^1(\overline{R})$.  On the other hand,  taking the limit $\delta \to 0$ in \eqref{heightdelta} reveals that $h_r$ satisfies $\mathscr{F}_\phi(h-H,F) h_r = 0$, and thus by elliptic theory it enjoys the improved regularity $h_r \in C_\bdd^{1,\alpha}(\overline{R})$ according to Theorem~\ref{schauder theorem}.  Repeating this procedure, we find that $\partial_r^k h$, for any $k \geq 0$.  

Next, suppose that $1/H_s^2$ has the additional regularity in the statement of the lemma.  We will only consider the situation near the free surface, as the argument for the bed is in fact simpler.  With that in mind, we introduce the infinite strips $S_1 := \mathbb{R} \times (s_1, 0)$ and $S_2 := \mathbb{R} \times (2s_1, 0)$.  By assumption, if $s_1$ is sufficiently close to 0, then
	\begin{align*}
	\left( \dfrac{1}{H_s^2} \right)_s \in C^{1,\al}\left([2s_1,0]\right).
	\end{align*}
For each $0 < \epsilon \ll 1$, let us redefine $h^\epsilon$ to be the downward translated height function $h^\epsilon := h(\cdot, \cdot - \epsilon)$, and study the finite difference $\zeta^{\ep} :=(h- h^\epsilon)/\ep$.  As before, we see that $\zeta^\epsilon$ solves a uniformly elliptic problem
	\begin{align} \label{heightepsilon}
	\left \{
		\begin{array}{l l}
		\calA\zeta^{\ep} = -\dfrac{1}{2\ep}\left( \dfrac{1}{(H_s^{\ep})^2} - \dfrac{1}{H_s^2} \right)_s & \text{in } S_2\\
		\calB \zeta^{\ep} = 0 & \text{on } T.
		\end{array}
	\right.
	\end{align}
The right-hand side above lies in $C_\bdd^{0,\alpha}([2s_1,0])$ by hypothesis, and so we may apply Schauder estimates up to $T$ to estimate
\[
	\| \zeta^{\ep} \|_{C^{1,\al}(S_1)} \leq C \left( \| h \|_{C^1(S_2)} + \| H_s^{-2}\|_{C^{2,\alpha}([2s_1,0])} \right),
\]
for some constant $C > 0$ independent of $\epsilon$. Thus $\zeta^{\ep}$ has $C^{1,\al}(\overline{S_1})$ norm bounded uniformly in $\epsilon$.  It follows that $\zeta^\ep$ has a convergent subsequence in $C^{1}(\overline{S_1})$ as $\epsilon \to 0$, meaning $h_s \in C^1(\overline{S_1})$. But, taking the limit $\epsilon \to 0$ in \eqref{heightepsilon}, we see also that $\mathscr{F}_{1\phi}(h-H,F)h_s = 0$ in $S_1$ and $\mathscr{F}_{2\phi}(h-H,F) h_s = 0$ on $T$. Schauder theory once more allows us to recover the extra H\"older continuity: $h_s \in C^{1,\alpha}(\overline{S_1})$.  \QED	

\begin{remark} \label{improved regularity U* remark}  The hypothesis \eqref{extra regularity H} in Lemma \ref{addt'l regularity at boundary} that $1/H_s^2$ is $C^{2,\alpha}$ near the top and bottom is equivalent to having $\Gamma \in C^{2,\alpha}$ near $s = -1$ and $s = 0$, and also $U^* \in C^{2,\alpha}$ near $y = 0$ and $y = -1$ as in assumption \eqref{regularity U*} in the main theorem.  
\end{remark}

\subsection{Symmetry and monotonicity}
\label{symm}

Next, we turn our attention to proving that supercritical solitary waves with discontinuous vorticity are symmetric and monotone.  As we have already shown that every such wave is a wave of elevation, we are effectively trying to establish certain symmetry properties for positive solutions to a quasilinear elliptic problem posed on an infinite cylinder.  This is a classical topic that has been studied extensively.  Our basic approach is to use a moving planes method in the spirit of Li \cite{li1991monotonicity}, and specifically the adaptation of his technique to water waves by Maia \cite{maia1997symmetry}.   Compared to these works, the difficulty we face is the reduced regularity and the presence of nontrivial dynamics at infinity in the form of the background current.  Here we will see the necessity of the additional smoothness assumption \eqref{extra regularity H}.  

\begin{theorem}[Symmetry and monotonicity] \label{symmetrythm} Suppose that the background flow $H$ exhibits the additional regularity \eqref{extra regularity H}. Let $(\phi,F) \in (C^{1,\alpha}(\calRbar) \cap C_0^1(\overline{R}) )\times \R$ be a supercritical solution of \eqref{phiHeightEqn}.  Then, up to a translation in $r$,  $\phi$ is symmetric and monotone; that is, there exists $r_* \in \R$ such that $r \mapsto \phi(r, \cdot)$ is even about $\{r=r_*\}$ and
	\begin{align*}
		\pm \phi_r > 0 \qquad \text{ for } \pm(r_* - r) >0, -1 < s \leq 0.
	\end{align*}
\end{theorem}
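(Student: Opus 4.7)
The plan is to adapt the moving planes method in the spirit of Li and Maia to the quasilinear problem \eqref{heightFormulation}. For each $\lambda \in \mathbb{R}$, set $h^\lambda(r,s) := h(2\lambda - r, s)$ and $w^\lambda := h^\lambda - h$ on the half-strip $\Sigma^\lambda := (-\infty, \lambda) \times (-1, 0)$; since $H = H(s)$, we also have $w^\lambda = \phi(2\lambda - \cdot, \cdot) - \phi$. Subtracting the equations satisfied by $h$ and $h^\lambda$ produces a linear divergence-form BVP
\[
  \mathcal{L}^\lambda w^\lambda = 0 \text{ in } \Sigma^\lambda, \quad \mathcal{B}^\lambda w^\lambda = 0 \text{ on } T \cap \overline{\Sigma^\lambda}, \quad w^\lambda = 0 \text{ on } \left(B \cap \overline{\Sigma^\lambda}\right) \cup \{r = \lambda\},
\]
where $\mathcal{L}^\lambda$ is uniformly elliptic and $\mathcal{B}^\lambda$ is uniformly oblique, both with $C^{0,\alpha}$-bounded coefficients (cf.\ the computation \eqref{def A B operators}). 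Because the zeroth-order term of $\mathcal{B}^\lambda$ inherited from the Bernoulli condition is $1/F^2 > 0$ (the ``bad sign''), I would follow Section~\ref{properness} and set $w^\lambda = \tilde\Phi(s)\, v^\lambda$ with $\tilde\Phi$ the auxiliary function of Lemma~\ref{Phitheorem}, so that $v^\lambda$ satisfies an equivalent BVP whose boundary operator has a strictly negative zeroth-order coefficient by \eqref{Phi3}. This is the place where supercriticality enters decisively.

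Once the right sign structure is in place, I would define
\[
  \lambda_\ast := \sup\left\{ \lambda \in \mathbb{R} : w^\mu \geq 0 \text{ on } \Sigma^\mu \text{ for every } \mu \leq \lambda \right\}
\]
and argue that $-\infty < \lambda_\ast < +\infty$. The lower bound uses that $\phi \in C_0^1(\overline{R})$: as $r \to -\infty$, both $w^\lambda$ and $\nabla w^\lambda$ vanish uniformly in $s$, so combining this with $w^\lambda = 0$ on the bottom, right, and at $-\infty$, a decay-based weak maximum principle for $v^\lambda$ on $\Sigma^\lambda$ forces $w^\lambda \geq 0$ for $\lambda$ sufficiently negative. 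The upper bound follows by a symmetric argument using decay as $r \to +\infty$: for sufficiently large $\lambda$, the reflected piece dominates and the inequality $w^\lambda \geq 0$ must fail somewhere in $\Sigma^\lambda$, so $\lambda_\ast$ is finite. Continuity then yields $w^{\lambda_\ast} \geq 0$ on $\Sigma^{\lambda_\ast}$.

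The heart of the argument is to show $w^{\lambda_\ast} \equiv 0$. If not, then $w^{\lambda_\ast} > 0$ in the interior of $\Sigma^{\lambda_\ast}$ by the strong maximum principle applied to $v^{\lambda_\ast}$, and the Hopf boundary point lemma (Theorem~\ref{max principle}\ref{hopf lemma}) gives a strict sign for the outward normal derivative at any smooth boundary zero, in particular along the reflection line $\{r = \lambda_\ast\}$. One then slightly perturbs $\lambda$ past $\lambda_\ast$ and seeks a contradiction with the maximality of $\lambda_\ast$; the obstruction is that the infimum of $w^{\lambda_\ast + \varepsilon}$ could a priori be attained at one of the corners $(\lambda_\ast, 0)$ or $(\lambda_\ast, -1)$, where Hopf is not directly applicable. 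This is precisely what the Serrin corner-point lemma (Theorem~\ref{max principle}\ref{edge point}) rules out, \emph{provided} $w^{\lambda_\ast}$ is $C^{2,\alpha}$ up to those corners --- exactly the regularity supplied by Lemma~\ref{addt'l regularity at boundary} under hypothesis \eqref{extra regularity H}, equivalently \eqref{regularity U*}; see Remark~\ref{improved regularity U* remark}.

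With $w^{\lambda_\ast} \equiv 0$ established, symmetry about $r_\ast := \lambda_\ast$ is immediate. For strict monotonicity on $\{r < r_\ast\}$, reapplying the strong maximum principle yields $w^\lambda > 0$ strictly inside $\Sigma^\lambda$ for every $\lambda < \lambda_\ast$, and then Hopf at $\{r = \lambda\}$ forces $\partial_r w^\lambda(\lambda, s) < 0$; since $\partial_r w^\lambda(\lambda, s) = -2 h_r(\lambda, s) = -2\phi_r(\lambda,s)$, this gives $\phi_r(\lambda, s) > 0$ for $-1 < s \leq 0$, and the claim for $r > r_\ast$ follows by symmetry. The main obstacle I expect is handling the comparison argument at the corners with the limited global regularity available; this is why the proof genuinely depends on the improved smoothness supplied by Lemma~\ref{addt'l regularity at boundary}. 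A secondary issue is making the decay-based maximum principle genuinely uniform in $\lambda$, which must be marshalled together with the auxiliary function $\tilde\Phi$ and the uniform decay $\phi \in C_0^1(\overline{R})$.
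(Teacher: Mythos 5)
Your skeleton is the same as the paper's (moving planes in the spirit of Li--Maia: reflect, define $\lambda_\ast$, start the planes far upstream using decay, use the auxiliary function of Lemma~\ref{Phitheorem} to tame the bad-sign Bernoulli term, and invoke the Serrin corner-point lemma via the improved regularity of Lemma~\ref{addt'l regularity at boundary}), but two steps are not justified as written. First, the substitution $w^\lambda = \tilde\Phi\, v^\lambda$ does \emph{not} put ``the right sign structure in place'' globally. Besides fixing the boundary term, it creates an interior zeroth-order coefficient $\mathcal{Z} = \mathcal{A}\tilde\Phi$, and both this coefficient and the transformed boundary coefficient have the favorable sign only where $h$ and $h^\lambda$ are $\delta$-close to the background $H$: the inequalities \eqref{Phi1} and \eqref{Phi3} concern the limiting coefficients, and the strictness survives only up to $\mathcal{O}(\delta)$ perturbations. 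This is exactly why the paper's Lemma~\ref{monotonicity} is restricted to $\lambda < -K$, and why at the critical position the paper does not run a maximum principle for the transformed problem at all: it works with $v^{\lambda_\ast}$ directly, noting that at a touching point on $T$ the bad zeroth-order term of $\mathcal{B}$ is harmless because $v^{\lambda_\ast}$ vanishes there, and it uses $\tilde\Phi$ only to show the touching points $(r_k,0)$ of $v^{\lambda_k}$, $\lambda_k \searrow \lambda_\ast$, cannot drift off to $r=-\infty$. Your argument needs this case analysis (far field versus bounded touching points, interior of $T^{\lambda_\ast}$ versus corner); moreover, the corner step is not a bare appeal to Serrin: one must first show, by differentiating the Bernoulli condition in $r$ and using the reflection identities $h^{\lambda_\ast}_r(\lambda_\ast,0)=-h_r(\lambda_\ast,0)$, $h^{\lambda_\ast}_s(\lambda_\ast,0)=h_s(\lambda_\ast,0)$, together with solving $\mathcal{A}v^{\lambda_\ast}=0$ for $v^{\lambda_\ast}_{rr}$, that \emph{all} first and second derivatives of $v^{\lambda_\ast}$ vanish at $(\lambda_\ast,0)$; only then does Theorem~\ref{max principle}\ref{edge point} give the contradiction. (The bottom corner you worry about never arises: negative minima are forced onto $T$ by the strong maximum principle, since $v^\lambda$ vanishes on $B^\lambda$ and on $\{r=\lambda\}$.)

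Second, your monotonicity conclusion ``$\phi_r(\lambda,s)>0$ for $-1<s\le 0$'' cannot come from the Hopf lemma alone: at $s=0$ the point $(\lambda,0)$ is a corner of the half-strip, where no interior tangent ball exists and Theorem~\ref{max principle}\ref{hopf lemma} does not apply. The strict inequality on the top boundary requires the same second round of corner analysis as above (assume $h_r(\lambda,0)=0$, differentiate the boundary condition, deduce $v^\lambda_{rs}(\lambda,0)=0$ and $v^\lambda_{rr}(\lambda,0)=0$, and contradict Serrin), which is where the hypothesis \eqref{extra regularity H} is again indispensable. Two smaller points: your upper bound on $\lambda_\ast$ needs nontriviality of the wave (supplied by the elevation property, Theorem~\ref{supercritical elevation}); alternatively, as in the paper, $\lambda_\ast=+\infty$ would make $h$ strictly increasing in $r$, contradicting $\phi\in C_0^1(\overline R)$. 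And the elevation property also streamlines the very first step (attainment of a negative infimum of $v^\lambda/\tilde\Phi$ on the half-strip), which your sketch leaves implicit.
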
	
	
For this analysis, it is more natural to work with the height function $h$ directly.  We start by considering the reflected functions
	\begin{align*}
	\hlam(r,s) := h(2\lam - r, s)
	\end{align*}
where $\{r = \lam\}$ is the axis of reflection.  We also introduce the difference
	\begin{align*}
	\vlam(r,s) := \hlam(r,s) - h(r,s).
	\end{align*}
Clearly, the line $\{r=r_*\}$ is an axis of even symmetry for $h$ if and only if $v^{r_*} \equiv 0$.

Define the $\lam$-dependent sets
	\begin{align*}
	T^{\lam} := \{ (r,0) : r<\lam\}; \quad \calR^{\lam} := \{(r,s) \in \calR : r< \lam\}; \quad B^{\lam} := \{ (r,-1) : r< \lam\}.
	\end{align*}


If $h$ solves the height equation \eqref{heightFormulation}, then for each $\lam$, $\vlam$ solves
	\begin{align} \label{vlameqn}
		\left \{
			\begin{array}{l l}
			\calA \vlam =0 & \text{ in } \calR^{\lam},\\
			\calB \vlam = 0 & \text{ on } T^{\lam},\\
			\vlam = 0 & \text{ on } B^{\lam},
			\end{array}
		\right.
	\end{align}
where $\mathcal{A}$ and $\mathcal{B}$ are the operators introduced in \eqref{def A B operators} with $h^\lambda$ replacing $h^\delta$.
Observe that $\mathcal{A}$ is a divergence form elliptic operator with $C_\bdd^{0,\alpha}(\overline{R})$ coefficients, while $\mathcal{B}$ is a uniformly oblique boundary operator whose coefficients lie in $C_\bdd^{0,\alpha}(T)$. 
\begin{lemma}\label{monotonicity} Under the hypotheses of Theorem \ref{symmetrythm}, there exists $K>0$ such that
	\begin{align}\label{Kcondition1}
	\vlam &\geq 0 \, \text{ in } \calR^{\lam} \quad \text{ for all } \lam < -K, \text{ and }\\
	h_r &\geq 0 \, \text{ in } \calR^{\lam} \quad \text{ for all } \lam < -K.
	\end{align}
\end{lemma}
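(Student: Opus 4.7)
The strategy is a moving planes argument in the spirit of Li and Maia, using the auxiliary function $\tilde\Phi$ from Lemma~\ref{Phitheorem} to compensate for the unfavorable sign of the zeroth order boundary term. Since $\phi\in C^1_0(\overline R)$, I fix $K>0$ so that $\|\phi\|_{C^1(R\cap\{|r|>K\})}<\eta$ for a small $\eta>0$ to be chosen. For $\lambda<-K$, observe that $v^\lambda=0$ on $B^\lambda\cup\{r=\lambda\}$ and $v^\lambda(r,s)\to 0$ uniformly in $s$ as $r\to-\infty$ (since both $h(r,s)$ and $h(2\lambda-r,s)$ tend to $H(s)$), so any negative infimum of $v^\lambda$ in $\overline{R^\lambda}$ is attained at some point $P\in\overline{R^\lambda}$. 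I plan to rule out each possible location of $P$.

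For the interior case $P\in R^\lambda$, the operator $\mathcal A$ in \eqref{vlameqn} has no zeroth order term, so the strong minimum principle (Theorem~\ref{max principle}\ref{strong max principle}) forces $v^\lambda$ to be constant, contradicting $v^\lambda|_{B^\lambda}=0$. For $P\in T^\lambda$, I substitute $w^\lambda:=v^\lambda/\tilde\Phi$; a direct computation yields a transformed boundary equation of the form $\alpha\,w^\lambda_r-\beta\,w^\lambda_s+\tilde c\,w^\lambda=0$ on $T^\lambda$, where $\beta>0$ and
\[
\tilde c\;=\;\tfrac{1}{\tilde\Phi}\Bigl(-\beta\,\tilde\Phi_s+\tfrac{\tilde\Phi}{F^2}\Bigr).
\]
By \eqref{Phi3} and continuity, $\tilde c<0$ whenever $(\phi,\phi^\lambda)$ is small in $C^1$. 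The key observation is that because $(\phi,F)$ is a wave of elevation (Theorem~\ref{supercritical elevation}), the identity $v^\lambda=\phi^\lambda-\phi$ forces $v^\lambda>0$ wherever $\phi^\lambda$ is comparable to the wave's amplitude while $\phi$ is tail-small; consequently, any negative minimizer $P\in T^\lambda$ must lie in the region where both $\phi(P)$ and $\phi^\lambda(P)$ are small, and there $\tilde c(P)<0$. The Hopf boundary point lemma (Theorem~\ref{max principle}\ref{hopf lemma}) then gives $w^\lambda_s(P)<0$, while tangentiality forces $w^\lambda_r(P)=0$; plugging into the transformed equation yields $-\beta w^\lambda_s(P)+\tilde c(P)\,w^\lambda(P)>0$, contradicting its vanishing. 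At the upper corner $P=(\lambda,0)$, I apply the Serrin corner-point lemma (Theorem~\ref{max principle}\ref{edge point}), whose applicability is secured by the $C^{2,\alpha}$ boundary regularity from Lemma~\ref{addt'l regularity at boundary}; the lower corner $(\lambda,-1)$ cannot support a negative minimum since $v^\lambda$ vanishes on both abutting edges.

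Once $v^\lambda\ge 0$ on $R^\lambda$ is established for every $\lambda<-K$, the monotonicity $h_r\ge 0$ follows by a two-point reflection: given $r_1<r_2<-K$, setting $\lambda:=(r_1+r_2)/2<-K$ gives $v^\lambda(r_1,s)=h(r_2,s)-h(r_1,s)\ge 0$, so $h(\cdot,s)$ is nondecreasing on $(-\infty,-K)$, and hence $h_r\ge 0$ on $R^{-K}\supset R^\lambda$ by $C^1$ regularity. The main obstacle is the pointwise control of the sign of $\tilde c$ on $T^\lambda$: because the reflection samples $h$ over an unbounded set meeting the wave's main body, smallness of the coefficients does not hold uniformly on $T^\lambda$, and the decisive input is the elevation property, which localizes negative minima to the tail region where $\tilde c<0$. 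The corner-point analysis is similarly delicate and leans essentially on the improved boundary regularity of Lemma~\ref{addt'l regularity at boundary}.
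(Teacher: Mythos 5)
Your overall moving-planes framework matches the paper's, and your interior case is actually simpler than the paper's (since $\mathcal{A}$ in \eqref{vlameqn} has no zeroth-order term, a negative interior minimum of $v^\lambda$ is ruled out directly by the strong maximum principle, whereas the paper transforms first and then must sign the zeroth-order coefficient $\mathcal{Z}$). Your reduction of $h_r \geq 0$ to $v^\lambda \geq 0$ by choosing $\lambda = (r_1+r_2)/2$ is also fine. But the top-boundary case contains a genuine gap at exactly the step you flag as "decisive": the claim that a negative minimizer $P=(r_0,0)\in T^\lambda$ must lie where $\tilde c(P)<0$. Elevation plus the tail location of $r_0$ only gives smallness of the \emph{values} $\phi(P)$ and $\phi^\lambda(P)$ (via $0<\phi^\lambda(P)<\phi(P)<\eta$); it says nothing about $\phi^\lambda_s(P)=\phi_s(2\lambda-r_0,0)$, which is a derivative of $\phi$ evaluated at the reflected point in the wave's core. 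The coefficient $\beta = \frac{(1+h_r^2)(h^\lambda_s+h_s)}{2(h^\lambda_s)^2h_s^2}$ degenerates ($\beta\to 0$) if $h^\lambda_s(P)$ is large, in which case $\tilde c=\frac{1}{\tilde\Phi}\bigl(-\beta\tilde\Phi_s+\tfrac{\tilde\Phi}{F^2}\bigr)$ loses its sign; \eqref{Phi3} only guarantees $\tilde c<0$ when $\beta$ is close to $1/H_s^3(0)$. The paper closes this by two ingredients you omit: at the boundary minimum the tangential derivative vanishes, so $h^\lambda_r(P)=h_r(P)$ (small), and then the Bernoulli condition on $T$, satisfied by \emph{both} $h$ and $h^\lambda$, combined with $|h^\lambda(P)-h(P)|<\delta$, forces $|h^\lambda_s(P)-h_s(P)|=O(\delta)$; only then is $\beta(P)$ close to $1/H_s^3(0)$ and $\tilde c(P)<0$. (One could alternatively try a one-sided interpolation using $\phi^\lambda\ge 0$, $\phi^\lambda(P)<\eta$ and the global $C^{1,\alpha}$ bound to get an upper bound on $\phi^\lambda_s(P)$, which is the only dangerous direction, but no such argument appears in your proposal.)

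Two further, more mechanical points. First, you take $P$ to be a minimizer of $v^\lambda$ but then apply the Hopf lemma to $w^\lambda=v^\lambda/\tilde\Phi$ at $P$; since $\tilde\Phi$ is nonconstant in $s$, a minimum point of $v^\lambda$ need not be a (local) minimum point of $w^\lambda$, so neither $w^\lambda_r(P)=0$ nor $w^\lambda_s(P)<0$ is justified as written. The paper avoids this by running the whole infimum argument on the transformed function $u^\lambda=v^\lambda/\Phi$ from the start (using elevation, $u^\lambda>(H-h)/\Phi\to 0$ as $r\to-\infty$, to ensure a negative infimum is attained); you should do the same with $w^\lambda$. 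Second, the corner $(\lambda,0)$ needs no Serrin argument in this lemma: $v^\lambda$ vanishes identically on $\{r=\lambda\}$, so a negative minimum cannot occur there; the Serrin corner-point lemma is only needed later, at $\lambda_\star$, in the proof of Theorem~\ref{symmetrythm}.
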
	
\begin{proof} It is important to note that, once again, the coefficient of the zeroth order term in $\calB$ has the wrong sign for maximum principle arguments.  In the supercritical regime, we can bypass this difficulty by modifying the problem using an auxiliary function like in Section \ref{properness}.  Let $\Phi$ be defined as in Lemma \ref{Phitheorem}; in particular, recall that
	\begin{align}
	\left( -\dfrac{1}{H_s^3}\Phi_s + \Froude\Phi \right)\Big|_{s=0} < 0 \quad \text{ for } 0 < \Froude < \Fcr.
	\end{align}
We also choose $\ep$ sufficiently small so that $\Phi > \ep$ on $(-1,0)$ and we may now define $\ulam$ by $\vlam = \ulam\Phi$.  Recalling that $\Phi$ is only a function of $s$, a straightforward calculation shows that
	\begin{align*}
	0 &= \calA \vlam = (\calA' \ulam)\Phi - \left[\dfrac{2\hlam_r}{(\hlam_s)^2} \Phi_s \right] \ulam_r +\left[ \dfrac{2(1+(\hlam_r)^2)}{(\hlam_s)^3} \Phi_s \right] \ulam_s + \calZ\ulam =: \tilde{\calA} \ulam,
	\end{align*}
where $\calA'$ is the principal part of $\calA$:
	\begin{align*}
	\calA':= \dfrac{1}{\hlam_s}\p_r^2 - \dfrac{2\hlam_r}{(\hlam_s)^2}\p_r\p_s + \dfrac{1+(\hlam_r)^2}{(\hlam_s)^2}\p_s^2,
	\end{align*}
and
	\begin{align*}
	\calZ := \calA\Phi &= \dfrac{1+(\hlam_r)^2}{(\hlam_s)^3}\Phi_{ss} + \left(\dfrac{1}{2H_s^2}\right)_s \left[ \dfrac{(\hlam_s)^2 + \hlam_sh_s + h_s^2}{(\hlam_s)^3} \right]\Phi_s.
	\end{align*}
A similar computation reveals that
	\begin{align*}
	0 = \calB \vlam = (\calB' \ulam)\Phi + (\calB' \Phi)\ulam =: \tilde{\calB}\ulam,
	\end{align*}
where $\calB'$ is the principal part of $\calB$:
	\begin{align*}
	\calB' := \dfrac{\hlam_r + h_r}{2h_s^2}\p_r - \dfrac{(\hlam_s + h_s)(1+(\hlam_r)^2)}{2(\hlam_s)^2h_s^2}\p_s.
	\end{align*}
Therefore, $\ulam$ solves the PDE
	\begin{align}\label{ulamPDE}
	\left \{
		\begin{array}{l l}
		\tilde{\calA} \ulam =0 & \text{ in } \calR^{\lam},\\
		\tilde{\calB} \ulam =0 & \text{ on } T^{\lam},\\
		\ulam = 0 & \text{ on } B^{\lam}.
		\end{array}
	\right.
	\end{align}
	
We claim that there exists $K >0$ large enough that $\ulam \geq 0 \, \text{ in } \calR^{\lam}$ for all $\lam \leq -K$.  By way of contradiction, assume that for every $K$, there exists some $\lamo \leq -K$ such that $u^{\lamo}$ takes on a negative value in $\calR^{\lamo}$.  Since $h$ is a wave of elevation, $h^{\lam}$ must also be a wave of elevation for any $\lam$.  Now we know that $\ulam = 0$ on $\{r=\lam\}$, and 
	\begin{align*}
	\ulam = \dfrac{\hlam -h}{\Phi} > \dfrac{H-h}{\Phi}.
	\end{align*}
Furthermore, as $r \to -\infty$, we must have that
	\begin{align*}
	\dfrac{H-h}{\Phi} \to 0.
	\end{align*}
Therefore, $u^{\lamo}$ taking on a negative value in $\calR^{\lamo}$ implies that there exists a point $(r_0, s_0) \in \calR^{\lamo} \cup T^{\lamo}$ such that
	\begin{align}\label{ulaminf}
	\ds u^{\lamo}(r_0, s_0) = \inf_{\calR^{\lamo}} u^{\lamo} <0.
	\end{align}
	
 \tbf{Case I:}  Suppose first that ${(r_0,s_0) \in \calR^{\lamo}}$. Then $u^{\lamo} \in C^1(\overline{R})$ attains its global minimum at the interior point $(r_0,s_0)$, and thus
 	\begin{align}\label{gradvlamo}
	0 &= \nabla u^{\lamo}(r_0,s_0) = \left[ \dfrac{\nabla v^{\lamo}}{\Phi} + v^{\lamo}\left(0, \dfrac{-\Phi_s}{\Phi^2} \right) \right](r_0,s_0) =\left [ \dfrac{\nabla v^{\lamo}}{\Phi} - v^{\lamo}\left(0, \dfrac{\Phi_s}{\Phi^2}\right) \right](r_0, s_0).
	\end{align}
Recalling that $\phi  \in C_0^1(\overline{R})$, we know that for each $\delta >0$, there exists a $K$ sufficiently large enough so that
	\begin{align*}
	|h(r_0,s)-H(s)| < \delta \quad \text{ on } \calR^{-K}.
	\end{align*}
Additionally, from the chain of inequalities
	\begin{align*}
	H(s_0) < h^{\lamo}(r_0,s_0) < h(r_0,s_0) < H(s_0)+\delta
	\end{align*}
we conclude that $|v^{\lamo}(r_0,s_0)| < \delta$.  Furthermore, \eqref{gradvlamo} allows us to obtain bounds on $\nabla v^{\lamo}(r_0,s_0)$:
	\begin{align*}
	|\nabla v^{\lamo}(r_0,s_0)| = \left | \dfrac{\Phi_s(s_0)}{\Phi(s_0)} v^{\lamo}(r_0,s_0) \right| < C\delta,
	\end{align*}
with $C$ depending on $\ep$.  From this we may conclude that (for K large enough)
	\begin{align*}
	\calZ(s_0) = \left( \dfrac{\Phi_s}{H_s^3} \right)_s\Big|_{s_0} + \mathcal{O}(\delta).
	\end{align*}
	
But conditions \eqref{Phi1} and \eqref{Phi2} guarantee that $\calZ(s_0) < 0$ for $K$ sufficiently large.  Hence $\calZ$ has the correct sign, and an application of the maximum principle, Theorem~\ref{max principle}\ref{strong max principle}, to  \eqref{ulamPDE} at $(r_0,s_0)$ leads to a contradiction.

 \tbf{Case II:} Assume instead that $(r_0,s_0)=(r_0,0) \in T^{\lamo}$.  By the Hopf lemma, 
	\[ u^{\lamo}_r(r_0,s_0) = 0 \, \text{ and } u^{\lamo}_s(r_0,0) >0;\]
see Theorem~\ref{max principle}\ref{hopf lemma}
This, of course, further implies that
	\begin{align}\label{est1}
	h^{\lamo}_r(r_0,0) = h_r(r_0,0)
	\end{align}
and a reiteration of the argument in the previous case guarantees that for $K$ sufficiently large,
	\be \label{est2} 
	|h(r_0,0)-H(0)| < \delta,\qquad
	 |h_s(r_0,0)-H(0)| < \delta, \qquad
	  |h^{\lamo}(r_0,0) - H(0)| < \delta.
 \ee
From the Bernoulli boundary condition in \eqref{heightFormulation} evaluated at $(r_0,0)$, we see that
	\begin{align*}
	\dfrac{1+(h^{\lamo}_r)^2}{2(h^{\lamo}_s)^2} + \Froude h^{\lamo} = \dfrac{1+h_r^2}{2h_s^2} + \Froude h.
	\end{align*}
Then \eqref{est1} and \eqref{est2} yield the estimate
	\begin{align*}
	|v^{\lamo}_r(r_0,0)| = |h^{\lamo}_r(r_0,0) - h(r_0,0)| < C\delta.
	\end{align*}
Together, these deductions show that 
	\begin{align*}
	\tilde{\calB}u^{\lamo} &= -\left ( \dfrac{(h^{\lamo}_s + h_s)( 1+ (h_r^{\lamo})^2)}{2(h_s^{\lamo})^2h_s^2} u^{\lamo}_s \right) \Phi + \left( -\dfrac{1}{H^3}\Phi_{ss} + \Froude \Phi + \bigO(\delta) \right) u^{\lamo} = 0,
	\end{align*}
at  $(r_0, 0)$.  But recalling the sign of $\Phi$ and the equation it satisfies, this means that the coefficient of $u^{\lamo}$ above is negative, and hence \eqref{ulaminf} implies that $u^{\lamo}_s(r_0,0) >0$, a contradiction.		\QED

With Lemma \ref{monotonicity} in hand, we may finally turn to the proof of Theorem \ref{symmetrythm}.

\begin{proof}[Proof of Theorem \ref{symmetrythm}]  Consider the set
	\begin{align*}
	\Lambda := \left\{\lamo : \vlam > 0 \, \text{ in } \calR^{\lam}, \forall \lam < \lamo \right\},
	\end{align*}	
which is nonempty in light of Lemma \ref{monotonicity}.  Then we may define
	\begin{align*}
	\lamstar := \sup \Lambda,
	\end{align*}
and consider two cases.

Assume first that  $\lamstar = +\infty$.  Retracing definitions, we must then have $\vlam \geq 0$ in $\calR^{\lam}$ for all $\lam$.  Since $\vlam$ solves \eqref{vlameqn} in $\calR^{\lam}$ and vanishes at infinity, using the strong maximum principle we can strengthen this to $\vlam > 0$ in $\calR^{\lam}$ for all $\lam$.  This implies that as $|r|\to \infty$, $h \to H$ and $\hlam \to H^{\ast}$, where $H^{\ast} > H$.  But this contradicts the fact that $\phi \in C_0^1(\calR)$.

 We may therefore suppose that $\lamstar < +\infty$.  By the continuity of $\vlam$, this means
	\begin{align*}
	v^{\lamstar} \geq 0 \, \text{ in } \calR^{\lamstar}.
	\end{align*}
Recall that $v^{\lamstar}$ satisfies the elliptic equation \eqref{vlameqn} with $\lam = \lamstar$; an application of the strong maximum principle tells us that either $v^{\lamstar} > 0$ in $R^\lamstar$, or else $v^\lamstar \equiv 0$ in $R^\lamstar$.   In fact, it must be the latter.  To see this, we argue by contradiction:  assume $v^{\lamstar} > 0 \text{ in } \calR^{\lamstar}$; then there exists a sequence $\{ \lam_k \}_{k=1}^{\infty}$  with $\lam_k \searrow \lamstar$ and a sequence of points $\{ (r_k, s_k ) \}_{k=1}^{\infty} \subset \overline{\calR}^{\lam_k}$ such that
	\begin{align*}
	\ds v^{\lam_k}(r_k,s_k) = \inf_{\calR^{\lam_k}} v^{\lam_k} < 0.
	\end{align*}

Since $v^{\lam_k} =0$ on $B^{\lam_k}$ and $\{ r=\lam_k \}$, the strong maximum principle guarantees that $(r_k,s_k) \in T^{\lam_k}$, implying that
	\begin{align*}
	\vlamk_s(r_k,0) \leq 0 \quad \text{ and } \quad \vlamk_r(r_k,0) = 0.
	\end{align*}
We want to show that $\{ r_k \}$ is bounded from below.  If $\{ r_k \}$ were not bounded from below, then we can assume that $r_k < -K$ for some $K$ satisfying \eqref{Kcondition1}.  Consider $u^{\lam_k} := \vlamk/\Phi$ as in Lemma \ref{Phitheorem}.  	This implies that
	\begin{align*}
	\tilde{\calB}u^{\lam_k}(r_k,0) > 0,
	\end{align*}
a contradiction.	
	
Hence $\{r_k\}$ is bounded from below by some such $-K$. Invoking Bolzano--Weierstrass, we can extract a subsequence $\{r_{k_j}\}_{j=1}^{\infty}$ such that
	\begin{align*}
	(r_{k_j},0) \to (r_{\star},0) \text{ in } \overline{T}^{\lamstar} \text{ as } j \to \infty,
	\end{align*}
for some $r_{\star} \in [-K, \lamstar]$.  We know that $\vlamstar >0 \text{ in } \calR^{\lamstar}$, which implies that
	\begin{align*}
	\ds \lim_{j \to \infty} v^{\lam_{k_j}}(r_{k_j},0) = \vlamstar(r_{\star},0).
	\end{align*}

If $r_{\star} < \lamstar$, we have, by continuity, that
	\begin{align*}
	\vlamstar(r_{\star},0) = \vlamstar_r(r_{\star},0) =0.
	\end{align*}
Now, an application of the Hopf lemma shows that $\vlamstar_s(r_{\star},0) < 0$.  Reconsidering the operator $\calB$, we know that 
\begin{align*}
	0= (\calB\vlamstar)(r_{\star},0) = -\dfrac{(\hlamstar_s + h_s)(1+(\hlamstar_r)^2)}{2h_s^2(\hlamstar_s)^2}\vlamstar_s(r_{\star},0) > 0,
	\end{align*}
which is impossible.  

Therefore, $(r_{\star},0)$ must be a corner point of $\calR^{\lamstar}$, that is, $r_{\star} = \lamstar$.  We will now proceed to show that the Serrin corner-point lemma is violated.  Note that this is valid in view of Lemma \ref{addt'l regularity at boundary}, which ensures that we have classical second derivatives of $h$ near $T$.  Since $\vlamstar_r(r_{\star},0) = 0 = \vlamstar(r_{\star},0)$, we have that $\hlamstar(r_{\star},0) =0$.  We may rewrite the top boundary condition by clearing the denominators:
	\begin{align} \label{Bvlammodified}
	0= (\hlam_s)^2(\hlam_r + h_r)\vlam_r - (\hlam_s + h_s)(1+(\hlam_r)^2)\vlam_s + 2\Froude h_s^2(\hlam_s)^2\vlam.
	\end{align}
Evaluating at $\lambda = \lamstar$, differentiating with respect to $r$, and then evaluating at $(r_{\star},0) = (\lamstar,0)$, we see that
	\begin{align*}
	\vlamstar_{rs}(\lamstar,0) = 0,
	\end{align*}	
where we have made uses of the fact that $h_s > 0$, and the identities
	\begin{align*}
	\hlamstar_r(\lamstar,0) =-h_r(\lamstar,0), \quad \hlamstar_s(\lamstar,0) = h_s(\lamstar,0), \quad \text{ and } \hlamstar_{sr}(\lamstar,0) = h_{rs}(\lamstar,0).
	\end{align*}				
Furthermore, since $\vlamstar(\lamstar,\cdot) \equiv 0$, it follows that
	\begin{align*}
	\vlamstar_s(\lamstar,0) = \vlamstar_{ss}(\lamstar,0) =0.
	\end{align*}
Lastly, notice that we may solve for $\vlamstar_{rr}$ in the equation $\calA\vlamstar =0$ in a strip near the top, which gives
	\begin{align*}
	\vlamstar_{rr} &= \hlamstar_s \Bigg( \dfrac{2\hlamstar_r}{(\hlamstar_s)^2} \vlamstar_{rs} - \dfrac{1+(\hlamstar_r)^2}{(\hlamstar_s)^3}\vlamstar_{ss} - \dfrac{h_ss(\hlamstar_r + h_r) - 2\hlamstar_sh_{rs}}{(\hlamstar_s)^3}\vlamstar_r \\
	& \qquad \qquad \qquad \qquad \qquad \qquad \qquad \qquad - \left(\dfrac{1}{H_s^2}\right)_s\dfrac{(\hlamstar_s)^2 + \hlamstar_sh_s + h_s^2}{(\hlamstar_s)^2} \vlamstar_s \Bigg ).
	\end{align*}
Evaluating at $(\lamstar,0)$, we see that
	\begin{align*}
	\vlamstar_{rr}(\lamstar,0) =0.
	\end{align*}
Hence $\vlamstar$ and all of its derivatives up to second order vanish at $(\lamstar,0)$.  Since $\vlamstar$ solves \eqref{vlameqn} in $\calR^{\lamstar}$, this violates the Serrin corner-point lemma, which concludes that either the first or second order derivatives (in the direction of the unit normal at the surface) must be negative; see Theorem~\ref{max principle}\ref{edge point}.  Having arrived at a contradiction, we infer that $\vlamstar \equiv 0 \text{ in } \calR^{\lamstar}$ and hence $h$ and $\phi$ are symmetric about the axis $\{ r= \lamstar \}$.

Now we wish to consider the monotonicity of $h$.  Once again, for $\lam < \lamstar$, we have $\vlam >0 \text{ in } \calR^{\lam}$.  Notice that $\vlam$ vanishes on $\{r=\lam\}$, and so it attains its minimum on $\overline{\calR}^{\lam}$ at each point along this line.  Yet another use of the Hopf boundary point lemma yields the strict inequality
	\begin{align*}
	 \vlam_r(\lam,s) < 0, \qquad \textrm{for all } \lambda < \lamstar, ~-1 < s < 0,
	\end{align*}
and further that
	\begin{align} \label{vlamtohlam}
	h_r(\lam,s) = -\dfrac{1}{2} \vlam_r(\lam,s) > 0 \text{ for } \lam < \lamstar, \, -1<s<0,
	\end{align}
where we used the fact that $\hlam_r(\lam,s) = -h_r(\lam,s)$.  On the top boundary $T^{\lamstar}$, we know that $h_r \geq 0$ by continuity.  By way of contradiction, assume that $h_r(\lam,0)=0$ for some $(\lam,0) \in T^{\lamstar}$.  Then using \eqref{Bvlammodified}, differentiating with respect to $r$ and evaluating at $(\lam,0)$, and using the identities
	\begin{align*}
	h_r(\lam,0) = -\hlam_r(\lam,0), \qquad \vlam(\lam,0) = \vlam_r(\lam,0) = 0;\\
	h_s(\lam,0) = \hlam_s(\lam,0), \qquad h_{rs}(\lam,0)=-\hlam_{rs}(\lam,0),
	\end{align*}	
we have
	\begin{align*}
	0= 2h_s(\lam,0)\vlam_{rs}(\lam,0),
	\end{align*}
which implies that $\vlam_{rs}(\lam, 0) = 0$.  Once again, working in a neighborhood of $T^\lambda$, we can express $\vlam_{rr}$ using the equation $\mathcal{A} v = 0$ to find that $\vlam_{rr}(\lam,0)=0$.  As we argued earlier, this leads to a violation of the Serrin corner-point lemma since $\vlam$ satisfies \eqref{vlameqn}.  We have therefore proved that $h_r >0$ on $T^{\lamstar}$. But then \eqref{vlamtohlam} implies that $h_r > 0$ on all of $\calR^{\lamstar} \cup T^{\lamstar}$.  The same is true for $\phi_r$, and hence, $\{r=\lamstar\}$ is an axis of even symmetry, as desired.
\QED

\section{Small-amplitude waves}
\label{small amp}

In this section, we will establish the existence of small-amplitude solitary waves, which recall in our notation corresponds to a pair $(\phi,F)$ satisfying $\calF(\phi,F)=0$.  The main result is the following.

\begin{theorem}(Small-amplitude solitary waves) \label{small-amp}  Under the hypotheses of Theorem~\ref{main theorem}, there exists a curve of small-amplitude solitary waves 
	\begin{align*}
	\mathscr{C}_{\mathrm{loc}}:= \{(\phi^{\epsilon},F^{\epsilon}) : \epsilon \in (0,\epsilon_{*}) \} \subset  W^{2,p}(R)  \times (F_{\mathrm{cr}}, \infty) \subset \Udomain,
	\end{align*}
where each $(\phi^{\ep}, F^{\ep})$ is a solution of \eqref{phiHeightEqn} with the following properties:
	\begin{enumerate}[label=$\mathrm{(\roman*)}$]
		\item \label{existence_continuity} \emph{(Continuity)} The map 
		\[  (0, \epsilon_*) \ni \epsilon \mapsto (\phi^{\ep},F^\ep) \in X \times \mathbb{R}\]
		 is continuous and $\| \phi^\epsilon \|_{X} \to 0$, $F^\epsilon \to F_{\mathrm{cr}}$ as $\ep \to 0$.
		\item \label{existence_invertibility} \emph{(Invertibility)}  The linearized operator $\mathscr{F}_{\phi}(\phi^{\ep},F^{\ep})$ is invertible as a mapping $X \to Y$ for each $\ep \in (0, \ep_{*})$.
		\item \label{existence_uniqueness} \emph{(Uniqueness)} If $(\phi,F) \in \Udomain$ satisfy $\mathscr{F}(\phi, F) = 0$, with $\| \phi \|_{X}$ and $F- F_{\mathrm{cr}}$ sufficiently small, then there exists a unique $\ep \in (0,\ep_{*})$ such that $\phi = \phi^\epsilon$ and $F = F^\epsilon$.  
	\end{enumerate}
\end{theorem}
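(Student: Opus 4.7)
The plan is to construct $\mathscr{C}_{\loc}$ via the spatial dynamics/center manifold reduction approach that is sketched in Section~\ref{plan subsection}, following the template of Groves--Wahl\'en \cite{groves2008vorticity} and Wheeler \cite{wheeler2013solitary}, with care taken to accommodate the reduced regularity forced on us by the discontinuous vorticity. First I would reformulate the height equation \eqref{phiHeightEqn} on the fixed strip $R$ as an infinite-dimensional quasilinear Hamiltonian evolution equation in which the horizontal variable $r$ plays the role of time, with phase space a Hilbert space of pairs $(\phi(\cdot), \phi_r(\cdot))$ defined on the cross section $(-1,0)$ respecting the boundary conditions at $s=-1$ and $s=0$. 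The no-stagnation requirement $\inf(\phi_s + H_s)>0$ selects an open neighborhood of $0$ in this phase space where the formulation is well-defined and the symplectic form is non-degenerate.

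Second, I would analyze the spectrum of the linearization at $(\phi,F)=(0,F_{\mathrm{cr}})$. The linearized $r$-independent problem is exactly the Sturm--Liouville problem \eqref{S-LProblem}, and Lemma~\ref{critical Froude number lemma} together with Lemma~\ref{S-L spectrum} shows that at $\mu=\mu_{\mathrm{cr}}$ the eigenvalue $\nu_0=0$ is simple while all other eigenvalues are strictly positive. Translated to the spatial dynamics setting, this implies that $0$ is an eigenvalue of algebraic multiplicity $2$ and geometric multiplicity $1$ of the generator, and (by Lemma~\ref{Fredholm} and Lemma~\ref{strong invertibility lemma} applied near $F_{\mathrm{cr}}$) the rest of the spectrum is bounded away from the imaginary axis. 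Hence the hypotheses of a Mielke/Kirchg\"assner-type center manifold theorem for quasilinear elliptic problems on infinite cylinders are met, giving a two-dimensional center manifold $\mathcal{M}^F$ depending analytically on $F$ near $F_{\mathrm{cr}}$, onto which every small, uniformly bounded solution must lie.

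Third, I would expand the reduced Hamiltonian on $\mathcal{M}^F$ in powers of the bifurcation parameter $\mu - \mu_{\mathrm{cr}}$ and the center variables. A direct computation, exactly as in \cite{groves2008vorticity,wheeler2013solitary}, shows that after the standard rescaling the reduced flow is equivalent at leading order to the stationary Korteweg--de Vries equation $q'' = a(\mu - \mu_{\mathrm{cr}})q - bq^2 + \bigO(\cdot)$, with coefficients $a,b>0$ that can be read off from moments of $\Phi$ and $H_s$; the nondegeneracy of $b$ follows from \eqref{Gamma and H relation} and the explicit form of $\Phi$. For each sufficiently small $\epsilon>0$ one chooses $F^\epsilon > F_{\mathrm{cr}}$ so that the reduced planar Hamiltonian system has a symmetric homoclinic orbit of amplitude $\bigO(\epsilon^2)$, and this orbit lifts under the center manifold parametrization to a genuine solitary wave $\phi^\epsilon \in X$, even in $r$, with $\phi^\epsilon \to 0$, $F^\epsilon \to F_{\mathrm{cr}}$ as $\epsilon\to 0$, establishing \ref{existence_continuity}. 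The persistence and uniqueness of such homoclinics in the reduced system, together with the fact that every small bounded solution lies on $\mathcal{M}^F$, gives the local uniqueness claim \ref{existence_uniqueness}.

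Finally, for the invertibility statement \ref{existence_invertibility}, I would use continuity of the Fredholm property. By Lemma~\ref{Fredholm}, $\mathscr{F}_\phi(\phi^\epsilon, F^\epsilon)\colon X \to Y$ is Fredholm of index zero for every $\epsilon$, so injectivity implies invertibility. Injectivity at $\epsilon > 0$ (as opposed to at $\epsilon=0$, where the kernel is nontrivial) follows from the nondegeneracy of the reduced homoclinic: the reduced center-manifold flow near the KdV soliton has a one-dimensional kernel generated by translation in $r$, which corresponds to the odd function $\phi^\epsilon_r$, and this lies \emph{outside} the even space $X$. Combined with the hyperbolic directions being trivially invertible, this rules out any even kernel, so $\mathscr{F}_\phi(\phi^\epsilon, F^\epsilon)$ is an isomorphism on $X$. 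The main obstacle in all of the above is verifying that the standard quasilinear center manifold theorems apply in the function-space setting of Section~\ref{three formulations section}, where the coefficients of the PDE are only in $C^{0,\alpha} \cap W^{1,p}_{\loc}$ rather than smooth; this requires the spatial dynamics formulation to be posed in spaces compatible with $W^{2,p}_{\loc}$ regularity in $s$, and the careful verification that the nonlinear Nemytskii maps retain analyticity, mirroring the Constantin--Strauss approach in \cite{constantin2011discontinuous} but in the unbounded-domain regime.
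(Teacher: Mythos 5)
Your overall strategy coincides with the paper's: reformulate \eqref{phiHeightEqn} as a spatial-dynamics Hamiltonian system, verify the $0^2$ resonance at $F_{\mathrm{cr}}$ via the Sturm--Liouville analysis, apply a Mielke/Kirchg\"assner--Buffoni--Groves--Toland center manifold reduction, and obtain the wave as the lift of a persistent KdV-type homoclinic of the reduced planar system, with invertibility deduced from Fredholmness plus a kernel analysis. Two remarks on the linear part: the spectral gap off the imaginary axis is not a consequence of Lemma~\ref{Fredholm} or Lemma~\ref{strong invertibility lemma}; what the reduction theorem actually requires is a resolvent estimate of the form $\|u\|\le C(1+|\xi|)^{-1}\|(L-i\xi)u\|$ along the imaginary axis, which must be proved separately (in the paper this is Lemma~\ref{spectral properties}\ref{Spec_L_ii}). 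Also, before any reduction one must flatten the parameter-dependent nonlinear boundary condition defining $\Dom{\mathcal{V}_{\eHam}}$ (the near-identity maps $\mathcal{G}_1$, $\mathcal{G}_2^\epsilon$ of Section~\ref{COV}); your ``main obstacle'' paragraph gestures at the low-regularity issues but omits this step, which is where the domain of the vector field is made $\epsilon$-independent so that the theorem in Appendix~\ref{quoted CMR} applies.

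The genuine gap is in the uniqueness claim \ref{existence_uniqueness}. You assert ``persistence and uniqueness of such homoclinics in the reduced system,'' but uniqueness of small homoclinics for the reduced equation \eqref{Z remainder system} is precisely what has to be proved, and it does not follow from persistence of the KdV soliton: the origin is a saddle for the reduced planar flow, and a putative second solution $(\phi,F^\epsilon)$ that is not a translate of $\phi^\epsilon$ would correspond to an orbit approaching the origin along the branches of the stable/unstable manifolds with $z_1<0$, which the $\bigO(\epsilon^{1/2})$ remainder does not by itself exclude from remaining in the neighborhood of validity. The paper closes this by a PDE-level input: tracing such an orbit back through the changes of variables gives $\phi(r,0)=z_1(r)u_1(0)+\mathscr{R}(r,0)<0$ for large $|r|$, i.e.\ a wave that is not a wave of elevation, contradicting Theorem~\ref{supercritical elevation} (supercritical $\Leftrightarrow$ elevation). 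Without invoking that qualitative theorem (or supplying a substitute global phase-plane argument controlling the remainder on the whole neighborhood), part \ref{existence_uniqueness} is not established. A similar, milder caveat applies to your invertibility argument: reducing a kernel element of $\mathscr{F}_\phi(\phi^\epsilon,F^\epsilon)$ to a bounded solution of the linearized reduced ODE requires the linearized counterpart of the reduction (Lemma~\ref{change of vars lem}\ref{linear elliptic to dynamic} together with Lemma~\ref{CMRthm}\ref{CMRv}), not just the nonlinear statement that small bounded solutions lie on the center manifold; granting that, your ``translation mode is odd, hence excluded from $X$'' conclusion is the right endgame.
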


As we mentioned in Section \ref{main result section}, waves of this form were constructed previously by Hur in \cite{hur2008solitary} for $\gam \in C^0$ as well as Groves and Wahl\'en in \cite{groves2008vorticity} for $\gam \in H^1$.   The analogues of statements (i), (ii), and (iii) above were first obtained by Wheeler, who worked with $\gam \in C^{1,\al}$ for some $\al \in (0,\frac{1}{2}]$; see \cite[Theorem 4.1]{wheeler2013solitary}.  

Like Groves and Wahl\'en, our approach is to reformulate the height equation as a spatial dynamical system with Hamiltonian structure.   This procedure is undertaken in Section \ref{hamiltonian}.  Over the course of Section \ref{COV}, we  then perform several changes of variable that transform Hamilton's equations into an evolution equation $u_r = Lu + N^{\ep}(u)$, where $r$ acts as the time variable, and $N^\epsilon$ is a quadratic nonlinearity.  

We saw in Section \ref{linearized} that there is a special value of the Froude number, $F_{\mathrm{cr}}$, at which the minimum eigenvalue of a Sturm--Liouville problem associated to the linearized operator $\mathscr{F}_\phi(0,F)$ passes through the origin.  In the Hamiltonian framework, it will turn out that this corresponds to a so-called $0^2$ resonance:  when $F$ is slightly subcritical, $L$ has precisely two imaginary eigenvalues, and as $F$ increases through $F_{\mathrm{cr}}$, they collide at the origin, then leave along the real axis.  

With that in mind, our plan is to show that, at the point of collision, there is a two-dimensional center manifold for the system.  Small-amplitude solitary waves are homoclinic orbits of the spatial dynamical problem, and hence they can be constructed by analyzing the planar Hamiltonian system that results from projecting down to the center manifold.  More precisely, we will utilize \cite[Theorem 4.1]{buffoni1996plethora}, which is a version of the center manifold reduction theorem specifically designed to exploit the extra structure of Hamiltonian systems; see Appendix~\ref{quoted CMR}.   In many respects, our argument is quite similar to that of Groves and Wahl\'en.  Indeed, they only assume the additional reguality $\gamma \in H^1$ when they begin to consider the spectral theory.  For that reason, we will omit some details and focus on the areas where new ideas are needed.
\subsection{Formulation as a Hamiltonian system}
\label{hamiltonian}

Using $\phi = \phi(r,s)$ as before, let us introduce the variable
	\begin{align} \label{w defn}
	w := \dfrac{\phi_r}{\phi_s+H_s}.
	\end{align}
Throughout this section, we will frequently suppress the dependence of $(\phi,w)$ on $r$, thinking of them instead as $C^1$ functions of $r$ taking values in Hilbert spaces of $s$-dependent functions.  Specifically, we work with two such spaces
	\begin{align*}
	\calX &:=\{ (\phi,w) \in H^1((-1,0)) \times L^2((-1,0)) : \phi(-1)=0 \}\\
	\calY &:=\{(\phi,w) \in H^2((-1,0)) \times H^1((-1,0)) : \phi(-1)=0 \}.
	\end{align*}
In order to enforce the no stagnation condition, we work in the set
	\begin{align*}
	\calM := \{(\phi,w) \in \calY: \phi_s(s)+H_s(s) >0 \text{ for each } s \in [-1,0] \}.
	\end{align*}
As $\calY$ is both dense and smoothly embedded in $\calX$,   $\calM$ is a \tit{manifold domain} in $\calX$ (see, for example, \cite{mielke1991book}).  We introduce a symplectic form $\symplecticform: T\mathcal{M} \times T\mathcal{M} \to \R$ defined by
	\begin{align*}
	\ds \symplecticform|_{m} \left((\dot\phi_1,\dot w_1),\, (\dot\phi_2,\dot w_2)\right) :=  \int^{1}_{0} (\dot w_2\dot\phi_1-\dot w_1\dot\phi_2) \, ds \qquad \textrm{for all } m \in \mathcal{M}, ~(\dot \phi_i, \dot w_i) \in T_{m}\mathcal{M}.
	\end{align*}
Here, and in the remainder of this section, we abuse notation by identifying the tangent space $T\mathcal{M}$ with $\mathcal{M}$ itself.  We also note that $\Upsilon$ is in fact independent of the base point.   Consider the Hamiltonian $\mathcal{H} \in C^\infty(\mathcal{M}; \mathbb{R})$ given by
	\begin{align} \label{phiHam}
	\ds \Ham(\phi, w) &:= \int^{0}_{-1} \left ( \dfrac{1}{2} \left (w^2- \dfrac{1}{(\phi_s+H_s)^2} \right) - \dfrac{1}{2H_s^2} \right)(\phi_s + H_s) \, ds \, + \dfrac{1}{2F^2} \phi(0)^2.
	\end{align}
Computing the variations of $\mathcal{H}$, we find that 
	\begin{align*}
	\ds \Ham_w(\phi,w)[\dw] &= \int^{0}_{-1} (\phi_s + H_s)w\dw \, ds, \qquad 
	\ds \Ham_{\phi}(\phi,w)[\dot{\phi}] = \int_{-1}^{0} (-w_r) \dot{\phi} \, ds
	\end{align*}
so that
\[ \partial_r \begin{pmatrix} \phi \\ w \end{pmatrix} = \begin{pmatrix} 0 & 1 \\ -1 & 0 \end{pmatrix} \nabla \mathcal{H}(\phi,w) .\]
%

Together,  $(\mathcal{M}, \symplecticform, \mathcal{H})$ constitutes an infinite-dimensional Hamiltonian system.  We claim that it corresponds precisely to the steady water wave problem \eqref{phiHeightEqn}.  Let $\mathcal{V}_{\Ham}$ be the Hamiltonian vector field  associated to $(\mathcal{M},\symplecticform,\Ham)$, and denote its domain by $\Dom{\mathcal{V}_{\Ham}}$.  By definition, $m \in \mathcal{M}$ is in $\Dom{\mathcal{V}_{\Ham}}$ provided that
	\begin{align*}
	\symplecticform_m(\mathcal{V}_{\Ham}|_m, \mathcal{W}|_m) = d\Ham |_m(\mathcal{W}|_m) \qquad \textrm{for all } \mathcal{W} \in T\mathcal{M} \subset T\calX.
	\end{align*}
A straightforward calculation shows that
	\begin{align*}
	d\Ham |_m(\mathcal{W}|_m) = \int_{-1}^0 -\dfrac{1}{2} &\left(w^2 + \dfrac{1}{(\phi_s + H_s)^2} -\dfrac{1}{H_s^2} \right)_s  \dot\phi \, ds \\
	&+ \int_{-1}^0 (\phi_s + H_s)w \dot w \, ds+ \dfrac{1}{2} \left(w^2(0) + \dfrac{1}{(\phi_s(0) + H_s(0))^2} + \Froude(\phi(0))\right)\dot\phi(0),
	\end{align*}
for $m = (\phi,w) \in \mathcal{M}$ and $\mathcal{W}|_m = (\dot\phi, \dot w) \in T\mathcal{M} |_m$, while
	\begin{align*}
\Dom{\mathcal{V}_{\Ham}} & =  \left\{ (\phi,w) \in \mathcal{M} : w(-1)=0,\,
	\left(\dfrac{1}{2}\left(w^2+\dfrac{1}{(\phi_s + H_s)^2} \right)  -\dfrac{1}{2H_s^2} + \Froude\phi \right)\bigg|_{s=0} = 0  \right \}.
	\end{align*}
It follows that Hamilton's equations are explicitly
\begin{subequations} \label{Hamiltonian formulation}
	\begin{equation} \label{Hamilton}
	\left \{
		\begin{array}{l l}
		\phi_r = w(\phi_s+H_s)\\
		w_r = \dfrac{1}{2}\left(w^2+\dfrac{1}{\phi_s + H_s} \right)_s - \left(\dfrac{1}{2H_{s}^{2}}\right)_s,
		\end{array}
	\right.
	\end{equation}
with the addtional conditions on the boundary
	\begin{align} \label{Hamilton boundary conditions}
	 \phi(r,-1)=w(r,-1)=0, \qquad
	\left(\dfrac{1}{2}\left(w^2 + \dfrac{1}{\phi_s + H_s} \right) -\dfrac{1}{2H_s^2} + \Froude \phi \right) \bigg|_{s=0} = 0.
	\end{align}
\end{subequations}
We recognize immediately that \eqref{Hamiltonian formulation} is indeed identical to \eqref{phiHeightEqn}.

In the above formulation, the steady water wave problem is interpreted as an ill-posed evolution equation where the horizontal variable $r$ operates as the time variable.  The solitary waves that we hope to construct then correspond to orbits of \eqref{Hamiltonian formulation} that are homoclinic to $0$.  Notice also that the invariance of the original equation with respect to reflection $r \mapsto -r$ translates to time-reversibility for the Hamiltonian system.  That is, if $(\phi,w)(r)$ is a solution to \eqref{Hamiltonian formulation}, then $\reverser(\phi,w)(-r)$ is also a solution, where the reverser $\reverser(\phi,w) := (\phi,-w)$.  

As we will do all of our analysis in a neighborhood of the critical Froude number, it is convenient at this point to reparameterize.  Specifically, for $\ep$ in a neighborhood of the origin in $\R$, we let 
	\begin{align} \label{F epsilon}
	\Fep := \left(\Fcr - \ep \right)^{-1/2}.
	\end{align}
It is important to notice that, with this convention, $\ep > 0$ implies that $F^{\epsilon}$ is supercritical.  We likewise define the $\ep$-dependent Hamiltonian by
	\begin{align*}
	\ds \Ham^{\ep}(\phi, w) = \int^{0}_{-1} \Bigg \{ \dfrac{1}{2} \left (w^2- \dfrac{1}{(\phi_s+H_s)^2} \right) - \dfrac{1}{2H_s^2} \Big \}(\phi_s +& H_s) \, ds + \dfrac{1}{2}\left( \Fep \right)\phi(0)^2,
	\end{align*}
so that Hamilton's equations \eqref{Hamilton} become
	\begin{align} \label{Ham_phi_w}
	\left \{
	\begin{array}{l l}
	\ds \phi_r = w(\phi_s + H_s)\\
		w_r = \dfrac{1}{2} \left( w^2+ \dfrac{1}{(\phi_s + H_s)^2} +\dfrac{1}{H^{2}_{s}} \right)_s,
	\end{array}
	\right.
	\end{align}
for which the domain of the Hamiltonian vector field is the set
	\begin{align*}
	 \Dom{\mathcal{V}_{\eHam}} := \left \{ (\phi,w) : w(-1)=0, \,\left(\dfrac{1}{2}\left( w^2 + \dfrac{1}{(\phi_s + H_s)^2} \right) -\dfrac{1}{2H_s^2} + \left(\Fep \right)\phi\right)\Bigg|_{s=0} = 0 \right \}.
	 \end{align*}
Linearizing the Hamiltonian system $(\mathcal{M}, \symplecticform, \mathcal{H}^0)$ about the equilibrium solution $(\phi,w)=(0,0)$ results in the problem
	\begin{align*}
	\partial_r \begin{pmatrix} \dot{\phi} \\ \dot{w} \end{pmatrix} = L \begin{pmatrix} \dot{\phi} \\\dot{w}\end{pmatrix},
	\end{align*}
where $L: \Dom{L} \subset \calX \to \calX$ is given by
	\begin{align} \label{linear operator}
	L\begin{pmatrix} \dot{\phi} \\ \dot{w}\end{pmatrix} :=
	\begin{pmatrix}
		H_s\dot{w}\\
		-\partial_s \left( H_s^{-3}  \dot \phi_s \right) 
	\end{pmatrix}
	\end{align}
with domain
	\begin{align*}
	\ds \Dom{L} := \left \{ (\dot{\phi}, \dot{w}) \in \calY : \dot{w}(-1) = 0, -\dfrac{\dot{\phi}(0)}{H_s^3(0)} + \frac{1}{(F^\epsilon)^2} \dot{\phi}(0) = 0 \right \}
	\end{align*}

The operator $L$ is closely related to the Sturm--Liouville problem introduced in Section \ref{SLP}.  Indeed, we have already characterized the spectrum of $L$, as the next lemma shows.  

\begin{lemma}[Spectral Properties]  \label{spectral properties} The linearized operator $L: \Dom{L} \subset \mathcal{X} \to \mathcal{X}$ satisfies the following:
\begin{enumerate}[font=\upshape,label=(\roman*)]
\item \label{Spec_L_i} The spectrum of $L$ consists of an eigenvalue at $0$ with algebraic multiplicity $2$, together with simple eigenvalues $\pm \sqrt{\nu_j}$, where $\nu_j$ are the nonzero eigenvalues of the corresponding Sturm--Liouville problem \eqref{SLP}.  The eigenvector and generalized eigenvector associated with the eigenvalue $\nu=0$ are
	\begin{align} \label{generalized eigenvectors}
	u_1 := \begin{pmatrix} \ds  \int^{s}_{-1} H^{3}_{s} \, dt \\  0 \end{pmatrix}, \qquad u_2:= \begin{pmatrix} 0\\  \ds \dfrac{1}{H_s} \int^{s}_{-1} H^{3}_{s} \, dt \end{pmatrix}.
	\end{align}

\item \label{Spec_L_ii} There exists $\Theta>0$ and $C>0$ such that
	\begin{align*}
	\|u\|_{\mathcal{Y}} \leq C\|(L-i\theta I)u\|_{\mathcal{X}}, \quad \text{and} \quad \|u\|_{\mathcal{X}} \leq \dfrac{C}{|\theta|} \|(L-i\theta I)u\|_{\mathcal{Y}}
	\end{align*}
for $u \in \Dom{L}$ and $\theta \in \R$ with $|\theta|>\Theta$.
\end{enumerate}
\end{lemma}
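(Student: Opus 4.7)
The plan splits according to the two parts. For (i), since $\mathcal Y$ embeds compactly into $\mathcal X$ by Rellich--Kondrachov, $L$ has compact resolvent and hence purely discrete spectrum. The eigenproblem $Lu = \lambda u$ with $u = (\dot\phi,\dot w) \in \Dom L$ forces $\dot w = \lambda\dot\phi/H_s$ from the first component, and substitution into the second reduces the problem to
\[ -(\dot\phi_s/H_s^3)_s = \lambda^2 \dot\phi/H_s, \qquad \dot\phi(-1) = 0, \qquad -\dot\phi_s(0)/H_s^3(0) + \mu_{\mathrm{cr}}\dot\phi(0) = 0, \]
which is exactly \eqref{S-LProblem} with $\nu = \lambda^2$ and $\mu = \mu_{\mathrm{cr}}$. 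Lemma \ref{S-L spectrum} then identifies the spectrum of $L$ as $\{\pm\sqrt{\nu_j}\}_{j \geq 0}$ with the nonzero eigenvalues simple. A direct computation (using the defining identity $\mu_{\mathrm{cr}}\int_{-1}^0 H_s^3 \, dt = 1$ to check the top boundary condition) gives $L u_1 = 0$ and $L u_2 = u_1$, so the eigenvalue $0$ has geometric multiplicity one and algebraic multiplicity at least two.

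To match this lower bound, I would exploit the infinitesimally symplectic structure: as the linearization of a Hamiltonian vector field on $(\mathcal M, \Upsilon, \mathcal H^0)$, the spectrum of $L$ is symmetric under $\lambda \mapsto -\lambda$, and odd-size Jordan blocks at $0$ must appear in pairs. Combined with geometric multiplicity one this forces a single Jordan block, and the simplicity of $\nu_0 = 0$ as a zero of the function $B(\nu) - \mu_{\mathrm{cr}} H_s(0)^3$ in the proof of Lemma \ref{S-L spectrum} pins its size to exactly two. An equivalent direct approach is to verify that $L u_3 = u_2$ admits no solution via a short solvability computation against the kernel.

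For part (ii), the first inequality is an energy estimate. Writing $(L - i\theta I)u = (f,g)$ and eliminating $\dot w = (f + i\theta \dot\phi)/H_s$ from the first component reduces the system to the scalar ODE
\[ -(\dot\phi_s/H_s^3)_s + \theta^2 \dot\phi/H_s = g + i\theta f / H_s \quad \text{on } (-1,0), \]
with $\dot\phi(-1) = 0$ and the Robin top condition. Testing against $\overline{\dot\phi}$ and integrating by parts produces the coercive quadratic form $\int |\dot\phi_s|^2/H_s^3 \,ds + \theta^2 \int |\dot\phi|^2/H_s\,ds$ minus a wrong-sign boundary term $\mu_{\mathrm{cr}}|\dot\phi(0)|^2$. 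Absorbing the latter by the trace inequality $|\dot\phi(0)|^2 \leq \delta\|\dot\phi_s\|_{L^2}^2 + C_\delta\|\dot\phi\|_{L^2}^2$, valid once $|\theta|$ exceeds a threshold depending on $\mu_{\mathrm{cr}}$ and $C_\delta$, yields $\|\dot\phi_s\|_{L^2} + |\theta|\|\dot\phi\|_{L^2} \leq C(\|f\|_{L^2} + \|g\|_{L^2}/|\theta|)$. Standard $H^2$ elliptic regularity applied to the scalar ODE, followed by the algebraic identity for $\dot w$, upgrades this to the full $\mathcal Y$-norm bound. The second inequality then follows by rearranging the resolvent equation as $u = (Lu - (f,g))/(i\theta)$ and invoking the first inequality together with boundedness of $L\colon \mathcal Y \to \mathcal X$:
\[ \|u\|_{\mathcal X} \leq \tfrac{1}{|\theta|}\bigl(\|Lu\|_{\mathcal X} + \|(f,g)\|_{\mathcal X}\bigr) \leq \tfrac{C}{|\theta|}\bigl(\|u\|_{\mathcal Y} + \|(f,g)\|_{\mathcal Y}\bigr) \leq \tfrac{C'}{|\theta|}\|(f,g)\|_{\mathcal Y}. \]

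The main technical obstacle is the wrong-sign boundary term in the energy estimate, which is precisely what requires the threshold $|\theta| > \Theta$: the trace contribution can be absorbed only once $\theta^2$ dominates the constant $C_\delta\,\mu_{\mathrm{cr}}$ coming from the trace inequality. The algebraic-multiplicity step in (i) is a secondary subtlety, most cleanly handled via the symplectic pairing constraint on Jordan blocks.
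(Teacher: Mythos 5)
Your part (i) is correct and follows essentially the same route as the paper: reduce $Lu=\lambda u$ to the Sturm--Liouville problem \eqref{S-LProblem} with $\nu=\lambda^2$, $\mu=\mucr$, quote Lemma~\ref{S-L spectrum}, and verify $Lu_1=0$, $Lu_2=u_1$ by direct computation using $\mucr\int_{-1}^0 H_s^3\,dt=1$. The paper is terse about why the algebraic multiplicity is \emph{exactly} $2$ (it simply attributes this to $\nu=\lambda^2$), so your extra step is welcome; of your two suggestions, the direct solvability check is the cleaner one and does work: $Lu_3=u_2$ forces $\dot w_3=0$ and $-\partial_s(H_s^{-3}\partial_s\dot\phi_3)=\Phi/H_s$ with $\Phi(s)=\int_{-1}^s H_s^3\,dt$, and pairing with $\Phi$ (the operator with these Dirichlet/Robin conditions is self-adjoint) gives $\int_{-1}^0 \Phi^2/H_s\,ds=0$, a contradiction. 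The symplectic Jordan-block argument also works but needs the standard remark that $\Upsilon$ restricts nondegenerately to the generalized zero-eigenspace.

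For part (ii) there is a genuine gap at your final step. The paper defers to the argument of Chen--Walsh--Wheeler, and your energy estimate is indeed the right first move, but the claim that ``standard $H^2$ elliptic regularity \dots upgrades this to the full $\mathcal{Y}$-norm bound'' is exactly where the uniformity in $\theta$ — the whole content of the lemma — is hidden, and as written the step fails. Writing $(L-i\theta)u=(f,g)\in\mathcal{X}=H^1\times L^2$, the target is $\|\dot\phi\|_{H^2}+\|\dot w\|_{H^1}\le C\left(\|f\|_{H^1}+\|g\|_{L^2}\right)$ with $C$ independent of $\theta$. Your energy estimate only yields $\|\dot\phi_s\|_{L^2}+|\theta|\,\|\dot\phi\|_{L^2}\lesssim \|f\|_{L^2}+\|g\|_{L^2}/|\theta|$. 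Then (a) bounding $\|\dot w\|_{H^1}$ via $\dot w=(f+i\theta\dot\phi)/H_s$ requires control of $|\theta|\,\|\dot\phi_s\|_{L^2}$, which the energy estimate only bounds by $|\theta|\,\|f\|_{L^2}+\|g\|_{L^2}$; and (b) reading $\dot\phi_{ss}$ off the scalar ODE gives $\|\dot\phi_{ss}\|_{L^2}\lesssim \theta^2\|\dot\phi\|_{L^2}+|\theta|\,\|f\|_{L^2}+\|g\|_{L^2}+\|\dot\phi_s\|_{L^2}$, and since $\theta^2\|\dot\phi\|_{L^2}$ is itself only $\lesssim |\theta|\,\|f\|_{L^2}+\|g\|_{L^2}$, you are left with an uncontrolled term of size $|\theta|\,\|f\|_{L^2}$. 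The point is that one must exploit $f\in H^1$ (the $H^1$ component of the $\mathcal{X}$-norm of the data) and the cancellation between $i\theta f$ and $\theta^2\dot\phi$: for instance, perform a second energy estimate by testing against $\overline{\dot\phi}_{ss}$ (equivalently, differentiating the equation), which after integration by parts replaces $i\theta f$ by $i\theta\,\partial_s(f/H_s)$ paired with $\dot\phi_s$, absorbable into $\theta^2\|\dot\phi_s\|_{L^2}^2$; the boundary terms generated at $s=0$ must then be handled with the Robin condition and the trace inequality, which is a second place the threshold $\Theta$ enters. This is precisely what the quoted argument in \cite[Appendix A.1]{chen2017existence} does. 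Your derivation of the second inequality from the first (via $u=(Lu-(f,g))/(i\theta)$ and boundedness of $L:\mathcal{Y}\to\mathcal{X}$) is fine once the first inequality is actually established.
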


\pf The proof of (ii) is straightforward and follows from the exact same argument as in \cite[Appendix A.1]{chen2017existence}, so let us focus on (i).  Observe that $\lambda$ is an eigenvalue of $L$ provided that there exists a nontrivial solution  $\Phi$ to
	\begin{align*} \left \{
	\begin{array}{l l}
	-\left ( \dfrac{\Phi_s}{H^{3}_{s}} \right)_s = \lambda^2 \dfrac{\Phi}{H_s}, & \\
	\Phi(-1)=0
	\\
	\dfrac{1}{(F^\epsilon)^2} \Phi(0)-\dfrac{\Phi_s(0)}{H^{3}_{s}(0)}=0.
	\end{array}
	\right.
	\end{align*}
This is exactly the Sturm--Liouville equation \eqref{S-LProblem} with $\nu = \lambda^2$ and $\mu = 1/(F^\epsilon)^2$.  From Lemma \ref{S-L spectrum}, we know that $0$ is indeed an eigenvalue for $\epsilon = 0$, and the rest of the spectrum has the stated form.  Note that the algebraic multiplicity of $2$ for $0$ as an eigenvalue of $L$ results from the fact that $\nu = \lambda^2$ here.  Finally, an elementary calculation shows that the eigenvector $u_1$ and generalized eigenvector $u_2$ are given by the formulas in \eqref{generalized eigenvectors}.
\QED

\subsection{Further change of variables}
\label{COV}

Prior to applying the center manifold reduction theorem, we need to restructure \eqref{Ham_phi_w} to get rid of the nonlinearity in the boundary condition, which effectively flattens $\Dom{\mathcal{V}_{\eHam}}$.  We proceed by making the following change of variables:  let $\Lambda \times \mathcal{Z}$ be a neighborhood of the origin in $\mathbb{R} \times \calY$, and consider the mapping $\mathcal{G}: \Lambda \times \mathcal{Z}   \to H^1((-1,0))$ defined by
	\begin{align} \label{nearidentityF}
	\ds \mathcal{G} (\phi, w) := -\dfrac{1}{2}\left(w^2 + \dfrac{1}{(\phi_s + H_s)^2} \right) +  \dfrac{1}{2H^{2}_{s}} - \dfrac{1}{H_s^3}\phi_s
	\end{align}
so that $(\phi, w) \in \Dom{\mathcal{V}_{\mathcal{H}^0}}$ provided that $\mathcal{G}(\phi, w)|_{s=0} = 0$ and $w(-1) = 0$.   Here we have used the definition of $F_{\mathrm{cr}}$ in \eqref{def mucr}.  
Now let
	\begin{align*}
	\ds \zeta := \phi - H^{3}_{s}(0)(1+s) \int^{0}_{s} \mathcal{G}(\phi,w)(t) \, dt
	\end{align*}
and consider the mapping $\mathcal{G}_1: \mathcal{Z} \to H^2((-1,0)) \times H^1((-1,0))$ defined by $\mathcal{G}_1(\phi,w) = (\zeta,w)$.  We see that the nonlinear boundary condition in the definition of $\Dom{\mathcal{V}_{\eHam}}$ is equivalent to the following linear condition for $\zeta$:
	\begin{align} \label{G1boundaryconditions}
	\Fep \zeta(0) - \dfrac{1}{H_s^3(0)}\zeta_s(0) = 0.
	\end{align}
	
This is still parameter-dependent, though, so we make one more additional change of variables.  Consider the linear function $\mathcal{G}^{\ep}_{2} : \mathcal{Y} \to \mathcal{Y}$ defined by $\mathcal{G}^{\ep}_{2}(\zeta,w) := (\xi,w)$, where
	\begin{align*}
	\ds \xi &:= \zeta + \ep H^{3}_{s}(0)(1+s) \int^{0}_{s} \zeta(t) \, dt.
	\end{align*}			
It follows that the boundary conditions \eqref{G1boundaryconditions} become
	\begin{align*}
	\ds \Fcr \xi(0) - \dfrac{1}{H_s^3(0)}\xi_s(0) = 0.
	\end{align*}
Denoting $\mathcal{G}^{\ep}(\phi,w) := \mathcal{G}^{\ep}_2 \circ \mathcal{G}_1$, the following lemma verifies that $\mathcal{G}^{\ep}$ is a well-defined change of variables.

\begin{lemma} \label{near-identity G} For $\Lambda \times \mathcal{Z}$, a neighborhood of the origin in $\R \times \mathcal{Y}$, the following hold:
\begin{enumerate}[font=\upshape,label=(\roman*)]
	\item For each $\ep \in \Lambda$, $\mathcal{G}^{\ep}: \mathcal{Z} \to \mathcal{Y}$ is a diffeomorphism onto its image.  The mappings $\mathcal{G}^{\ep}$ and $(\mathcal{G}^{\ep})^{-1}$ depend smoothly on $\ep$.
	\item For each $(\ep, \phi, w) \in \Lambda \times \mathcal{Z}$, the derivative $D\mathcal{G}^{\ep}(\phi,w) : \calY \to \calY$ extends to an isomorphism $\widehat{D\mathcal{G}^{\ep}}(\phi,w) : \calX \to \calX$.  The operators $\widehat{D\mathcal{G}^{\ep}}$ and $(\widehat{D\mathcal{G}^{\ep}})^{-1}$ depend smoothly on $(\ep, \phi, w)$.
	\item The reverser is invariant under $\mathcal{G}^\epsilon$ in the sense that $\reverser = \mathcal{G}^{\epsilon} \circ \reverser \circ (\mathcal{G}^\epsilon)^{-1}$.
\end{enumerate}
\end{lemma}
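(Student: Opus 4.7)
The pivotal observation is that $\mathcal{G}$ has been engineered, using the definition \eqref{def mucr} of $F_{\mathrm{cr}}$, so that $\mathcal{G}(0,0) = 0$ and $D\mathcal{G}(0,0) = 0$. Indeed, direct differentiation of \eqref{nearidentityF} gives
\[
D\mathcal{G}(\phi,w)[\dot\phi,\dot w] = \frac{\dot\phi_s}{(\phi_s+H_s)^3} - \frac{\dot\phi_s}{H_s^3} - w\dot w,
\]
which vanishes at $(\phi,w) = (0,0)$. Consequently, $\mathcal{G}_1(0,0) = (0,0)$ and $D\mathcal{G}_1(0,0)$ is the identity on $\mathcal{Y}$. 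Smooth dependence of $\mathcal{G}$ on $(\phi,w)$ is clear from the formula, and hence the classical inverse function theorem produces a neighborhood of the origin on which $\mathcal{G}_1$ is a $C^\infty$-diffeomorphism onto its image. For $\mathcal{G}_2^\epsilon$, note that it is linear in $(\zeta,w)$ and linear (hence smooth) in $\epsilon$, with $\mathcal{G}_2^0 = I_{\mathcal{Y}}$; shrinking $\Lambda$ if needed, the Neumann series furnishes a smooth family of bounded inverses. Composing and shrinking $\Lambda\times\mathcal{Z}$ if necessary establishes \textup{(i)}.

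For \textup{(ii)}, the plan is to write $\widehat{D\mathcal{G}^\epsilon}(\phi,w) = I + \widehat{K}^\epsilon(\phi,w)$ and verify that $\widehat{K}^\epsilon$ extends boundedly to $\mathcal{X}$ with small operator norm in a neighborhood of $(\epsilon,\phi,w) = (0,0,0)$. The key point is that the off-identity contributions in $D\mathcal{G}_1$ and $D\mathcal{G}_2^\epsilon$ arise only through the operator $(\dot\phi,\dot w) \mapsto H_s^3(0)(1+s)\int_s^0(\,\cdot\,)\,dt$ acting on either $D\mathcal{G}(\phi,w)[\dot\phi,\dot w]$ or $\dot\zeta$; the outer integral is smoothing, while the integrand, given the no-stagnation bound $\inf(\phi_s+H_s) > 0$ and the continuous embedding $H^1 \hookrightarrow C$ used to multiply $w \in H^1$ against $\dot w \in L^2$, is already an element of $L^2((-1,0))$ for every $(\dot\phi,\dot w) \in \mathcal{X}$. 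Thus each of $\widehat{D\mathcal{G}_1}$ and $\widehat{D\mathcal{G}_2^\epsilon}$ extends to a bounded linear operator on $\mathcal{X}$, depending smoothly on its arguments. At the origin both reduce to the identity, so the set of invertible operators being open allows us to shrink $\Lambda\times\mathcal{Z}$ once more to ensure invertibility of the extensions everywhere, with smooth dependence coming from the smooth inversion map in $\mathcal{L}(\mathcal{X})$.

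Finally, \textup{(iii)} will follow from inspection of the defining formulas: the map $\mathcal{G}$ depends on $w$ only through the even term $w^2$, so $\mathcal{G}(\phi,-w) = \mathcal{G}(\phi,w)$; therefore the modification $\zeta = \phi - H_s^3(0)(1+s)\int_s^0 \mathcal{G}(\phi,w)\,dt$ is likewise unchanged when $w$ is negated, whence $\mathcal{G}_1 \circ \reverser = \reverser \circ \mathcal{G}_1$. The operator $\mathcal{G}_2^\epsilon$ acts only on the $\zeta$-component and is trivially compatible with $\reverser$, so $\mathcal{G}^\epsilon \circ \reverser = \reverser \circ \mathcal{G}^\epsilon$, and composing on the right by $(\mathcal{G}^\epsilon)^{-1}$ yields the claimed identity. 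The main technical hurdle in this program is the extension step in \textup{(ii)}: verifying that the nominally $\mathcal{Y}$-valued differential retains its invertibility when viewed on the rougher phase space $\mathcal{X}$, which is essential for interpreting the center manifold reduction in the correct functional framework.
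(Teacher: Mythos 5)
Your proposal is correct and follows essentially the same route as the paper: the paper's proof simply invokes \cite[Lemma 4.7]{wheeler2013solitary} together with the facts $\mathcal{G}^{\ep}(0,0)=0$, $D\mathcal{G}^0(0,0)=\operatorname{id}$, and the inverse function theorem, which is exactly the near-identity-plus-IFT structure you exploit. Your additional details --- the Neumann-series treatment of $\mathcal{G}_2^\ep$ and of the extension $\widehat{D\mathcal{G}^\ep} = I + \widehat{K}^\ep$ on $\mathcal{X}$ (using $H^1\hookrightarrow C^0$ to handle $w\dot w$), and the observation that $\mathcal{G}$ depends on $w$ only through $w^2$ for the reversibility claim --- are precisely the content of the quoted lemma, filled in correctly.
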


\begin{proof} The proof follows exactly as in \cite[Lemma 4.7]{wheeler2013solitary}, which uses the facts
	\begin{align*}
	\mathcal{G}^{\ep}(0,0) = 0, \quad D\mathcal{G}^0(\phi,w) = \textrm{id}: \calY \to \calY,
	\end{align*}
and an application of the inverse function theorem. 
\QED

The ultimate result of the above myriad variable changes is that the Hamiltonian system $(\mathcal{M}, \mathcal{H}^\epsilon, \Upsilon)$ has been transformed to $(\mathcal{M}, \Upsilon_*, \mathcal{H}_*^\epsilon)$, where $\mathcal{H}_*^\epsilon := \mathcal{H}^\epsilon \circ (\mathcal{G}^\epsilon)^{-1}$, and likewise $\Upsilon_*^\epsilon$ is the pushforward of $\Upsilon$ under $\mathcal{G}^{\epsilon}$.   The corresponding Hamilton's equations now take the form
\be  u_r = L u + N^\epsilon(u), \label{final Hamiltonian} \ee
for $u = \mathcal{G}^\epsilon(\phi, w)$.  
The next lemma asserts the equivalence \eqref{final Hamiltonian}  with the original height equation and Eulerian formulations discussed earlier.  As it is entirely standard, we omit the proof (see, for example, \cite[Lemma 4.3, Lemma 4.4]{wheeler2013solitary}.)  

\begin{lemma}\label{change of vars lem}\hfill
  \begin{enumerate}[label=\rm(\roman*)]
  \item \label{elliptic to dynamic} Let $(\phi,F^\epsilon) \in X \times \mathbb{R}$ be a solution of the height equation \eqref{phiHeightEqn} with $\n \phi_X$ and $\abs \epsilon$ sufficiently small.  Then  
\[
          u := \mathcal{G}^\epsilon \left(\phi, \frac{\phi_r}{\phi_s + H_s}\right) \in C^4(\mathbb{R} ; \mathcal{X}) \cap C_0^1(\mathbb{R} ; \mathcal{X}) \cap C^3(\mathbb{R}; \mathcal{Z}) \cap C_0^0(\mathbb{R}; \mathcal{Z})
\]
   solves Hamilton's equations \eqref{final Hamiltonian} and is reversible in the sense that $u(-r)=\reverser u(r)$.
  \item \label{dynamic to elliptic} Conversely, suppose that $u \in C^4_0(\R,\X) \cap C^3_0(\R;\mathcal{Z})$ satisfies \eqref{final Hamiltonian} and $u(-r)=\reverser u(r)$. Then setting $(\phi,w) := (\mathcal{G}^\epsilon)^{-1}(u)$, it follows that $\phi \in X$ and solves $\F(\phi,F^\epsilon) = 0$. Moreover, the mapping $(\epsilon,u) \mapsto \phi$ is continuous $\R \by C^4_0(\R;\X) \cap C^3_0(\R;\mathcal{Z}) \to X$.
  \item \label{linear elliptic to dynamic} With $u$ and $\phi$ as above, suppose that $\dot \phi \in X$ is a nontrivial solution of the linearized problem $\F_\phi(\phi,F^\epsilon)\dot \phi = 0$. Then there is an associated nontrivial solution $\dot u \in C^2_\bdd(\R,\X) \cap C^1_\bdd(\R,\Dom{\linear})$ of the linearized Hamiltonian $\dot u_r = L \dot u + DN^\epsilon(u) \dot u$ satisfying $\dot u(-r) = \reverser \dot u(r)$.
  \end{enumerate}
\end{lemma}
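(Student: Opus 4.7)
The plan is to prove each item by direct verification, exploiting that the successive changes of variables in Section~\ref{COV} were engineered precisely to convert the height equation into the Hamiltonian evolution $u_r = Lu + N^\epsilon(u)$ on a flattened domain. The essential bridge throughout is between spatial regularity (what we know about $\phi$) and temporal regularity (what is claimed about $u(r)$), which is effected by the fact that $L$ and $N^\epsilon$ involve only $s$-derivatives and are $r$-independent, so temporal derivatives of $u$ can always be rewritten as $s$-derivatives of $u$ using the equation itself.

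For part~\ref{elliptic to dynamic}, given $\phi \in X$ solving $\mathscr{F}(\phi,F^\epsilon) = 0$, set $w := \phi_r/(\phi_s + H_s)$, which is well-defined and smooth in the fiber direction because $(\phi,F^\epsilon) \in \Udomain$ ensures $\phi_s + H_s > 0$. The first equation of \eqref{Ham_phi_w} is then the definition of $w$, while the second equation is a rearrangement of $\mathscr{F}_1(\phi,F^\epsilon) = 0$. The conditions $\phi(r,-1) = w(r,-1) = 0$ are immediate, and $\mathscr{F}_2(\phi,F^\epsilon) = 0$ is exactly the nonlinear top condition placing $(\phi,w) \in \Dom{\mathcal{V}_{\eHam}}$. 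Applying $\mathcal{G}^\epsilon$ to $(\phi,w)$ then yields $u$ solving $u_r = Lu + N^\epsilon(u)$, since $L + N^\epsilon$ is by construction the pushforward of $\mathcal{V}_{\eHam}$ under $\mathcal{G}^\epsilon$. The membership $\phi \in X$ gives $u \in C_0^0(\mathbb{R};\mathcal{Z}) \cap C_0^1(\mathbb{R};\mathcal{X})$ via Sobolev embedding on horizontal slices; differentiating $u_r = Lu + N^\epsilon(u)$ in $r$ a finite number of times, each differentiation trading a temporal derivative for an $s$-derivative already controlled by $\mathcal{Y}$, then bootstraps to the stated $C^4(\mathbb{R};\mathcal{X}) \cap C^3(\mathbb{R};\mathcal{Z})$ regularity. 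Reversibility follows because evenness of $\phi$ in $r$ forces $w$ to be odd, and then Lemma~\ref{near-identity G}(iii) transfers this to $u(-r) = \reverser u(r)$.

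Part~\ref{dynamic to elliptic} reverses this procedure: set $(\phi,w) := (\mathcal{G}^\epsilon)^{-1}(u)$, which lies in $\Dom{\mathcal{V}_{\eHam}}$ and satisfies \eqref{Ham_phi_w} by Lemma~\ref{near-identity G}. Reading off the first equation recovers $w = \phi_r/(\phi_s + H_s)$ and the second together with the boundary conditions yields $\mathscr{F}(\phi,F^\epsilon) = 0$. The membership $\phi \in X$ is obtained from the $\mathcal{Z}$-regularity of $u$ combined with interior elliptic regularity for the height equation to produce the $W^{2,p}_{\mathrm{loc}}$ component, while the continuous dependence of $(\epsilon,u) \mapsto \phi$ inherits from the smoothness of $(\mathcal{G}^\epsilon)^{-1}$ in Lemma~\ref{near-identity G}. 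For part~\ref{linear elliptic to dynamic}, differentiate both constructions in $\phi$: given a nontrivial $\dot\phi \in \ker \mathscr{F}_\phi(\phi,F^\epsilon)$, define $\dot w$ as the linearization of $\phi_r/(\phi_s + H_s)$ at $\phi$ in the direction $\dot\phi$, and set $\dot u := \widehat{D\mathcal{G}^\epsilon}(\phi,w)(\dot\phi,\dot w)$. Linearizing the computation from part~\ref{elliptic to dynamic} shows $\dot u_r = L\dot u + DN^\epsilon(u)\dot u$, and $\dot u \neq 0$ because $\widehat{D\mathcal{G}^\epsilon}$ is an isomorphism by Lemma~\ref{near-identity G}(ii).

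The main obstacle is purely the regularity bookkeeping across the composite changes of variables, particularly the claim that the uniform decay built into $X$ transfers correctly through $\mathcal{G}^\epsilon$ to produce the $C_0^0$ and $C_0^1$ vanishing conditions on $u$. This depends decisively on the $r$-independence of $H_s$ and of $\mathcal{G}^\epsilon$, which makes every coefficient in both the equation and the change of variables translation-invariant in $r$ and therefore compatible with the limit $r \to \pm\infty$. Once this is in place, bootstrapping temporal regularity via $u_r = Lu + N^\epsilon(u)$ is mechanical.
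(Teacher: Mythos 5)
Your overall strategy --- direct verification that the composite change of variables carries solutions of \eqref{phiHeightEqn} to solutions of \eqref{final Hamiltonian} and back, with reversibility coming from evenness of $\phi$ and Lemma~\ref{near-identity G}(iii) --- is the standard one; the paper in fact omits the proof entirely, citing the corresponding lemmas of Wheeler, so the algebraic part of your argument (identifying $w = \phi_r/(\phi_s+H_s)$, reading $\mathscr{F}_1=0$ as the second equation of \eqref{Ham_phi_w}, $\mathscr{F}_2=0$ as membership in $\Dom{\mathcal{V}_{\eHam}}$, and pushing forward by $\mathcal{G}^\epsilon$) is fine, as are parts \ref{dynamic to elliptic} and \ref{linear elliptic to dynamic} in outline (in \ref{dynamic to elliptic} you should note that the evenness required by $X$ comes from the reversibility hypothesis $u(-r)=\reverser u(r)$).

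The genuine gap is in your regularity bookkeeping for part \ref{elliptic to dynamic}. You propose to obtain $u \in C^4(\mathbb{R};\mathcal{X}) \cap C^3(\mathbb{R};\mathcal{Z})$ by ``differentiating $u_r = Lu + N^\epsilon(u)$ in $r$, each differentiation trading a temporal derivative for an $s$-derivative already controlled by $\mathcal{Y}$.'' This bootstrap runs in the wrong direction: $L$ and $N^\epsilon$ take two $s$-derivatives of the first component, so they map $\mathcal{Y}$-valued data only into $\mathcal{X}$-valued data. Starting from $u \in C^0(\mathbb{R};\mathcal{Z})$ the equation yields $u_r \in C^0(\mathbb{R};\mathcal{X})$, i.e.\ $u \in C^1(\mathbb{R};\mathcal{X})$, but a second $r$-differentiation would require $u_r(r) \in \mathcal{Y}$, which is precisely what the equation does \emph{not} supply; each temporal derivative consumes $s$-regularity rather than being financed by it, and $\phi \in X$ only provides $W^{2,p}_{\mathrm{loc}}$ in $s$. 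The missing ingredient is the improved-regularity Lemma~\ref{addt'l regularity at boundary}: exploiting horizontal translation invariance and Schauder estimates, every solution satisfies $\partial_r^k h \in C^{0,\alpha}_\bdd(\overline{R})$ (indeed $\partial_r^k h \in C^{1,\alpha}_\bdd$) for all $k \geq 0$, and it is this elliptic fact --- explicitly invoked in the remark following the lemma in the paper --- that justifies the high temporal regularity of $r \mapsto u(r)$. The same issue affects your decay claims: membership in $C_0^1(\mathbb{R};\mathcal{X}) \cap C_0^0(\mathbb{R};\mathcal{Z})$ involves decay of $\phi_{rs}$, $\phi_{rr}$ and of $\phi_{ss}(r,\cdot)$ in $L^2$, which does not follow from $\phi \in C_0^1(\overline{R})$ and ``Sobolev embedding on horizontal slices''; it is obtained by combining the uniform bounds on $\partial_r^k \phi$ from Lemma~\ref{addt'l regularity at boundary} with the $C_0^1$ decay (interpolation), together with the interior equation to express $\phi_{ss}$ in terms of quantities already known to decay.
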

Note that although the above lemma supposes that $u$ is quite smooth with respect to the time-like variable, this is perfectly justified in light of Lemma~\ref{addt'l regularity at boundary}.  Also, the small-amplitude solutions that we construct below are exponentially localized, so $\partial_r^k u$ is decays at infinity for any $k \geq 0$.  

\subsection{Center manifold reduction}
\label{CMR}

We have now laid the groundwork for the center manifold reduction.  Let $\Xc \subset \calX$ be the two-dimensional (generalized) eigenspace for $L$ associated with the eigenvalue $0$.  Also let $\Pc$ be the associated spectral projection, and write $\Ph := I-\Pc, \Xh := \Ph\calX$.  We denote $u_{\mathrm{c}} \in \Pc\Dom{L}$; more precisely, $u_{\mathrm{c}} = z_1 u_1 +z_2u_2$, where $u_1, u_2 \in \Dom{L}$ are the eigenvector and generalized eigenvector from Lemma~\ref{spectral properties}\ref{Spec_L_i}.

\begin{lemma}[Center manifold reduction]  \label{CMRthm} For any integer $k \geq 2$, there exists a neighborhood $\Lambda \times \mathcal{U}$ of the origin in $\R \times \Dom{L}$ such that, for each $\ep \in \Lambda$, there exists a two-dimensional manifold $\mathcal{W}^{\ep} \subset \mathcal{U}$ together with an invertible coordinate map
	\begin{align*}
	\chi^{\ep} := \Pc|_{\mathcal{W}^{\ep}} : \mathcal{W}^{\ep} \to \mathcal{U}^c := \Pc\mathcal{U},
	\end{align*}
with the following properties:
	\begin{enumerate}[font=\upshape,label=(\roman*)]
		\item \label{CMRi} Defining $\Psi^{\ep} : \mathcal{U}^{\mathrm{c}} \to \mathcal{U}^* := \Ph\mathcal{U}$ by
			\begin{align*}
			u_{\mathrm{c}} + \Psi^{\ep}(u_\mathrm{c}) := (\chi^{\ep})^{-1}(u_{\mathrm{c}}),
			\end{align*}
		the map $(\ep, u) \mapsto \Psi^{\ep}(u)$ is $C^k(\Lambda \times \mathcal{U}^c, \mathcal{U}^*)$.  Moreover, $\Psi^{\ep}(0) = 0$ for all $\ep \in \Lambda$ and $D\Psi^0(0) = 0$.
		\item \label{CMRii} Every initial condition $u_0 \in \mathcal{W}^{\ep}$ determines a unique solution $u$ of $u_r = Lu + N^{\ep}(u)$, which remains in $\mathcal{W}$ as long as it remains in $\mathcal{U}$.
		\item \label{CMRiii} If $u$ solves $u_r = Lu + N^{\ep}(u)$ and lies in $\mathcal{U}$ for all $r$, then $u$ lies entirely in $\mathcal{W}^{\ep}$.
		\item \label{CMRiv} If $u_{\mathrm{c}} \in C^1((a,b), \mathcal{U}^c)$ solves the reduced system
			\begin{align} \label{reduced system}
			(u_{\mathrm{c}})_r = f^{\ep}(u_{\mathrm{c}}) := Lu_{\mathrm{c}} + \Pc N^{\ep}(u_{\mathrm{c}} + \Psi^{\ep}(u_{\mathrm{c}})),
			\end{align}
		then $u= (\chi^{\ep})^{-1}(u_{\mathrm{c}})$ solves the full system $u_r=Lu+N^{\ep}(u)$.
		\item \label{CMRv} With $u_{\mathrm{c}}$ and $u$ as above, if $\dot{u}_{\mathrm{c}} \in C^1(\R,\mathcal{U}^{\mathrm{c}})$ solves the linearized reduced equation $(\dot{u}_{\mathrm{c}})_r = Df^{\ep}(u_{\mathrm{c}})\dot{u}_{\mathrm{c}}$, then $\dot{u} = \dot{u}_{\mathrm{c}} + D_u\Psi^{\ep}(u_{\mathrm{c}})\dot{u}_{\mathrm{c}}$ solves the full linearized system $\dot{u}_r = L\dot{u} + D_uN^{\ep}(u)\dot{u}$.
		\item \label{CMRvi} The reduced system \eqref{reduced system} can be transformed into a Hamiltonian system $(V^{\mathrm{c}},\upsilon,\eK)$ via a $C^{k-1}$ change of variables, where $V^{\mathrm{c}}$ is a neighborhood of the origin in $\R^2$, $\upsilon$ is the canonical symplectic form
			\begin{align*}
			\upsilon\left((z_1,z_2),(z_1', z_2')\right) := z_1z_2'-z_1'z_2, \quad (z_1, z_2), (z_1',z_2') \in \R^2,
			\end{align*}
		and the reduced Hamiltonian is given by
			\begin{align} \label{reduced Ham}
			\eK(z_1, z_2) := \eHam(z_1u_1 + z_2u_2 + \Theta^{\ep}(z_1u_1+z_2u+2)),
			\end{align}
		where $(\ep, u_{\mathrm{c}}) \mapsto \Theta^{\ep}(u_{\mathrm{c}})$ is of class $C^{k-1}(\Lambda \times \mathcal{U}^{\mathrm{c}},\mathcal{U})$ and satisfies $\Theta^{\ep}(0)=0$ for all $\ep \in \Lambda$, and $D_{u_{\mathrm{c}}}\Theta^0(0)=0$.  The system is reversible with reverser $S(z_1,z_2) = (z_1,-z_2)$.
	\end{enumerate}
\end{lemma}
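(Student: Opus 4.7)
The plan is to apply the reversible Hamiltonian center manifold reduction theorem of Mielke and Buffoni--Toland recorded in Appendix~\ref{quoted CMR} to the abstract evolution equation $u_r = L u + N^\epsilon(u)$ at the equilibrium $(u,\epsilon) = (0,0)$. Essentially every clause of (i)--(vi) is built into the conclusion of that abstract theorem, so the work consists of unpacking the infinite-dimensional framework of Section~\ref{hamiltonian}--\ref{COV} to show that its hypotheses hold. I would formulate the problem in the pair $(\mathcal{X},\mathcal{Y})$, where $\mathcal{Y} \hookrightarrow \mathcal{X}$ is continuous and dense, and the linear part $L \maps \Dom L \subset \mathcal X \to \mathcal X$ is the closed operator \eqref{linear operator}.

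First I would verify the spectral hypotheses. Lemma~\ref{spectral properties}\ref{Spec_L_i} provides exactly the required decomposition: $\sigma(L)$ is the union of the eigenvalue $0$ of algebraic multiplicity $2$ (with explicit generalized eigenvectors $u_1, u_2$) and a discrete set $\{\pm\sqrt{\nu_j}\}$ of simple eigenvalues that is separated from the imaginary axis by a vertical strip. Part \ref{Spec_L_ii} supplies the resolvent estimates
\[ \n{u}_{\mathcal{Y}} \leq C \n{(L - i\theta I) u}_{\mathcal{X}}, \qquad \n{u}_{\mathcal{X}} \leq \frac{C}{\abs \theta} \n{(L - i\theta I) u}_{\mathcal{Y}}\]
for $\abs \theta > \Theta$, which are precisely what the abstract theorem needs to control the hyperbolic part.

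Second I would check that the nonlinearity $N^\epsilon$ is smooth as a map from a neighborhood of the origin in $\R \times \mathcal{Y}$ into $\mathcal{X}$, with $N^\epsilon(0) = 0$ and $D_u N^0(0) = 0$. Writing $N^\epsilon$ explicitly via $u = \mathcal{G}^\epsilon(\phi,w)$ and the right-hand side of \eqref{Ham_phi_w}, the only potential singularities come from the denominators $(\phi_s+H_s)^2$; these are bounded away from zero on the manifold domain $\mathcal{M}$, and the remaining operations are polynomial in the components together with the $C^\infty$ near-identity diffeomorphism supplied by Lemma~\ref{near-identity G}. Thus $N^\epsilon$ is of class $C^\infty$ on a sufficiently small neighborhood, for any desired $k$.

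Third, I would verify the Hamiltonian and reversibility structure that is required for part (vi) and the reverser formula in it. After the change of variables $\mathcal{G}^\epsilon$, the symplectic form pushes forward to $\symp_*^\epsilon := (\mathcal{G}^\epsilon)_* \Upsilon$ and the Hamiltonian to $\ham_*^\epsilon := \ham^\epsilon \circ (\mathcal G^\epsilon)^{-1}$, so the system $u_r = Lu + N^\epsilon(u)$ is Hamiltonian with respect to $(\symp_*^\epsilon, \ham_*^\epsilon)$. Lemma~\ref{near-identity G}(iii) gives $\reverser \circ \mathcal{G}^\epsilon = \mathcal{G}^\epsilon \circ \reverser$, so the reverser is unchanged and the system is reversible. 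The only thing left to confirm for (vi) is that $\symp_*^\epsilon|_{\Xc}$ is nondegenerate, which follows from the direct computation
\[ \Upsilon(u_1,u_2) = \int_{-1}^0 \frac{1}{H_s(s)} \left( \int_{-1}^s H_s^3(t)\,dt \right)^{\!2} ds > 0 \]
using the explicit formulas \eqref{generalized eigenvectors}. The action of $\reverser$ on $\Xc$ then reads $z_1 u_1 + z_2 u_2 \mapsto z_1 u_1 - z_2 u_2$ because $u_1$ has vanishing second component while $u_2$ has vanishing first component, giving the stated $S(z_1,z_2) = (z_1,-z_2)$.

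With the hypotheses verified, parts (i)--(v) follow immediately from the abstract center manifold theorem, which produces a $C^k$ graph $\mathcal{W}^\epsilon = \{u_\cs + \Psi^\epsilon(u_\cs) : u_\cs \in \mathcal{U}^\cs\}$ tangent to $\Xc$ at the origin and enjoying the flow-invariance, containment-of-bounded-orbits, and lifting-of-linearized-solutions properties exactly as stated. Part (vi) is the Hamiltonian refinement in \cite{buffoni1996plethora}: it supplies an additional $C^{k-1}$ change of variables $\Theta^\epsilon$ that pulls $\symp_*^\epsilon|_{\mathcal{W}^\epsilon}$ back to the canonical form $\upsilon$ on a neighborhood of the origin in $\R^2$, so that \eqref{reduced system} is equivalent to Hamilton's equations for the reduced Hamiltonian \eqref{reduced Ham}. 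The anticipated difficulty is not any single step but the bookkeeping of keeping the Hamiltonian, reversible, and regularity structures aligned through the sequence of changes of variables; this is precisely why we have done the preparatory work of Section~\ref{COV} and Lemma~\ref{near-identity G} so carefully, and once those are in place the reduction itself is a direct citation.
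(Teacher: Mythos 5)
Your proposal is correct and follows essentially the same route as the paper: verify the spectral hypotheses via Lemma~\ref{spectral properties}, check $N^0(0)=0$ and $D_uN^0(0)=0$ using Lemma~\ref{near-identity G}, apply the quoted Buffoni--Groves--Toland theorem for parts (i)--(v), and obtain part (vi) from the Hamiltonian/reversible structure together with a parameter-dependent Darboux-type change of variables. Your added details (the nondegeneracy computation $\Upsilon(u_1,u_2)>0$ and the explicit action of the reverser on $\mathcal{X}^{\mathrm{c}}$) are correct and simply make explicit what the paper handles by citation.
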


\begin{proof} We already proved in Lemma \ref{spectral properties} that $L$ satisfies the spectral hypotheses \ref{H1} and \ref{H2} of Theorem \ref{BGTCMR}.  In particular, the only part of the spectrum on the imaginary axis is $0$, which we showed as algebraic multiplicity $2$.  Additionally, Lemma \ref{near-identity G} and a quick calculation verify that $N^0(0)=0$ and $D_uN^0(0)=0$, which satisfies \ref{H3} in Theorem \ref{BGTCMR}.  Applying that theorem proves statements \ref{CMRi}--\ref{CMRiv} above.  Lastly, part \ref{CMRvi} follows by undoing the near-identity transformation $\mathcal{G}^{\ep}$ in favor of working with the original variables, and then employing a parameter-dependent Darboux transformation to obtain the reversible Hamiltonian system $(V^{\mathrm{c}}, \upsilon,\mathcal{K}^{\ep})$ (see, for example, \cite[Theorem 4]{buffoni1999multiplicity}).
\QED

From the above Lemma, we obtain the following reduced Hamiltonian system governing the dynamics on the center manifold:
\[ \partial_r \begin{pmatrix} z_1 \\ z_2 \end{pmatrix} = \begin{pmatrix} 0 & 1 \\ -1 & 0 \end{pmatrix} \nabla {\mathcal{K}}^\epsilon(z_1, z_2) + \mathcal{R}(z_1, z_2, \epsilon)  \]
where $\mathcal{R}$ is a remainder term.   We can simplify  further by introducing the scaled variables $Z = (Z_1,Z_2)$ and $R$ defined as
\begin{equation} \label{r to R change of variables}
	\begin{split} 
	\ds Z_1 &:= |\ep|^{-1} \phi_{\mathrm{cr}}(0)^{-2} \left(\int^{0}_{-1} H^{5}_{s} \, ds \right) z_1 \\
	Z_2 &:= |\ep|^{-3/2} \phi_{\mathrm{cr}}(0)^{-2} \left(\int^{0}_{-1} \dfrac{\phi^{2}_{\mathrm{cr}}}{H_s}\, ds\right)^{1/2}\left(\int_{-1}^{0} H^{5}_{s}\, ds \right) z_2\\
	R &:= |\ep|^{1/2} \left( \phi_{\mathrm{cr}}(0)^2\int^{0}_{-1} \dfrac{\phi^{2}_{\mathrm{cr}}}{H_s} \, ds \right)^{1/2} r.
	\end{split}
	\end{equation}
Here $\phi_{\mathrm{cr}}$ is the eigenfunction corresponding to the eigenvalue $0$ of the Sturm--Liouville problem \eqref{eigenvalueProblem}.   Ultimately, this yields 
	\begin{equation} \label{Z remainder system}
	\partial_R \begin{pmatrix} Z_1 \\ Z_2 \end{pmatrix} = \begin{pmatrix} Z_2 \\ -(\sgn{\ep}) Z_1 - \dfrac{3}{2} Z_1^2 \end{pmatrix} + \hat{\mathcal{R}}(Z_1, Z_2, \epsilon) \end{equation}
with new remainder term $\hat{\mathcal{R}} = \bigO(\ep^{1/2})$.   The calculation leading to \eqref{Z remainder system} follows exactly as in \cite{groves2008vorticity}.  Further details are also provided in \cite[Appendix A.2]{chen2017existence}.

\begin{lemma}[Existence of $\phi^{\ep}$] \label{existence thm}  There exists $\ep_{*} > 0$ such that for each $\ep \in (0,\ep^{*})$, there is a corresponding solution $(\phi^{\ep},F^{\ep})$ to the height equation \eqref{phiHeightEqn}.
\end{lemma}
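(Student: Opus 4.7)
The plan is to produce $\phi^{\ep}$ by constructing a homoclinic orbit of the reduced planar Hamiltonian system \eqref{Z remainder system} and then lifting it back through the center manifold machinery and the preceding changes of variables.

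For $\ep > 0$, the truncated system obtained by discarding $\hat{\mathcal{R}}$,
\[
\partial_R Z_1 = Z_2, \qquad \partial_R Z_2 = -Z_1 - \tfrac{3}{2} Z_1^2,
\]
is Hamiltonian with energy $H_0(Z_1, Z_2) := \tfrac{1}{2} Z_2^2 + \tfrac{1}{2} Z_1^2 + \tfrac{1}{2} Z_1^3$, reversible under $\creverser(Z_1, Z_2) = (Z_1, -Z_2)$, and admits on the zero energy level the explicit Korteweg--de Vries soliton
\[
Z_1^0(R) = -\sech^2(R/2), \qquad Z_2^0(R) = \sech^2(R/2)\tanh(R/2).
\]
This is a reversible homoclinic orbit to the hyperbolic equilibrium $(0,0)$ whose intersection with $\mathrm{Fix}(\creverser) = \{Z_2 = 0\}$ is transversal at the single point $(-1,0)$.

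Since the remainder $\hat{\mathcal{R}}$ is $O(\ep^{1/2})$ in $C^{k-1}$ on bounded subsets of the $(Z_1,Z_2)$-plane, for each $\ep \in (0,\ep_*)$ small the full system \eqref{Z remainder system} is a small reversible perturbation of the truncated one. The origin remains a hyperbolic equilibrium with eigenvalues $\pm 1 + O(\ep^{1/2})$, so its one-dimensional unstable manifold $W^u_\ep$ is $C^{k-1}$ and depends continuously on $\ep$. The transversal crossing of $W^u_0$ with the one-dimensional set $\mathrm{Fix}(\creverser)$ then persists by the implicit function theorem to a transversal intersection of $W^u_\ep$ with $\{Z_2 = 0\}$ at some point $(Z_1^\ep(0),0)$ near $(-1,0)$. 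The trajectory through this point decays exponentially to $(0,0)$ as $R \to +\infty$, and reflecting it through $\mathrm{Fix}(\creverser)$ by reversibility yields a full $\creverser$-symmetric homoclinic orbit $(Z_1^\ep, Z_2^\ep)$ of \eqref{Z remainder system}.

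Undoing the scaling \eqref{r to R change of variables} returns a reversible, exponentially decaying solution $u_{\mathrm c}^\ep \in C^1_\bdd(\R;\Xc)$ of the reduced system \eqref{reduced system}. By Lemma~\ref{CMRthm}\ref{CMRiv}, the function $u^\ep := (\chi^\ep)^{-1}(u_{\mathrm c}^\ep)$ is a reversible solution of $u_r = L u + N^\ep(u)$ lying entirely in $\cman$, and the uniform exponential decay of $u^\ep_{\mathrm c}$ in $r$ together with the smoothness of $\Psi^\ep$ places $u^\ep \in C^4_0(\R;\calX) \cap C^3_0(\R;\calZ)$. Lemma~\ref{change of vars lem}\ref{dynamic to elliptic} finally converts $u^\ep$ into the desired $\phi^\ep \in X$ solving $\mathscr{F}(\phi^\ep, F^\ep) = 0$. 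The main delicate point is justifying that the transversality at $\ep = 0$ yields a persistence statement uniform in $\ep$ down to the origin; this is handled by observing that the rescaling in \eqref{r to R change of variables} has already absorbed the singular $\ep$-dependence, so that $\hat{\mathcal R}$ is a genuinely small perturbation of a fixed hyperbolic planar vector field to which standard stable/unstable manifold theory applies.
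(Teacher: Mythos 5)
Your overall route is the same as the paper's: pass to the rescaled planar reduced system \eqref{Z remainder system}, recognize the limiting equation as the one solved by the KdV soliton, use reversibility to perturb the homoclinic orbit to small $\ep>0$ (the paper does this by quoting Kirchg\"assner's Proposition 5.1 rather than rerunning the stable/unstable-manifold-plus-$\mathrm{Fix}(S)$ transversality argument, but that is the same mechanism), and then lift through Lemma~\ref{CMRthm}\ref{CMRiv} and Lemma~\ref{change of vars lem}\ref{dynamic to elliptic}, using the exponential localization and the smoothness of $\Psi^\ep$ to land in $X$. Your closing remark that the rescaling \eqref{r to R change of variables} has already absorbed the singular $\ep$-dependence, so that $\hat{\mathcal R}=\bigO(\ep^{1/2})$ is a regular perturbation of a fixed hyperbolic planar field, is exactly the right justification for uniformity as $\ep\downarrow 0$.

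However, your treatment of the planar model is internally inconsistent, and the inconsistency is not cosmetic. The truncated system you write, $(Z_1)_{RR}=-Z_1-\tfrac32 Z_1^2$, has a \emph{center} at the origin (eigenvalues $\pm i$, and your own Hamiltonian $\tfrac12 Z_2^2+\tfrac12 Z_1^2+\tfrac12 Z_1^3$ has a strict local minimum there), so the origin is not hyperbolic, there is no homoclinic orbit to it, and the entire persistence argument you build on $W^u_\ep$ has nothing to perturb from. Moreover, the explicit orbit you propose, $Z_1^0=-\sech^2(R/2)$, solves neither that equation nor the correct one: it solves $(Z_1)_{RR}=Z_1+\tfrac32Z_1^2$. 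The correct limiting equation for $\ep>0$ (supercritical) is $(Z_1)_{RR}=Z_1-\tfrac32 Z_1^2$, whose homoclinic is the \emph{positive} soliton $Z_1^0=\sech^2(R/2)$; the origin is then a genuine saddle with eigenvalues $\pm1$ and your transversality/persistence argument goes through verbatim. The sign of $Z_1^\ep$ matters here: $Z_1^\ep>0$ is what makes the lifted waves waves of elevation, which is forced for supercritical solutions by Theorem~\ref{supercritical elevation} and is used again in the uniqueness part of Theorem~\ref{small-amp}; a construction producing $Z_1^\ep<0$ would purport to yield supercritical waves of depression, which do not exist, so as written the argument would be vacuous. (The displayed sign in \eqref{Z remainder system} is itself inconsistent with the limit equation used in the paper's proof, so you may have inherited a typo, but your hyperbolicity claim, your energy-level claim, and your explicit formula must all be made consistent with the correct sign before the perturbation step is valid.)
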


\begin{proof}  Passing to the limit as $\ep \searrow 0$, the system \eqref{Z remainder system} becomes
	\begin{align}
	(Z_1)_{RR} = Z_1 - \dfrac{3}{2}Z_1^2
	\end{align}
which is exactly the equation satisfied by the KdV soliton $Z_1^0(R) = \sech^{2}{(R/2)}$.  In the planar system \eqref{Z remainder system}, this corresponds to a orbit homoclinic to the origin.   Due to \cite[Proposition 5.1]{kirchgassner1988resonant}, we may exploit reversibility to conclude that the  phase portrait of \eqref{Z remainder system} is qualitatively the same for $\ep>0$ sufficiently small.  More precisely, there exists a reversible homoclinic orbit $(Z_1^{\ep},Z_2^{\ep})$ for $0 < \ep < \ep_*$, with $Z_1^{\ep} > 0$.  Since $(Z_1^{\ep},Z_2^{\ep})(0)$ depends continuously on $\ep$, we have uniform bounds
	\begin{align} \label{exponential localization in R}
	 |\partial_R^k(Z_1^{\ep},Z_2^{\ep})| \leq Ce^{-|R|/2}, \qquad \textrm{for } k = 0, 1, 2.
	\end{align}
Using the change of variables defined in \eqref{r to R change of variables}, we can write $(z_1^{\ep},z_2^{\ep})$ in terms of $(Z_1^{\ep}, Z_2^{\ep})$ to obtain a reversible homoclinic orbit $u_{\mathrm{c}}^{\ep} := z_1^{\ep}u_1 + z_2^{\ep}u_2$ of the reduced system \eqref{Z remainder system}.  Notice that
	\begin{align*}
	\ds  \| u_{\mathrm{c}}^{\ep}\|_{C^2(\mathbb{R}; \mathcal{Y})} \leq \| z_1^\ep \|_{C^2(\mathbb{R})} \| u_1 \|_{H^2} + \| z_2^\ep \|_{C^2(\mathbb{R})} \| u_2 \|_{H^1},	\end{align*}
so that applying the change of variables \eqref{r to R change of variables} and the uniform bound \eqref{exponential localization in R} yields
	\begin{align} \label{exponential localization in r}
	 |\partial_r^k(u_{\mathrm{c}}^{\ep})| \leq C_1\ep e^{-C_2 |\ep|^{1/2} |r|}, \qquad \textrm{for } k = 0, 1, 2, 
	\end{align}
with positive constants $C_1, C_2$ uniform in $\epsilon$ for $\epsilon$ small.

Using the pullback $(\chi^{\ep})^{-1}$, define 
	\begin{align} \label{uc defn}
	\ds u^{\ep} := (\chi^{\ep})^{-1}(u_{\mathrm{c}}^{\ep}) = u_{\mathrm{c}}^{\ep} + \Psi^{\ep}(u_{\mathrm{c}}^{\ep}). 
	\end{align}
	According to Lemma~\ref{CMRthm}\ref{CMRiv}, $u^{\ep}$ corresponds to an orbit of the full system \eqref{final Hamiltonian} that is homoclinic to $0$ and remains in the neighborhood of the origin $\mathcal{U} \subset \Dom{L}$.  Note also that, because $\Psi^\epsilon$ is of class $C^2$ taking values in a subspace of $H^2 \times H^1$, we have by the chain rule that $u^\epsilon$ is likewise exponentially localized in $r$.  From this observation and Lemma~\ref{change of vars lem}, it is easily seen that $\phi$ defined to be the first component of $(\mathcal{G}^{\epsilon})^{-1}(u^\epsilon)$ is a solitary wave solution of the height equation with the regularity $W^{2,p}(R)$.  In particular, this means that $\phi \in X$.
\QED

\subsection{Proof of small-amplitude existence}
\label{proof of small-amp}

We finally have all of the necessary components to prove the main result of this section.

\begin{proof}[Proof of Theorem \ref{small-amp}]  We already constructed the family of small-amplitude solitary wave solutions $(\phi^{\ep},F^{\ep})$ in Lemma \ref{existence thm}.  Recalling that $\ep>0$ implies $F^{\ep} > F_{\textrm{cr}}$, the invertibility of $\mathcal{F}_{\phi}(\phi^{\ep},F^{\ep})$ in part \ref{existence_invertibility} follows immediately from the equivalence of formulations and Lemma \ref{strong invertibility lemma}.

It remains to show parts \ref{existence_continuity} and \ref{existence_uniqueness}.  Recalling $\ep_{*}$ and $u_{\mathrm{c}}^{\ep}$ as in Lemma \ref{existence thm}, we see that, after possibly shrinking $\ep_{*}$, the exponential estimates also hold for $\phi^{\ep}$, and part \ref{existence_continuity} follows.

Now suppose we have a solution $(\phi, F^{\ep})$ of the height equation \eqref{phiHeightEqn} with $\ep + \| \phi \|_X < \delta$, where $\delta$ is to be determined.  By way of contradiction, assume $\phi \not\equiv \phi^{\ep}$.  We wish to show that $\phi$ is not a supercritical solution.

By the properties of the center manifold reduction theorem, we know that $\phi$ is determined by a homoclinic orbit $(z_1,z_2)$ of \eqref{reduced system}.  Since we know that this equation already has a homoclinic orbit $(z_1^{\ep}, z_2^{\ep})$, and that $\phi$ is not a translation of $\phi^{\ep}$, it is impossible for $(z_1,z_2)$ to be a translate of $(z_1^{\ep},z_2^{\ep})$.  By the properties of the phase portrait at the origin, we conclude that $z_1 <0$ for $|r|$ sufficiently large and
	\begin{align} \label{z limit}
	\ds \lim_{|r| \to \infty} \dfrac{z_2(r)}{z_1(r)} = \pm \ep^{1/2} + \mathcal{O}(\ep).
	\end{align}
Tracing back the various changes of variable, we see that
	\begin{align}
	\phi(r,s) = z_1(r)u_1(s) + \mathscr{R}(r,s),
	\end{align}
where the remainder term satisfies
	\begin{align} \label{remainder bounds}
	\| \mathscr{R}(r, \cdot) \|_{H^2(-1,0)} \leq C(|\ep| + |z_1| + |z_2|)(|z_1| + |z_2|),
	\end{align}
with constant $C$ independent of $\ep$.  Now taking $\delta$ small enough, we have
	\begin{align}
	\| \mathscr{R}(r,0) \|_{X} \leq \dfrac{u_1(0)}{2}(|z_1(r)| + |z_2(r)|).
	\end{align}
Shrinking $\delta$ further, \eqref{z limit} and \eqref{remainder bounds} yield
	\begin{align}
	\| \mathscr{R}(r,0) \|_X < u_1(0)|z_1(r)|,
	\end{align}
for $|r|$ sufficiently large.  Finally, for $|r|$ large enough that $r>0$ and $\mathscr{R}(r,0)$ has the above upper bound, we know that
	\begin{align} \label{elevation contradiction}
	\phi(r,0) = z_1(r)u_1(0) + \mathscr{R}(r,0) < 0.
	\end{align}
Since in Lemma~\ref{supercritical elevation} we proved that supercritical solutions are equivalent to waves of elevation, \eqref{elevation contradiction} contradicts the fact that $\phi$ is a supercritical solution.    Hence $\phi = \phi^{\ep}$, and the theorem is proved.
\QED

\section{Large-amplitude waves} \label{proof main result section}

Finally, in this section we prove the main result Theorem \ref{main theorem} by showing that the curve of small-amplitude solitary waves $\mathscr{C}_{\mathrm{loc}}$ lies inside a much larger family containing large-amplitude waves that are arbitrarily close to having points of horizontal stagnation.  We do this using analytic global bifurcation theory, which was first introduced by Dancer \cite{dancer1973bifurcation,dancer1973globalsolution,dancer1973globalstructure} and further refined by Buffoni and Toland \cite{buffoni2003analytic}.  Chen, Walsh, and Wheeler \cite{chen2016announcement,chen2017existence} recently developed a variant of these results that is particularly well-suited to the study of solitary waves.  Compared to the classical theory, it offers two main advantages: (i) the initial point of bifurcation may be at a point on the boundary of the set where $\mathscr{F}$ is Fredholm, and (ii) there is no a priori requirement that the solution set $\mathscr{F}^{-1}(0)$ is locally compact.  To accommodate the latter of these relaxed hypotheses, the Chen--Walsh--Wheeler leave open the additional ``undesirable alternative'' that the extended curve loses compactness;  see Appendix~\ref{global bifurcation appendix} and specifically Theorem~\ref{generic global theorem}\ref{gen alternatives}\ref{gen noncompact}.   Usually, one hopes to later exclude this scenario using finer qualitative properties of the solutions.  

In the next subsection, we preemptively show that the undesirable alternative cannot happen.  We may then apply the abstract theory to conclude that there exists a global curve $\mathscr{C}$ extending $\mathscr{C}_{\mathrm{loc}}$, and, as we follow it, a certain quantity must tend to infinity.  The last step is to verify that this blowup leads inexorably to the stagnation limit claimed in Theorem~\ref{main theorem}\ref{extreme wave limit}.   

\subsection{Local compactness} \label{local compactness section}

The flow force corresponding to the solitary wave $(\phi,F)$ is defined to be 
\be \flowforce (h,F) := \int_{-1}^0 \left[ \frac{1-h_r^2}{2 h_s^2} + \frac{1}{2H_s^2} - \frac{1}{F^2}(h-1)  \right] h_s \, ds.\label{definition flow force} \ee
Here, as usual, $h := \phi + H$.  A simple computation confirms that $\flowforce(\phi,F)$ is independent of $r$ for any $(\phi,F) \in X \times \mathbb{R}_+$ satisfying the height equation.  It is well known that the flow force is related to the Hamiltonian for the spatial dynamic formulation of the steady water wave problem.  Indeed, we exploited this connection ourselves when constructing the curve of small-amplitude solutions.  Our interest in $\flowforce$ here is somewhat different:  we will use the fact that it distinguishes the limiting height $H$ from all other laminar flow solutions of \eqref{heightFormulation}.  Following the ideas of Wheeler \cite{wheeler2013solitary}, this will allow us to rule out the loss of compactness alternative in Theorem \ref{generic global theorem}.  The key ingredient is the next lemma, which closely parallels \cite[Lemma 2.10]{wheeler2013solitary}. 

\begin{lemma} \label{flow force lemma} Suppose that $(\phi, F) \in C_\bdd^{1,\alpha}(\overline{R}) \times \mathbb{R}$ is a supercritical solution of the height equation \eqref{phiHeightEqn} but does not necessarily satisfy the asymptotic condition $\phi \in C_0^1(\overline{R})$. Denote $h := \phi + H$ and assume that $h_r \equiv 0$ and $\inf_R h_s > 0$.  Then $\flowforce (h,F) \geq \flowforce (H,F)$ with equality holding if and only if $h \equiv H$.
\end{lemma}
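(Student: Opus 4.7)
The plan is to first exploit the hypothesis $h_r \equiv 0$ to pin $h$ down as a member of the one-parameter family of laminar flows $\mathscr{T} = \{H(\cdot;\kappa)\}_{\kappa > -2\Gamma_{\min}}$ from \eqref{definition of H(s,kappa)}, and then reduce the lemma to a scalar comparison of the flow force along this family. With $h_r \equiv 0$, the interior equation in \eqref{heightFormulation} integrates to $1/h_s^2 - 1/H_s^2 \equiv \mathrm{const}$, so $h_s = (\kappa + 2\Gamma)^{-1/2}$ for some $\kappa > -2\Gamma_{\min}$; together with $h(-1) = 0$ this forces $h \equiv H(\cdot;\kappa)$. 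Evaluating the Bernoulli boundary condition at $s=0$ reduces to $\mu(\kappa) = 1/F^2$ when $\kappa \neq \lambda$, so by Lemma~\ref{A kappa} and the supercriticality $F > F_{\mathrm{cr}}$ one obtains $\kappa \leq \lambda$, with equality exactly when $h \equiv H$.

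It remains to show that the scalar function $f(\kappa') := \flowforce(H(\cdot;\kappa'), F)$ satisfies $f(\kappa) > f(\lambda)$ whenever $\kappa < \lambda$. I would compute $f'(\kappa')$ by differentiating under the integral, using $H_s(s;\kappa')^{-2} = \kappa' + 2\Gamma(s)$ together with the identity $1/H_s^2 = 1/H_s(\cdot;\kappa')^2 + (\lambda - \kappa')$ to collapse the result down to
\begin{equation*}
  f'(\kappa') = \tfrac{1}{4} J(\kappa')\bigl[(\kappa' - \lambda) - \tfrac{2}{F^2}\bigl(1 - H(0;\kappa')\bigr)\bigr], \qquad J(\kappa') := \int_{-1}^0 H_s(s;\kappa')^3\, ds > 0.
\end{equation*}
The crucial algebraic observation is that, for $\kappa'\neq\lambda$, the bracket rewrites as $2(1-H(0;\kappa'))[\mu(\kappa') - 1/F^2]$; this is immediate from the definition of $\mu$ in Lemma~\ref{A kappa}, and yields the clean factorization
\begin{equation*}
  f'(\kappa') = \tfrac{1}{2} J(\kappa')\bigl(1 - H(0;\kappa')\bigr)\bigl[\mu(\kappa') - 1/F^2\bigr].
\end{equation*}

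The remaining step is a sign analysis on $(\kappa, \lambda)$: the factor $J(\kappa')$ is strictly positive; strict monotonicity of $H(0;\cdot)$ with $H(0;\lambda) = 1$ gives $1 - H(0;\kappa') < 0$; and strict monotonicity of $\mu$ with $\mu(\kappa) = 1/F^2$ gives $\mu(\kappa') - 1/F^2 > 0$. Hence $f'(\kappa') < 0$ throughout $(\kappa, \lambda)$, and integrating from $\kappa$ to $\lambda$ produces $f(\kappa) > f(\lambda)$, which is the desired strict inequality $\flowforce(h,F) > \flowforce(H,F)$. The equality clause is immediate: $h \equiv H$ corresponds to $\kappa = \lambda$ and gives equality trivially, while the strict monotonicity just established rules out any other possibility.

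The main difficulty I anticipate is locating the sign-clean factorization of $f'$. Both $\kappa$ and $\lambda$ are critical points of $f$, yet only $\kappa$ arises from a genuine Bernoulli condition with Froude number $F$; the two roots are separated by the vanishing of distinct factors in $f'$, and the supercriticality is essential both to ensure $\kappa < \lambda$ strictly and to make those two factors have opposite signs on the intermediate interval. In the subcritical regime the signs would reverse and the comparison would fail, confirming that the hypothesis $F > F_{\mathrm{cr}}$ is doing real work.
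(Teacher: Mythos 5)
Your argument is correct and follows essentially the same route as the paper: with $h_r \equiv 0$ you identify $h$ with a member $H(\placeholder;\kappa_*)$ of the laminar family \eqref{definition of H(s,kappa)}, use the Bernoulli condition at $s=0$ together with the strict monotonicity of $\mu$ from Lemma~\ref{A kappa} and supercriticality to conclude $\kappa_* < \lambda$ unless $h \equiv H$, and then compare flow forces through the monotonicity of $\kappa \mapsto \flowforce(H(\placeholder;\kappa),F)$. The one point where your write-up is sharper than the paper's is the sign analysis of the derivative: your factorization $\partial_\kappa \flowforce(H(\placeholder;\kappa),F) = \tfrac12 J(\kappa)\bigl(1-H(0;\kappa)\bigr)\bigl(\mu(\kappa)-1/F^2\bigr)$ is correct, and it shows the flow force decreases only on the interval between $\kappa_*$ and $\lambda$ --- the derivative vanishes at both of these values and is positive for $\kappa > \lambda$ when $F > F_{\mathrm{cr}}$ --- whereas the paper asserts a global strict decrease in $\kappa$; since only the interval $(\kappa_*,\lambda)$ is ever used, both arguments reach the same conclusion, but your localized monotonicity statement is the accurate one.
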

\begin{proof}
Recall from Section~\ref{elevation} that there is a one-parameter family $\mathscr{T}$ of laminar flows $H(\placeholder ; \kappa)$ having the vorticity function $\gamma$.  Indeed, we see immediately from their explicit formula \eqref{definition of H(s,kappa)} that $H(\placeholder; \kappa) \in C^{1,1}([-1,0])$ and depend  smoothly on $\kappa$.  Recall that this family is exhaustive in the sense that it includes every $r$-independent solution of the height equation of class $C^{1,\alpha}(\overline{R})$ with vorticity function $\gamma$.  In particular, $H = H(\placeholder; \lambda)$ for the unique parameter value $\lambda := 1/H_s(0)^2$.    

Evaluating the flow force of an element of $\mathscr{T}$, we find that 
\begin{align*} \flowforce (H(\placeholder; \kappa), F) &= \int_{-1}^0 \left[ \frac{1}{2H_s(s; \kappa)^2} + \frac{1}{2H_s(s)^2} - \frac{1}{F^2} \left( H(s; \kappa) - 1\right) \right] H_s(s; \kappa) \, ds \\
& = \left[ \frac{1}{2H_s(0; \kappa)^2} + \frac{1}{2H_s(0)^2} \right] H(0; \kappa) - \frac{1}{2F^2} \left( H(0; \kappa) - 1\right)^2 + \frac{1}{2 F^2} \\
& \qquad - \int_{-1}^0 \left[ \frac{1}{2H_s(s; \kappa)^2} + \frac{1}{2H_s(s)^2} \right]_s H(s; \kappa) \, ds   \\
& = \frac{\lambda - \kappa}{2} H(0; \kappa) - \frac{1}{2F^2} \left( H(0; \kappa) - 1 \right)^2 + \frac{1}{2F^2} +  \int_{-1}^0 \sqrt{\kappa + 2 \Gamma(s)} \, ds. \end{align*}
In deriving the third line from the second, we have made repeated use of the fact that 
\be \frac{\kappa-\lambda}{2} = \frac{1}{2H_s(0; \kappa)^2} - \frac{1}{2H_s(0)^2} = -\frac{1}{F^2} \left( H(0; \kappa) - 1 \right),\label{kappa lambda identity} \ee
which follows from the definition of $\Gamma$ and the boundary condition on the top in \eqref{heightFormulation}. Differentiating this identity with respect to $\kappa$ then gives  
\begin{align*}
\partial_\kappa \flowforce (H(\placeholder; \kappa), F) & = -\frac{1}{2} H(0; \kappa) + \frac{\lambda - \kappa}{2}  H_\kappa(0; \kappa) - \frac{1}{F^2}\left( H(0;\kappa) -1 \right) H_\kappa(0; \kappa) \\ 
& \qquad + \frac{1}{2} \int_{-1}^0 \frac{1}{\sqrt{\kappa+ 2\Gamma(s)}} \, ds \\
& =  \frac{1}{2 F^2} H_\kappa(0; \kappa) = -\frac{1}{2} \int_{-1}^0 \left(\kappa + 2\Gamma(s)\right)^{-3/2} \, ds < 0. 
\end{align*}
Thus the flow force is monotonically decreasing along the family $H(\cdot; \kappa)$ as $\kappa$ increases.  

Now let $(\phi,F)$ be given as in the statement of the lemma.  There exists a unique $\kappa_* \in (-2\Gamma_{\mathrm{min}}, \infty)$ such that $h = H(\placeholder; \kappa_*)$.  We then see immediately from the above considerations that $\flowforce (h,F) = \flowforce (H, F)$ if and only if $h \equiv H$.  Suppose instead that $h \not\equiv H$, meaning that $\kappa_* \neq \lambda$ and $H(0; \kappa_*) \neq 1$.    From \eqref{kappa lambda identity} it follows that
\[ \mu(\kappa_*) = \frac{1}{2} \frac{\kappa_* - \lambda}{1- H(0; \kappa_*)} = \frac{1}{F^2} < \frac{1}{F_{\mathrm{cr}}^2} = \mu(\lambda),\]
where $\mu(\cdot)$ is the function introduced in Lemma \ref{A kappa}.   In that same lemma, we showed that $\mu(\cdot)$ is strictly increasing, implying that $\kappa_* < \lambda$.  Since $\kappa \mapsto \flowforce (H(\placeholder; \kappa), F)$ is strictly decreasing, this at last gives us the inequality $\flowforce (H(\placeholder; \kappa_*), F) > \flowforce (H(\placeholder; \lambda), F) = \flowforce (H, F)$.
\end{proof}

\begin{proposition}[Local compactness] \label{local compactness proposition} Any sequence $\{(\phi_n, F_n)\} \subset \Udomain$ of solutions to the height equation \eqref{phiHeightEqn} satisfying 
\[ \sup_n \left(  \| \phi_n \|_{C^{1,\alpha}(R)} + \| \phi_n \|_{W_{\mathrm{loc}}^{2,p}}+ F_n + \frac{1}{F_n - F_{\mathrm{cr}}} + \frac{1}{\inf_R{\left(\partial_s \phi_n + H_s\right)}}  \right) < \infty\]
has a subsequence converging in $X \times \mathbb{R}$ to some solution $(\phi,F) \in \Udomain$ of the height equation.  
\end{proposition}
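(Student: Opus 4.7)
The plan is to extract a subsequence via Arzel\`a--Ascoli and local $W^{2,p}$ weak compactness, pass to the limit in the height equation to produce a candidate $(\phi,F) \in \Udomain$, and then use the flow force rigidity of Lemma~\ref{flow force lemma} together with the monotonicity from Theorem~\ref{symmetrythm} both to identify $\phi$ as lying in $C_0^1(\overline R)$ and to upgrade the mode of convergence to the $X$-norm. The main obstacle will be the asymptotic step: ruling out mass escaping along the sequence requires combining the structural results of Section~\ref{quals} with the conserved quantity \eqref{definition flow force}.

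The compactness and passage to the limit are routine. The uniform $C^{1,\alpha}(R)$ bound yields $\phi_n \to \phi$ in $C^1_\loc(\overline R)$ via Arzel\`a--Ascoli, the locally uniform $W^{2,p}$ bound provides weak convergence on each compact set, and a further subsequence gives $F_n \to F$. Because $F_n - F_{\mathrm{cr}} \geq c > 0$ and $\inf_R(\phi_{n,s}+H_s) \geq c > 0$ pass to the $C^0_\loc$ limit, $(\phi,F) \in \Udomain$; $C^1_\loc$ convergence together with weak $W^{2,p}_\loc$ convergence suffice to pass to the distributional limit in \eqref{phiHeightEqn}, so $\mathscr{F}(\phi,F) = 0$.

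For the asymptotic condition $\phi \in C_0^1(\overline R)$, I would first invoke Theorem~\ref{symmetrythm}: each $\phi_n$ is even in $r$ and satisfies $\partial_r \phi_n < 0$ on $\{r > 0\} \cap (R \cup T)$, and these properties pass to the limit. Hence $r \mapsto \phi(r,s)$ is monotone and bounded, and so converges pointwise as $r \to \infty$ to some $\phi^\infty(s)$; Schauder estimates applied to the horizontal translates $\phi(\cdot + r_k, \cdot)$ upgrade this to $C^1_\loc$ convergence, so $\phi^\infty$ is an $r$-independent solution of \eqref{phiHeightEqn} with parameter $F$. The flow force is $r$-independent on any such solution, and $\phi_n \in C_0^1$ yields $\flowforce(\phi_n + H, F_n) = \flowforce(H, F_n)$; continuity in $C^1_\loc$ then gives $\flowforce(\phi^\infty + H, F) = \flowforce(H, F)$, and Lemma~\ref{flow force lemma} forces $\phi^\infty \equiv 0$, with even symmetry handling the limit as $r \to -\infty$.

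Finally, to obtain convergence in the $X$-norm I would exploit that each $\phi_n \geq 0$ by Theorem~\ref{supercritical elevation} and is monotone for $r > 0$, giving $\phi_n(r,s) \leq \phi_n(R_0,s)$ for $r \geq R_0$. Given $\epsilon > 0$, one selects $R_0$ large so that $\|\phi\|_{C^1(|r|>R_0)} < \epsilon/2$ and then $n$ large so that $\phi_n(R_0,\cdot) < \epsilon$ uniformly in $s$; this yields uniform $C^0$ smallness of $\phi_n - \phi$ outside a bounded set. Schauder estimates (Theorem~\ref{schauder theorem}) applied to the linear equation for $\phi_n - \phi$ on overlapping strips upgrade this to uniform $C^{1,\alpha}$ smallness, while on the complementary bounded strip the $C^1_\loc$ convergence combined with the uniform $C^{1,\alpha}$ bound suffices by interpolation; analogous Sobolev estimates deliver the $W^{2,p}_\loc$ convergence. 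I expect the hardest step to be the identification of $\phi^\infty$ as $r$-independent and then as zero, since without monotonicity and the flow force rigidity of Lemma~\ref{flow force lemma} there is nothing to exclude a nontrivial laminar ``outgoing wave'' at infinity.
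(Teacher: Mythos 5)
Your proposal is correct and rests on the same two pillars as the paper's proof --- the flow-force rigidity of Lemma~\ref{flow force lemma} and the elevation/monotonicity/symmetry theory of Section~\ref{quals} --- but it organizes the tail control differently. The paper first proves a uniform-in-$n$ equidecay property: arguing by contradiction, it translates the sequence itself, $\tilde h_n := h_n(\cdot + r_n,\cdot)$, extracts a $C^1_\loc$ limit, uses monotonicity to produce upstream/downstream states $H_\pm$, and identifies both with $H$ via the flow force; only afterwards does it extract the limit $\phi$ and upgrade the convergence with the global Schauder bound $\n{\phi_n-\phi}_{C^{1,\alpha}(R)} \le C \n{\phi_n-\phi}_{C^{0}(R)}$ and a Cauchy argument in $W^{2,p}_{\loc}$. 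You instead pass to the limit first, prove that the limit $\phi$ decays by translating the fixed function $\phi$ and applying Lemma~\ref{flow force lemma} to the resulting $r$-independent state, and then recover uniformity in the tails directly from $0 \le \phi_n(r,s) \le \phi_n(R_0,s)$ for $r \ge R_0$, so that the pointwise monotonicity of each $\phi_n$ substitutes for the paper's equidecay; this is a sound and arguably slightly more economical variant, since the compactness-based contradiction argument is replaced by a one-line comparison. One small repair is needed at the end: on the bounded strip, interpolating between $C^1_\loc$ convergence and the uniform $C^{1,\alpha}$ bound only yields convergence in $C^{1,\beta}$ for $\beta<\alpha$, not in the $C^{1,\alpha}$ norm that the $X$-topology demands; since you already have global uniform $C^0$ smallness of $\phi_n-\phi$, the correct move is to apply the Schauder estimate of Theorem~\ref{schauder theorem} to the linear problem satisfied by $\phi_n-\phi$ on all of $R$, exactly as the paper does, which gives $C^{1,\alpha}_\bdd$ convergence everywhere; the $W^{2,p}_{\loc}$ convergence then follows from the local $L^p$ elliptic estimate applied to the same difference.
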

\begin{proof}  Let $\{(\phi_n, F_n)\}$ be given as above.  As they are supercritical, the qualitative theory developed in Section \ref{quals} ensures that each of these waves is necessarily even, monotonic, and a wave of elevation.   We first claim that the sequence is equidecaying in the sense that function
  \[ M(r) := \sup_n \sup_{s \in [-1,0]}  |\phi_n(r,s)| \qquad \textrm{for all }  r \in \mathbb{R},\]
  vanishes in the limit $r \to \pm\infty$.  Seeking a contradiction, assume that this not the case. Then there exists $\varepsilon > 0$ and a sequence $\{(r_n,s_n)\} \subset \overline{R}$ with $r_n \to \infty$ and $\phi_n(r_n, s_n) > \varepsilon$.  Passing to a subsequence, we can assume that $s_n \to s_* \in [-1,0]$ and $F_n \to F \in (F_{\mathrm{cr}}, \infty)$.  
 It is slightly more convenient at this stage to shift to the corresponding sequence of height functions $h_n := \phi_n + H$; recall that they solve \eqref{heightFormulation}.  Let $\tilde h_n := h_n(\cdot + r_n, s)$.  As the height equation is invariant under translation in the $r$-direction, $\tilde h_n$ is also a solution.   Clearly, $\{ \tilde h_n \}$ is uniformly bounded in $C_\bdd^{1,\alpha}(\overline{R})$, and thus passing to a subsequence, we can conclude that $\tilde h_n \to \tilde h$ in $C_{\mathrm{loc}}^1(\overline{R})$ for some $\tilde h \in C_\bdd^{1,\alpha}(\overline{R})$.  
 
 Consider now the asymptotic behavior and flow force associated to $\tilde h_n$ and $\tilde h$.  Because $\tilde h_n(r, \cdot) \to H$ as $|r| \to \infty$, we must have $\flowforce (\tilde h_n, F) = \flowforce (H, F)$.  Likewise, from its definition \eqref{definition flow force}, it is clear that the flow force is preserved by the $C_{\mathrm{loc}}^1(\overline{R})$ limit, and hence $\flowforce (\tilde h, F) = \flowforce (H, F)$.   On the other hand, because $\partial_r \tilde h_n(r,s) < 0$ for $r > 0$ and $s \in (-1,0)$, we know that $\partial_r \tilde h \leq 0$ in $R$.  In particular, this implies the existence of point-wise limits
\[ \tilde h(r,s) \to H_\pm(s) \qquad \textrm{as } r \to \pm\infty, ~\textrm{for all } s \in [-1,0].  \]
Naturally, the monotonicity of $\tilde h$ also gives $H_+(s) \leq \tilde h(r,s) \leq H_-(s)$ for all $(r,s) \in \overline{R}$.  

We claim that $(H_\pm,F)$ are both $r$-independent solutions of the height equation.  Consider a further translated sequence defined by $\hat h_{\pm n} := \tilde h(\cdot \pm n, \cdot)$.  The translation invariance of the height equation ensures that each $\hat h_{\pm n}$ is also a solution having the same upstream and downstream states.  Because $\{ \hat h_{\pm n} \}$ is uniformly bounded in $C_\bdd^{1,\alpha}(\overline{R})$, passing to a subsequence we have $\hat h_{\pm n} \to \hat h_\pm$ in $C_{\mathrm{loc}}^1(\overline{R})$ for some $\hat h_\pm \in C_\bdd^{1,\alpha}(\overline{R})$ that is itself a solution of the height equation and satisfies $\flowforce (\hat h_\pm, F) = \flowforce (H, F)$.  By definition, though, $\hat h_{\pm n}$ converges point-wise to $H_\pm$, and hence we have shown that $H_\pm \in C^{1,\alpha}$ is a $r$-independent solution of the height equation having flow force $\flowforce (H, F)$.  Lemma \ref{flow force lemma} then forces $H_+ = H_- = H$, which in turn means $\tilde h \equiv H$.  This is a contradiction as, by construction, $\tilde h(0,s_*) - H(s_*) \geq \varepsilon > 0$.  
  
The above argument confirms that $M(r) \to 0$ as $r \to \pm\infty$.  To complete the proof, we will show that this implies the existence of a convergent subsequence first in $C_\bdd^{1,\alpha}(\overline{R})$ and second in $W_{\mathrm{loc}}^{2,p}(R)$.  Equidecay and uniform boundedness of $\{ \phi_n \}$ allows us to infer that, upon passing to a subsequence, $\phi_n \to \phi$ in $C_{\mathrm{loc}}^1(\overline{R})$ and $C_\bdd^0(\overline{R})$, for some $\phi \in C_\bdd^{1,\alpha}(\overline{R}) \cap C_0^1(\overline{R})$.  Observe that the height equation for $\phi$ has the general form 

\begin{equation}
 \label{lem:compact:main} \left\{ 
 \begin{alignedat}{2}
   \mathcal \partial_{i} \mathscr{A}^i(s,\nabla \phi) &=0 && \text{ in } R,\\
   -\mathscr A^2(0,\nabla \phi) + \mathscr{G}(\phi, F)&=0 && \text{ on } T ,\\
   \phi &=0 && \text{ on } B,
 \end{alignedat} \right. 
\end{equation}
  where $\mathscr{A} = \mathscr{A}(s,\xi) : \mathbb{R} \times \mathbb{R}^n \to \mathbb{R}^2$ and $\mathscr{G} = \mathscr{G}(z, F) : \mathbb{R} \times (F_{\mathrm{cr}}, 0) \to \mathbb{R}$.  It follows that $v_n := \phi_n - \phi$ is a distributional solution of  
  \be
       \left\{     \begin{alignedat}{2} 
        \partial_i \left( a^{ij}_n \partial_j v_n \right)  &= 0 &\qquad& \text{in } R,\\
        -a_n^{2j} \partial_j v_n + \gamma_n v_n &= 0 && \text{on }  T,\\
        v_n &= 0 && \text{on } B,
      \end{alignedat} \right. 
\label{compactness v_n equation}
\ee
    where we are using the summation convention.  The coefficients $a^{ij}_n, \gamma_n$ are given by
 \[ a^{ij}_n :=  \int_0^1 \mathscr A^j_{\xi^i}(s, \nabla \phi_n^{(\lambda)})\, d\lambda, \qquad 
      \gamma_n := \int_0^1 \mathscr G_z(\phi_n^{(\lambda)}, F^{(\lambda)})\, d\lambda, \]
for $\phi_n^{(\lambda)} := \lambda \phi_n + (1-\lambda)\phi$ and $F^{(\lambda)} := \lambda F_n + (1-\lambda)F$.  The uniform boundedness of $\{\phi_n\}$ in $C_\bdd^{1,\alpha}(\overline{R})$ thus ensures that $a_n^{ij}$  and $\gamma_n$ are uniformly bounded in the $C^{0,\alpha}$ norm.  Likewise, we see that the above linear problem is uniformly elliptic (with an ellipticity constant independent of $n$) and the boundary condition is uniformly oblique.  We are therefore justified in applying the Schauder-type estimate Theorem \ref{schauder theorem} to infer that  
\[ \| v_n \|_{C^{1,\alpha}(R)} \leq C \| v_n \|_{C^0(R)}, \]
for some constant $C > 0$ independent of $n$.  Recall that $v_n = \phi_n - \phi$, and $\phi_n \to \phi$ in $C_\bdd^0(\overline{R})$.  The above bound then proves $\phi_n \to \phi$ in $C_\bdd^{1,\alpha}(\overline{R})$, as desired.  

Finally, we must show convergence in $W_{\mathrm{loc}}^{2,p}(R)$.  Of course $C_\bdd^{1,\alpha}(\overline{R})$ convergence already guarantees that $\phi_n \to \phi$ in $W^{1,\infty}(R)$ and thus $W_{\mathrm{loc}}^{1,p}(R)$ .  For each $m$,$n \geq 1$, let $v_{mn} := \phi_n - \phi_m \in W_{\mathrm{loc}}^{2,p}$.  As before, we find that this difference solves an elliptic problem
  \be
       \left\{     \begin{alignedat}{2} 
        \partial_i \left( a^{ij}_{mn} \partial_j v_{mn} \right)  &= 0 &\qquad& \text{in } R,\\
        -a_{mn}^{2j} \partial_j v_{mn} + \gamma_{mn} v_{mn} &= 0 && \text{on }  T,\\
        v_{mn} &= 0 && \text{on } B,
      \end{alignedat} \right. 
\label{compactness v_n equation}
\ee
  In this case, the coefficients $a^{ij}_{mn}, \gamma_{mn}$ are bounded uniformly in $C_\bdd^{0,\alpha}(\overline{R})$.  Applying standard elliptic theory, we deduce that
  \[ \| v_{mn} \|_{W_{\mathrm{loc}}^{2,p}(R)} \leq C \| v_{mn} \|_{L_{\mathrm{loc}}^p(R)},\]
  where the constant $C$ is independent of $m$ and $n$.  Thus $\{\phi_n\}$ is a Cauchy sequence in the $W_{\mathrm{loc}}^{2,p}(R)$ topology.  This completes the proof of the proposition.
\end{proof}

\subsection{Global continuation} \label{global continuation section}

Using the local compactness result established in the previous subsection in concert with Theorem~\ref{generic global theorem}, we can now prove the existence of a global curve of solutions extending the family $\mathscr{C}_{\mathrm{loc}}$ of small-amplitude solitary waves.

\begin{theorem}[Global continuation] \label{preliminary large amplitude theorem}
  The local curve $\mathscr{C}_{\mathrm{\loc}}$ is contained in a continuous curve of solutions, parametrized as
  \begin{align*}
    \mathscr{C} = \{(\phi(t),F(t)) : 0 < t < \infty\} \subset \Udomain
  \end{align*}
  with the following properties.
  \begin{enumerate}[label=\rm(\alph*)]
        \item \label{reparam} Near each point $(\phi(t_0),F(t_0)) \in \cm$, we can reparametrize $\cm$ so that the mapping $t\mapsto (\phi(t),F(t))$ is real analytic.
  \item \label{reconnect} $(\phi(t),F(t)) \not\in \cm_\loc$ for $t$ sufficiently large.
  
  \item \label{nodal} Each wave in $\mathscr{C}$ is a monotonic wave of elevation in the sense that 
  \[ \phi(t) > 0 \quad \textrm{on } R \cup T \qquad \textrm{and}  \qquad \partial_r \phi_n(t) \leq 0 \quad \textrm{on } \{r \geq 0\}.\]  
  
  \item \label{alternatives} As $t \to \infty$,
      \begin{align}
        \label{global blowup}
        N(t):= \n{\phi(t)}_{C^{1,\alpha}(R)} + \n{\phi(t)}_{W_{\mathrm{loc}}^{2,p}(R)} + \frac 1{\inf_R (\phi_s(t)+H_s)} +
        F(t) + \frac 1{F(t)-F_{\mathrm{cr}}} \to \infty.
        \hspace*{-2em}
      \end{align}
  \end{enumerate}
\end{theorem}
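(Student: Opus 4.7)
The plan is to deduce Theorem~\ref{preliminary large amplitude theorem} by applying the abstract analytic global bifurcation theorem of Chen--Walsh--Wheeler (Theorem~\ref{generic global theorem} in Appendix~\ref{global bifurcation appendix}) to the operator equation $\F(\phi,F)=0$ starting from the local curve $\cm_\loc$ constructed in Theorem~\ref{small-amp}. The required abstract hypotheses are: (i) $\F\maps \Udomain \to Y$ is real analytic, which is immediate from its explicit form in \eqref{F1&F2}; (ii) at some point of $\cm_\loc$ the Fr\'echet derivative $\F_\phi$ is invertible $X\to Y$, which is precisely part \ref{existence_invertibility} of Theorem~\ref{small-amp}; (iii) $\F_\phi(\phi,F)$ is Fredholm of index $0$ at every $(\phi,F) \in \Udomain$, which is exactly Lemma~\ref{Fredholm}; and (iv) the appropriate local compactness property for bounded sets of solutions in $\Udomain$, which is supplied by Proposition~\ref{local compactness proposition}. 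With these in place, the abstract theorem produces a continuous, piecewise real analytic curve $\cm = \{(\phi(t),F(t)):t\in(0,\infty)\} \subset \Udomain$ extending $\cm_\loc$ and enjoying the reparametrization property~\ref{reparam}.

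Theorem~\ref{generic global theorem} describes the behavior along $\cm$ as $t \to \infty$ by a list of alternatives, and the plan is to rule out all of them save the blow-up \eqref{global blowup}. First, the loss-of-compactness alternative is excluded directly by Proposition~\ref{local compactness proposition}: along any bounded subarc of $\cm$ (where $N(t)$ stays bounded), a subsequence converges in $X \times \R$ to a solution still lying in $\Udomain$, so there is no escape to a boundary component on which $N(t)$ remains bounded. Second, the ``reconnection'' alternative $(\phi(t),F(t)) \in \cm_\loc$ for arbitrarily large $t$ is ruled out by the uniqueness statement~\ref{existence_uniqueness} in Theorem~\ref{small-amp}: if solutions of $\cm$ returned to a neighborhood of the trivial laminar flow $(0,F_{\mathrm{cr}})$, they would coincide with some $(\phi^\epsilon,F^\epsilon)$ already on $\cm_\loc$, which in view of the local real-analytic parametrization~\ref{reparam} contradicts the non-self-intersecting structure of the globally continued branch. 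This gives parts~\ref{reconnect} and~\ref{alternatives}.

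For the nodal property~\ref{nodal}, the key observation is that $\Udomain$ is defined so that every element of $\cm$ is supercritical. Consequently Theorem~\ref{supercritical elevation} applies to give $\phi(t)>0$ on $\calR \cup T$, and Theorem~\ref{symmetrythm} (with the additional regularity~\eqref{regularity U*} on $U^*$ passed through Remark~\ref{improved regularity U* remark}) yields, up to a translation in $r$, the even symmetry and strict monotonicity $\partial_r \phi(t) < 0$ for $r>0$. Since these qualitative properties hold for \emph{every} supercritical solitary wave with the prescribed regularity, they are automatically preserved along the entire curve $\cm$ without any additional continuation argument.

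The main obstacle is really packaged inside Proposition~\ref{local compactness proposition}: away from the standard elliptic regularity machinery, what can fail on an unbounded strip is that a sequence of solutions might lose mass at spatial infinity, converging to a nontrivial laminar flow different from~$H$. The flow-force argument in Lemma~\ref{flow force lemma} is what prevents this, by showing that $\flowforce$ strictly separates $H$ from every other laminar element of the family $\mathscr{T}$; once equidecay of $\{\phi_n\}$ at $r\to\pm\infty$ is established, Schauder and $W^{2,p}$ estimates for the linear problem satisfied by the differences $\phi_n - \phi$ upgrade $C_\loc^1$ convergence to the full $X$-topology. Modulo that analytic input, the global continuation is a direct application of the abstract framework.
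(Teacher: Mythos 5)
Your proposal is correct and follows essentially the same route as the paper: verify the hypotheses of the Chen--Walsh--Wheeler framework (analyticity, the limiting and invertibility conditions along $\cm_\loc$ from Theorem~\ref{small-amp}, Fredholm index $0$ from Lemma~\ref{Fredholm}), obtain the global curve from Theorem~\ref{generic global theorem}, deduce part~\ref{nodal} from supercriticality via Theorems~\ref{supercritical elevation} and~\ref{symmetrythm}, and use Proposition~\ref{local compactness proposition} to exclude the loss-of-compactness alternative so that \eqref{global blowup} must hold. The only cosmetic differences are that local compactness is not a hypothesis of Theorem~\ref{generic global theorem} (dispensing with it is the point of that framework) but rather the tool for discarding alternative \ref{gen alternatives}\ref{gen noncompact}, and part~\ref{reconnect} is a direct conclusion of the abstract theorem rather than something requiring your uniqueness-based argument; neither point affects the validity of your proof.
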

\begin{proof}  In the construction of the local curve $\mathscr{C}_{\mathrm{loc}}$, it was already confirmed that it satisfies \eqref{gen limit inv}, where the Froude number $F$ plays the role of the parameter $\lambda$, and $F_{\mathrm{cr}}$ that of $\lambda_*$.  We proved in Lemma \ref{Fredholm} that when $(\phi,F)$ is a supercritical solitary wave, the linearized operator $\mathscr{F}_\phi(\phi,F) : \Udomain \subset {X} \to {Y}$ is Fredholm index $0$.  Thus the hypotheses of Theorem \ref{generic global theorem} are met, furnishing the existence of a global curve of solutions $\mathscr{C} \subset \Udomain$ with the stated parameterization, and exhibiting properties \ref{reparam} and \ref{reconnect} above.  Likewise, the monotonicity asserted in \ref{nodal} is a consequence of the fact these waves are supercritical according to Theorem~\ref{supercritical elevation} and Theorem~\ref{symmetrythm}.  

Finally, to confirm that \ref{alternatives} occurs, suppose instead that $N$ defined in \eqref{global blowup} remains bounded along the continuum.  We must therefore be in the situation described in Alternative \ref{gen alternatives}\ref{gen noncompact} of Theorem \ref{generic global theorem}, meaning that there exists a sequence $\{t_n\} \subset (0,\infty)$ such that $\sup_n N(t_n) < \infty$, but the corresponding sequence of solutions $\{(\phi(t_n), F(t_n))\}$ has no convergent subsequence in ${X} \times \mathbb{R}$.  But this is impossible, since the boundedness of $\{N(t_n)\}$ implies that the sequence is uniformly supercritical, and hence Proposition \ref{local compactness proposition} guarantees that it is precompact in ${X} \times \mathbb{R}$.  
\end{proof}

\subsection{Bounds}

We next explore the physical meaning for the blowup in Theorem \ref{preliminary large amplitude theorem}\ref{alternatives}.  This entails deriving uniform bounds for the various quantities represented by terms in the function $N(s)$.  In so doing, we make use of some qualitative theory developed by Chen, Walsh, and Wheeler \cite{chen2017existence} in their study of solitary stratified water waves.  While these results suppose more smoothness than our functions exhibit, a simple examination reveals that some of them can be trivially extended to the lower regularity regime.   This is true of the next two lemmas that we present with proof.  First, we have the following upper bound on the Froude number; see \cite[Theorem 4.7]{chen2017existence}.

\begin{theorem}[Upper bound on $F$] \label{upper bound F}
Let $(\phi,F) \in {X} \times \R$ be a solution of the height equation.  Then the Froude number $F$ satisfies the bound 
  \begin{align}
    \label{eqn:hurray}
    F^2 
    \le 
    \frac{2}{\pi}
    \n{H_s}_{L^\infty}^2    \n{\phi_s(0,\cdot)+H_s(0)}_{L^\infty}.
  \end{align}
\end{theorem}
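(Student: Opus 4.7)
The plan is to derive an integral identity for $1/F^2$ at the crest of the wave by exploiting flow-force conservation, then apply a sharp Poincar\'e-type inequality on the depth interval $(-1,0)$ to extract the constant $2/\pi$.

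First, I would use the qualitative theory already established.  By Theorem~\ref{symmetrythm} (applied after the horizontal translation), we may assume $\phi$ is even in $r$, monotonically decreasing for $r > 0$, with crest at $r = 0$. Thus $\phi_r(0,s) \equiv 0$, and the maximum $\phi(0,0)$ of $\phi$ is positive. By a standard maximum principle argument, for supercritical waves of elevation the quantity $h_s$ attains its supremum on $\overline R$ on the free surface at the crest; in particular $h_s(0,s) \le M := \n{\phi_s(0,\cdot) + H_s(0)}_{L^\infty}$ for all $s \in [-1,0]$.

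Next, I would derive a clean identity using the flow-force $\flowforce(h,F)$ defined in \eqref{definition flow force}.  A short calculation shows $\flowforce$ is conserved in $r$ along any solution of the height equation.  Evaluating at $r = 0$ (where $h_r = 0$) and at $r = \pm\infty$ (where $h \to H$, $h_r \to 0$), and using the boundary condition and definition of $\Gamma$, one finds after regrouping into a sum of squares:
\[
  \frac{\phi(0,0)^2}{F^2} \;=\; \int_{-1}^0 \frac{\phi_s(0,s)^2}{H_s(s)^2\, h_s(0,s)}\, ds.
\]
This is the fundamental identity underlying the bound.

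Then I would apply a weighted Cauchy--Schwarz inequality to $\phi(0,0) = \int_{-1}^0 \phi_s(0,s)\,ds$ using $\sqrt{H_s^2 h_s}$ as weight, yielding
\[
 \phi(0,0)^2 \;\le\; \Bigl(\int_{-1}^0 \tfrac{\phi_s^2}{H_s^2 h_s}\,ds\Bigr)\cdot \int_{-1}^0 H_s^2\, h_s(0,s)\,ds \;=\; \tfrac{\phi(0,0)^2}{F^2}\int_{-1}^0 H_s^2 h_s(0,s)\,ds,
\]
giving $F^2 \le \int_{-1}^0 H_s(s)^2 h_s(0,s)\,ds$.  The final step is to estimate this integral sharply by $\tfrac{2}{\pi}\n{H_s}_\infty^2 M$.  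Here the sharp constant $2/\pi$ comes from testing against the Sturm--Liouville eigenfunction $\eta(s) = \sin(\pi(s+1)/2)$ (with Dirichlet condition at $s=-1$ and Neumann at $s=0$), which satisfies $\int_{-1}^0 \eta(s)\,ds = 2/\pi$.  More precisely, one refines the Cauchy--Schwarz step by choosing a weight tied to $\eta$, exploiting the fact that the relevant quadratic form has lowest eigenvalue $\pi^2/4$, to trade one factor of the length $1$ for $2/\pi$.

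The main obstacle is the last step: obtaining the sharp prefactor $2/\pi$ rather than the naive Cauchy--Schwarz constant $1$.  This requires either a careful use of the extremal eigenfunction $\sin(\pi(s+1)/2)$ in a weighted inequality, or equivalently exploiting the monotonicity condition $\p_s\bigl(\tfrac{1}{2H_s^2} - \tfrac{1}{2h_s^2}\bigr) \ge 0$ at the crest (which follows from $\phi_{rr}(0,s) \le 0$ and the interior equation) to control $h_s(0,s)$ pointwise in terms of $M$ and $H_s$.  I expect the proof in \cite{chen2017existence} carries this out in a clean way that we would adapt with only minor regularity modifications.
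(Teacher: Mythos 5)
There is no in-paper proof to measure you against: the paper states this theorem without proof, citing \cite[Theorem 4.7]{chen2017existence}. Judged on its own terms, your attempt is sound up to a point but has a genuine gap at the decisive step. The crest-line flow-force identity is correct: with the evenness and monotonicity of Theorem~\ref{symmetrythm} (note that invoking it silently adds the hypotheses of nontriviality, supercriticality, and the extra regularity \eqref{extra regularity H}, none of which appear in the statement), conservation of \eqref{definition flow force} evaluated at $r=0$ and $r=\pm\infty$ does give $\phi(0,0)^2/F^2=\int_{-1}^0 \phi_s(0,s)^2/(H_s^2 h_s(0,s))\,ds$, and weighted Cauchy--Schwarz then yields $F^2\le\int_{-1}^0 H_s^2\,h_s(0,s)\,ds\le\|H_s\|_{L^\infty}^2\,\|h_s(0,\cdot)\|_{L^\infty}$, i.e.\ the bound with constant $1$. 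The improvement of $1$ to $2/\pi$ is precisely what you do not supply, and the mechanism you gesture at cannot supply it: the ``sharp Poincar\'e-type inequality'' you would need, $\bigl(\int_{-1}^0 f\,ds\bigr)^2\le\tfrac{2}{\pi}\int_{-1}^0 f^2\,ds$, is false (take $f\equiv 1$), and no reweighting of Cauchy--Schwarz applied to the same one-dimensional identity can beat the constant $1$, because equality in Cauchy--Schwarz is approached exactly in the near-laminar regime where $\phi_s$ is proportional to the weight. Likewise, the asserted ``standard maximum principle argument'' that $h_s$ is maximized at the crest is unsubstantiated (and unnecessary if the norm is read along the crest line). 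Ending with ``I expect the proof in \cite{chen2017existence} carries this out'' leaves the only nontrivial content of the stated constant unproved.

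In fact you should be suspicious of the constant as transcribed in \eqref{eqn:hurray}: along the small-amplitude curve of Theorem~\ref{small-amp} with a uniform background current ($U^*$ constant, hence $H_s\equiv 1$), one has $F^2\to F_{\mathrm{cr}}^2=\int_{-1}^0 H_s^3\,ds=1$, while every reasonable reading of the right-hand side of \eqref{eqn:hurray} tends to $2/\pi<1$; so no argument can establish the inequality exactly as printed in this paper's normalization (and, as stated, it would also fail for the trivial solutions $\phi\equiv 0$ with $F$ arbitrary, so some nontriviality hypothesis is needed in any case). The statement of \cite[Theorem 4.7]{chen2017existence} evidently involves a different normalization or right-hand side than the transcription here. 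For what the paper actually uses downstream (Lemma~\ref{uniform boundedness lemma} and the stagnation alternative in Theorem~\ref{main theorem} only need that $F\to\infty$ forces $\|\partial_s\phi\|_{L^\infty}\to\infty$), your flow-force argument with constant $1$ would suffice and is a clean route; but as a proof of \eqref{eqn:hurray} itself, the key step producing $2/\pi$ is missing, and the refinement you propose is not a viable way to obtain it.
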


On the other hand, we can show that the Froude is effectively bounded from below along $\mathscr{C}$ provided that certain norms remain bounded.  The following result is proved in \cite[Lemma 6.9]{chen2017existence}.  


\begin{lemma}[Asymptotic supercriticality] \label{asymptotical supercritical lemma}  
  If $\n{\phi(t)}_{C^{1,\alpha}(R)}$ is uniformly bounded along $\cm$, then
  \begin{equation}
    \liminf_{t \to \infty} F(t) >  F_{\mathrm{cr}}. \label{liminf F > Fcr} 
  \end{equation}
\end{lemma}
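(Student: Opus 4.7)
The plan is to argue by contradiction. Suppose there is a sequence $t_n \to \infty$ with $F(t_n) \to F_{\mathrm{cr}}$ while $\|\phi(t_n)\|_{C^{1,\alpha}(R)} \leq M$ for some $M > 0$. The strategy is to show that $\phi(t_n) \to 0$ in $X$, whereupon Theorem~\ref{small-amp}\ref{existence_uniqueness} forces $(\phi(t_n), F(t_n)) \in \mathscr{C}_{\mathrm{loc}}$ for all large $n$, contradicting Theorem~\ref{preliminary large amplitude theorem}\ref{reconnect}.

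\textbf{Equidecay.} By Theorem~\ref{preliminary large amplitude theorem}\ref{nodal}, each $\phi(t_n)$ is nonnegative, even in $r$, and monotone non-increasing on $\{r > 0\}$. Suppose for contradiction that equidecay fails: there exist $\varepsilon > 0$ and points $(r_n, s_n)$ with $r_n \to +\infty$, $s_n \to s_*$, and $\phi(t_n)(r_n, s_n) \geq \varepsilon$. The translates $\tilde\phi_n := \phi(t_n)(\cdot + r_n, \cdot)$ solve \eqref{phiHeightEqn} at $F = F(t_n)$ and, by the uniform $C^{1,\alpha}$ bound, subconverge in $C^1_{\mathrm{loc}}(\overline{R})$ to some $\tilde\phi_* \in C^{1,\alpha}_\bdd(\overline R)$ solving the height equation at $F_{\mathrm{cr}}$. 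Since $r_n \to \infty$, the monotonicity of $\tilde\phi_n$ on $\{r > -r_n\}$ passes to $\tilde\phi_*$ on all of $\R$, and produces pointwise asymptotic limits $\tilde\phi_*(r,\cdot) \to \widetilde{H}_\pm(\cdot) - H(\cdot)$ as $r \to \pm\infty$. A further translation of $\tilde\phi_*$ and another $C^1_{\mathrm{loc}}$-compactness step, as in Proposition~\ref{local compactness proposition}, identifies $\widetilde{H}_\pm$ as laminar members of $\mathscr{T}$ solving \eqref{phiHeightEqn} at $F_{\mathrm{cr}}$. Flow force conservation along each $\phi(t_n) + H$, together with continuity of $\flowforce$ under $C^1_{\mathrm{loc}}$ convergence, then gives $\flowforce(\widetilde{H}_-, F_{\mathrm{cr}}) = \flowforce(H, F_{\mathrm{cr}})$. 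Although Lemma~\ref{flow force lemma} is stated only for supercritical solutions, its proof depends on $F$ only through the strict monotonicity of $\mu(\cdot)$ in Lemma~\ref{A kappa}, so the same argument applies at $F_{\mathrm{cr}}$ and forces $\widetilde{H}_- = H$. Monotonicity of $\tilde\phi_*$ together with $\tilde\phi_*(0, s_*) \geq \varepsilon$ yields $\widetilde{H}_-(s_*) \geq H(s_*) + \varepsilon$, a contradiction.

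\textbf{Amplitude decay and $X$-convergence.} By evenness, monotonicity, and nonnegativity, $\|\phi(t_n)\|_{C^0(R)}$ is attained at some $(0, s_n^\sharp)$. If it does not tend to zero, pass to a subsequence with $\phi(t_n)(0, s_n^\sharp) \geq c > 0$ and $s_n^\sharp \to s^\sharp$. Extracting a $C^1_{\mathrm{loc}}$ limit $\phi_*$ of $\phi(t_n)$, the equidecay established above upgrades $\phi_*$ to an element of $C^{1,\alpha}_\bdd(\overline R) \cap C_0^1(\overline R)$ with $\phi_*(0, s^\sharp) \geq c$. But then $\phi_*$ is a nontrivial wave of elevation solving \eqref{phiHeightEqn} at $F = F_{\mathrm{cr}}$, contradicting Theorem~\ref{supercritical elevation}. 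Hence $\|\phi(t_n)\|_{C^0} \to 0$. H\"older interpolation then gives $\|\phi(t_n)\|_{C^1(R)} \to 0$, so $\inf_R(\phi_s(t_n) + H_s) \geq \tfrac12 \inf H_s > 0$ for $n$ large. Viewing $\phi(t_n)$ as a solution of the linear, uniformly elliptic, uniformly oblique boundary value problem obtained by comparison against the zero solution --- as in the proof of Proposition~\ref{local compactness proposition} --- Schauder and Calder\'on--Zygmund estimates yield
\[
\|\phi(t_n)\|_{C^{1,\alpha}(R)} + \|\phi(t_n)\|_{W^{2,p}_{\mathrm{loc}}(R)} \leq C(M)\,\|\phi(t_n)\|_{C^0(R)} \longrightarrow 0,
\]
so that $\phi(t_n) \to 0$ in $X$, completing the contradiction.

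The principal obstacle is the equidecay step, since at the limit the Froude number becomes critical and the spectral advantage of supercriticality is lost. The saving observation is that Lemma~\ref{flow force lemma} is secretly an $F$-uniform statement: its proof only invokes the strict monotonicity of $\mu(\cdot)$, which persists at the endpoint. The remaining steps --- amplitude decay, bootstrapping to $X$-convergence, and the final invocation of small-amplitude uniqueness --- rely on standard elliptic compactness and interpolation arguments.
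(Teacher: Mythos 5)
The paper does not actually prove this lemma: it is quoted from \cite[Lemma 6.9]{chen2017existence}, with the remark that the argument extends to the present regularity. Your write-up is therefore a reconstruction, and its skeleton is the same as that of the cited argument: if $F(t_n) \to F_{\mathrm{cr}}$ with bounded norms, use evenness/monotonicity/elevation plus a translated-limit and flow-force argument to get equidecay, exclude a nontrivial limiting wave at $F_{\mathrm{cr}}$ via Theorem~\ref{supercritical elevation} (elevation implies strictly supercritical), conclude $\phi(t_n) \to 0$ in $X$, and then play the uniqueness clause of Theorem~\ref{small-amp}\ref{existence_uniqueness} against Theorem~\ref{preliminary large amplitude theorem}\ref{reconnect}. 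None of the ingredients you invoke depend on Lemma~\ref{asymptotical supercritical lemma}, so there is no circularity in the overall plan, and your observation about Lemma~\ref{flow force lemma} at criticality is correct --- in fact at $F = F_{\mathrm{cr}}$ the strict monotonicity of $\mu$ in Lemma~\ref{A kappa} leaves $H$ as the \emph{only} $r$-independent solution, so the flow-force comparison is not even needed for that identification.

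There is, however, a genuine gap: uniform nondegeneracy. Every limit passage in your equidecay step --- that the translates $\tilde\phi_n$ subconverge to a solution of \eqref{phiHeightEqn} at $F_{\mathrm{cr}}$, that further translates converge to laminar members of $\mathscr{T}$, and that $\flowforce$, whose integrand contains $1/h_s$, is continuous under $C^1_{\mathrm{loc}}$ convergence --- requires a lower bound $\inf_R(\partial_s\phi(t_n)+H_s) \ge \delta_0 > 0$ uniform in $n$. The hypothesis of the lemma is only a $C^{1,\alpha}$ bound, and this does not prevent $h_s$ from degenerating along the sequence; this is precisely why $1/\inf_R(\phi_s+H_s)$ appears as a separate term in the blow-up quantity \eqref{global blowup}. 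The lower bound you do produce comes only in your second step, as a consequence of $\|\phi(t_n)\|_{C^1}\to 0$, which itself rests on the equidecay you are in the middle of proving, so as written the argument is circular at this point. The repair is in the paper: since $F_{\mathrm{cr}} < F(t_n) \to F_{\mathrm{cr}}$ is bounded, the velocity bound \eqref{velocity field bound} of Lemma~\ref{pressure bound lemma} gives $\tfrac{1+\phi_r^2}{2(\phi_s+H_s)^2} \le C$ uniformly, hence the needed $\delta_0$; Lemma~\ref{pressure bound lemma} is proved independently of the present lemma, so invoking it (despite its placement just afterwards) is legitimate, and with it your limit passages and the later Schauder/Calder\'on--Zygmund step are justified. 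Two smaller glosses: the claim that equidecay ``upgrades'' the untranslated limit $\phi_*$ to $C_0^1(\overline{R})$ needs an extra elliptic step (Theorem~\ref{schauder theorem} on far-out substrips, comparing against the zero solution, to convert $C^0$ decay into $C^1$ decay before applying Theorem~\ref{supercritical elevation}), and the inequality $\|\phi(t_n)\|_{W^{2,p}_{\mathrm{loc}}(R)} \le C\|\phi(t_n)\|_{C^0(R)}$ should be read locally in $r$ (constants depending on the truncation), which still yields convergence to $0$ in the $W^{2,p}_{\mathrm{loc}}$ topology and hence in $X$.
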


By contrast, the next two estimates require closer attention.  First, we must control the pressure uniformly along the continuum.  Through Bernoulli's law, this furnishes bounds on the magnitude of the velocity.  

\begin{lemma}[Bounds on the pressure and velocity] \label{pressure bound lemma} For any solitary wave with Froude number $F > F_0$, the pressure and velocity field satisfy the bounds 
\be \label{pressure bound} 
P - P_{\mathrm{atm}} + \frac{1}{2} \| \gamma_+   \|_{L^\infty} \psi \geq 0 \qquad \textrm{in } \Omega_\eta,
\ee
and 
\be \label{velocity field bound} 
(u-c)^2 + v^2 \leq 2 \| \gamma_+ \|_{L^\infty} + \frac{2}{F_0} + 2 \| E \|_{L^\infty} \qquad \textrm{in } \Omega_\eta.
\ee
\end{lemma}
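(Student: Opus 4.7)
My plan is to prove the two inequalities in sequence, with the pressure bound being the substantive step and the velocity estimate following from Bernoulli's law and the first bound.

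For the pressure inequality, I would set $Q := P - P_{\mathrm{atm}} + \tfrac{1}{2}\|\gamma_+\|_{L^\infty}\psi$ and show $Q \geq 0$ on $\overline{\Omega_\eta}$ by a maximum principle argument. The boundary values are favourable: on the free surface $\{y=\eta(x)\}$ both $P - P_{\mathrm{atm}}$ and $\psi$ vanish, so $Q|_{y=\eta} = 0$, and because $\psi$ solves $-\Delta \psi = \gamma(\psi)$ with $\psi = 0$ on top and $\psi = m$ on the bed, the wave-of-elevation property (Theorem~\ref{supercritical elevation}) together with no-stagnation ensures $0 \leq \psi \leq m$ throughout the fluid. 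The interior argument rests on the identity $\Delta P = 2\det \nabla(u-c,v)$, obtained by taking $\nabla\cdot$ of the steady momentum equation \eqref{velocity form interior}, using incompressibility $u_x + v_y = 0$ and $\omega = \gamma(\psi)$; together with $-\Delta\psi = \gamma(\psi)$ this yields
\[
  \Delta Q \;=\; 2\det\nabla(u-c,v) \,-\, \tfrac{1}{2}\|\gamma_+\|_{L^\infty}\,\gamma(\psi) .
\]
The main obstacle is that $\Delta Q$ is not sign-definite, so the strong maximum principle cannot be applied directly. To circumvent this, I would argue by contradiction at a putative negative interior minimum $x_0$: the first-order condition $\nabla P(x_0) = -\tfrac{1}{2}\|\gamma_+\|_{L^\infty}\nabla\psi(x_0)$ combined with the gradient form of Bernoulli's law $\nabla P = -\nabla(|\mathbf{w}|^2/2 + \Gamma(\psi) + gy)$, with $\mathbf{w} := (u-c, v)$, determines $\nabla(|\mathbf{w}|^2)(x_0)$. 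The second-order inequality $\Delta Q(x_0) \geq 0$, rewritten using $\det \nabla(u-c,v) = -u_x^2 - v_x^2 + \gamma(\psi) v_x$, becomes, after completing the square in $v_x$, the constraint $\gamma(\psi(x_0))(\gamma(\psi(x_0)) - \|\gamma_+\|_{L^\infty}) \geq 2u_x(x_0)^2 \geq 0$, forcing $\gamma(\psi(x_0)) \leq 0$; a case analysis using the no-stagnation condition $u - c < 0$ and the strict inequality $(|\mathbf{w}|^2)_y(x_0) < -g$ (obtained from the pinned-down gradient expression, since $\tfrac{1}{2}\|\gamma_+\|_{L^\infty} - \gamma(\psi(x_0)) > 0$ whenever $\gamma(\psi(x_0)) < 0$) together with the boundary and far-field behaviour of $Q$ then yields the contradiction. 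Since $\gamma \in L^\infty$ only, these manipulations must be interpreted distributionally, which is licensed by the $W^{2,p}_{\mathrm{loc}}$ regularity of $P$ established in Theorem~\ref{equivalence theorem}.

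For the velocity estimate, I would invoke Bernoulli's law in the form
\[
  \tfrac{1}{2}\left( (u-c)^2 + v^2 \right) \;=\; E - (P - P_{\mathrm{atm}}) - g(y+d) - \Gamma(\psi)
\]
with $E$ the Bernoulli head, and insert the lower bound $-(P-P_{\mathrm{atm}}) \leq \tfrac{1}{2}\|\gamma_+\|_{L^\infty}\psi$ just established, together with $g(y+d) \geq 0$ on $\overline{\Omega_\eta}$ and the elementary estimate $|\Gamma(\psi)| \leq \|\gamma_+\|_{L^\infty}\psi$ for the primitive of $\gamma$. Using $\psi \leq m$ in the fluid and controlling $m$ via \eqref{mdefn} and the supercriticality hypothesis $F > F_0$ to absorb the $1/F_0$ contribution, one arrives at the claimed bound $(u-c)^2 + v^2 \leq 2\|\gamma_+\|_{L^\infty} + 2/F_0 + 2\|E\|_{L^\infty}$ after collecting the dimensional constants.
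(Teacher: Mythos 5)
Your interior algebra is fine as far as it goes: $\Delta P = 2\det\nabla(u-c,v)$ and the completed square do show that a negative interior minimum $x_0$ of $Q$ with $\Delta Q(x_0)\ge 0$ forces $\gamma(\psi(x_0))\le 0$ (up to the borderline case $\gamma=\|\gamma_+\|_{L^\infty}$). But the proof stops exactly where the lemma becomes hard. When $\gamma(\psi(x_0))\le 0$, nothing you derive is contradictory: the first-order condition merely pins down $\nabla(|\mathbf{w}|^2)(x_0)$ and gives $(|\mathbf{w}|^2)_y(x_0)<-g$, which is a statement at a single point and is perfectly compatible with $Q$ having a negative interior minimum. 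The sentence ``a case analysis \ldots together with the boundary and far-field behaviour of $Q$ then yields the contradiction'' is an assertion, not an argument, and I do not see how to complete it from the ingredients you list. This is precisely why the paper does not argue pointwise at a minimum: following Varvaruca, it converts the relation $\nabla\varphi=(-\psi_{xy}\psi_y+\psi_x\psi_{yy},\,-\psi_{xy}\psi_x+\psi_{xx}\psi_y)$ into a genuine elliptic equation \eqref{dynamic pressure elliptic equation} for the dynamic pressure $\varphi=P-P_{\mathrm{atm}}+y/F^2$ (no stagnation keeps $|\nabla\psi|^2$ bounded below, and $\Delta\psi=-\gamma(\psi)\in L^\infty$ keeps the first-order coefficients bounded), and then shows that $\theta:=P-P_{\mathrm{atm}}+M\psi$ with $M>\tfrac12\|\gamma_+\|_{L^\infty}$ satisfies a differential inequality $L\theta\le 0$ for an elliptic operator with no zeroth-order term. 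The \emph{global} weak minimum principle, the sign $\theta_y<0$ on the bed, $\theta=0$ on the surface, and the decay of $\varphi$ then give $\theta\ge 0$, and the claimed bound \eqref{pressure bound} follows. Without some such global PDE structure, the first- and second-order conditions at $x_0$ simply do not contain enough information.

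There is a second, independent obstruction: at the regularity of this paper you cannot evaluate second derivatives at the minimum point at all. Here $P\in W^{2,p/2}_{\mathrm{loc}}$ with $p/2=1/(1-\alpha)\le 2$, so $Q$ is only $C^1$ plus $W^{2,p/2}_{\mathrm{loc}}$, and ``interpreting the manipulations distributionally'' does not license the pointwise inequality $\Delta Q(x_0)\ge 0$; even an ABP/Bony-type maximum principle for strong solutions requires $W^{2,n}_{\mathrm{loc}}=W^{2,2}_{\mathrm{loc}}$, which fails for $\alpha<1/2$. The paper's proof is arranged so that only the weak minimum principle for distributional inequalities is ever invoked. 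Two smaller points: the far-field control $\liminf Q\ge 0$ needs the decay of the dynamic pressure, which the paper extracts from \eqref{pressure in terms of E}, not just ``far-field behaviour''; and in the velocity step the estimate $|\Gamma(\psi)|\le\|\gamma_+\|_{L^\infty}\psi$ is false in general---the primitive of $\gamma$ is controlled by $\|\gamma\|_{L^\infty}$ (or its negative part, depending on which sign you need), which is harmless here only because the stated bound also carries $\|E\|_{L^\infty}$.
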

\begin{proof}
The inequality \eqref{pressure bound} essentially comes from the pressure bounds for periodic waves with continuous vorticity obtained by Varvaruca \cite[Theorem 3.1]{varvaruca2009extreme}.  Here, we can adapt his argument to the solitary wave setting following \cite[Proposition 4.1]{chen2017existence} and relax the assumptions on the regularity  as in the proof of Theorem 1 in \cite[Section 7]{constantin2011discontinuous}.

Let 
\[ \varphi := P - P_{\mathrm{atm}} + \frac{1}{F^2} y \in C_{\bdd}^{1,\alpha}(\overline{\Omega_\eta}) \cap W_{\mathrm{loc}}^{2,\frac{p}{2}}(\Omega_\eta) \]
be the dynamic pressure (that is, the deviation of the pressure from hydrostatic).  Using the expression for $P$ in \eqref{pressure in terms of E}, the fact that $\nabla \psi \to (0, \Psi_y)$ uniformly in $y$, and the equations satisfied by $\Psi$, we can see that $\varphi \in C_0^0(\Omega_\eta)$. On the other hand, in light of \eqref{velocity form interior}, $\varphi$ and $\psi$ satisfy
\be \varphi_x = -\left( \psi_y^2 \right)_x + \left( \psi_x \psi_y \right)_y, \quad \varphi_y = -\left( \psi_x^2 \right)_y + \left( \psi_x \psi_y \right)_x \qquad \textrm{in } \Omega_\eta. \label{weak pressure psi system} \ee
As $\psi \in W_{\mathrm{loc}}^{2,p}$, we can distribute the derivatives above revealing that
\be \nabla \varphi = \left( -\psi_{xy} \psi_y + \psi_x \psi_{yy}, \, -\psi_{xy} \psi_x + \psi_{xx} \psi_y \right), \label{gradient dynamic pressure} \ee
which holds almost everywhere in $\Omega_\eta$.  Using the fact that 
\[ \Delta \varphi = (\Delta \psi)^2 - \psi_{xx}^2 - \psi_{yy}^2 - 2 \psi_{xy}^2 = \psi_{xx} \psi_{yy} - 2 \psi_{xy}^2,\]
and taking the divergence of \eqref{gradient dynamic pressure}, we ultimately arrive at the equation 
\be \Delta \varphi + 2\frac{\nabla \varphi + \Delta \psi \nabla \psi}{|\nabla \psi|^2} \cdot \nabla \varphi = 0.\label{dynamic pressure elliptic equation} \ee
Note that the coefficient of the first-order term $\nabla \varphi$ above is bounded, as $\Delta \psi = -\gamma(\psi) \in L^\infty(\Omega_\eta)$.  

Next, set $\theta := P - P_{\mathrm{atm}} + M \psi$, for  a constant $M$ to be determined later.  From the above equation satisfied by $\varphi$, we find that 
\[ \Delta \theta + 2 \frac{\nabla \theta - (\Delta \psi + 2M) \nabla \psi}{|\nabla \psi|^2} \cdot \nabla \theta = -M( \Delta \psi + 2M) - \frac{1}{F^2} \frac{1}{|\nabla \psi|^2} \left( \frac{1}{F^2} - (\Delta \psi + 2M) \psi_y \right) .\]
Again, we note the first-order in $\theta$ coefficients on the left-hand side above are in $L^\infty(\Omega_\eta)$.  Taking $M > \frac{1}{2} \| \gamma_+\|_{L^\infty}$, we see that the right-hand side is indeed non-positive.  Also, since $\varphi$ decays at infinity, we have 
\[ \lim_{x \to \pm \infty} \inf_{-1 < y < \eta(x)} \theta(x,y) =  \lim_{x \to \pm \infty} \inf_{-1 < y < \eta(x)} \left( -\frac{1}{F^2} y + M \psi \right) \geq 0.\]
Furthermore, evaluating $\theta_y$ on the bed using \eqref{gradient dynamic pressure}, we see that
\[ \theta_y = P_y + M \psi_y = - \frac{1}{F^2} + M \psi_y < 0 \qquad \textrm{on } \{y = -d\}.\] 
Thus $\theta$ cannot attain its minimum on the bed.  As $\theta$ vanishes identically on the free surface,  Theorem~\ref{max principle}\ref{weak max principle} tells us that $\theta$ obeys the weak minimum principle, and hence $\theta \geq 0$.  Retracing definitions, this is exactly the bound \eqref{pressure bound}.   

To control the velocity field, we may use \cite[Proposition 4.1]{chen2017existence} setting the density to be constant.  Bernoulli's law and the above pressure estimate then give \eqref{velocity field bound}.  As $\nabla \psi$ and $E$ are in $C_\bdd^{0,\alpha}$, no modification of the argument is required.  
\end{proof}

\begin{remark} \label{pressure bound remark}  Observe that in the above argument we use strongly the assumption that $\gamma \in L^\infty$.  Also, our choice to formulate an existence theory with  the additional regularity $u, v \in W_{\mathrm{loc}}^{1,p}$ --- and hence $P \in W_{\mathrm{loc}}^{2,p/2}$ --- is entirely because we cannot otherwise justify the move from the relation \eqref{weak pressure psi system} to the elliptic equation \eqref{dynamic pressure elliptic equation} that is at the heart of the proof.  
\end{remark}

\begin{lemma}[Uniform boundedness in $X$] \label{uniform boundedness lemma} For each $K > 0$, there exists a constant $C = C(K) > 0$ such that, if $(\phi, F) \in \mathscr{C}$ and $\| \phi_s \|_{L^\infty(R)} < C$, then 
\[ \| \phi \|_{C^{1,\alpha}(R)} + \| \phi \|_{W_{\mathrm{loc}}^{2,p}(R)} < K.\]  
\end{lemma}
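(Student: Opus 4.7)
The plan is to exploit the hypothesis $\|\phi_s\|_{L^\infty(R)} < C$ to secure uniform ellipticity of the height equation, then bootstrap. Observe first that $\|\phi_s\|_{L^\infty} < C$ controls the quantity $h_s = \phi_s + H_s$ from above; provided $C$ is chosen smaller than $\min_{[-1,0]} H_s$, it also yields the quantitative lower bound
\[
0 < \min_{[-1,0]} H_s - C \;\leq\; h_s \;\leq\; \|H_s\|_{L^\infty} + C \qquad \text{in } R.
\]
Translated to the Eulerian picture via $h_s = 1/(c-u) \cdot (\text{const})$, this means the wave is uniformly bounded away from horizontal stagnation, and the coefficients of the quasilinear operator in \eqref{heightFormulation} are uniformly elliptic.

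Next, I would use Theorem~\ref{upper bound F} to get $F \leq F_1(C)$ where $F_1(C)$ depends only on $C$ and $\|H_s\|_{L^\infty}$, and then invoke Lemma~\ref{pressure bound lemma} (with $F_0 := F_{\mathrm{cr}}$, permissible since $(\phi,F) \in \Udomain$) to obtain $(u-c)^2 + v^2 \leq M(C)$. Recalling $\phi_r \sim -v/(c-u)$ and that $c-u$ is bounded below, this gives $\|\phi_r\|_{L^\infty(R)} \leq M'(C)$. Since $\phi \in C_0^1(\overline{R})$ and $\|\nabla\phi\|_{L^\infty}$ is now controlled, $\|\phi\|_{L^\infty(R)}$ is likewise bounded by some $M''(C)$. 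At this stage, $\phi$ is uniformly bounded in $W^{1,\infty}(R)$ by a constant depending only on $C$.

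The remaining step is a standard regularity bootstrap. Because $h_s$ lies in a compact subinterval of $(0,\infty)$, the full nonlinear coefficients appearing in \eqref{heightFormulation} remain in a compact set, and the PDE is a uniformly elliptic divergence-form quasilinear equation with $W^{1,\infty}$ solution and oblique (Robin) boundary condition on $T$. Applying a De~Giorgi--Nash--Moser estimate up to the boundary gives $\phi \in C^{0,\beta}$ for some $\beta > 0$, with quantitative bound depending only on $C$. Differentiating in $r$, exactly as in the finite-difference argument of Lemma~\ref{addt'l regularity at boundary}, the resulting linearized problem has $C^{0,\beta}$ coefficients and uniformly oblique boundary operator; the Schauder estimate Theorem~\ref{schauder theorem} then yields $\|\phi_r\|_{C^{0,\alpha}(R)} \leq K_1(C)$, and since $\alpha \in (0,1/2]$ we recover $\|\phi\|_{C^{1,\alpha}(R)} \leq K_1'(C)$. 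An $L^p$ Calderón--Zygmund estimate for uniformly elliptic divergence-form operators with $C^{0,\alpha}$ coefficients, applied locally in $R$, finally produces $\|\phi\|_{W^{2,p}_{\mathrm{loc}}(R)} \leq K_2(C)$. Picking $C = C(K)$ small enough that $K_1'(C) + K_2(C) < K$ closes the argument.

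The main obstacle in making this rigorous is the low regularity of the coefficient $1/(H_s+\phi_s)^2$—only Lipschitz in $s$ (given the assumption \eqref{extra regularity H} on $H$) and only as regular as $\phi$ in $r$—which forces all estimates to be carried out in divergence form and rules out any non-divergence reformulation. A subtle point is ensuring that the Robin-type boundary condition at $T$, whose $F^{-2}$ coefficient inherits only the rough lower bound $F > F_{\mathrm{cr}}$, does not obstruct the Schauder and $L^p$ estimates; this is handled as in Section~\ref{properness} by an auxiliary-function modification using $\tilde\Phi$, which restores the correct sign and allows maximum principle/coercivity arguments to apply.
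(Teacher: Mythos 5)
Your first move---choosing $C<\min_{[-1,0]}H_s$ so that $h_s=\phi_s+H_s\ge \min H_s-C>0$---turns the hypothesis into a \emph{smallness} condition on $\|\phi_s\|_{L^\infty}$, and that defeats the purpose of the lemma. The way this result is used in the proof of Theorem~\ref{main theorem} is that a bound on $\|\partial_s\phi\|_{L^\infty}$ of \emph{arbitrary} size must control the full $X$-norm, so that blowup of $\|\phi(t)\|_{C^{1,\alpha}}$ forces $\|\partial_s\phi(t)\|_{L^\infty}\to\infty$; if the estimate is only available when $\|\phi_s\|_{L^\infty}$ lies below the fixed threshold $\min H_s$, that conclusion is lost. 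The restriction is also unnecessary: the needed lower bound on $h_s$ is exactly what the velocity bound \eqref{velocity field bound} provides, since by Bernoulli $\frac{1+\phi_r^2}{2(\phi_s+H_s)^2}=\frac12\left((u-c)^2+v^2\right)\le \|\gamma_+\|_{L^\infty}+F_{\mathrm{cr}}^{-2}+\|E\|_{L^\infty}$, which simultaneously bounds $(\phi_s+H_s)^{-1}$ and $|\phi_r|$ by data-dependent constants; this, together with Theorem~\ref{upper bound F} for $F$ and Lemma~\ref{asymptotical supercritical lemma} for $1/(F-F_{\mathrm{cr}})$, is precisely how the paper starts. Finally, your closing step ``pick $C=C(K)$ small enough that $K_1'(C)+K_2(C)<K$'' cannot work as written: those constants are built from $\|\gamma_+\|_{L^\infty}$, $F_{\mathrm{cr}}$, $H$ and the elliptic constants and do \emph{not} tend to zero as $C\to0$, so for small $K$ the inequality is unattainable. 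The actual content of the lemma (and what the paper proves) is that the $X$-norm is bounded by a constant depending only on $\|\phi_s\|_{L^\infty}$.

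The bootstrap also has a circularity. After De Giorgi--Nash gives $\phi\in C^{0,\beta}$ you differentiate in $r$ and assert that the linearized problem has $C^{0,\beta}$ coefficients; but the coefficients of $\mathscr{F}_\phi(\phi,F)$ in \eqref{Fphi} are functions of $\nabla\phi=(\phi_r,\phi_s)$, not of $\phi$, so H\"older continuity of $\phi$ alone buys nothing and the Schauder step presupposes exactly the gradient regularity you are trying to prove. The paper's route avoids this: De Giorgi--Nash is applied to $\phi_r$ itself (which solves $\mathscr{F}_{1\phi}(\phi,F)\phi_r=0$ with merely bounded measurable coefficients), giving $\phi_r\in C^{0,\alpha'}$ with $\alpha'=\alpha'(C)$; then---and this step is entirely absent from your sketch---the H\"older bound on $\phi_s$ is extracted by rearranging the interior equation in \eqref{phiHeightEqn} to express $\bigl(\tfrac{1+\phi_r^2}{2(\phi_s+H_s)^2}\bigr)_s$ in terms of $\bigl(\tfrac{\phi_r}{H_s+\phi_s}\bigr)_r$ and $\bigl(\tfrac{1}{2H_s^2}\bigr)_s$ and inverting. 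Your phrase ``since $\alpha\in(0,1/2]$ we recover $\|\phi\|_{C^{1,\alpha}}$'' is not a substitute: a H\"older bound on $\phi_r$ says nothing about $[\phi_s]_{C^{0,\alpha}}$, and the size of $\alpha$ is irrelevant to that. Note also that the boundary strips are handled in the paper via the finite-difference improved regularity of Lemma~\ref{addt'l regularity at boundary} (using \eqref{extra regularity H}), not via a boundary De Giorgi--Nash estimate plus the $\tilde\Phi$ modification, which is a sign/coercivity device and plays no role in these a priori bounds; the final $W^{2,p}_{\mathrm{loc}}$ estimate (local Calder\'on--Zygmund, as in Constantin--Strauss) is fine once the uniform $C^{1,\alpha}$ bound is in hand.
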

\begin{proof}
Let $K > 0$ be given.  We will write $C$ to denote a generic positive constant depending only on $\| \phi_s \|_{L^\infty}$.   First, observe that in light of Lemma \ref{upper bound F}, we have $F < C$.  Returning to the (nondimensionalized) Eulerlian variables, we see also that 
\[ \frac{1+\phi_r^2}{2\left(\phi_s+H_s\right)^2} = \frac{1}{2} \left( (u-c)^2 + v^2 \right) < \| \gamma_+ \|_{L^\infty} + \frac{1}{F_{\mathrm{cr}}^2} + \| E \|_{L^\infty}, \]
where the last inequality comes from \eqref{velocity field bound}. Thus $\| \phi \|_{C^1(R)} < C$.  By Lemma \ref{asymptotical supercritical lemma}, this further ensures that $1/(F - F_{\mathrm{cr}}) < C$

We must now upgrade this to control of the full $C^{1,\alpha}(\overline{R})$ norm, as well as uniform boundedness in $W_{\mathrm{loc}}^{2,p}(R)$.  First we consider the H\"older estimates of $\nabla \phi$.  In a neighborhood of the top or bottom, $\phi$ enjoys improved regularity in view of Lemma \ref{addt'l regularity at boundary}.  Let $\delta \in (0,1)$ be given as in \eqref{extra regularity H} and denote $R_\delta := \mathbb{R} \times (-1+\delta, -\delta) \subset R$.  Arguing exactly as  as in \cite[Proposition 5.12]{wheeler2013solitary}, we find that 
\[ \| \phi \|_{C^{1,\alpha}(\overline{R} \, \setminus R_{\delta/2})} < C.\]  

The main new challenge is the interior regularity.  For that, we note that it was also established in Lemma \ref{addt'l regularity at boundary} that 
$\phi_r \in C_\bdd^{1,\alpha}(\overline{R})$ and solves the elliptic problem 
\[ \mathscr{F}_{1\phi}(\phi,F) \phi_r = 0 \qquad \textrm{in  } R.\]
Looking at the form of the linearized operator \eqref{Fphi}, we see that the $C^0$ norms of the coefficients are indeed uniformly bounded in terms of $C$.  The De Giorgi--Nash-type estimate \cite[Theorem 8.24]{gilbarg2001elliptic} then gives
\[ \| \phi_r \|_{C^{0,\alpha^\prime}(R_{1,\delta})} \leq C \| \phi_r \|_{C^0(R)} < C, \]
where $R_{1,\delta} := R_\delta \cap \{ |r| < 1 \}$, and $\alpha^\prime = \alpha^\prime(C) \in (0,1)$.  Using the same estimate on horizontal translates of $R_{1,\delta}$ allows us to conclude that $\phi_r \in C_\bdd^{0,\alpha^\prime}(R_\delta)$ with a norm controlled by $C$.  But then this means that the coefficients in $\mathscr{F}_\phi(\phi,F)$ are actually bounded uniformly in the $C^{0,\alpha^{\prime\prime}}(R_\delta)$ norm by $C$, with $\alpha^{\prime\prime}:= \min\{\alpha, \alpha^\prime\}$.  We may then apply the interior version of the Schauder-type estimate Theorem~\ref{schauder theorem} to conclude that 
\[ \| \phi_r\|_{C^{1, \alpha^{\prime\prime}}(R_\delta)} < C.\]
In particular, this gives $\| \phi_r \|_{C^{0,\alpha}(R_\delta)} < C$.

To obtain the analogous bound for $\phi_s$, we first note that by rearranging the terms in the equation satisfied by $\phi$ \eqref{phiHeightEqn} and using the above information, we have
\[ \left( \frac{1+ \phi_r^2}{2(\phi_s + H_s)^2} \right)_s = \left( \frac{\phi_r}{H_s + \phi_s} \right)_r - \left( \frac{1}{2H_s^2} \right)_s \in C_\bdd^{0,\alpha^{\prime\prime}}(R_\delta) \]
with a bound controlled by $C$.   Distributing the derivative on the left-hand side, it follows from this that 
\[ \left\| \left( \frac{1}{\phi_s + H_s} \right)_s \right\|_{C^{0,\alpha^{\prime\prime}}(R_\delta)} < C,\]
from which the estimate $\| \phi_s \|_{C^{0,\alpha}(R_\delta)} < C$ can be easily deduced.  Combining this with the estimates near the top and bottom, we have shown $\| \phi \|_{C^{1,\alpha}(\overline{R})} < C$.

 Finally, the control of $\phi$ in $W_{\mathrm{loc}}^{2,p}(R)$ is achieved using the argument in \cite[Lemma 5]{constantin2011discontinuous}.  There, the authors are studying periodic solutions, and in fact prove uniform control over the $W^{2,p}$ as a means of bounding the solutions in $C^{1,\alpha}$.  As we are only concerned with local Sobolev regularity, their approach works essentially verbatim.  \end{proof}

\subsection{Proof of large-amplitude existence}

\begin{proof}[Proof of Theorem \ref{main theorem}]
Due to the formulation equivalence, it suffices to prove this result working entirely with the height equation.  In Theorem \ref{preliminary large amplitude theorem}, we already constructed the global curve $\mathscr{C}$ and concluded that $N(t) \to \infty$ as $t \to \infty$, where $N$ is the quantity defined in \eqref{global blowup}.    
In view of Lemma~\ref{asymptotical supercritical lemma}, we know that if $F(t) \to F_{\mathrm{cr}}$, then it must also be the case that one of $\|\phi(t) \|_{C^{1,\alpha}(R)}$ and $\| \partial_s \phi(t) \|_{L^\infty}$ also blowup in the limit.  Likewise, the upper bound on the Froude number \eqref{eqn:hurray} implies that if $F(t) \to \infty$, then $\| \partial_s \phi(t) \|_{L^\infty} \to \infty$.  Finally, Lemma~\ref{uniform boundedness lemma} tells us that the blowup of $\| \phi(t) \|_{C^{1,\alpha}(R)}$ forces $\| \partial_s \phi(t) \|_{L^\infty}$ to diverge to infinity.  This proves part (i).

The fact that $\mathscr{C}$ begins at the critical laminar flow is a consequence of the construction of $\mathscr{C}_{\mathrm{loc}}$, thus part (ii) is obvious.  Finally, the assertion in part (iii) that the solutions in $\mathscr{C}$ are monotone waves of elevation was already established in Theorem \ref{preliminary large amplitude theorem}\ref{nodal}.  The proof is therefore complete.
\end{proof}

\section*{Acknowledgements} 
This material is based upon work supported by the National Science Foundation under Grant No. DMS-1439786 while the authors were in residence at the Institute for Computational and Experimental Research in Mathematics in Providence, RI, during the Spring 2017 semester.

The research of SW is supported in part by the National Science Foundation through Grant No. DMS-1514910.  

\appendix

\section{Quoted results} \label{quoted results section}

\subsection{Some results from elliptic theory}
\label{elliptic theory appendix}

Let $\Omega \subset \R^n$ be a connected, open set (possibly unbounded), and consider the second-order differential operator $L$ in divergence form given by
	\begin{align} \label{L}
	\ds Lu :=  \p_j(a^{ij}(x) \p_i u )+  b^i(x)\p_i u + c(x) u ,
	\end{align}
where we are using the summation convention and writing $\p_i := \p_{x_i}$.   Throughout this section, we suppose that the coefficients $a^{ij}, b^i, c$ are at least $L^\infty(\Omega)$; $c \leq 0$ in $\Omega$;  the operator is uniformly elliptic in the sense that there exists $\lambda >0$ with
	\begin{align} \label{uniform ellipticity condition}
	 a^{ij}(x) \xi_i\xi_j \geq \lambda |\xi|^2, \quad \text{ for all } \xi \in \R^n, x \in \overline{\Omega};
	\end{align}
and that $a^{ij}$ is symmetric.  

Throughout the paper, we make frequent use of the maximum principle and several of its variations, namely the Hopf boundary lemma and Serrin corner-point lemma.  For the latter, we apply the original version introduced in \cite{serrin1971symmetry} that requires the solution to possess classical regularity.  Both the maximum principle and Hopf lemma, though, hold in the weak solution setting.  A classical references for this material are \cite{fraenkel2000introduction,gilbarg2001elliptic}.

\begin{theorem}[Maximum pinciples] \label{max principle}  
    Let $u \in C^1(\Omega) \cap C^0(\overline{\Omega})$ be a weak subsolution $Lu \geq 0$ in $\Omega$.  

  \begin{enumerate}[label=\rm(\roman*)]
  \item \label{weak max principle}{\rm (Weak maximum principle)} It holds that $\sup_{\Omega} u = \sup_{\partial \Omega} u$.  
  \item \label{strong max principle} {\rm (Strong maximum principle)} Assume that the coefficients $a^{ij}, b^i, c \in C^{0,\alpha}(\overline{\Omega})$.  Then if $u$ attains its maximum value on $\overline{\Omega}$ at a point in the interior of $\Omega$, then $u$ is a constant function.  

  \item \label{hopf lemma} {\rm (Hopf boundary lemma)} Assume that  $a^{ij}, b^i, c \in C^{0,\alpha}(\overline{\Omega})$.  Suppose that $u$ attains its maximum value on $\overline{\Omega}$ at a point $x_0 \in \partial \Omega$ for which there exists an open ball $B \subset \Omega$ with $\overline{B} \cap \partial\Omega = \{ x_0 \}$.    Then either $u$ is a constant function or 
    \[ \nu \cdot \nabla u(x_0) > 0,\]
    where $\nu$ is the outward unit normal to $\Omega$ at $x_0$.
  \item \label{edge point} {\rm (Serrin corner-point lemma)} 
    Let $x_0 \in \partial\Omega$ be a ``corner point" in the sense that near $x_0$ the boundary $\partial \Omega$ consists of two transversally intersecting $C^2$ hypersurfaces $\{\gamma(x) = 0\}$ and $\{\sigma(x) = 0\}$. Suppose that $\gamma, \sigma < 0$ in $\Omega$, $u>0$ in $\Omega$ and $u(x_0) = 0$. Assume further that  $u, a^{ij} \in C^2$, $b^i$, $c \in C^0$ in a neighborhood of $x_0$, and 
    \begin{equation}\label{bluntness2}
      B(x_0) = 0, \quad \text{and } \quad \partial_\tau B(x_0) = 0
    \end{equation}
    for every differential operator $\partial_\tau$ tangential to $\{\gamma=0\} \cap \{\sigma=0\}$ at $x_0$. Then for any unit vector 
    $s$
    outward from $\Omega$ at $x_0$, either
    \begin{equation*}
      {\partial_s u}(x_0) < 0 \ \text{or }\ {\partial^2_s u}(x_0) < 0.
    \end{equation*}
  \end{enumerate}
\end{theorem}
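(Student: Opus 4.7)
The plan is to address each of the four parts using the standard toolkit for linear elliptic PDEs, essentially following \cite{gilbarg2001elliptic} for (i)--(iii) and \cite{serrin1971symmetry} for (iv). Since these are classical results, the proposal below outlines the standard strategies rather than reinventing them.

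For (i), I would use the Stampacchia truncation method. Fix $k > \sup_{\partial \Omega} u$ and test the weak inequality $\int_\Omega (a^{ij}\partial_i u\, \partial_j v - b^i (\partial_i u) v - cu v)\, dx \le 0$ against the nonnegative test function $v := (u-k)^+ \in H_0^1(\Omega \cap \{u > k\})$. The sign condition $c \le 0$ allows the zeroth-order term to be dropped on $\{u > k\}$ (where $u \geq k > 0$ if one first reduces to that case by considering $\sup_{\partial\Omega} u \geq 0$), and uniform ellipticity combined with Cauchy--Schwarz yields
\begin{equation*}
\lambda \int_{\{u > k\}} |\nabla v|^2\, dx \le C \int_{\{u > k\}} |\nabla v| \, |v|\, dx.
\end{equation*}
A standard Sobolev/Poincar\'e argument on each bounded component then forces $|\{u > k\}| = 0$, so $\sup_\Omega u \le k$; sending $k \downarrow \sup_{\partial\Omega} u$ finishes the proof.

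For (ii) and (iii), the key is the Hopf barrier. Given an interior ball $B_R(y) \subset \Omega$ that either touches $\{u < \sup u\}$ at a single interior point $x_0$ (for (ii)) or touches $\partial \Omega$ at $x_0$ (for (iii)), set
\begin{equation*}
w(x) := e^{-\alpha|x-y|^2} - e^{-\alpha R^2}.
\end{equation*}
For $\alpha$ chosen sufficiently large in terms of $\|a^{ij}\|_{C^0}, \|b^i\|_{C^0}, \|c\|_{C^0}$ and $R$, a direct computation gives $Lw \ge 0$ in the annulus $A := B_R(y) \setminus \overline{B_{R/2}(y)}$. Since $u - \sup_{\overline A} u + \epsilon w \le 0$ on $\partial A$ for $\epsilon$ small, part (i) upgraded to the subsolution setting yields $u + \epsilon w \le \sup_{\overline A} u$ on $A$. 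Differentiating along the outward normal $\nu$ at $x_0$ gives $\partial_\nu u(x_0) \ge -\epsilon \partial_\nu w(x_0) > 0$. This proves (iii) directly; for (ii), if the maximum is attained in the interior at some $x_* \not\in \partial\{u = \sup u\}$ whose component of $\{u < \sup u\}$ is nonempty, one can find an internally tangent ball and apply the argument to derive a contradiction with $\nabla u = 0$ at the interior maximum, then conclude by connectedness.

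For (iv), I would follow Serrin's original approach. The obstruction is that no interior ball can be tangent at a corner, so the Hopf barrier must be adapted to the wedge geometry. Working in local coordinates near $x_0$, the crucial construction is a barrier of the form $w := \gamma \sigma + M|x - x_0|^2 \eta(x)$ for a cutoff $\eta$ and a parameter $M$ tuned to the coefficients of $L$. The product $\gamma\sigma$ is positive in $\Omega$ near $x_0$ and vanishes to second order along the edge $E := \{\gamma = 0\} \cap \{\sigma = 0\}$; the bluntness hypotheses \eqref{bluntness2} on $B = Lu$ (or really its antecedent quadratic form) are exactly what is required so that the lower-order contributions of $L$ acting on $w$ along the edge direction cannot cancel the dominant negative term, yielding $Lw \le -\delta < 0$ in a conical neighborhood $U$ of $x_0$. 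Comparison then gives $u \ge \epsilon w$ in $U$ for small $\epsilon > 0$, and a Taylor expansion of both sides at $x_0$ in the direction $s$ produces the dichotomy $\partial_s u(x_0) < 0$ or $\partial_s^2 u(x_0) < 0$. The main obstacle is the careful bookkeeping of how the assumed vanishing of $B$ and its tangential derivatives at $x_0$ interacts with the first-order terms in $Lw$; this is the heart of Serrin's original calculation and is what distinguishes the corner-point lemma from the ordinary Hopf lemma.
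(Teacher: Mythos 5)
The paper never proves Theorem~\ref{max principle}: it sits in Appendix~\ref{quoted results section} under ``Quoted results,'' with parts (i)--(iii) attributed to \cite{fraenkel2000introduction,gilbarg2001elliptic} and part (iv) taken directly from Serrin \cite{serrin1971symmetry} (the text above the theorem says explicitly that the corner-point lemma is used in its original, classical-regularity form). So there is no in-paper argument to match; what you have done is reconstruct the standard textbook proofs that those references contain, and as an outline it is the right material in the right order --- truncation for (i), the annulus barrier for (ii)--(iii), Serrin's wedge barrier for (iv).

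Two steps in your sketch would fail in the stated generality and need repair. First, (i) is asserted for a connected, possibly unbounded $\Omega$ with only $L^\infty$ coefficients, and in every application $\Omega$ is an unbounded strip; your truncation argument tests against $(u-k)^+$ and appeals to Sobolev/Poincar\'e ``on each bounded component,'' but there are no bounded components, $(u-k)^+$ need not have finite energy, and the conclusion $|\{u>k\}|=0$ does not follow. Indeed, as literally stated the identity $\sup_\Omega u=\sup_{\partial\Omega}u$ fails on a strip without a condition at infinity (e.g.\ $e^{x}\sin y$ on $\R\times(0,\pi)$); the paper always supplements the principle with decay as $x\to\pm\infty$ when it invokes it, as in the proof of Lemma~\ref{pressure bound lemma}, and a correct proof must either assume boundedness or incorporate that limit at infinity into the comparison. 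Second, your verification ``$Lw\ge 0$ in the annulus'' tacitly expands $\partial_j(a^{ij}\partial_i w)=a^{ij}\partial_i\partial_j w+(\partial_j a^{ij})\partial_i w$, which is not available since $L$ is in divergence form and $a^{ij}$ is only H\"older continuous; the barrier must be shown to be a \emph{weak} subsolution, and this is precisely where the H\"older (Dini) hypothesis on the coefficients is used in the sources (see, e.g., \cite{pucci2007book}), while the strong maximum principle (ii) can alternatively be deduced from the weak Harnack inequality with no coefficient regularity at all \cite[Theorem 8.19]{gilbarg2001elliptic}. Your outline of (iv) is a faithful summary of Serrin's computation and only requires the classical regularity of $u$ near the corner, which in this paper is supplied by Lemma~\ref{addt'l regularity at boundary}.
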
 

We rely considerably on a priori estimates for solutions of elliptic problems.  In particular, the following result takes the Schauder-type estimate of \cite[Theorem 3]{constantin2011discontinuous} and adapts it to problems posed on an infinite strip using the idea behind \cite[Lemma A.1]{wheeler2013solitary}.

\begin{theorem}[Schauder estimate] \label{schauder theorem} Suppose $\Omega = \mathbb{R} \times \mathcal{B}$, where $\mathcal{B} \subset \mathbb{R}^{n-1}$ is a bounded $C^{1,\alpha}$ domain, and assume that $a^{ij}$, $b^i$, $c \in C_\bdd^{0,\alpha}(\overline{\Omega})$.  Let 
\[ B := \left( \beta^i(x) \partial_i + \gamma(x) \right)|_{\partial \Omega},\]
with $\beta^i$, $\gamma \in C_\bdd^{0,\alpha}(\partial\Omega)$, and $\inf_{\partial\Omega} |\nu_i \beta^i | \geq \delta > 0$, where $\nu$ is the outward unit normal to $\Omega$.   If $u \in C_\bdd^0(\overline{\Omega}) \cap C^{1}(\overline{\Omega})$ is a weak solution of 
\[ Lu = \nabla \cdot F \textrm{ in } \Omega, \qquad Bu = g \textrm{ on } \partial \Omega,\]
for $F \in C_\bdd^{0,\alpha}(\overline{\Omega}; \mathbb{R}^n)$ and $g \in C_\bdd^{0,\alpha}(\partial \Omega)$,  then $u \in C_\bdd^{1,\alpha}(\overline{\Omega})$ and obeys the estimate 
\[ \| u \|_{C^{1,\alpha}(\Omega)} \leq C \left( \| u \|_{C^0(\Omega)} + \| F \|_{C^{0,\alpha}(\Omega; \mathbb{R}^n)} + \| g \|_{C^{0,\alpha}(\partial \Omega)} \right).\]
where $C > 0$ is a constant depending only on $\lambda$, $\delta$, $\Omega$, and the norms of the coefficients.  
\end{theorem}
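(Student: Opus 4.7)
The plan is to bootstrap from the bounded-domain Schauder estimate of Constantin--Strauss (quoted as \cite[Theorem 3]{constantin2011discontinuous}) by exploiting the translation invariance of the strip $\Omega = \mathbb{R} \times \mathcal{B}$.  First I will cover $\overline{\Omega}$ by the overlapping sequence of bounded cylinders $\Omega_n := (n-1, n+1) \times \mathcal{B}$ and $\Omega_n^\prime := (n-\tfrac{1}{2}, n+\tfrac{1}{2}) \times \mathcal{B}$, for $n \in \mathbb{Z}$.  On each $\Omega_n$, the solution $u$ still satisfies $Lu = \nabla \cdot F$ in $\Omega_n$ together with the oblique condition $Bu = g$ on the portion of $\partial \Omega_n$ coming from $\partial \Omega$; the ``new'' vertical boundary pieces $\{n \pm 1\} \times \mathcal{B}$ can be treated as interior by choosing a slightly larger enclosing cylinder.

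Next I will apply the Constantin--Strauss estimate (or rather its interior/boundary localization) on each pair $(\Omega_n^\prime, \Omega_n)$ to obtain
\[
  \| u \|_{C^{1,\alpha}(\Omega_n^\prime)} \le C_n \left( \| u \|_{C^0(\Omega_n)} + \| F \|_{C^{0,\alpha}(\Omega_n)} + \| g \|_{C^{0,\alpha}(\partial\Omega \cap \overline{\Omega_n})} \right).
\]
The key point is that the constant $C_n$ depends only on $\lambda$, $\delta$, the geometry of $\mathcal{B}$, and the Hölder and sup norms of the coefficients on $\Omega_n$.  Because the coefficients are in $C^{0,\alpha}_\bdd$ and both the ellipticity and obliqueness hold uniformly across all of $\Omega$, and because the family $\{\Omega_n\}$ is obtained from $\Omega_0$ by pure translation in $r$ (with respect to which the norms are invariant), we have $C_n = C$ independent of $n$.

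Finally, I will patch these local estimates into the claimed global bound.  Taking the supremum in $n$ immediately controls $\| u \|_{C^1_\bdd(\overline{\Omega})}$ and the local Hölder seminorm of $\nabla u$ on each $\Omega_n^\prime$ by the right-hand side with $\Omega$ in place of $\Omega_n$.  To produce the full uniform seminorm $[\nabla u]_{C^{0,\alpha}(\Omega)}$, for points $x, y \in \overline{\Omega}$ I split into two cases: if $|x - y| \le \tfrac{1}{2}$, then $x, y$ both lie in some $\Omega_n^\prime$ and the local estimate applies; if $|x-y| > \tfrac{1}{2}$, then the difference quotient is trivially bounded by $2^{1+\alpha} \| \nabla u \|_{C^0(\Omega)}$, which has already been controlled.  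The same splitting handles the Hölder seminorm of $u$ itself.  The only potentially delicate step is making sure the cut-off/localization argument on $\Omega_n$ produces a constant independent of $n$; this is precisely where translation invariance of the family of cylinders is essential, and is exactly the device used in \cite[Lemma A.1]{wheeler2013solitary}, which I will follow.
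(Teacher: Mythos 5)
Your proposal is correct and follows exactly the route the paper intends: Theorem~\ref{schauder theorem} is stated as a quoted result, obtained by taking the bounded-domain Schauder-type estimate of \cite[Theorem 3]{constantin2011discontinuous} and transferring it to the infinite strip via the translation-invariance covering device of \cite[Lemma A.1]{wheeler2013solitary}, which is precisely your decomposition into translated cylinders with a uniform constant and the near/far splitting for the global H\"older seminorm. No substantive difference from the paper's (indicated) argument.
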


Observe that the above theorem allows to upgrade local bounds $C^0$ bounds on $\nabla u$ to \emph{uniform} H\"older regularity provided $u \in C_\bdd^0$ and solves the elliptic equation.  This fact is used many times over the course of the paper.

Naturally, one can prove many results in the spirit of Theorem \ref{schauder theorem}:  interior estimates, estimates up to a boundary portion, estimates for the Dirichlet problem, and so on.  We will not bother to list them all.

\subsection{Center manifold reduction}
\label{quoted CMR}

\begin{theorem}[Buffoni, Groves, Toland \cite{buffoni1996plethora}]  \label{BGTCMR} Suppose that $(\calX, \Upsilon^{\ep}, \eHam)$ is a one-parameter family of reversible Hamiltonian systems, where $\calX$ is a Hilbert space, $\Upsilon^{\ep}$ is a symplectic form on $\calX$, and $\eHam$ the Hamiltonian.  Write the corresponding Hamilton equation in the form
	\begin{align} \label{linear + nonlinear}
	u_r = Lu + N^{\ep}(u),
	\end{align}
where $u(r)$ is assumed to lie in $\calX$ for each $r$.  We assume that $L: \mathcal{D}(L) \subset \calX \to \calX$ is a densely defined, closed linear operator.  Suppose that $0$ is an equilibrium for \eqref{linear + nonlinear} at $\ep =0$ and that the following conditions hold:
	\begin{enumerate}[label=$(\mathrm{H}\arabic*)$]
	\item \label{H1} The spectrum $\sigma(L)$ of $L$ contains at most finitely-many eigenvalues on the imaginary axis; each of which has finite multiplicity.  Moreover, $\sigma(L) \cap i\R$ is separated from $\sigma(L) \backslash i\R$ in the sense of Kato.  Let $\Pc$ denote the spectral projection corresponding to $\sigma(L) \cap i\R$ and put $\calX^{\mathrm{c}} := \Pc\calX, \calX^* := (1-\Pc)\calX$.  We let $n$ be the finite dimension of $\calX^{\mathrm{c}}$.
	\item \label{H2} There exists $C>0$ such that the operator $L$ satisfies the resolvent estimate
		\begin{align} \label{resolvent est}
		\ds \|u\|_{\calX} \leq \dfrac{C}{1+ |\xi|} \|(L-i\xi I)u\|_{\calX},
		\end{align}
	for all $\xi \in \R$ and $u \in \calX^*$.
	\item \label{H3} There exists a natural number $k$, and interval $\Lambda \subset \R$ containing $0$, and a neighborhood $\mathcal{U}$ of $0$ in $\mathcal{D}(L)$ such that $N$ is $C^{k+1}$ in its dependence on $(\ep, u)$ on $\Lambda \times \mathcal{U}$.  Moreover, $N^0(0)=0$ and $D_uN^0(0)=0$.
	\end{enumerate}
Then after possibly shrinking the interval $\Lambda$ and neighborhood $\mathcal{U}$, we have that, for each $\ep \in \Lambda$, there exists an $n$-dimensional local center manifold $\mathcal{W}^{\ep} \subset \mathcal{U}$ together with an invertible coordinate map
	\begin{align*}
		\chi^{\ep} := \Pc|_{\mathcal{W}^{\ep}} : \mathcal{W}^{\ep} \to \mathcal{U}^{\mathrm{c}} := \Pc\mathcal{U},
	\end{align*}
with the following properties:
	\begin{enumerate}[font=\upshape,label=(\roman*)]
		\item Defining $\Psi^{\ep} : \mathcal{U}^{\mathrm{c}} \to \mathcal{U}^* := P^*\mathcal{U}$ by $u_{\mathrm{c}} + \Psi^{\ep}(u_{\mathrm{c}}) := (\chi^{\ep})^{-1}(u_{\mathrm{c}})$, the map $(\ep, u) \mapsto \Psi^{\ep}(u)$ is $C^k(\Lambda \times \mathcal{U}^{\mathrm{c}}, \mathcal{U}^*)$.  Moreover, $\Psi^{\ep}(0) = 0$ for all $\ep \in \Lambda$ and $D\Psi^0(0) = 0$.
		\item Every initial condition $u_0 \in \mathcal{W}^{\ep}$ determines a unique solution $u$ of \eqref{linear + nonlinear}, which remains in $\mathcal{W}$ as long as it remains in $\mathcal{U}$.
		\item If $u$ solves \eqref{linear + nonlinear} and lies in $\mathcal{U}$ for all $r$, then $u$ lies entirely in $\mathcal{W}^{\ep}$.
		\item If $u_{\mathrm{c}} \in C^1((a,b), \mathcal{U}^{\mathrm{c}})$ solves the reduced system
			\begin{align} \label{reduced system reprise}
			(u_{\mathrm{c}})_r = f^{\ep}(u_{\mathrm{c}}) := Lu_{\mathrm{c}} + \Pc N^{\ep}(u_{\mathrm{c}} + \Psi^{\ep}(u_{\mathrm{c}})),
			\end{align}
		then $u= (\chi^{\ep})^{-1}(u_{\mathrm{c}})$ solves the full system \eqref{linear + nonlinear}.
		\item $\mathcal{M}^{\ep}$ is a symplectic submanifold of $\calX$ when equipped with the symplectic form $\Upsilon^{\ep} |_{\mathcal{M}^{\ep}}$ and Hamiltonian $\eK(u_{\mathrm{c}}) = \eHam(u_{\mathrm{c}} + \Psi^{\ep}(u_{\mathrm{c}}))$.  The reduced system \eqref{reduced system reprise} corresponds to the Hamiltonian flow for $(\mathcal{M}^{\ep}, \Upsilon|_{\mathcal{M}^{\ep}}, \eK)$.  In fact, it is reversible and coincides with the restriction of the full Hamiltonian to the center manifold.
	\end{enumerate}
\end{theorem}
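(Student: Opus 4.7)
The plan is to follow the standard center manifold reduction strategy adapted to the Hamiltonian setting, essentially reproducing the proof of Buffoni--Groves--Toland. The argument proceeds in three main stages: first a soft construction of a locally invariant $C^k$ manifold, second the verification of the reduction properties, and finally the preservation of the Hamiltonian and reversible structures.

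First, I would use hypothesis \ref{H1} to decompose $\calX = \calX^{\mathrm c} \oplus \calX^*$ via the spectral projection $\Pc$, with $\calX^{\mathrm c}$ finite-dimensional and $L$ invariant on both factors. Write $L_{\mathrm c} := L|_{\calX^{\mathrm c}}$ and $L_* := L|_{\calX^*}$. The resolvent estimate \ref{H2} together with the separation of the spectrum tells us that $L_*$ generates a strongly continuous group on $\calX^*$ admitting an exponential dichotomy; this is the analytic heart of the construction and is extracted by Laplace/Fourier multiplier arguments (see, e.g., Mielke \cite{mielke1991book}). Next, I would introduce a smooth cutoff $\chi_\delta \in C_c^\infty(\calX^{\mathrm c}; [0,1])$ with $\chi_\delta \equiv 1$ on a $\delta$-ball about $0$ and define the modified nonlinearity $\tilde N^\ep(u) := \chi_\delta(\Pc u) N^\ep(u)$, which by \ref{H3} is globally Lipschitz with small Lipschitz constant once $\delta$ and $\abs{\ep}$ are small enough.

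The next step is to characterize the center manifold as the set of initial data whose trajectories for $u_r = Lu + \tilde N^\ep(u)$ remain bounded for all $r \in \R$. By the variation-of-constants formula and the dichotomy for $L_*$, any such bounded solution must satisfy the integral equation
\begin{align*}
u_{\mathrm c}(r) &= e^{L_{\mathrm c}r} u_{\mathrm c}(0) + \int_0^r e^{L_{\mathrm c}(r-\tau)} \Pc \tilde N^\ep(u(\tau))\, d\tau,\\
u_*(r) &= \int_{-\infty}^{r} e^{L_*(r-\tau)} \Pi_-^* \tilde N^\ep(u(\tau))\, d\tau - \int_r^{\infty} e^{L_*(r-\tau)} \Pi_+^* \tilde N^\ep(u(\tau))\, d\tau,
\end{align*}
where $\Pi_\pm^*$ are the stable/unstable projections on $\calX^*$. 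For each $u_{\mathrm c}(0) \in \mathcal{U}^{\mathrm c}$, a Banach fixed-point argument in a suitable weighted space $C^0_{\eta}(\R;\calX)$ of mildly exponentially growing functions yields a unique bounded solution depending $C^k$ on data; here the small-Lipschitz property of $\tilde N^\ep$ and the dichotomy give contraction, and $C^k$-smoothness is obtained via the scale-of-spaces trick of Vanderbauwhede--Iooss (fiber-contraction in different decay exponents). Setting $\Psi^\ep(u_{\mathrm c}(0)) := u_*(0)$ defines the reduction map; $C^k$-regularity in $(\ep, u_{\mathrm c})$ follows from the smooth dependence of the fixed point, and $\Psi^0(0) = 0$, $D\Psi^0(0)=0$ are immediate from $N^0(0)=0$ and $D_u N^0(0)=0$. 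The properties (ii)--(iv) then follow routinely: (ii) is local positive/negative invariance, (iii) is the observation that any globally bounded orbit of the unmodified system is a bounded orbit of the cutoff system and therefore arises from the fixed point, and (iv) follows by direct substitution.

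The main conceptual obstacle, and the part requiring the Hamiltonian hypothesis, is property (v). For this I would first verify that $\calX^{\mathrm c}$ is a symplectic subspace by checking that $\Upsilon^\ep|_{\calX^{\mathrm c} \times \calX^{\mathrm c}}$ is nondegenerate; this follows because $L$ is $\Upsilon^\ep$-skew (being Hamiltonian) and $\sigma(L_{\mathrm c}) \subset i\R$ while $\sigma(L_*)$ is symmetric about the imaginary axis but disjoint from it, forcing the $\Upsilon^\ep$-orthogonal complement of $\calX^{\mathrm c}$ to coincide with $\calX^*$. Hence $\mathcal{W}^\ep$, being a graph over an open set in $\calX^{\mathrm c}$, is a symplectic submanifold once it is locally invariant. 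Invariance together with conservation of $\eHam$ along the flow then implies that the restriction of $\eHam$ to $\mathcal{W}^\ep$ generates the reduced flow with respect to $\Upsilon^\ep|_{\mathcal{W}^\ep}$; concretely this is established by computing the reduced vector field $f^\ep$ via $(\chi^\ep)^{-1}_* (\symplecticform|_{\mathcal{W}^\ep})^\sharp d\eK^\ep$ and comparing to $Lu_{\mathrm c} + \Pc N^\ep$. Finally, reversibility passes to the reduction because the uniqueness in the fixed-point characterization forces $\Psi^\ep$ to commute with the reverser $\reverser$, so the reduced dynamics is reversible with respect to $\reverser|_{\calX^{\mathrm c}}$. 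The truly delicate points, requiring the most care, are the $C^k$ smoothness in the scale-of-spaces argument and the symplectic splitting of $\calX$; both are by now well documented and I would simply invoke \cite{buffoni1996plethora,mielke1991book} at the appropriate moments.
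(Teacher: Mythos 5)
This theorem is a quoted result: the paper gives no proof of it, but simply imports it from Buffoni--Groves--Toland \cite{buffoni1996plethora} (with the underlying linear machinery going back to Mielke \cite{mielke1988reduction,mielke1991book}), so there is no internal argument to compare yours against; the comments below judge your sketch on its own terms.

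Your outline follows the standard Vanderbauwhede--Iooss/Mielke route (cutoff in the center coordinate, fixed point in weighted spaces, fiber contraction for $C^k$ smoothness, then restoration of the Hamiltonian and reversible structure), which is indeed the right family of techniques. But there is a genuine gap at the linear step. From (H2) you conclude that $L_*$ ``generates a strongly continuous group on $\calX^*$ admitting an exponential dichotomy'' and you then characterize bounded orbits through a variation-of-constants formula built from $e^{L_*(r-\tau)}\Pi^*_\pm$. Hypothesis (H2) is only a resolvent bound along the imaginary axis; it does not give Hille--Yosida-type bounds on half-planes and hence does not produce a $C_0$-group or a dichotomy. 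In the very application for which the theorem is quoted, $\sigma(L_*)$ consists of real eigenvalues unbounded in both directions, the spatial evolution is ill-posed, and $e^{L_* r}$ simply does not exist as a bounded operator for any $r\neq 0$. The whole point of formulating the hypotheses as (H1)--(H2) is to bypass semigroups: the affine hyperbolic problem $u_*' = L_* u_* + f$ is solved directly by Fourier transform in $r$, with the resolvent estimate (H2) furnishing a bounded solution operator on (exponentially weighted) spaces, and the fixed-point equation for $\Psi^\ep$ is set up with that solution operator in place of your convolution integrals. With this replacement your scheme can be carried out, but as written the analytic heart of the proof is asserted from a false premise rather than established.

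Two smaller points. The cutoff $\chi_\delta(\Pc u)$ destroys the Hamiltonian structure, so part (v) cannot be obtained by invoking conservation of $\eHam$ along the modified flow; the correct route is to restrict $\Upsilon^\ep$ and $\eHam$ to the locally invariant graph, verify nondegeneracy there (your symplectic-splitting observation is the right ingredient), and then pass to canonical coordinates by a parameter-dependent Darboux transformation, which is how \cite{buffoni1996plethora} (and the paper, in Lemma 5.6(vi)) phrase it. Likewise, the equivariance $\Psi^\ep\circ\reverser = \reverser\circ\Psi^\ep$ follows from uniqueness of the fixed point only if the modified nonlinearity is itself $\reverser$-equivariant, so the cutoff must be chosen compatibly with the reverser; this is easy but needs to be said.
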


\subsection{Abstract global bifurcation theory}
\label{global bifurcation appendix}

Let $\genX, \genY$ be Banach spaces, $\genI$ an open interval (possibly unbounded) with $0 \in \overline{\genI}$, and $\genU \subset \genX$ an open set with $0 \in \partial \genU$. Consider the abstract operator equation
\[ \genG(x,\lambda) = 0, \]
where $\genG \maps \genU \by \genI \to \genY$ is an analytic mapping. Assume that for any $(x,\lambda) \in \genU \times \genI$ with $\genG(x,\lambda) = 0$, the Fr\'echet derivative $\genG_x(x,\lambda) \maps \genX \to \genY$ is Fredholm with index $0$.

\begin{theorem}[Chen, Walsh, and Wheeler \cite{chen2016announcement,chen2017existence}] \label{generic global theorem}
  Suppose that there exists a continuous curve $\mathscr{C}_\loc$ of solutions to $\genG(x,\lambda) = 0$, parametrized as
  \begin{align*}
    \mathscr{C}_\loc :=  \{(\tilde x(\lambda),\lambda) : 0 < \lambda < \lambda_*\}
    \subset \genG^{-1}(0)
  \end{align*}
  for some $\lambda_* > 0$ and continuous $\tilde x \maps (0,\lambda_*) \to \genU$.  If
  \begin{align}
    \label{gen limit inv}
    \lim_{\lambda \searrow 0} \tilde x(\lambda) = 0 \in \dell \genU,
    \qquad 
    \genG_x(\tilde x(\lambda),\lambda) \maps \genX \to \genY
    \textup{ is invertible for all $\lambda$},
  \end{align}
  then $\mathscr{C}_\loc$ is contained in a curve of solutions $\mathscr{C}$, parametrized as
  \begin{align*}
    \mathscr{C} := \{(x(t),\lambda(t)) : 0 < t < \infty\} \subset \genG^{-1}(0)
  \end{align*}
  for some continuous $(0,\infty) \ni t \mapsto (x(t),\lambda(t)) \in \genU \by \genI$, with the following properties.
  \begin{enumerate}[label=\rm(\alph*)]
  \item \label{gen alternatives} One of the following alternatives holds:
    \begin{enumerate}[label=\rm(\roman*)]
    \item \textup{(Blowup)} \label{gen blowup alternative}
      As $t \to \infty$,
      \begin{align}
        \label{gen global blowup}
        N(t):= \n{x(t)}_\genX + \frac 1{\dist(x(t),\dell \genU)} +
        \lambda(t) + \frac 1{\dist(\lambda(t),\dell \genI)} \to \infty.
      \end{align}
    \item \label{gen noncompact} \textup{(Loss of compactness)} There exists a sequence $t_n \to \infty$ such that $\sup_n N(t_n) < \infty$ but 
      $\{x(t_n)\}$ has no subsequences converging in $\genX$.
    \end{enumerate}
  \item \label{gen reparam} Near each point $(x(t_0),\lambda(t_0)) \in \mathscr{C}$, we can reparameterize $\mathscr{C}$ so that $t\mapsto (x(t),\lambda(t))$ is real analytic.
  \item \label{gen reconnect} $(x(t),\lambda(t)) \not\in \mathscr{C}_\loc$ for $t$ sufficiently large.
  \end{enumerate}
 \end{theorem}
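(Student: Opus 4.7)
The plan is to construct $\mathscr{C}$ by real-analytic continuation from $\mathscr{C}_\loc$, following the template of Dancer \cite{dancer1973bifurcation} and Buffoni--Toland \cite{buffoni2003analytic}, but modified so as not to assume the solution set is locally compact. The natural parameter for $\mathscr{C}$ is not $\lambda$ but some intrinsic scalar (such as arclength), because $t \mapsto \lambda(t)$ may fail to be monotone at points where $\genG_x$ is singular.

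At any regular point $(x_0,\lambda_0)$ on the solution set---meaning $\genG_x(x_0,\lambda_0) \maps \genX \to \genY$ is invertible---the analytic implicit function theorem yields a unique local analytic branch. At a singular point, $\ker \genG_x(x_0,\lambda_0)$ is finite-dimensional since $\genG_x$ is Fredholm index $0$, so a Lyapunov--Schmidt reduction converts the equation locally into a finite system of real-analytic equations. Weierstrass preparation and Puiseux expansions then identify the solution set near the singular point as a finite union of analytic arcs through that point, and choosing the arc compatible with the incoming tangent gives the desired local extension. This analytic description justifies the reparametrization claim \ref{gen reparam}.

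Now define $T \in (0,\infty]$ as the supremum of parameters along which a continuous extension of $\mathscr{C}_\loc$ exists, and assemble $\mathscr{C}$ as the union of these local extensions. If $T < \infty$, then for any sequence $t_n \nearrow T$ either $N(t_n) \to \infty$, yielding alternative \ref{gen blowup alternative}, or $N(t_n)$ is bounded; in the latter case, if some subsequence of $\{x(t_n)\}$ converges in $\genX$, the limit is a solution in $\genU \by \genI$ from which the local continuation machinery produces an extension beyond $T$, contradicting the maximality of $T$. Hence no subsequence converges, giving alternative \ref{gen noncompact}. A parallel argument handles the case $T = \infty$. For the reconnection property \ref{gen reconnect}, I would argue by analytic unique continuation: if $\mathscr{C}$ returned to $\mathscr{C}_\loc$ for arbitrarily large $t$, the two real-analytic curves would agree on an accumulating set of regular points and thus globally, which is impossible since $\mathscr{C}_\loc$ has $\lambda$-parameter confined to $(0,\lambda_*)$ while the extension must cross a singular point or otherwise escape that range.

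The principal obstacle is the absence of a local compactness hypothesis. In the classical Dancer--Buffoni--Toland framework, a Whyburn-type lemma rules out bounded sequences of solutions without convergent subsequences; here such behavior can genuinely occur and must be encoded as alternative \ref{gen noncompact}. A secondary technical subtlety is selecting a coherent branch at each singular point so that $\mathscr{C}$ remains a single-valued continuous curve rather than a branching analytic variety; this requires consistently orienting the analytic arcs via tangent directions and verifying that the choice propagates unambiguously through successive singular points.
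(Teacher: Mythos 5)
A preliminary remark: this paper contains no proof of the statement at all — Theorem~\ref{generic global theorem} is quoted verbatim in Appendix~\ref{global bifurcation appendix} from Chen--Walsh--Wheeler \cite{chen2016announcement,chen2017existence} — so your proposal can only be measured against the argument in those references. Your overall strategy does match theirs: analytic continuation in the style of Dancer and Buffoni--Toland, with local arcs produced by the analytic implicit function theorem at regular points and by Lyapunov--Schmidt at singular Fredholm-index-zero points, and with the loss-of-compactness alternative \ref{gen noncompact} inserted exactly where the classical theory invokes local properness of the solution set.

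That said, two steps of your sketch are genuinely incomplete. First, at a singular point the kernel of $\genG_x$ may have dimension $n>1$, so the reduced problem is a system on an $(n+1)$-dimensional space; Weierstrass preparation and Puiseux expansions yield ``a finite union of analytic arcs'' only in the scalar two-variable case. The general case requires the Buffoni--Toland structure theory of real-analytic varieties and their notion of distinguished arcs, which is also what resolves your branch-selection worry; Puiseux alone does not. Second, your maximality argument is organized around a finite escape parameter $T<\infty$, whereas the curve is parametrized on $(0,\infty)$ by construction and the whole content of alternative \ref{gen alternatives} concerns $t\to\infty$; the sentence ``a parallel argument handles the case $T=\infty$'' is precisely where the work lies. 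The needed argument is: if \ref{gen blowup alternative} fails, choose $t_n\to\infty$ with $\sup_n N(t_n)<\infty$; if \ref{gen noncompact} also failed, a subsequence of $x(t_n)$ would converge to a solution $(x_*,\lambda_*)\in\genU\times\genI$ (boundedness of $N$ keeps the limit away from $\partial\genU$ and $\partial\genI$), and near that point Lyapunov--Schmidt shows the solution set is homeomorphic to a finite-dimensional real-analytic variety, hence \emph{locally compact there even without a global compactness hypothesis}, so the classical continuation/rerouting argument applies and contradicts maximality of the route of arcs. Finally, for \ref{gen reconnect} the clean mechanism is not generic analytic unique continuation but the second hypothesis in \eqref{gen limit inv}: every point of $\cm_\loc$ is a regular point, so by the implicit function theorem the solution set near it coincides with $\cm_\loc$; if the global curve met $\cm_\loc$ at arbitrarily large times it would locally coincide with it and be forced to retrace itself toward the endpoint $0\in\partial\genU$, contradicting the non-backtracking way the maximal route is assembled. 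Your closing clause about ``crossing a singular point or otherwise escaping that range'' does not substitute for this argument.
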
 

\bibliographystyle{siam}
\bibliography{projectdescription}

\end{document}